\documentclass[a4paper,12pt,reqno]{amsart}

\usepackage{tikz-cd}

\usepackage[latin1]{inputenc} 
\usepackage{graphicx}
\usepackage[matrix,arrow,tips,curve]{xy}
\usepackage[english]{babel}
\usepackage{amsmath}
\usepackage{amssymb}

\usepackage{enumerate,bm}

\newtheorem{thm}[equation]{Theorem}
\newtheorem{lemma}[equation]{Lemma}
\newtheorem{corollary}[equation]{Corollary}
\newtheorem{proposition}[equation]{Proposition}
\newtheorem{assumption}[equation]{Assumption}

\newtheorem{propdefi}[equation]{Proposition - Definition}

\newtheorem*{thm*}{Theorem}
\newtheorem{parg}[equation]{}

\numberwithin{equation}{section}
\numberwithin{figure}{section}
\numberwithin{table}{section}

\theoremstyle{definition}

\newtheorem{question}[equation]{Question}
\newtheorem{remark}[equation]{Remark}
\newtheorem{prg}[equation]{}

\newtheorem{notation}[equation]{Notation}

\numberwithin{figure}{section}

\newcommand{\ph}{\varphi}
\newcommand{\de}{\delta}
\newcommand{\w}{\widetilde}
\newcommand{\ma}{\mathcal}
\newcommand{\la}{\longrightarrow}
\newcommand{\ol}{\mathcal{O}}
\newcommand{\wi}{\widehat}
\newcommand{\pr}{\mathbb{P}}
\newcommand{\Q}{\mathbb{Q}}
\newcommand{\C}{\mathbb{C}}
\newcommand{\R}{\mathbb{R}}
\newcommand{\Z}{\mathbb{Z}}
\newcommand{\N}{\mathcal{N}_1}
\newcommand{\Nu}{\mathcal{N}^1}

\newcommand{\dom}{\operatorname{dom}}

\newcommand{\Spec}{\operatorname{Spec}}
\newcommand{\Sing}{\operatorname{Sing}}
\newcommand{\Pic}{\operatorname{Pic}}
\newcommand{\Rat}{\operatorname{Ratcurves}^n}

\newcommand{\NE}{\operatorname{NE}}
\newcommand{\Exc}{\operatorname{Exc}}
\newcommand{\Supp}{\operatorname{Supp}}
\newcommand{\Lo}{\operatorname{Locus}}
\newcommand{\codim}{\operatorname{codim}}
\newcommand{\Eff}{\operatorname{Eff}}

\newcommand{\Sym}{\operatorname{Sym}}

\newcommand{\Hom}{\operatorname{Hom}}

\newcommand{\coker}{\operatorname{coker}}
\newcommand{\Bl}{\operatorname{Bl}}

\newcommand{\Div}{\operatorname{div}}

\usepackage{etoolbox}
\patchcmd{\section}{\normalfont}{\normalfont\large}{}{}
\patchcmd{\subsection}{\bfseries}{\scshape\centering}{}{}
\patchcmd{\subsection}{-.5em}{.5em}{}{}

\makeatletter
\@namedef{subjclassname@2020}{\textup{2020} Mathematics Subject Classification}
\makeatother

 \title[On  Fano varieties with Lefschetz defect 2]{On the geometry of Fano varieties with Lefschetz defect 2}
\author{C.~Casagrande}
\address{Universit\`a di Torino,
Dipartimento di Matematica,
via Carlo Alberto 10,
10123 Torino - Italy}
\email{cinzia.casagrande@unito.it}
\date{\today}
\subjclass[2020]{14J45,14E30}

\calclayout

\begin{document}
\maketitle


\section{Introduction}
\noindent In the study of smooth, complex, higher dimensional Fano varieties $X$ with large Picard number, an important invariant is the {\bf Lefschetz defect} $\delta_X$, defined as follows. For any prime divisor $D\subset X$,  consider the natural map $\N(D)\to\N(X)$ between the real vector spaces of 1-cycles modulo numerical equivalence, and set $\N(D,X)\subset\N(X)$ to be its image. Then we define
$\delta(D):=\codim\N(D,X)$ and
$$\delta_X:=\max\{\delta(D)\,|\,D\text{ a prime divisor in }X\}.$$

The first strong property of the Lefschetz defect is the following.
\begin{thm}[\cite{codim}, Theorem 1.1]\label{geq 4}
  Let $X$ be a smooth Fano variety. If $\delta_X\geq 4$, then $X\cong S\times T$, where $S$ is a del Pezzo surface with $\rho_S=\delta_X+1$, and $T$ a smooth Fano variety with $\delta_T\leq \delta_X$.
\end{thm}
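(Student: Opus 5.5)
The plan is to turn the numerical condition $\delta_X\geq 4$ into an explicit fibration structure on $X$ and then recognize that structure as a product. I would fix a prime divisor $D$ with $\delta(D)=\delta_X$. The basic tool is a Mori program for $-D$. Since $-D$ is not pseudoeffective in the relevant directions, such a program runs through flips and divisorial contractions
$$X=X_0\dashrightarrow X_1\dashrightarrow\cdots\dashrightarrow X_k\longrightarrow Y,$$
and ends with a fiber type contraction $X_k\to Y$. Each step is $D$-positive.

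The first step is to relate the steps of this program to $\N(D,X)$. Every divisorial step has exceptional divisor $E_i$, whose transform $\ell_i$ of a general contracted curve satisfies $D\cdot\ell_i>0$. From this I expect to show that the classes $[\ell_i]$, together with $\N(D,X)$, span $\N(X)$. Consequently:
\begin{itemize}
\item the number of divisorial steps is at least $\delta_X$;
\item the final contraction has relative Picard number $1$, and its fibers meet $D$.
\end{itemize}
Using the Fano condition on $X$, I would then show the following for every $i$:
\begin{itemize}
\item each $E_i$ is a $\pr^1$-bundle over a codimension-$2$ locus;
\item $-K_X\cdot\ell_i=1$, and $E_i\cdot\ell_j=0$ for $i\neq j$;
\item the program contains no flips, so all the maps $X_{i}\to X_{i+1}$ are regular blow-ups of smooth codimension-$2$ centres;
\item $X_k\to Y$ is a conic bundle, so $\dim Y=n-1$.
\end{itemize}
The key inequality here is the bound $\rho_X-\rho_{D}\leq$ (number of disjoint families), combined with the fact that the loci of the $\ell_i$ cover $D$.

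This is the step I expect to be the main obstacle: excluding flips, and showing that the exceptional divisors are pairwise disjoint, uses $\delta_X\ge4$ in an essential way. With only $3$ steps one would get exceptions that are not products. I would first try to show that each $E_i$ has $\delta(E_i)\ge\delta_X-1\geq 3$, and apply the same analysis to $E_i$ in place of $D$. The large number of independent disjoint $\pr^1$-families then forces the images of the centres in $Y$ to be divisors with no mutual intersections except along fibers.

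Once $X\to Y$ is known to be a composition of blow-ups over a conic bundle, I would take $T$ to be the image of a further contraction of $Y$. The discriminant and centres come from $\delta_X+1$ points of a surface fiber, so the general fiber $S$ of the composed map $X\to T$ is a surface obtained by blowing up $\pr^1\times\pr^1$ or $\pr^2$, with $\rho_S=\delta_X+1$. I would also construct a second contraction $X\to S'$ onto a surface by contracting the face spanned by the curves coming from $T$ (the classes in $\N(D,X)$ not involved in the blow-ups). The product map $X\to S'\times T$ is then finite, since no curve is contracted by both maps. It is birational by a degree count on a general fiber, and hence an isomorphism because the target is normal. Finally, $S\cong S'$ is del Pezzo and $T$ is Fano, since factors of a Fano product are Fano. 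The bound $\delta_T\leq\delta_X$ follows by pulling a divisor of $T$ back to $S\times T$.
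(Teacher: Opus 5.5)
\textbf{There is a genuine gap.} Your architecture (special MMP for $-D$, exceptional $\pr^1$-bundles, a conic bundle, two transverse contractions, a finite product map) is the one used in \cite{codim}. But your first structural step is wrong as stated, and the steps that carry the theorem are only asserted.

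\emph{The first step, tested on a product.} Take a case the theorem covers: $X=S\times T$ with $\rho_S=\delta_X+1$, and $D=C\times T$ with $C\in|-K_S|$ smooth. Then $\delta(D)=\rho_S-1=\delta_X$, every $(-1)$-curve is $D$-positive, and special MMPs for $-D$ are just surface MMPs times $T$. A special MMP ending with the conic bundle $\mathbb{F}_1\times T\to\pr^1\times T$ has exactly $\delta_X-1$ divisorial steps, and there the classes $[\ell_i]$ together with $\N(D,X)$ span only a hyperplane. A special MMP with $\delta_X$ divisorial steps ends with $\pr^2\times T\to T$, which is not a conic bundle. So your spanning claim, ``at least $\delta_X$ divisorial steps'' and ``$X_k\to Y$ is a conic bundle'' cannot hold together. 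Combined with a further nontrivial contraction $Y\to T$, they would give $\rho_S=\rho_X-\rho_T\geq\delta_X+2$, contradicting the statement you want. In general one only knows $\delta(D_k)\in\{0,1\}$. You must separate the two outcomes and arrange to land in the conic-bundle one, which has $\delta_X-1$ blow-ups; that is what yields $\rho_S=\delta_X+1$.

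\emph{The core claims.} You assert that there are no flips, that every step blows up a smooth codimension-$2$ centre, and that the exceptional divisors are pairwise disjoint, but you give no mechanism. The ``key inequality'' is not formulated in a usable form, and rerunning the analysis on $E_i$ presupposes the structure being proved. This cannot be a formality: flips do occur in special MMPs when $\delta_X=2$ (Theorem \ref{main}, Proposition \ref{prop5.1}). So an argument that genuinely exploits large $\delta$ is needed here.

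\emph{The final step.} Saying that $T$ is the image of a further contraction of $Y$, and that $X\to S'$ contracts ``the face spanned by the curves coming from $T$'', presupposes that these classes span faces of $\NE(X)$. That is exactly what has to be proved.
\begin{itemize}
\item In the cited proof (reproduced in this paper in \ref{Eproduct}--\ref{product} and \ref{italo}--\ref{yoga}), one first uses Lemma \ref{varigotti} plus linear algebra in $\N(X)$. This shows that the exceptional $\pr^1$-bundles are products $\pr^1\times B$ meeting along $\{pts\}\times B$, with all $\N(E_i\cap E_j,X)$ equal to one fixed subspace $L$.
\item One then checks that a suitable positive combination $H$ of these divisors is nef with $\NE(X)\cap H^{\perp}=\NE(X)\cap L$. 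The divisor $H$ gives the contraction onto the surface.
\item ``Birational by a degree count'' is also not an argument: a priori a finite map from a fibre $X_t$ onto $S'$ can have any degree. What is used is that $K_{X/S}$ vanishes on the fibres of the exceptional $\pr^1$-bundles, again from their product structure. Hence $\ker\psi_*\subset(K_{X/S})^{\perp}$, so $X_t\to S$ is unramified, and one concludes via \cite[Remark 3.2.23]{codim}.
\end{itemize}
Your closing deductions ($S$ del Pezzo, $T$ Fano, $\delta_T\leq\delta_X$ via $S\times P$) are fine once the product structure is established.
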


Thus every Fano variety $X$ that is not a product of a del Pezzo surface with another  Fano variety has $\delta_X\in\{0,1,2,3\}$.
When $\delta_X=3$, $X$ does not need to be a product, but it nonetheless has a very rigid structure, described as follows.
\begin{thm}[\cite{delta3}, Theorem 1.4]\label{delta=3}
  Let $X$ be a smooth Fano variety with $\delta_X=3$ and dimension $n$. Then there exist:
\begin{enumerate}[$\bullet$]
\item  a smooth Fano variety $T$ with $\dim T=n-2$, $\rho_T=\rho_X-4$, and $\delta_T\leq 3$;
  \item
  a $\pr^2$-bundle $\pi\colon Z\to T$, given by the projectivization of a totally decomposable rank 3 vector bundle on $T$;
\item
a birational map $\alpha\colon X\to Z$ that blows-up three pairwise disjoint smooth irreducible subvarieties $A,A',B\subset Z$ of codimension $2$, such that $A$ and $A'$ are sections of $\pi$, and $B$ is either a section of $\pi$, or is finite of degree $2$ onto $T$.
\end{enumerate}
\end{thm}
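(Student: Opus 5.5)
Theorem~\ref{delta=3} is quoted from \cite{delta3}, so here I only outline how I would reconstruct its proof; the natural tool is a run of the MMP adapted to a divisor realizing the defect. Fix a prime divisor $D\subset X$ with $\delta(D)=3$, so that $\codim\N(D,X)=3$. Following the strategy of \cite{codim}, I would run a $(-D)$-negative MMP, or more precisely a special MMP for $-D$ that uses only $K$-negative extremal rays. Each step should be either a flip or a divisorial contraction whose exceptional divisor $E$ is a smooth $\pr^1$-bundle. Moreover, $E$ should be the blow-up of a smooth codimension-$2$ locus, with fibres meeting $D$ positively. The condition $\delta(D)=3$ forces curves in $D$ to span a small subspace, and this constrains each ray so that it cannot be contained in $\N(D,X)$.

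I expect the process to end in a rational contraction $X\dashrightarrow Y\to W$ of fiber type, with $\dim W=n-2$ and fibres containing surfaces that meet $D$. Next I would count Picard numbers. The three directions of $\N(X)$ not reached by $D$ should correspond to three exceptional divisors $E_1,E_2,E_3$, each a $\pr^1$-bundle meeting $D$, together with the relative Picard number of the final conic-bundle-like structure. This gives $\rho_X-\rho_W=4$: three blow-ups plus one fibration. The key geometric claim is that the rational map is in fact regular and there are no flips. That is, $X\to Z$ is the blow-up of three pairwise disjoint smooth codimension-$2$ subvarieties, and $Z\to T:=W$ is a $\pr^2$-bundle. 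To get this I would use the Fano condition in the style of the earlier works: the $E_i$ are pairwise disjoint, and their fibres $e_i$ satisfy $-K_X\cdot e_i=1$ and $D\cdot e_i>0$. I would also show that the general fibre of $X\to T$ is a del Pezzo surface $S$ with $\rho_S=4$, namely $\pr^2$ blown up in three points.

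Once $Z\to T$ is shown to be a $\pr^2$-bundle, I would identify the blown-up centres $A,A',B$ by restricting them to fibres. In each $\pr^2$ fibre, the centres meet in three non-collinear points, because the fibre must be a del Pezzo surface. Hence each centre is finite over $T$ of degree at most $3$, and together they account for degree $3$. Flat degeneration and the smoothness of $X$ should then show that each component is étale over $T$, and hence that $T$ is smooth. This leads to a case split on the monodromy of the three points. If the monodromy is trivial, there are three sections. If there is a $2$-cycle, we get $B$ of degree $2$ plus one section. A $3$-cycle or the full symmetric group $S_3$ should be excluded by the condition $\delta(D)=3$, since it would reduce the number of independent exceptional classes. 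This presumably explains why at least two components are sections $A,A'$. Three disjoint sections spanning the fibres give a splitting of the rank-$3$ bundle, which explains the total decomposability. In the case with $B$ of degree $2$, the line through the two points of $B$ together with the section $A$ gives the splitting.

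Finally, $T$ is Fano because it is the image of a Fano variety under a smooth morphism with rationally connected fibres, or via the standard argument on $-K_T$ restricted to sections. The inequality $\delta_T\le 3$ follows by pulling back divisors from $T$ and comparing $\N$. The main obstacle I expect is the second paragraph: proving that the MMP involves no flips and that the centres are disjoint and of codimension $2$. This needs delicate control of how $-K_X$ and $D$ intersect the extremal curves. I would attack it first by showing that every extremal ray used is not contained in $\N(D,X)$, and then invoking the classification of $K$-negative rays with a fibre of dimension $1$, due to Wiśniewski and Ando.
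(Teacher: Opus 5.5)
The paper does not prove this statement (it is quoted from \cite{delta3}), so I assess your outline on its own terms. It has a genuine gap, and as written it is inconsistent with the statement it is meant to prove.

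\textbf{The description of the fibres is wrong.} You claim the general fibre $X_t$ of $X\to T$ has $\rho_{X_t}=4$, i.e.\ is $\pr^2$ blown up at three points, so that the centres meet each fibre of $Z\to T$ in three points in total. This fails precisely in the case the theorem singles out.
\begin{enumerate}[$\bullet$]
\item If $B$ has degree $2$ over $T$, then $X_t$ is $\pr^2$ blown up at four points: one on $A$, one on $A'$, two on $B$. It is a del Pezzo surface of degree $5$ with $\rho_{X_t}=5$.
\item What $\rho_X-\rho_T=4$ controls is $\dim\N(X_t,X)$, equivalently $\rho_{X_\eta}$ for the generic fibre over $K=\C(T)$. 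In this case $X_\eta$ is $\pr^2_K$ blown up at two $K$-points and one point of degree $2$.
\item Your monodromy case split therefore collapses. Since $\rho_X-\rho_Z=3$, the centre of $X\to Z$ has exactly three irreducible components. So the $3$-cycle and $S_3$ cases are excluded by Picard numbers, not by $\de(D)=3$.
\item Your ``$2$-cycle'' case (a section plus a degree-$2$ component) has only two components, so it would give $\rho_X-\rho_T=3$. Made consistent, your count would prove that all three centres are always sections, which is false.
\end{enumerate}
What actually has to be shown is that, of three components finite over $T$, two are sections and the third has degree $1$ or $2$. That means excluding patterns such as $(1,2,2)$, $(1,1,3)$ or $(2,2,2)$, and nothing in your sketch does this.

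\textbf{Other steps fail or are missing.}
\begin{enumerate}[$\bullet$]
\item The claim that every centre is \'etale over $T$ would also rule out the degree-$2$ case. $T$ is Fano, hence simply connected, so an irreducible finite \'etale cover of $T$ is an isomorphism. In fact $B\to T$ must ramify.
\item Over a branch point the fibre of $X\to T$ cannot be smooth: a smooth fibre would be $\pr^2$ blown up at three points, of degree $6\neq 5$. So $X\to T$ is not smooth, your argument that $T$ is Fano via a smooth morphism does not apply, and smoothness of $T$ must come from elsewhere.
\item In the degree-$2$ case, the line through the two points of $B$ together with the section $A$ at best splits off a line bundle. A rank-$2$ summand still has to be split to get total decomposability.
\item What you call the main obstacle is only announced: no flips, regularity of $X\to Z$, pairwise disjoint smooth codimension-$2$ centres, and the last contraction being a $\pr^2$-bundle.
\item It is also unclear which MMP you would run, because the outcome of a special MMP for $-D$ depends on $D$. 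For $X=S\times T$ with $S=\Bl_{3\,\mathrm{pts}}\pr^2$ and $D=C\times T$, $C$ a $(-1)$-curve, the special MMP has two divisorial steps and ends with a $\pr^1$-bundle over an $(n-1)$-dimensional base.
\end{enumerate}
This last part is where most of the work in \cite{codim} and \cite{delta3} lies, so the proposal is not yet a proof.
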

Note that the composition $\pi\circ\alpha\colon X\to T$ is a flat fibration in del Pezzo surfaces. We also note that the construction of $X$ from $T$ is completely explicit, in the sense that both $Z$ and the centers of the blow-up $\alpha$ are determined by the choice of some line bundles on $T$ satisfying suitable conditions; we refer the interested reader to \cite[Propositions 1.2 and 1.3]{delta3} for more details.

This paper is devoted to the next case, namely that of Lefschetz defect $\delta_X=2$. A first study of this case has been developed in \cite{codimtwo}; here we refine that study, and our first result is the following.
\begin{thm}\label{main}
  Let $X$ be a smooth Fano variety with $\de_X=2$. Then there exist:
\begin{enumerate}[$\bullet$]
\item a sequence of $K$-negative flips $\xi\colon X\dasharrow X'$, with 
  $X'$ smooth;
\item 
  a conic bundle\footnote{A conic bundle is a flat map such that every fiber is isomorphic to a plane conic.} $\zeta\colon X'\to Y$, such that $Y$ is smooth with $\rho_X-\rho_Y=2$;
  \item a factorization of $\zeta$ as  $\ph\circ \sigma$,
    where  $\sigma\colon X'\to X_2$ is the blow-up of a smooth, irreducible, codimension 2 subvariety $A_2\subset X_2$,  $X_2$ is smooth, and
    $\ph\colon X_2\to Y$ is an elementary\footnote{Namely with $\rho_{X_2}-\rho_Y=1$.} conic bundle.
   $$\xymatrix{ X\ar@{-->}[r]^{\xi}&{X'}\ar[r]^{\sigma}\ar[dr]_{\zeta}&{X_2}\ar[d]^{\ph}\\
    &&Y
  }$$
  Moreover $\ph$ is smooth over $A_Y:=\ph(A_2)\subset Y$, $A_2$ is a section of $\ph_{|\ph^{-1}(A_Y)}$, and  $\zeta^{-1}(A_Y)\subset\dom(\xi^{-1})$.
   \end{enumerate}
\end{thm}  

We note that flips do occur; this can be seen in examples of toric Fano $4$-folds, such as those of type $J$ (with $\rho_X=4$) or those of type $R$ (with $\rho_X=5$) in \cite{bat2}.

\medskip

Let us recall that a conic bundle either is a smooth morphism, or has singular fibers over a non-empty divisor in the base, called the discriminant of the conic bundle.
Next we study  whether the elementary conic bundle $\ph\colon X_2\to Y$ as in Theorem \ref{main} can have non-empty discriminant divisor $\Delta_\ph$. It turns out that, when $\ph$ is not smooth, we get a stronger version of the result above. In order to give the statement, let us introduce the following notation, needed for generic fibers: if $S$ is a smooth surface over a field, we denote by $\Bl(S;d_1,d_2)$ the blow-up of $S$ at two closed points of degrees $d_1$ and $d_2$ (see \S\ref{delpezzo} for more details).
\begin{thm}[case $\Delta_\ph\neq\emptyset$]\label{singular}
  In the situation of Theorem \ref{main}, suppose that the conic bundle $\ph$ has non-empty discriminant. Then $\xi$ is an isomorphism (so that $X=X'$), $A_Y\cap\Delta_\ph=\emptyset$, and $Y=\pr_T(\ma{F})$ for some smooth projective variety $T$ and
  rank $2$ vector bundle $\ma{F}$ on $T$, such that $A_Y$ is a section of the $\pr^1$-bundle $\pi_Y\colon Y\to T$.
  $$\xymatrix{X\ar[r]_{\sigma}\ar@/^{1pc}/[rr]^{\zeta}
    \ar@/_/[rrd]_{\psi}&{X_2}\ar[r]_-{\ph}&{Y=\pr_T(\ma{F})}\ar[d]^{\pi_Y}\\
    &&T
  }$$
  
    The composition $\psi:=\pi_Y\circ\zeta\colon X\to T$ is a flat fibration in del Pezzo surfaces, with integral fibers.  
    Let $X_t\subset X$ be the fiber of $\psi$ over a general closed point $t\in T$, and $X_\eta$ the generic fiber of $\psi$, where $\eta:=\Spec K$ is the generic point of $T$, $K:=\C(T)$ the field of rational functions on $T$. Then one of the following holds:
\begin{enumerate}[$(i)$]    
\item $X_\eta\cong\Bl(\pr^2_K;1,4)$ and $\rho_{X_t}=6$;
  \item  $X_\eta\cong\Bl(\pr^2_K;2,2)$ and $\rho_{X_t}=5$.
\end{enumerate}  
 We also have $\NE(\psi)\cong\NE(X_\eta)$, a $3$-dimensional cone with $4$ extremal rays, and every contraction of $X_\eta$ is the restriction of a contraction of $X$ over $T$.
\end{thm}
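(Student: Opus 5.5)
We plan to work entirely in the setting of Theorem \ref{main}. There the key geometric inputs are the divisor $E:=\zeta^{-1}(A_Y)=\sigma^{-1}(\ph^{-1}(A_Y))$ and the two divisors over $A_Y$. Since $\ph$ is smooth over $A_Y$ and $A_2$ is a section there, $\zeta^{-1}(A_Y)$ splits as $\widetilde{D}\cup \operatorname{Exc}(\sigma)$, where $\widetilde D$ is the strict transform of the $\pr^1$-bundle $\ph^{-1}(A_Y)$.

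\textbf{Step 1: $A_Y\cap\Delta_\ph=\emptyset$ and the $\pr^1$-bundle structure on $Y$.} The first task is to use the discriminant. Let $D_\Delta:=\ph^{-1}(\Delta_\ph)$. Over $\Delta_\ph$ the fibers of $\ph$ are reducible, and the two components are numerically distinct classes in $X'$. I would compute $\delta$ of the divisor $E_\sigma=\operatorname{Exc}(\sigma)$ (or of $\widetilde D$). By the construction in Theorem \ref{main}, $\N(E_\sigma,X')$ has codimension $2$, being the pullback of $\N(A_Y,Y)$ together with the fiber classes. If $A_Y$ met $\Delta_\ph$, the components of singular fibers over $A_Y\cap\Delta_\ph$ would add a new class to $\N(E_\sigma,X')$; this would drop $\delta$ below $2$ and contradict maximality, so the intersection is empty. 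Next, $\Delta_\ph$ is a divisor in $Y$ disjoint from $A_Y$, and $A_Y$ has codimension $\ge 1$. Since $\delta$ is preserved, $\codim\N(A_Y,Y)=1$, so I expect $A_Y$ to be a prime divisor with $\N(A_Y,Y)$ a hyperplane. A divisor in $Y$ disjoint from $A_Y$ forces $A_Y$ to be nef and not big. I would then contract $Y$ via the extremal ray of $\NE(Y)$ on which $A_Y$ is positive and $\Delta_\ph$ is zero. Here I use that $Y$ is Fano-like, i.e.\ $-K_Y$ is big, from the standard fact that the base of a conic bundle from a Fano is Fano when $X=X'$. The contraction should be a $\pr^1$-bundle $\pi_Y\colon Y\to T$ with $A_Y$ a section. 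The standard argument (Casagrande's) is that a prime divisor with $\delta=1$ on the base of a conic bundle, together with a disjoint divisor, gives an elementary contraction whose fibers all meet $A_Y$ in one point. Smoothness of $T$ and the fact that $Y=\pr_T(\ma F)$ follow from a smooth elementary fiber-type contraction of relative dimension one with a section.

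\textbf{Step 2: $\xi$ is an isomorphism.} Flips of $X$ have exceptional loci contained in the indeterminacy of $\xi^{-1}$. I would show that every curve in a flipped locus of $X'$ has to lie over $\Delta_\ph$ or over $A_Y$. Because $\zeta^{-1}(A_Y)\subset\dom(\xi^{-1})$, such curves lie over $\Delta_\ph$. But $\Delta_\ph$ is $\pi_Y$-ample-ish, meeting every fiber of $\pi_Y$, and the $K$-positivity of small loci contradicts the fact that curves in $\zeta^{-1}(\Delta_\ph)$ are $K$-negative (components of conics). Hence there are no flips.

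\textbf{Step 3: the generic fiber.} The fiber $X_t$ is a conic bundle over $\pr^1$ with a section-type blow-up. Thus $X_t$ is a del Pezzo surface, $X_\eta$ has Picard number $3$ over $K$, and $\rho_{X_t}$ is determined by the number of singular conics, i.e.\ $\Delta_\ph\cdot(\text{fiber of }\pi_Y)$. I would classify by minimal models over $K$: $X_\eta\to\pr^1_K$ is a conic bundle obtained by blowing up a rational point on a standard conic bundle over $\pr^1_K$ with $\rho=2$. Running the other contractions, the nonempty discriminant together with $\rho(X_\eta)=3$ forces the surface to be $\Bl(\pr^2_K;1,4)$ or $\Bl(\pr^2_K;2,2)$, using $\rho_{X_t}\le 6$ from $K_{X_t}^2>0$ and the known list of Iskovskikh's minimal rational surfaces. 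Flatness and integrality of fibers come from flatness of $\pi_Y$ and $\zeta$, given that no fiber of $\psi$ lies in $E_\sigma$. Finally, $\NE(\psi)\cong\NE(X_\eta)$ because relative cycles are generated by curves in fibers, and contractions extend by the relative cone theorem over $T$.

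The main obstacle is Step 1, namely producing the $\pr^1$-bundle $\pi_Y$ from the disjointness. In particular it is hard to show that the relevant extremal contraction of $Y$ is equidimensional with $A_Y$ a section and not merely a divisor meeting fibers in several points.
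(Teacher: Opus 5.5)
Your proposal has genuine gaps in each of its three steps. In Step 1, $A_Y\cap\Delta_\ph=\emptyset$ needs no argument: $\ph$ is smooth over $A_Y$ by Theorem \ref{main}. Your $\de$-argument also presupposes $\de(\Exc(\sigma))=2$, which Theorem \ref{main} does not give; it fails for $\rho_X=3$, see Remark \ref{rho3}.

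The construction of $\pi_Y$ does not work as described, for three reasons. First, $\Delta_\ph$ is finite and surjective over $T$ (of degree $3$ in case $(i)$), so it is positive, not zero, on the fibers of $\pi_Y$. Second, $A_Y$ is in general not nef: in Section \ref{reconstructing} one has $A_{Y|A_Y}\sim -2D$ with $D$ effective and non-zero. Third, you use $X=X'$ before proving it. In the paper, the smooth $\pr^1$-fibration $\pi_Y\colon Y\to T$ finite on $A_Y$, together with $X=X'$, the quasi-elementarity of $\psi$ and $\de(\Exc(\sigma))=2$, is taken from \cite[Proposition 5.1]{codimtwo} (Proposition \ref{giugno}).

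Even granting $\pi_Y$, you are missing the idea that makes $A_Y$ a section. $Y$ is simply connected, and $A_Y$, $\Delta_\ph$ are disjoint and finite over $T$, so by Lemma \ref{section2} one of them is a section. $\Delta_\ph$ cannot be: it would then be smooth and isomorphic to $T$, hence simply connected, while the double cover $\w{\Delta}_\ph\to\Delta_\ph$ given by the components of the singular conics would be étale and non-trivial ($\ph$ being elementary).

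Step 2 also fails. Nothing forces the flipped curves to lie over $\Delta_\ph\cup A_Y$. When $\ph$ is smooth, flips do occur even though $\zeta^{-1}(A_Y)\subset\dom(\xi^{-1})$, so any such claim must use $\Delta_\ph\neq\emptyset$ in a way you do not indicate. Moreover, a curve $C$ with $\zeta(C)\subset\Delta_\ph$ need not be a component of a conic. In fact the flipped curves of the last flip are $K_{X'}$-positive, hence never contracted by the $K$-negative conic bundle $\zeta$, so your intended contradiction cannot arise.

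In Step 3 the classification is not justified. $(K_{X_t})^2>0$ only gives $\rho_{X_t}\le 9$, not $\le 6$. The local data you use — a conic bundle $X_\eta\to\pr^1_K$ whose relative cone is spanned by two exceptional curves of self-intersection $-1$, with non-empty discriminant — are also satisfied by $\Bl(Y_4;1,1)$. That surface has degree $2$, $\rho_{X_t}=8$, and $\Delta_\ph$ of degree $5$ over $T$. No classification of surfaces over $K$ rules it out. The paper needs a global argument on $X$ (\ref{description}--\ref{yoga}):
\begin{enumerate}
\item prove $\de=2$ for the loci of all rays of $\NE(\psi)$;
\item build divisors disjoint from $E\cup F$ and from $\wi{E}\cup\wi{F}$ out of $(-1)$-curves in the general fibers of the contractions of the faces $R_E+R_F$ and $R_{\wi{E}}+R_{\wi{F}}$, as in \ref{milano};
\item use Lemma \ref{varigotti} to get $E\cong\pr^1\times B$, and similarly for the other three loci;
\item use the nef divisor $E+F+\wi{E}+\wi{F}$ to show $X\cong(\Bl_{2\,\mathrm{pts}}\pr^2)\times T$, a contradiction.
\end{enumerate}
Excluding $\Bl(\pr^2_K;1,1)$ also needs an argument: in \ref{not_smooth}, $\sigma(F)\cdot\sigma(\hat{e})=1$ would force $\ph$ to be smooth.

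Moreover, $\rho_{X_\eta}=3$ and $\NE(\psi)\cong\NE(X_\eta)$ do not follow from ``relative cycles are generated by curves in fibers''. One needs $\psi$ quasi-elementary and every extremal ray of $\NE(\psi)$ to have locus dominating $T$. The paper gets this by showing each ray is of type $(n-1,n-2)^{sm}$ (\ref{every_ray}): a conic-bundle ray would factor $\psi$ through a quasi-elementary fibration with curves as fibers and relative Picard number $2$, which is impossible. It then applies Lemmas \ref{degreeR}, \ref{restr} and \ref{NE}, and only after that can Lemma \ref{4surfaces} be used. Finally, integrality of the fibers of $\psi$ rests on $\pi_Y^{-1}(t)\not\subset\Delta_\ph$, i.e.\ on $\Delta_\ph$ being finite over $T$, not on a condition involving $\Exc(\sigma)$.
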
  

Finally we study more in detail case $(i)$, which is the simplest from the point of view of the generic fiber $X_\eta$ of $\psi$. We get a complete description of $X$ as a blow-up of a $\pr^2$-bundle over $T$, as follows.
\begin{thm}\label{rho=6}
In the situation of Theorem \ref{singular}, suppose that we are in case $(i)$. 
Then $T$ is Fano with $\delta_T\leq 2$, and there is a factorization of $\psi$ as:
$$\xymatrix{
  X
  \ar[rrd]|{\,\psi\,}\ar@/^{1pc}/[rr]^{\alpha} \ar[r]|>>>>>>>{\,\tau'\,}
  &{X_2'}\ar[d]_{\ph'}\ar[r]|-{\,\sigma'\,}
  &{Z=\pr_T(\ol\oplus\ol(D)\oplus\ol(2D))}\ar[d]^{\pi_Z} \\
 & {W=\pr_T(\ol\oplus\ol(D))}\ar[r]|-{\,\pi_W}&{T}
}$$
where:
\begin{enumerate}[$(a)$]
  \item $D$ is a divisor on $T$ such that  $-K_T-2D$ and $-K_T+D$ are ample;
\item $\alpha$ is the blow-up of two disjoint smooth, irreducible subvarieties $A_Z,B_Z\subset Z$ of codimension $2$; $A_Z$ is the section of $\pi_Z$ corresponding to the projection  $\ol_T\oplus\ol_T(D)\oplus\ol_T(2D)\twoheadrightarrow\ol_T$, while $\pi_{Z|B_Z}\colon B_Z\to T$ is finite of degree $4$;
\item $\sigma'$ is the blow-up of $A_Z$, and $\ph'$ is a $\pr^1$-bundle;
\item let $B_W\subset W$ be the image under $\ph'$ of the transform of $B_Z\subset Z$ in $X_2'$. Then
$B_W$ is  a smooth connected divisor in $|4H_W|$, where $H_W$ is the tautological divisor of $W$.
\end{enumerate}
\end{thm}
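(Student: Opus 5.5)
The plan is to work over the generic point first and then spread out. In case $(i)$ we have $X_\eta\cong\Bl(\pr^2_K;1,4)$, and $\NE(X_\eta)$ has $4$ extremal rays. Two of them are the exceptional curves over the degree $1$ point and over the degree $4$ point. A third is the pencil of lines through the degree $1$ point, which gives a conic bundle. The fourth is the pencil of conics through the degree $4$ point. By Theorem \ref{singular}, every contraction of $X_\eta$ extends to a contraction of $X$ over $T$. We therefore get a contraction $\alpha\colon X\to Z$ over $T$ whose generic fiber is $\Bl(\pr^2_K;1,4)\to\pr^2_K$.

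Since $\psi$ is flat with integral fibers, and $\rho_X-\rho_T=3$, the map $\alpha$ is birational with two exceptional prime divisors $E_A,E_B$, and $Z\to T$ is a $\pr^2$-bundle in codimension one. To show that $\alpha$ is the blow-up of two disjoint smooth codimension $2$ centers, I would factor $\alpha$ into two elementary divisorial contractions. First contract $E_A$ to get $\tau'\colon X\to X_2'$. Then $X_2'\to W$ is the $\pr^1$-bundle obtained by extending the pencil of lines through the rational point; $W$ should be the $\pr^1$-bundle $Y$ of Theorem \ref{singular}. Because these fibers have no singular members, I would use the ruling by lines together with Ando/Wiśniewski-type results on contractions with fibers of dimension $\le1$ to see that each step is a smooth blow-up. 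The degree $1$ center $A_Z$ is a section of $\pi_Z$. The degree $4$ center $B_Z$ is finite of degree $4$ over $T$, and it is disjoint from $A_Z$ because $A_Y\cap\Delta_\ph=\emptyset$. Next, $\pi_Z$ is a $\pr^2$-bundle, since it is a smooth morphism with fibers $\pr^2$ admitting the section $A_Z$. So $Z=\pr_T(\ma{E})$ with $\ma{E}$ of rank $3$, and the projection to $\ol$ corresponds to $A_Z$. In the same way $W=\pr_T(\ma{F})$, and the projection from $A_Z$ identifies $\ma{F}$ with the kernel of $\ma{E}\to\ol$ (up to twist).

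The hard part is proving that $\ma{F}\cong\ol\oplus\ol(D)$ and $\ma{E}\cong\ol\oplus\ol(D)\oplus\ol(2D)$, and that $B_W\in|4H_W|$. For this I would use the second conic bundle structure, the pencil of conics through the degree $4$ point. Its discriminant contains the divisor $\Delta$, and on each fiber over $T$ it gives a second $\pr^1$ ruling.
\begin{itemize}
\item First I would show that $B_Z$ is contained in a relative conic section bundle. Concretely, $B_Z$ lies in the complete intersection of two relative conics containing $A_Z$, and one of these conics should be the reducible one.
\item This should force the splitting $\ma{F}$ into $\ol\oplus\ol(D)$, with $B_W$ the image of $B_Z$ under projection from $A_Z$.
\item Degree considerations on a fiber then give $B_W\in|4H_W|$. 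Smoothness of $B_W$ follows from smoothness of $B_Z$ and of $X$.
\item Connectedness follows from $\rho$ considerations: $\rho_X-\rho_T=3$ forces the monodromy to act transitively on the $4$ points.
\end{itemize}
Failing this, I would compute $N_{A_Z/Z}$ from the splitting of $Y$ and use $E_A\cong\pr(N)$, together with an extension-splitting argument from $H^1$ vanishing via Kodaira on Fano $X$.

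Finally, for $T$ Fano: take a section $S\subset X$ over $T$ disjoint from the exceptional loci, for instance the transform of the section given by the projection to $\ol(2D)$. Then $-K_X$ restricted to $S$ equals $-K_T$ plus a correction term, and the ampleness of $-K_T-2D$ and $-K_T+D$ comes from restricting $-K_X$ to suitable sections. For example, $-K_{Z}$ restricted to the sections corresponding to $\ol(2D)$ and $\ol$ gives $-K_T\pm$ multiples of $D$, adjusted by how $B_Z$ meets them. Hence $-K_T$ is ample, as a positive combination. The bound $\delta_T\le 2$ follows since $\delta_X\ge\delta_T$ for such fibrations (pull back divisors), so $\delta_T\le\delta_X=2$.
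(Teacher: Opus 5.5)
Your proposal has a genuine gap, and it starts with a confusion about which contraction produces $W$.

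\textbf{The setup is mis-identified.} In $\Bl(\pr^2_K;1,4)$ all four extremal rays are birational. They are spanned by the exceptional curve over the rational point, the exceptional curve over the degree-$4$ point, the conic through all five geometric points, and the four lines joining the rational point to the degree-$4$ point; their self-intersections are $-1,-4,-1,-4$, cf.\ \ref{tables}. The two pencils you list are $2$-dimensional faces, not rays.

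More seriously, contracting first the divisor over $A_Z$ (the paper's $E$) gives $X_2=\Bl_{B_Z}Z$. Its other contraction is $\ph\colon X_2\to Y$, the pencil of conics through the degree-$4$ point, which has $\Delta_\ph\neq\emptyset$. The smooth $\pr^1$-fibration $\ph'\colon X_2'\to W$ given by lines through the rational point exists only on $X_2'=\Bl_{A_Z}Z$, i.e.\ after contracting $F$ first. In particular $W\neq Y$: $Y=\pr_T(\ma{F})$ with $0\to\ol_T(2D)\to\ma{F}\to\ol_T\to 0$, whereas $W=\pr_T(\ol\oplus\ol(D))$. So your fallback of reading off $\ma{N}_{A_Z/Z}$ ``from the splitting of $Y$'' aims at the wrong object.

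Two foundational steps are also unjustified. First, $A_Z\cap B_Z=\emptyset$ does not follow from $A_Y\cap\Delta_\ph=\emptyset$, because $F$ dominates $Y$ and meets $\wi{E}$ ($F\cdot\hat{e}=4$). It needs the numerical argument of \ref{description}: $\N(E,X)\cap\ker\psi_*=\R[e]$ together with $E\cdot f=0$. Second, smoothness of $\pi_Z$ is only asserted. It requires showing $\pi_Z$ is flat with irreducible fibres, and then a smoothness criterion for such families with general fibre $\pr^2$.

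\textbf{The central identification of $W$, $B_W$ and $\ma{E}$ is not argued.} As stated, your first bullet cannot hold. In each plane $Z_t$ exactly one conic of the pencil through $(B_Z)_t$ passes through $(A_Z)_t$, namely $(\wi{E}_Z)_t$, and two distinct conics cannot share those five points. The other bullets (``should force the splitting'') are not arguments. What is needed is the following.
\begin{enumerate}
\item Since $B_Z\subset\wi{E}_Z$, the transform $B_2'$ lies in the section $\wi{E}_2'$ of $\ph'$. Hence $\ph'\circ\tau'\colon X\to W$ is a conic bundle without non-reduced fibres, so $W$ is Fano by Wi\'sniewski's result, and $T$ is Fano because $\pi_W$ is smooth.
\item Lemma \ref{sectionsFano}, applied to the section $A_W$ and the disjoint divisor $B_W$, gives $W\cong\pr_T(\ol\oplus\ol(D))$ (via Kodaira vanishing on $T$) with $-K_T\pm D$ ample. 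Then $B_W\sim 4H_W$ follows from $B_{W|A_W}=0$ and the degree.
\item One must show $\ma{E}\cong\ol\oplus\ol(D)\oplus\ol(2D)$. The paper does this by computing the Casnati--Ekedahl Tschirnhausen bundle of $B_Z\to T$ in two ways. A shorter route: push forward $\ol_Z(H_Z)_{|\wi{E}_Z}\cong\xi^*\ol_W(2H_W)$ (see \ref{tautologicals}) and use that a smooth conic is linearly normal, so $\ma{E}\cong\Sym^2(\ol\oplus\ol(D))$.
\item The extension \eqref{sabri} splits because $H^0(T,-D)=H^0(T,-2D)=0$.
\end{enumerate}

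\textbf{The positivity argument for $-K_T-2D$ fails as written.} $-K_Z$ need not be ample, and the section given by $\ol(2D)$ generally meets $B_Z$. The paper instead restricts $-K_X$ to $E\cong\pr_T(\ol(D)\oplus\ol(2D))$, where it is tautological for $\pr_T(\ol(-K_T-2D)\oplus\ol(-K_T-D))$.
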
 
Let us note that in the statement above, $\ph'\colon X_2'\to W$ is smooth, while $\ph\colon X_2\to Y$ has non-empty discriminant by assumption, hence the factorization $\psi=\pi_W\circ\ph'\circ\tau'$ is actually different from the original factorization $\psi=\pi_Y\circ \ph\circ \sigma$ given by Theorem \ref{singular}.

In the setting of Theorem \ref{rho=6}, the Fano variety $X$ determines the pair $(T,D)$.
Conversely, we show that $X$ can be reconstructed from $(T,D)$.
\begin{proposition}\label{construction}
  Let $T$ be a smooth Fano variety, of dimension $n-2$, with $\delta_T\leq 2$. Let $D$ be a divisor on $T$ such that $-K_T-2D$ and $-K_T+D$ are ample, and such that, in $W=\pr_T(\ol\oplus\ol(D))$ with tautological divisor $H_W$, there exists a smooth connected divisor $B_W\in |4H_W|$.\footnote{If $D$ is base-point-free and $D\not\equiv 0$, the existence of such $B_W$ is automatic, see Remark \ref{bsptfree}.}

  Then there exists a smooth Fano variety $X$ with $\dim X=n$,
  $\delta_X=2$, and $\rho_X=\rho_T+3$,
  as in Theorem \ref{singular} case $(i)$, 
  and
  obtained from $(T,D)$ as in Theorem \ref{rho=6}.

  Moreover the  section $A_Y$ of $\pi_Y\colon Y=\pr_T(\ma{F})\to T$  (notation as in Theorem \ref{singular}) corresponds to an
  extension:
   $$0\la\ol_T(2D)\la\ma{F}\la\ol_T\la 0$$
   and $\Delta_\ph\sim 3A_Y+6\pi_Y^*(D)$.
 \end{proposition}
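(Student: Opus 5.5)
We build $X$ explicitly from $(T,D)$, following Theorem \ref{rho=6} backwards. Set $Z=\pr_T(\ol\oplus\ol(D)\oplus\ol(2D))$ and let $A_Z$ be the section given by the quotient onto $\ol_T$. Let $X_2'=\Bl_{A_Z}Z$. Projection from $A_Z$ fibrewise identifies $X_2'$ with the $\pr^1$-bundle $\ph'\colon X_2'\to W=\pr_T(\ol(D)\oplus\ol(2D))\cong\pr_T(\ol\oplus\ol(D))$. The exceptional divisor $E_A$ is a section of $\ph'$. Now take the smooth connected $B_W\in|4H_W|$ and lift it to $B_{X_2'}\subset X_2'$ as a section of $\ph'$ over $B_W$ disjoint from $E_A$. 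Such a lift is a section of the line bundle $\ph'^{-1}(B_W)\setminus E_A$, which exists up to a choice of twist, and we fix the twist so that the image $B_Z$ in $Z$ is finite of degree $4$ over $T$. We then define $X:=\Bl_{B_{X_2'}}X_2'$. This $X$ is smooth, since it is a blow-up of a smooth codimension-$2$ centre. It has $\rho_X=\rho_T+3$, and it is the blow-up $\alpha$ of $Z$ along the disjoint $A_Z,B_Z$.

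The next step is the Fano property, which is the main obstacle. I will compute
$$-K_X=\alpha^*(-K_Z)-E_A-E_B$$
in terms of $H_Z,\pi_Z^*D,\pi_Z^*K_T$. Then I will check positivity on generators of $\NE(X)$. The cone is generated by three kinds of classes: the rays of $\NE(\psi)$, namely the $4$ rays coming from $X_\eta\cong\Bl(\pr^2_K;1,4)$ (the $(-1)$-curves over the two points, the conic through the five points, and lines through the degree-$1$ point and a point of the degree-$4$ point); curves contracted by the ray $E_A\to A_Z\cong T$ and the analogous ones for $E_B$; and curves lying in sections of $\psi$ that map onto curves of $T$.

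Rather than list all curves, I would use the structure of the fibration. First I show that $-K_X$ is $\psi$-ample, which is a computation on the del Pezzo fibre of degree $9-5=4$. Then it suffices to show that $-K_X\cdot C>0$ for curves $C$ that are horizontal to extremal faces. Here I expect to reduce to $-K_T+D$ and $-K_T-2D$ ample, and these appear exactly on curves in the two extremal sections $\pr_T(\ol(2D))$ and $\pr_T(\ol)$-type loci of $Z$, and in $E_A$. Alternatively, I could apply a relative Kleiman argument via the $\psi$-ample decomposition $-K_X=-K_{X/T}+\psi^*(-K_T)$, writing $-K_{X/T}$ as a combination of $\psi$-nef divisors whose restrictions to the boundary sections are $\pm D$ twists.

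With $X$ Fano in hand, I will read off the remaining structure. The contraction $\tau'$ together with the conic bundle $\ph'\circ\tau'$ shows that $\delta_X\ge2$. Indeed, $\ph'$ is smooth, and the divisor $\tau'^{-1}(\ph'^{-1}(\pi_W^{-1}(\text{pt})))$ has small image. A prime divisor of the form $\psi^{-1}(\text{divisor in }T)$ has $\delta(D)=$ codim at least $2$, since it misses the fibre directions only modulo the relative classes. Theorem \ref{geq 4} and Theorem \ref{delta=3} exclude $\delta_X\ge3$: the generic fibre is not of the form required there, since in $\delta_X=3$ the blown-up centres have degree at most $2$ while ours has degree $4$. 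Hence $\delta_X=2$. Then Theorems \ref{main} and \ref{singular} apply, and $X_\eta\cong\Bl(\pr^2_K;1,4)$ places us in case $(i)$.

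For the final claims, I will contract the ray of lines through the degree-$1$ point. This yields the conic bundle $\ph\colon X_2\to Y$ with $A_Y$ the image of the exceptional divisor over that point. The lines through the rational point give $Y=\pr_T(\ma F)$, where $\ma F$ is the pushforward of the ideal twist of the lines through $A_Z$ in $\ol\oplus\ol(D)\oplus\ol(2D)$. Computing the kernel and quotient identifies the extension $0\to\ol(2D)\to\ma F\to\ol\to0$. Finally, $\Delta_\ph$ is found from the standard formula $\Delta\equiv -\ph_*(K_{X_2/Y}^2)$, or equivalently by counting singular fibres: the plane conics through the degree-$4$ point contribute $3$ singular members per line pencil. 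Expanding in $A_Y$ and $\pi_Y^*D$ gives $3A_Y+6\pi_Y^*D$.
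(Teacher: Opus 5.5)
The construction is not pinned down, and the choice you leave open is exactly what decides whether $X$ is Fano. Since $X_2'=\pr_W(\ol\oplus\ol(H_W+\pi_W^*D))$, with $E_A$ the section given by the projection onto $\ol$, the complement of $E_A$ in $\ph'^{-1}(B_W)$ is the total space of the line bundle $\ol_W(H_W+\pi_W^*D)_{|B_W}$. So lifts of $B_W$ disjoint from $E_A$ are just its global sections, and there is no twist to choose. Moreover every lift maps isomorphically onto a $B_Z$ that is finite of degree $4$ over $T$, so your normalization is vacuous. The most natural lift, the zero section, lies in the transform of $H_Z$. It puts the four points of $B_Z\cap Z_t$ on the line $H_Z\cap Z_t$, whose transform in $X$ has $-K_X$-degree $3-4<0$, so that $X$ is not Fano.

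The paper instead takes $B_2'=\wi{E}_2'\cap\ph'^{-1}(B_W)$, where $\wi{E}_2'$ is the transform of a smooth conic bundle $\wi{E}_Z\in|2(H_Z-\pi_Z^*D)|$ through $A_Z$ (Lemma \ref{Ehat}). This is a global section of $\ph'$ with $\wi{E}_2'\sim E_A+\ph'^*(2H_W)$, and it yields $-K_X=\psi^*(-K_T)+E+\wi{E}+\frac14(F+\wi{F})$ (Lemma \ref{-K}). That identity is the missing idea in your Fano step. It reduces strict nefness of $-K_X$ to its restrictions to $E,\wi{E},F,\wi{F}$. These are tautological bundles on $\pr_T(\ol_T(-K_T-2D)\oplus\ol_T(-K_T-D))$ and on $\pr_{B_W}(\pi_B^*\ol_T(-K_T-D)\oplus\pi_B^*\ol_T(-K_T+D))$ (Lemmas \ref{E} and \ref{F}). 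One then shows bigness and applies the base-point-free theorem. Checking $-K_X$ on a list of ``generators of $\NE(X)$'' presupposes a description of $\overline{\NE}(X)$ that you do not have. Likewise, relative ampleness over $T$ would also have to be checked on fibres where points of $B_Z$ collide.

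The Lefschetz defect step fails in both directions. First, $\ph'^{-1}(\pi_W^{-1}(pt))$ is only a surface, not a divisor. A divisor $\psi^{-1}(P_T)$ contains fibres of $\psi$, so $\ker\psi_*\subset\N(\psi^{-1}(P_T),X)$ and $\de(\psi^{-1}(P_T))=\de(P_T)$, which can be $0$. What works is this: $E$ is a $\pr^1$-bundle over $T$ with the section $E\cap\wi{E}$, so $\N(E)\to\N(X)$ is injective and $\de(E)=\rho_X-(\rho_T+1)=2$.

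Second, for $\de_X\leq 2$ you never use $\de_T\leq 2$, yet it is necessary. Take $T=S\times\pr^2$ with $S$ a del Pezzo surface of Picard number $4$, $D$ the pullback of a line, and $B_W$ pulled back from $\pr^2$. All the other hypotheses hold, and $X\cong S\times X_0$, with $X_0$ as in Corollary \ref{4fold}, has $\de_X\geq 3$. Theorems \ref{geq 4} and \ref{delta=3} describe \emph{some} fibration of $X$ (here the projection onto $X_0$), not necessarily $\psi$, so comparing degrees of centres over $T$ proves nothing. The paper shows instead that a divisor $P$ with $\de(P)\geq 3$ cannot dominate $T$, hence contains a fibre of $\psi$; quasi-elementarity then gives $\de(\psi(P))=\de(P)$, contradicting $\de_T\leq 2$.

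Finally, the lines through the point $A_Z\cap Z_t$ are the fibres of the smooth $\pr^1$-bundle $\ph'$, so your recipe for $Y$ recovers $W$. The correct conic bundle $\ph$ is obtained by contracting $E$ and taking the pencil of conics through the four points of $B_Z\cap Z_t$ (Lemma \ref{madeira}). There the transform $\wi{E}_2$ of $\wi{E}_Z$ equals $\ph^*(A_Y)$, which gives $\ol_Y(-A_Y)_{|A_Y}\cong\ol_Z(-\wi{E}_Z)_{|A_Z}\cong\ol_T(2D)$ and hence the extension. The term $6\pi_Y^*D$ in $\Delta_\ph$ comes from $\Delta_\ph\cap A_Y=\emptyset$ together with $A_{Y|A_Y}\sim-2D$. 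Counting singular conics only gives the coefficient $3$.
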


 In dimension 4, this construction yields one family of Fano $4$-folds with $\de_X=2$ and $\rho_X=4$, obtained as a blow-up of $\pr_{\pr^2}(\ol\oplus\ol(1)\oplus\ol(2))$ along the negative section of the $\pr^2$-bundle and a K3 surface having degree $4$ onto $\pr^2$; this corresponds to the pair $(T,D)=(\pr^2,\ol_{\pr^2}(1))$, see Section \ref{dim4}.

 \medskip

 Throughout the paper we use techniques from birational geometry in the framework of the MMP. We work over the field of complex numbers $\C$, but we also consider and study generic fibers $X_\eta$ of del Pezzo fibrations $\psi\colon X\to T$ ($\eta$ being the generic point of $T$), where $X$ is a smooth, complex Fano variety; such generic fiber is a smooth del Pezzo surface over the non-closed field $\C(T)$ of rational functions on $T$. In particular we use the classification of del Pezzo surfaces with Picard number $3$ over arbitrary fields in \cite{BFSZ}. In our setting, the knowledge of $X_\eta$ gives us information on the relative cone $\NE(\psi)$, and hence on the possible factorizations of $\psi$, see \S \ref{delpezzo} and \S \ref{fibrations}.

The paper is organized as follows. In Section \ref{prel} we set up the notation and give some preliminary results on different topics: in \S\ref{disjdiv} we consider properties of a pair of effective divisors with disjoint support in a variety $X$ with a smooth fibration with fiber $\pr^1$, that are interesting in their own right. 
In \S\ref{delpezzo} we state the results on del Pezzo surfaces over non-closed fields from \cite{BFSZ} that are needed in the sequel, and in \S\ref{fibrations} we relate some properties of contractions of fiber type over $\C$ to properties of the generic fiber.

In Section \ref{proof} first we recall what is a special MMP for $-D$, where $D$ is a prime divisor in a Fano variety $X$ such that $\de(D)=\de_X=2$, and  we recall some results from \cite{codimtwo}, in particular Proposition \ref{prop5.1}, and \ref{previous}. Then we prove Theorem \ref{main} on the structure of Fano varieties with $\de_X=2$; see \ref{outlinemain} for an outline of the proof.

 In Section \ref{discriminant} we study the case where the elementary conic bundle $\ph\colon X_2\to Y$ in Theorem \ref{main} is not smooth. Again, our starting point is a result from \cite{codimtwo} (Proposition \ref{giugno}), that in this setting shows the existence of a del Pezzo fibration $\psi\colon X\to T$. We study this del Pezzo fibration and prove Theorem \ref{singular}; see \ref{outline_singular} for an outline of the proof.

 Sections \ref{section_rho=6}
and \ref{reconstructing} are devoted to study case $(i)$ of Theorem \ref{singular}, where the generic fiber of the del Pezzo fibration $\psi\colon X\to T$ is $\Bl(\pr^2_{\C(T)};1,4)$. In Section \ref{section_rho=6} we show Theorem \ref{rho=6}, that gives an explicit description of $X$ as a blow-up of $\pr_T(\ol\oplus\ol(D)\oplus(2D))$, where $T$ is Fano, and $D$ is a suitable divisor on $T$; see \ref{outline_rho=6} for an outline of the proof. Then in Section \ref{reconstructing} we reverse this description, and show  that given a Fano variety $T$ and a divisor $D$ on $T$ with suitable properties, we can construct a Fano variety $X$ with $\de_X=2$ as above (Proposition \ref{construction}).
In Section \ref{dim4} we apply these results to get a family of Fano $4$-folds with Lefschetz defect $2$ as in  Theorem \ref{singular}$(i)$.

Finally in Section \ref{questions} we state some open questions on Fano varieties $X$ with $\de_X=2$, and consider the special case where in Theorem \ref{main} the elementary conic bundle $\ph$ is smooth and there are no flips, namely $X=X'$.

\medskip

{\footnotesize

\noindent{\bf Acknowledgements} I am grateful to Eric Jovinelly for useful conversations about this project. I also thank Saverio Secci for a careful reading of a preliminary version of this paper.

The author is a member of GNSAGA, INdAM.

  \tableofcontents}
 \section{Preliminaries}\label{prel}
 \subsection{Notation}\label{notation}
 \noindent Let $X$ be a smooth projective variety.\\
$\dom(f)$ is the domain of a rational map $f\colon X\dasharrow Y$;\\
$\sim$ denotes linear equivalence for Cartier divisors;\\
$\equiv$ denotes numerical equivalence both  for $1$-cycles and $\R$-divisors;\\
$\Nu(X)$ is the $\R$-vector space of $\R$-divisors up to numerical equivalence;\\
$\N(X)$ is the $\R$-vector space of $1$-cycles, with $\R$-coefficients, up to numerical equivalence;\\
 $\rho_X=\dim\Nu(X)=\dim\N(X)$ is the Picard number of $X$;\\
 $[D]\in\Nu(X)$ is the class of a divisor $D$, and $[C]\in\N(X)$ is the class of a curve $C$;\\ $D^{\perp}:=\{\gamma\in\N(X)\,|\,D\cdot\gamma=0\}\subset\N(X)$, $D$ a divisor in $X$.

 We refer the reader to \cite{kollarmori} for the standard terminology in birational geometry, and to \cite{hukeel} for the notion of Mori dream space. Every Fano variety is a Mori dream space \cite[Corollary 1.3.2]{BCHM}.

 Assume that $X$ is a Mori dream space. \\
 $\NE(X)\subset\N(X)$ is the convex cone spanned by classes of effective curves, and $\Eff(X)\subset\Nu(X)$ is the convex cone spanned by classes of effective divisors. Both cones are rational polyhedral.

 A {\bf contraction} of $X$ is a surjective morphism $f\colon X\to Y$, with connected fibers, where $Y$ is normal and projective; $f$ can be birational or of fiber type. If $Y$ is $\Q$-factorial, then $Y$ is still a Mori dream space. We set $\NE(f):=\NE(X)\cap\ker f_*$ where $f_*\colon\N(X)\to\N(Y)$ is the pushforward of 1-cycles; $\NE(f)$ is a face of $\NE(X)$ of dimension $\rho_X-\rho_Y$, and we say that $f$ is the contraction of $\NE(f)$.
A contraction is {\bf elementary} if $\rho_X-\rho_Y=1$; an elementary birational contraction can be divisorial or small.

An {\bf extremal ray} $R$ is a one-dimensional face of $\NE(X)$; we have $R=\NE(f)$ for an elementary contraction $f\colon X\to Y$. We set $\Lo(R):=\Exc(f)$, and we say that $R$ is birational, or small, if $f$ is.
If $D$ is a divisor on $X$, we write $D\cdot R>0$ (or $<0$, etc.) if $D\cdot \gamma>0$ for $\gamma\in R\smallsetminus\{ 0\}$ (or $<0$, etc.).

 A flip is the flip of a small elementary contraction (equivalently of a small extremal ray).

Let $f\colon X\to Y$ be an elementary contraction. We say that $f$, or $\NE(f)$, is {\bf of type $(n-1,n-2)^{sm}$} (respectively, of type $(n-1,n-2)\,$) if $f$ is the blow-up of a smooth, irreducible subvariety of codimension $2$ in $Y$, contained in $Y_{reg}$ (respectively, if $\dim\Exc(f)=n-1$ and $\dim f(\Exc(f))=n-2$).
 
Given a divisor $D$, a contraction $f\colon X\to Y$ is $D$-negative if $-D$ is $f$-ample. When $D=K_X$, we just say {\bf $K$-negative}.

We say that two extremal rays $R$, $R'$ of $\NE(X)$ are {\bf adjacent} if $R+R'$ is a face of $\NE(X)$.

Given a closed subset $Z$ of $X$, we set $\N(Z,X):=\iota_*(\N(Z))\subset\N(X)$, and 
$\NE(Z,X):=\iota_*(\NE(Z))\subset\NE(X)$, where $\iota\colon Z\hookrightarrow X$ is the inclusion. Note that if $f\colon X\to Y$ is a morphism, we have $\N(f(Z),Y)=f_*(\N(Z,X))$.

For every prime divisor $D\subset X$, we set $\de(D):=\codim\N(D,X)$.

A {\bf smooth $\pr^1$-fibration} is a smooth morphism with fiber $\pr^1$. A  {\bf $\pr^m$-bundle} is the projectivization of a vector bundle of rank $m+1$.
A {\bf conic bundle} is a flat contraction of fiber type such that every fiber is isomorphic to a plane conic.

An {\bf exceptional $\pr^1$-bundle} is a smooth prime divisor $E\subset X_{reg}$, with a $\pr^1$-bundle structure, such that if $e\subset E$ is a fiber, we have $E\cdot e=-1$. Then the class $[E]\in\Nu(X)$ generates a one-dimensional face of the cone $\Eff(X)$. The exceptional divisor of an elementary contraction of type $(n-1,n-2)^{sm}$ is an exceptional $\pr^1$-bundle.

Let $f\colon X\to Y$ be a contraction of fiber type. We say that $f$ is
{\bf quasi-elementary} if, for a general fiber $F$ of $f$, we have $\dim\N(F,X)=\rho_X-\rho_Y$. We refer the reader to \cite{fanos} for properties of quasi-elementary contractions of fiber type.

We work over the field of complex numbers $\C$, but we will also consider {\bf generic fibers} of some contraction of fiber type $f\colon X\to Y$. If $K=\C(Y)$ is the field of rational functions on $Y$, let $\eta=\Spec K$ be the generic point of $Y$. Then $X_\eta=\eta \times_{Y} X$ is the generic fiber of $f$, a projective variety over the field $K$.

\begin{remark}\label{disjoint}
  Let $X$ be a smooth projective variety, $Z\subset X$ a closed subset, and $D\subset X$ a prime divisor. If $Z\cap D=\emptyset$, then $\N(Z,X)\subset D^{\perp}$. Indeed for every curve $C\subset Z$ we have $D\cdot C=0$, hence $[C]\in D^{\perp}$.
  \end{remark}
  \begin{remark}\label{linindep}
Let $X$ be a smooth projective variety and $E_1,E_2,E_3\subset X$ distinct exceptional $\pr^1$-bundles. Then $[E_1],[E_2],[E_3]\in\Nu(X)$ generate distinct one-dimensional faces of $\Eff(X)$, therefore they are linearly independent.
    \end{remark}
    \begin{remark}\label{easy}
      Let $X$ be a smooth Fano variety with $\de_X=2$, $D\subset X$ a prime divisor, and  $E_1,E_2\subset X$ distinct exceptional $\pr^1$-bundles. If $D$ is disjoint from both $E_1$ and $E_2$, then $\de(D)=2$ and $\N(D,X)=E_1^{\perp}
      \cap E_2^{\perp}$. Moreover $D\cap E_3\neq\emptyset$ for every exceptional $\pr^1$-bundle $E_3$ distinct from $E_1$ and $E_2$.
    \end{remark}
    \begin{proof}
We have $\N(D,X)\subset E_1^{\perp}
      \cap E_2^{\perp}$ by Remark \ref{disjoint}, and $\codim (E_1^{\perp}
      \cap E_2^{\perp})=2$ by Remark \ref{linindep}. Since $\de_X=2$, we deduce
      $\de(D)=2$ and $\N(D,X)=E_1^{\perp}
      \cap E_2^{\perp}$. Finally $D\cap E_3\neq\emptyset$ again by Remarks \ref{disjoint} and  \ref{linindep}.
      \end{proof}
\begin{lemma}\label{pullback}
  Let $(X,\Delta)$ be a projective klt pair with $\Delta$ effective. Let $f\colon X\to Y$ be a $(K+\Delta)$-negative contraction, and $D$ an effective Cartier divisor on $X$ such that $\NE(f)\subset D^{\perp}$.

  Then there exists an effective Cartier divisor $D_Y$ on $Y$ such that $D=f^*(D_Y)$.
\end{lemma}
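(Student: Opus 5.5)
The plan is to apply the relative basepoint-free (Kawamata--Shokurov) and cone theorems to $f$. Since $(X,\Delta)$ is klt and $-(K_X+\Delta)$ is $f$-ample, the contraction theorem gives the following. If $L$ is a Cartier divisor on $X$ with $L\cdot\gamma=0$ for every $\gamma\in\NE(f)$, then $L\sim f^*(L_Y)$ for some Cartier divisor $L_Y$ on $Y$. The reason is that $L-(K_X+\Delta)$ is $f$-ample and $L$ is $f$-nef, so by relative basepoint-freeness both $mL$ and $(m+1)L$ are pullbacks of Cartier divisors from $Y$ for $m\gg0$ (see \cite[Theorem 3.7(4)]{kollarmori}). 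Taking the difference shows that $L$ itself is a pullback.

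1. Apply this with $L=D$. The hypothesis $\NE(f)\subset D^{\perp}$ says precisely that $D$ is $f$-numerically trivial. Hence there is a Cartier divisor $D'_Y$ on $Y$ with $\ol_X(D)\cong f^*\ol_Y(D'_Y)$.

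2. Next, show that $D'_Y$ can be chosen effective, with $D=f^*D_Y$ as divisors and not just up to linear equivalence. Let $s\in H^0(X,\ol_X(D))$ be the canonical section, whose zero locus is $D$. Since $f$ is a contraction, $Y$ is normal and $f$ has connected fibers, so $f_*\ol_X=\ol_Y$. The projection formula then gives
$$H^0(X,\ol_X(D))=H^0(X,f^*\ol_Y(D'_Y))=H^0(Y,\ol_Y(D'_Y)).$$
Therefore $s=f^*s_Y$ for a unique section $s_Y$ of $\ol_Y(D'_Y)$. Set $D_Y:=\Div(s_Y)$. This is an effective Cartier divisor, being the zero scheme of a section of a line bundle. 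The section $s_Y$ is nonzero because $s$ is, and $f$ is surjective, so $D_Y$ is a genuine divisor. Finally, $f^*D_Y=\Div(f^*s_Y)=\Div(s)=D$.

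3. One detail remains: $D_Y$ should be a Cartier divisor and not merely a Weil divisor. This holds automatically, since $D_Y$ is the divisor of a section of the line bundle $\ol_Y(D'_Y)$, and that line bundle comes from the contraction theorem.

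The main point is step 1. It requires the full strength of the contraction theorem for $(K+\Delta)$-negative contractions in the klt setting, namely that numerically $f$-trivial line bundles descend. Once that is available, steps 2 and 3 are formal consequences of $f_*\ol_X=\ol_Y$.
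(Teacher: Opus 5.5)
Your proposal is correct and follows essentially the same route as the paper: first descend $\ol_X(D)$ to a line bundle on $Y$ via \cite[Theorem 3.7(4)]{kollarmori}, then use $f_*\ol_X\cong\ol_Y$ to get $H^0(X,f^*\ol_Y(D'_Y))\cong H^0(Y,\ol_Y(D'_Y))$, so that the section cutting out $D$ descends. The only difference is that the paper phrases step 2 with a rational function $\ph$ such that $D=f^*(E)+\Div(\ph)$ and descends $\ph$, rather than descending the canonical section; this is the same argument.
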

We remark that the statement above is an equality of divisors, not only of linear equivalence classes.
\begin{proof}
  By \cite[Theorem 3.7(4)]{kollarmori} there exists a Cartier divisor $E$ such that $\ol_X(D)\cong f^*(\ol_Y(E))=\ol_X(f^*(E))$. Then $D$ and $f^*(E)$ are linearly equivalent, and there exists a non-zero rational function $\ph\in\C(X)$ such that $D=f^*(E)+\Div(\ph)$. Since $D\geq 0$, this means that $\ph\in H^0(X,f^*(E))$. On the other hand $f_*\ol_X\cong\ol_Y$, hence $f_*\ol_X(f^*(E))\cong\ol_Y(E)$, and the pullback via $f$ gives an isomorphism
  $H^0(X,f^*(E))\cong H^0(Y,E)$. We conclude that there exists $\psi\in\C(Y)$ such that $\ph=f^*(\psi)$, therefore $D=f^*(E+\Div(\psi))$, and we get the statement with $D_Y:=E+\Div(\psi)$.
\end{proof}  
\subsection{Disjoint divisors in $\pr^1$-fibrations}\label{disjdiv}
\noindent
In this subsection we give several preliminary results on effective divisors with disjoint support in a variety that has a smooth $\pr^1$-fibration; these will be needed in the paper, but they are interesting in their own right. We also refer the reader to \cite{totaro2000, BPS2016} for related results and examples on effective divisors with disjoint support.

The following two lemmas are variations of \cite[Lemma 2.5]{delta3} and \cite[3.3.16]{codim}.
\begin{lemma}\label{section}
  Let $X$ be a smooth projective variety and $\pi\colon X\to Y$ a smooth $\pr^1$-fibration. Let $D_1$ and $D_2$ be effective divisors in $X$, with disjoint support, such that $\pi$ is finite on both supports.
Then one of the following holds:
  \begin{enumerate}[$(i)$]
\item 
  $D_j$ is supported on a section of $\pi$ for some $j\in\{1,2\}$;
\item
 there exists $\lambda\in \Q$ such that $D_1\equiv\lambda D_2$.
  \end{enumerate}
\end{lemma}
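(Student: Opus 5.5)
Our approach is to work through the relative geometry of $\pi$, in the spirit of \cite[Lemma 2.5]{delta3}. Let $F$ be a fiber of $\pi$; since $\pi$ is finite on both supports and $D_j$ is effective, $d_j:=D_j\cdot F\geq 0$. If some $d_j=0$, then $D_j$ has no horizontal component, so it is a pullback of a divisor on $Y$. Being finite over $Y$ then forces $D_j=0$, and $(ii)$ holds with $\lambda=0$ (after relabelling if needed). So we may assume $d_1,d_2>0$ and consider the $\Q$-divisor $\Gamma:=d_2D_1-d_1D_2$, which satisfies $\Gamma\cdot F=0$. Since $\pi$ is a smooth $\pr^1$-fibration, $\ker\pi_*$ is spanned by $[F]$. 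Hence any divisor class with zero degree on $F$ is numerically a pullback, so $\Gamma\equiv\pi^*L$ for some $\Q$-divisor class $L$ on $Y$. The goal becomes: if neither $D_j$ is supported on a section, then $L\equiv 0$.

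The key step uses disjointness. Since $D_1\cap D_2=\emptyset$, for every curve $C\subset \Supp D_1$ we have $D_2\cdot C=0$, so $\Gamma\cdot C=d_2D_1\cdot C$, and symmetrically on $D_2$. We restrict to the surface (or curve family) $S=\pi^{-1}(\Gamma_Y)$ over a general curve $\Gamma_Y\subset Y$ (a complete intersection of very ample divisors). Then $S\to \Gamma_Y$ is a ruled surface, and $D_1|_S$, $D_2|_S$ are disjoint effective multisections. Over a curve, the intersection theory of the ruled surface $S$ is governed by $[F]$ and a section class. A standard computation then shows the following. Writing $D_j|_S\equiv d_j\,C_0+b_jF$, disjointness gives $(D_1|_S)\cdot(D_2|_S)=0$. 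Moreover, if $D_j|_S$ is not a section, its self-intersection is constrained: a horizontal effective curve that is disjoint from another horizontal curve has nonnegative self-intersection. Combining $D_1|_S\cdot D_2|_S=0$ with $(D_1|_S)^2\geq 0$ and $(D_2|_S)^2\geq0$, the Hodge index theorem on $S$ forces $D_1|_S$ and $D_2|_S$ to be numerically proportional, with square $0$. Hence $L\cdot \Gamma_Y=0$ for general complete-intersection curves. Varying the curves through the test classes (for instance, using that the classes $\pi^*H^{\dim Y-1}$ span enough, or applying the same argument to curves $\Gamma_Y$ of the form $\pi(C)$ for curves $C\subset\Supp D_1$), we obtain $L\equiv0$, giving $(ii)$.

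The main obstacle is the self-intersection claim, which is exactly where the section case enters. If an irreducible horizontal component $E$ of $D_1$ is a section, then $E|_S$ can have negative square, for example the negative section of a Hirzebruch surface. That is why alternative $(i)$ appears. If instead every component of $D_1$ has degree $\geq 2$ over $Y$, we must show that $(D_1|_S)^2\geq 0$ when $D_1|_S$ misses $D_2|_S$. We will try the following. Since $D_2|_S$ is a nonzero horizontal effective curve, write $D_2|_S\equiv d_2C_0+b_2F$. Disjointness gives $D_1|_S\cdot D_2|_S=0$, and expanding yields $d_1d_2C_0^2+d_1b_2+d_2b_1=0$. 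Irreducible multisections of degree $\geq2$ (or their components) on a ruled surface satisfy Hartshorne's bound $b\geq \tfrac{d}{2}\cdot(-C_0^2)$-type inequalities (V.2.20/2.21), and these give $(D_j|_S)^2\geq0$ unless a component is the unique negative section. Then the Hodge index argument goes through: two classes with nonnegative squares and zero product, not both zero, must be proportional with square zero. If the Hartshorne bounds prove too weak in characteristic zero with mixed components, we fall back on a direct argument via the normalization, base changing to make components sections. The symmetric condition for $D_2$ is handled in the same way, so at least one $D_j$ being section-supported is exactly the exceptional case.
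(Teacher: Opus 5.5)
Your reduction matches the paper's: the class $d_2D_1-d_1D_2\equiv\pi^*L$, restriction to the smooth ruled surface $S=\pi^{-1}(\Gamma_Y)$ over a general complete-intersection curve, and the fact that the only irreducible curve of negative square on $S$ is the negative section. But the step that should produce alternative $(i)$ is missing.

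\textbf{The gap.} Your Hodge index/Hartshorne argument only controls the \emph{total} square $(D_{j|S})^2$. When that square is negative, you only learn that \emph{some} component of $D_{j|S}$ is the negative section $C_0$ of $S$. Alternative $(i)$ needs $D_{j|S}$ to be supported on $C_0$, and then $\Supp D_j$ to be a single section of $\pi$. Your case split ``some component of $D_1$ is a section of $\pi$ / every component has degree $\geq 2$'' ignores mixed divisors (a section plus a multisection, two sections, \dots), and nothing you wrote rules them out.

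The Hodge index step itself carries no information. Restricting $d_2D_1-d_1D_2\equiv\pi^*L$ to $S$ and using $D_{1|S}\cdot D_{2|S}=0$ gives $(D_{1|S})^2=(L\cdot\Gamma_Y)\,d_1/d_2$ and $(D_{2|S})^2=-(L\cdot\Gamma_Y)\,d_2/d_1$. So as soon as $L\cdot\Gamma_Y\neq 0$, one of the two squares is negative. The real content of the lemma is showing that the corresponding divisor is then section-supported.

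\textbf{How the paper closes it.} It uses a per-component computation. Suppose $L\cdot\Gamma_Y>0$ and let $C_2$ be any irreducible component of $D_{2|S}$. Disjointness gives $D_1\cdot C_2=0$, hence $d_1\,D_2\cdot C_2=-L\cdot\pi_*(C_2)=-\deg(\pi_{|C_2})\,L\cdot\Gamma_Y<0$. So $D_{2|S}\cdot C_2<0$ for \emph{every} component, which forces $C_2^2<0$. Every component is therefore the unique negative section, and $D_{2|S}=d_2C_2$. The case $L\cdot\Gamma_Y<0$ is symmetric with $D_1$.

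An equivalent fix: if $C_0\subset\Supp D_{2|S}$ with $C_0^2=-e<0$, then $D_{1|S}\cdot C_0=0$ forces $D_{1|S}\equiv d_1(C_0+eF)$. Any other irreducible horizontal curve $\equiv aC_0+\alpha F$ has $\alpha\geq ae>0$, hence meets $D_{1|S}$ and cannot be a component of $D_{2|S}$.

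\textbf{Remaining points.} You also need the generality (Bertini) argument: a component of $D_2$ whose restriction to a general $S$ is a section of $S$ is a section of $\pi$, and distinct components of $D_2$ cannot restrict to the same curve on a general $S$. Finally, your alternative of testing against curves $\pi(C)$ with $C\subset\Supp D_1$ would not work. Such curves are not general, and their preimage need not be a smooth ruled surface. Stick with the fact that general complete-intersection curves of very ample divisors detect $L\not\equiv 0$.
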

\begin{proof}
  Let $\Gamma\subset X$ be a fiber of $\pi$ and
  set $d_i:=D_i\cdot \Gamma$ for $i=1,2$, so that
  $(d_2D_1-d_1D_2)\cdot \Gamma=0$.
  Since $\pi$ is an elementary, $K$-negative contraction, we have
  $d_2D_1-d_1D_2\sim\pi^*(N)$ for some divisor $N$ on $Y$.
   If $N\equiv 0$, then $d_2D_1\equiv d_1D_2$, and we have $(ii)$.

Suppose that $N\not\equiv 0$. Then $N^{\perp}\subset\N(Y)$ is a hyperplane, 
and there exists
a curve $C\subset Y$, complete intersection of very ample divisors 
$H_1,\dotsc,H_{n-2}$, such that 
 $N\cdot C\neq 0$ (see for instance \cite[Remark 2.4]{delta3}).  We choose   
 $H_i$ general in their linear system.

 Set $S:=\pi^{-1}(C)\subset X$; by Bertini $C$ is a smooth irreducible curve, and
 $S$ is a smooth ruled surface.

 Suppose that $N\cdot C>0$,  let $C_2$ be an irreducible component of $D_{2\,|S}$, and note that $C_2\cap \Supp D_1=\emptyset$ hence $D_1\cdot C_2=0$.
 Then
 \begin{align*}
 D_2\cdot C_2=&\frac{1}{d_1}\bigl(d_2D_1-\pi^*(N)\bigr)\cdot C_2
 =-\frac{1}{d_1}\pi^*(N)\cdot C_2=-\frac{1}{d_1}N\cdot\pi_*(C_2)\\=&-
 \frac{\deg\pi_{|C_2}}{d_1}N\cdot C<0\end{align*}
 hence $D_{2\,|S}\cdot C_2<0$.

 Since $D_{2\,|S}$ is an effective divisor in $S$, it must be $(C_2)^2<0$.
Then $\NE(S)=\langle [\Gamma],[C_2]\rangle$ and, by the properties of ruled surfaces (see for instance \cite[Ch.~V, Propositions 2.20 and 2.21]{hartshorne}),
$C_2$ is the unique negative section of the ruled surface $S$. This shows that $D_{2\,|S}=d_2 C_2$.
By generality of $C$, we conclude that $D_2$ is irreducible and that $(\Supp D_2)\cdot \Gamma=1$, so that $\Supp D_2$
is a section of $\pi$, and we have $(i)$.

In the case $N\cdot C<0$, we argue in a similar way with $D_1$.
\end{proof}
\begin{lemma}\label{section2}
In the setting of Lemma \ref{section}, suppose furthermore that $X$ is simply connected. Then one of the following holds:
  \begin{enumerate}[$(i)$]
\item 
  $D_j$ is supported on a section of $\pi$ for some $j\in\{1,2\}$;
\item
  $X\cong Y\times\pr^1$, $\pi$ is the projection, and $D_i=Y\times\{pts\}$ for $i=1,2$.
  \end{enumerate}
\end{lemma}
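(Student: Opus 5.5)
We start from Lemma \ref{section}: if $(i)$ holds we are done, so we may assume that neither $D_1$ nor $D_2$ is supported on a section, and that $D_1\equiv\lambda D_2$ for some $\lambda\in\Q$. Since both are effective and nonzero, with $\pi$ finite on their supports, we have $d_i=D_i\cdot\Gamma>0$, so $\lambda=d_1/d_2>0$. Rescaling by positive integers gives $m_1D_1\equiv m_2D_2$ with $m_1,m_2>0$. Because $X$ is simply connected, $H^1(X,\ol_X)=0$ and $\Pic(X)$ is torsion-free ($H^2(X,\Z)$ has no torsion coming from $H_1$). Hence numerical equivalence of divisors coincides with linear equivalence up to torsion, and we get $m_1D_1\sim m_2D_2$.

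Consider the pencil spanned by $m_1D_1$ and $m_2D_2$. These two members have disjoint supports, so the pencil is base-point-free and defines a morphism $f\colon X\to\pr^1$ with $f^*(0)=m_1D_1$ and $f^*(\infty)=m_2D_2$. Take its Stein factorization $X\to C\to\pr^1$. The curve $C$ is smooth, and since $X$ is simply connected and $C\to\pr^1$ is finite, we expect $C\cong\pr^1$: indeed, $\pi_1(X)\to\pi_1^{orb}$ surjects, and $g(C)=0$ because $H^1(X,\ol_X)=0$ forces $H^1(C,\ol_C)=0$. Let $g\colon X\to\pr^1$ be the resulting contraction with connected fibers. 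Every fiber of $g$ is then a divisor with $\NE$-classes disjoint from $D_1$. The next step is to restrict $g$ to a fiber $\Gamma$ of $\pi$. Since $D_i\cdot\Gamma>0$, the map $g_{|\Gamma}\colon\Gamma\to\pr^1$ is finite. We then want to show that it has degree $1$, which will give $(\pi,g)\colon X\to Y\times\pr^1$ as a finite, bijective-on-fibers morphism, hence an isomorphism by Zariski's main theorem, because $Y\times\pr^1$ is normal.

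The main obstacle is proving $\deg g_{|\Gamma}=1$. The first approach I would try is the following. The fibers $G$ of $g$ are disjoint divisors that are all numerically proportional to $D_1$, and each is finite over $Y$. Then $\pi_{|G}\colon G\to Y$ is finite of degree $e=G\cdot\Gamma$. If $e>1$, then by simple connectedness of $Y$ (which follows from that of $X$, $\pi$ having connected simply connected fibers), a general smooth fiber $G$ mapping étale onto $Y$ would split; in general $\pi_{|G}$ ramifies. To rule out ramification, I would argue that two fibers of $g$ meeting a fiber $\Gamma$ at the same point is impossible, since they are disjoint. So the map $g_{|\Gamma}$ has no ramification except where the fibers of $g$ are tangent to $\Gamma$, and the ramification locus of $\pi_{|G}$ varies in a family. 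A cleaner route is to look at the family $\{g_{|\Gamma}\}$, a family of degree $e$ maps $\pr^1\to\pr^1$ parametrized by $Y$. Alternatively, the fibers of $g$ give a foliation transverse to $\pi$, whose leaves are finite étale covers of $Y$ away from the multiple fibers, and simple connectedness then forces $e=1$ generically. Once $e=1$, each fiber $G$ is a section, and the multiple fibers $m_iD_i$ reduce to $D_i$ being a union of such sections. By assumption no $D_i$ is supported on a single section, so these would have to be reduced fibers $Y\times\{pt\}$. This yields $(ii)$.
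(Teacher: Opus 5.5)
Your overall plan is the same as the paper's: Lemma \ref{section}, then $m_1D_1\sim m_2D_2$ (your torsion-freeness argument is fine), the base-point-free pencil, Stein factorization onto $\pr^1$, the finite morphism $h:=(\pi,g)\colon X\to Y\times\pr^1$, and Zariski's main theorem once $\deg h=1$. The gap is exactly the step you call the main obstacle: you never prove $e:=\deg g_{|\Gamma}=1$.

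None of your three suggestions does it. First, disjointness of distinct fibers of $g$ is irrelevant, because $g_{|\Gamma}$ ramifies where a \emph{single} fiber $G$ of $g$ is tangent to $\Gamma$. Second, the ``family of maps'' idea is not carried out. Third, the ``foliation transverse to $\pi$ whose leaves are \'etale covers of $Y$'' assumes precisely what must be shown, namely that $\pi_{|G}\colon G\to Y$ is unramified for a general fiber $G$; multiple fibers of $g$ are not the issue. Once unramifiedness is known, your closing argument is the paper's: $G$ is a connected \'etale cover of $Y$, and $\pi_1(Y)\cong\pi_1(X)=1$ forces $e=1$. The paper reaches this point by asserting that $h$ is \'etale along a general fiber. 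Note that surjectivity of $d\pi$ and $dg$ at a point gives bijectivity of $dh$ only if $T\Gamma\not\subset TG$ there, so a genuine argument is needed.

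Here is one way to supply it. The ramification divisor of $h$ is $R\sim K_{X/Y}+2G$, and it is effective. Since $eK_{X/Y}+2G$ has degree $0$ on every fiber of $\pi$, we have $K_{X/Y}+\tfrac{2}{e}G\equiv\pi^*M$ for some $\Q$-divisor $M$ on $Y$. Take any curve $C\subset Y$ with normalization $\tilde C$. The ruled surface $S:=X\times_Y\tilde C$ satisfies $K_{S/\tilde C}^2=0$ and $(G_{|S})^2=0$, hence $0=\bigl(-\tfrac{2}{e}G_{|S}+(M\cdot C)\Gamma\bigr)^2=-4\,M\cdot C$. Thus $M\equiv 0$ and $R\equiv(2-\tfrac{2}{e})G$. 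For general $G$ the divisor $R_{|G}$ is then effective and numerically trivial, so $R\cap G=\emptyset$ and $\pi_{|G}$ is \'etale of degree $e$.

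Your ``family of maps'' idea can also be completed. The quotient of the affine variety of degree-$e$ maps $\pr^1\to\pr^1$ by $\mathrm{PGL}_2$ acting on the source is affine, so the family over the projective $Y$ is isotrivial. The resulting finite torsor over the simply connected $Y$ is trivial, and $g$ becomes a fixed degree-$e$ map composed with the projection, contradicting connected fibers if $e\geq 2$.

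Finally, your last sentences about reduced fibers are unnecessary. Once $h$ is an isomorphism, $m_iD_i=g^*(\text{effective divisor on }\pr^1)$ directly gives $D_i=Y\times\{pts\}$.
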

\begin{proof}
  By Lemma \ref{section} either we have $(i)$ or there exists $\lambda\in\Q_{>0}$ such that $D_1\equiv \lambda D_2$. In this last case, we show that $(ii)$ holds.

  There exist positive integers $m_1,m_2$ such that $m_1D_1\equiv m_2D_2$. Since $X$ is simply connected, we have $h^1(\ol_X)=0$, and up to taking larger multiples, we can assume that $m_1D_1$ and $m_2D_2$ are linearly equivalent. Since they are effective with disjoint support, it is not difficult to see that the linear system $|m_1D_1|$ defines a morphism $X\to\pr^1$ (see for instance \cite[Lemma 2.6]{BPS2016}). By taking the Stein factorization, we get a contraction $f\colon X\to C$, with $C$ a smooth curve, that sends every connected component of $\Supp(D_i)$ to a point, for $i=1,2$.

  Let $\Gamma\subset X$ be a fiber of $\pi$. We have $\N(X)=\R[\Gamma]\oplus\N(\Supp(D_1),X)$ and $\N(\Supp(D_1),X)=\ker f_*$, thus $f(\Gamma)=\pr^1$. Then $\pi$ and $f$ induce a finite morphism $g\colon X\to Y\times\pr^1$.

Consider a general fiber $F$ of $f$ and a point $x_0\in F$; set $g(x_0)=(y_0,t_0)\in Y\times\pr^1$. Then both $f$ and $\pi$ are smooth at $x_0$, and this implies that $g$ is smooth at $x_0$ too. Indeed $d_{x_0}g\colon T_{x_0}X\to T_{g(x_0)}(Y\times\pr^1)=T_{y_0}Y\oplus T_{t_0}\pr^1$ has components precisely $d_{x_0}\pi$ and $d_{x_0}f$. This means that the ramification 
divisor $R$ of $g$ does not meet $F$, and $g_{|F}\colon F\to Y$ is étale, of the same degree as $g$.

On the other hand $X$ is simply connected and $\pi\colon X\to Y$ is topologically a fiber bundle (with respect to the complex topology), with simply connected fiber $\pr^1$, hence $\pi_1(Y)\cong\pi_1(X)$ by the long exact sequence of homotopy groups  (see for instance \cite[Theorem 4.41 and Proposition 4.48]{hatcher}), and $Y$ is simply connected too. We conclude that $g_{|F}$ and $g$ have degree one, so that $g$ is an isomorphism and we have $(ii)$.
\end{proof}  
\begin{lemma}\label{ample}
  Let $X$  be a projective variety
 and $\pi\colon X\to Y$ a surjective morphism such that every fiber of $\pi$ is one-dimensional. Suppose that $\ker\pi_*=\R[F]$, where $F\subset X$ is a fiber of $\pi$ and $\pi_*\colon\N(X)\to\N(Y)$ is the pushforward. Suppose also that there exist $A_1,A_2\subset X$ 
disjoint prime Cartier divisors such that $\pi(A_i)=Y$, $i=1,2$.

Let $H\in\Pic(X)$. The following are equivalent:
\begin{enumerate}[$(i)$]
  \item
    $H$ is ample;
  \item $H\cdot F>0$ and $H_{|A_i}$ is ample for $i=1,2$.
        \end{enumerate}
  \end{lemma}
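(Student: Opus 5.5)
The implication $(i)\Rightarrow(ii)$ is immediate: restrictions of ample line bundles are ample, and $H\cdot F>0$ because $F$ is a curve. The work is in $(ii)\Rightarrow(i)$. I will verify Kleiman's criterion by describing $\NE(X)$ in terms of $F$ and the two divisors, namely by proving
$$\NE(X)=\R_{\geq0}[F]+\NE(A_1,X)+\NE(A_2,X).$$
Once this is known, positivity of $H$ on $\NE(X)\smallsetminus\{0\}$ follows. Here $H\cdot F>0$, and $H$ is positive on $\NE(A_i,X)\smallsetminus\{0\}$ since $H_{|A_i}$ is ample. One subtlety is that a nonzero class in $\NE(A_i,X)$ is the image of a nonzero class of $\overline{\NE}(A_i)$; if pushforward kills some class, then $H$ vanishes on it, which contradicts ampleness on $A_i$. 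So $\NE(A_i,X)$ is closed and $H$ is strictly positive on it. A finite sum of closed cones on which $H$ is strictly positive is closed and pointed, with $H>0$ on it, and Kleiman then gives ampleness.

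To prove the cone decomposition, I take an irreducible curve $C\subset X$ and show that $[C]$ lies in the right-hand side.

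\emph{Case 1: $C$ is contained in a fiber of $\pi$.} Then $\pi_*[C]=0$, so $[C]\in\ker\pi_*=\R[F]$. Since $C$ is effective and $H$-type positivity is not yet available, I need the sign of the coefficient. I get it by intersecting with $A_1$: because $\pi(A_1)=Y$, $A_1$ meets $F$ in finitely many points (fibers are one-dimensional and $A_1$ is a divisor dominating $Y$), so $A_1\cdot F>0$. I expect $A_1\cdot F>0$ to hold genuinely; if $A_1$ contained a component of $F$, I would pick a general fiber instead, since all fibers are numerically proportional. As $[C]=c[F]$, this gives $[C]=0$ or $c>0$ when $A_1\cdot C\geq0$. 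If instead $C\subset A_1$, then $C$ already lies in $\NE(A_1,X)$.

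\emph{Case 2: $C$ is not contained in a fiber.} I use the disjointness. Because $A_1\cap A_2=\emptyset$, we have $A_1\cdot C'=0$ for every curve $C'\subset A_2$, and conversely. Consider the surface $S=\pi^{-1}(\pi(C))$, or rather a component containing $C$. Both $A_1\cap S$ and $A_2\cap S$ contain curves $C_1,C_2$ dominating $\pi(C)$, and $\pi_*[C_i]$ is a positive multiple of $\pi_*[C]$. So I write $m[C]=k[C_1]+t[F]$ with $t\in\R$ and $m,k>0$, and likewise with $C_2$, after matching degrees over $\pi(C)$. If $t\geq0$ in either expression, I am done. The main obstacle is excluding $t<0$ in both. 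Intersecting with $A_2$ in the first expression gives $mA_2\cdot C=tA_2\cdot F$, since $A_2\cdot C_1=0$. If $C\not\subset A_2$, the left side is $\geq0$, which forces $t\geq0$. If $C\subset A_2$, then $[C]\in\NE(A_2,X)$ directly. This settles the cone decomposition and hence the lemma.
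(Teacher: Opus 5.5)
Your proof is correct and is essentially the paper's argument: in both, the key step is to lift the image in $Y$ of a curve or class to something supported on one $A_i$, write the difference as a multiple of $[F]$, and get the sign of that multiple by intersecting with the other, disjoint divisor. The difference is organizational. The paper works directly in $\overline{\NE}(X)$, splitting it as $\mathcal{C}_1+\mathcal{C}_2$ by the sign of $A_i\cdot\gamma$, which requires lifting classes via $(\pi_{|A_2})_*\overline{\NE}(A_2)=\overline{\NE}(Y)$. You instead lift actual curves and handle the closure at the end, using that a finite sum of closed cones on which $H$ is strictly positive is closed. (The component $S$ you introduce is never used, and need not meet the $A_i$ in curves dominating $\pi(C)$, so you can drop it.)
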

  \begin{proof}
    The implication $(i)\Rightarrow (ii)$ is clear, let us show $(ii)\Rightarrow (i)$.

    For $i=1,2$ set $\ma{C}_i:=\overline{\NE}(X)\cap \{\gamma\in\N(X)\,|\,A_i\cdot\gamma\geq 0\}$; this is a closed convex cone in $\N(X)$, and the sum $\ma{C}_1+\ma{C}_2$ is still closed.
    We show that  $\ma{C}_1+\ma{C}_2=\overline{\NE}(X)$.

    We clearly have 
    $\ma{C}_1+\ma{C}_2\subset\overline{\NE}(X)$. For the converse, let $C\subset X$ be an irreducible curve. If $A_1\cdot C\geq 0$, then $[C]\in\ma{C}_1$. If instead $A_1\cdot C<0$, then $C\subset A_1$, thus $A_2\cap C=\emptyset$ and $A_2\cdot C=0$, therefore $[C]\in\ma{C}_2$. In any case $[C]\in \ma{C}_1+\ma{C}_2$, hence $\NE(X)\subset \ma{C}_1+\ma{C}_2$ and finally
    $\overline{\NE}(X)=\ma{C}_1+\ma{C}_2$.

    \medskip

    We show that $H$ is ample using Kleiman's criterion. Since   $\overline{\NE}(X)= \ma{C}_1+\ma{C}_2$, it is enough to show that
    $H\cdot\gamma>0$ for every $\gamma\in\ma{C}_i\smallsetminus\{0\}$ and $i=1,2$. For simplicity let us assume  $\gamma\in\ma{C}_1\smallsetminus\{0\}$.

If $\pi_*(\gamma)=0$, then $\gamma=\lambda [F]$ with $\lambda>0$, and $H\cdot\gamma=\lambda H\cdot F>0$.

Assume that $\pi_*(\gamma)\neq 0$; we have $\pi_*(\gamma)\in \overline{\NE}(Y)$. Since $\pi(A_2)=Y$, we have $(\pi_{|A_2})_*(\N(A_2))=\N(Y)$ and  $(\pi_{|A_2})_*(\overline{\NE}(A_2))=\overline{\NE}(Y)$. Let $\gamma_2\in \overline{\NE}(A_2)$ be such that $(\pi_{|A_2})_*(\gamma_2)=\pi_*(\gamma)$, and note that $\gamma_2\neq 0$, because $\pi_*(\gamma)\neq 0$.

Let also $\gamma_2'\in\overline{\NE}(X)$ be the image of $\gamma_2$ under the natural map $j_*\colon \N(A_2)\to\N(X)$ induced by the inclusion $j\colon A_2\hookrightarrow X$.
    We note that, by the projection formula,  $A_1\cdot \gamma_2'=A_{1|A_2}\cdot \gamma_2=0$, because $A_1$ and $A_2$ are disjoint.

    We have $$\pi_*(\gamma_2')=\pi_*(j_*(\gamma_2))=(\pi_{|A_2})_*(\gamma_2)=\pi_*(\gamma),$$
     hence 
     $\gamma=\gamma_2'+\mu[F]$ for some $\mu\in \R$.
     Intersecting with $A_1$ we get
     $$0\leq A_1\cdot \gamma=A_1\cdot\gamma_2'+\mu A_2\cdot F=\mu A_2\cdot F,$$
and since $A_2\cdot F>0$, we get $\mu\geq 0$. Finally
    $$H\cdot \gamma=H\cdot\gamma_2'+\mu H\cdot F\geq H\cdot\gamma_2'=H_{|A_2}\cdot \gamma_2>0,$$
because $H_{|A_2}$ is ample and $\gamma_2\in \overline{\NE}(A_2)\smallsetminus\{0\}$. 
  \end{proof}

\begin{lemma}\label{ample2}
  Let $X$ be a smooth projective variety and $\pi\colon X\to Y$ a smooth $\pr^1$-fibration.
Suppose that there exist a section
$A\subset X$ of $\pi$ and  a prime divisor $B\subset X$ disjoint from $A$.
Write $\ol_X(A)_{|A}\cong\ol_Y(D)$ for some divisor
  $D$  on $Y$.

  Let $H\in\Pic(X)$ and write $H\sim dA+\pi^*(M)$, where $d\in\Z$ and $M\in\Pic(Y)$. Then  the following are equivalent:
\begin{enumerate}[$(i)$]
  \item
    $H$ is ample;
  \item $d\geq 1$ and both $M$ and $M+dD$ are ample on $Y$.
        \end{enumerate}
\end{lemma}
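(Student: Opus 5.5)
The plan is to deduce the statement from Lemma \ref{ample}, applied with $A_1:=A$ and $A_2:=B$. First I check that its hypotheses hold. Since $\pi$ is a smooth $\pr^1$-fibration, every fiber is one-dimensional, and $\pi$ is an elementary contraction with $\ker\pi_*=\R[F]$ for a fiber $F$. Both $A$ and $B$ are Cartier, because $X$ is smooth.

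The one point that needs an argument is that $\pi(B)=Y$. Suppose instead that $\pi(B)\subsetneq Y$. Then $B$, being a prime divisor, equals $\pi^{-1}(\pi(B))$. Since $A$ is a section, it meets every fiber, so it would meet $B$, which is a contradiction. The same reasoning shows that $B$ contains no fiber of $\pi$. Hence $\pi_{|B}\colon B\to Y$ is finite and surjective, since a proper map with finite fibers is finite.

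By Lemma \ref{ample}, $H$ is ample if and only if $H\cdot F>0$, $H_{|A}$ is ample and $H_{|B}$ is ample. I then translate each of these three conditions.

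\begin{enumerate}[$\bullet$]
\item Since $A$ is a section, $A\cdot F=1$. Also $\pi^*(M)\cdot F=0$, so $H\cdot F=d$, and the first condition is $d\geq 1$.
\item Identify $A\cong Y$ via $\pi_{|A}$. Then $\ol_X(A)_{|A}\cong\ol_Y(D)$ and $\pi^*(M)_{|A}\cong M$. So $H_{|A}\cong dD+M$, and the second condition is that $M+dD$ is ample.
\item Since $A\cap B=\emptyset$, we have $\ol_X(A)_{|B}\cong\ol_B$. Hence $H_{|B}\cong(\pi_{|B})^*(M)$.
\end{enumerate}

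For the last item, recall that the pullback of a line bundle under a finite surjective morphism is ample if and only if the line bundle is ample. Applied to $\pi_{|B}$, this shows that $H_{|B}$ is ample if and only if $M$ is ample. Combining the three translations gives the equivalence of $(i)$ and $(ii)$.

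I expect no real obstacle. The only steps needing care are the verification that $B$ dominates $Y$ and is finite over it, and the ampleness criterion for finite surjective pullbacks, in the form: ample pulls back to ample under a finite morphism, and conversely via Nakai–Moishezon on subvarieties.
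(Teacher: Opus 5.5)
Your proposal is correct and follows the paper's proof. You apply Lemma \ref{ample} with $A_1=A$ and $A_2=B$, and translate the three conditions as $H\cdot F=d$, $H_{|A}\cong M+dD$, and $H_{|B}\cong(\pi_{|B})^*(M)$, with $\pi_{|B}$ finite because $B$ contains no fiber. The paper additionally writes $X=\pr_Y(\ma{E})$ with $A$ tautological; this is not needed here but is reused in Lemma \ref{sectionsFano}. You, in turn, spell out the check $\pi(B)=Y$, which the paper leaves implicit.
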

\begin{proof}
  Since $\pi$ has a section, we have $X=\pr_Y(\ma{E})$ where $\ma{E}$ is a rank 2 vector bundle on $Y$; moreover we can assume that there is an exact sequence
  \begin{equation}\label{good}
  0\la \ol_Y\la\ma{E}\stackrel{\alpha}{\la} \ol_Y(D)\la 0\end{equation}
  such that the section $A$ corresponds to the surjection $\alpha$, so that $A$ is a tautological divisor and $\ol_X(A)_{|A}\cong\ol_Y(D)$.

  We have $d=H\cdot F$ where $F\subset X$ is a fiber of $\pi$. Moreover $H_{|A}\sim dA_{|A}+\pi^*(M)_{|A}\cong dD+M$ and
  $H_{|B}\sim  dA_{|B}+\pi^*(M)_{|B}=\pi_{|B}^*(M)$. Finally $\pi_{|B}\colon B\to Y$ is finite, because $A\cap B=\emptyset$, hence $B$ cannot contain any fiber of $\pi$. This implies that $H_{|B}$ is ample if and only if $M$ is ample, and the statement follows from Lemma \ref{ample}.
\end{proof}
\begin{lemma}\label{sectionsFano}
  Let $X$ be a smooth projective variety and $\pi\colon X\to Y$ a smooth $\pr^1$-fibration. Suppose that there exist a section
  $A\subset X$ of $\pi$ and  a prime divisor $B\subset X$ disjoint from $A$.
  Write $\ol_X(A)_{|A}\cong\ol_Y(D)$ for some divisor
  $D$  on $Y$.

  Then $X$ is Fano if and only if $-K_Y\pm D$ are ample on $Y$. Moreover in this case $X\cong\pr_Y(\ol\oplus\ol(D))$, and the section $A$ corresponds to the projection onto $\ol_Y(D)$.
\end{lemma}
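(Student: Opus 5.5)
The plan is to reduce everything to Lemma \ref{ample2} by writing $-K_X$ in the form $dA+\pi^*(M)$, and then to get the splitting of the vector bundle from Kodaira vanishing.

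\emph{Setup.} As in the proof of Lemma \ref{ample2}, write $X=\pr_Y(\ma{E})$ with an exact sequence
$$0\la\ol_Y\la\ma{E}\la\ol_Y(D)\la 0,$$
where $A$ corresponds to the surjection onto $\ol_Y(D)$. Since $A$ is a section, $\Pic(X)=\Z[A]\oplus\pi^*\Pic(Y)$. Let $F$ be a fiber of $\pi$; then $K_X\cdot F=-2$ and $A\cdot F=1$, so we can write $K_X\sim -2A+\pi^*(N)$ for some $N\in\Pic(Y)$.

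\emph{Computing $N$.} To find $N$ without worrying about sign conventions for the relative canonical bundle, I would use adjunction on $A\cong Y$:
$$K_Y\cong (K_X+A)_{|A}\cong -A_{|A}+N\cong -D+N.$$
Hence $N=K_Y+D$, and therefore
$$-K_X\sim 2A+\pi^*(-K_Y-D).$$

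\emph{The equivalence.} Now apply Lemma \ref{ample2} to $H=-K_X$, with $d=2$ and $M=-K_Y-D$. The hypotheses of that lemma hold here: $A$ is a section and $B$ is a prime divisor disjoint from $A$. The lemma gives that $-K_X$ is ample if and only if $M=-K_Y-D$ and $M+2D=-K_Y+D$ are both ample. This proves that $X$ is Fano if and only if $-K_Y\pm D$ are ample.

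\emph{The splitting.} Assume now that $X$ is Fano, so that $-K_Y-D$ is ample. The extension class of the sequence above lies in
$$\operatorname{Ext}^1(\ol_Y(D),\ol_Y)=H^1(Y,\ol_Y(-D))=H^1\bigl(Y,K_Y+(-K_Y-D)\bigr).$$
This group vanishes by Kodaira vanishing, because $-K_Y-D$ is ample. So the sequence splits, $\ma{E}\cong\ol_Y\oplus\ol_Y(D)$, and $A$ corresponds to the projection onto $\ol_Y(D)$.

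The only delicate point is getting the correct formula for $K_X$; the adjunction computation above settles it independently of conventions. The rest follows directly from Lemma \ref{ample2} and Kodaira vanishing.
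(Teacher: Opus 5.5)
Your proof is correct and follows the paper's argument. You write $-K_X\sim 2A+\pi^*(-K_Y-D)$, apply Lemma \ref{ample2} with $d=2$ and $M=-K_Y-D$, and then split the extension by Kodaira vanishing of $H^1(Y,\ol_Y(-D))$. The only difference is that you get the formula for $K_X$ by adjunction on $A\cong Y$, whereas the paper reads it off from $\det\ma{E}\cong\ol_Y(D)$.
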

\begin{proof}
  We keep the same notation as in the proof of Lemma \ref{ample2}. We have $\det\ma{E}\cong\ol_Y(D)$ and  $-K_X=2A+\pi^*(-K_Y-D)$, so the first statement follows from Lemma \ref{ample2}.

  Suppose now that $X$ is Fano. Then $\text{Ext}^1(\ol_Y(D),\ol_Y)\cong H^1(Y,\ol_Y(-D))=H^1(Y,K_Y-K_Y-D)=0$ by Kodaira vanishing, hence the sequence \eqref{good} splits and  $\ma{E}\cong \ol_Y\oplus\ol_Y(D)$.
\end{proof}
\begin{remark}\label{pisa0}
 Let $X$ be a smooth projective variety,
 $E\subset X$  an exceptional $\pr^1$-bundle with fiber $e\subset E$, and $D\subset X$ a prime divisor such that $D\cdot e>0$. Then  $\N(E,X)=\R[e]+\N(D\cap E,X)$. 
 Indeed $D\cap E$ is horizontal for the $\pr^1$-bundle on $E$, hence $\N(E)=\R[e]+\N(D\cap E,E)$, and we pushforward this relation to $X$.
\end{remark}  
\begin{lemma}\label{pisa}
 Let $X$ be a smooth projective variety,
  $E\subset X$  an exceptional $\pr^1$-bundle with fiber $e\subset E$, and $D_1,D_2\subset X$ disjoint prime divisors such that $D_1\cdot e>0$ and $D_2\cap E\neq\emptyset$. Then $D_2\cdot e>0$ and for $i=1,2$ we have $[e]\not\in\N(D_i,X)$ and $\N(D_i\cap E,X)$ has codimension one in $\N(E,X)$.
\end{lemma}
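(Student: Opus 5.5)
We argue directly with intersection numbers on the ruled divisor $E$, using Remark \ref{pisa0}. Let $\pi_E\colon E\to G$ be the $\pr^1$-bundle structure on $E$, with fiber $e$.

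\emph{Step 1: $D_2\cdot e>0$.} Since $D_2$ is prime and $D_2\cap E\neq\emptyset$, either $D_2=E$ or $D_2\not\supset E$. In the first case $D_1\cap E=\emptyset$, so $D_1\cdot e=0$, a contradiction; hence $D_2\neq E$, and $D_2\cap E$ is a nonempty effective divisor on $E$. If $D_2\cdot e=0$, then $D_{2|E}$ is trivial on the fibers of $\pi_E$, so $D_2\cap E$ is a union of fibers of $\pi_E$. It therefore contains some fiber $e_0$. But $D_1\cdot e>0$ forces $D_1\cap e_0\neq\emptyset$, and this point lies in $D_1\cap D_2=\emptyset$, which is impossible. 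Hence $D_2\cdot e>0$ (it is $\geq 0$ since $D_2\not\supset e$ for general $e$).

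\emph{Step 2: $[e]\notin\N(D_i,X)$.} Set $j=3-i$. Then $D_j\cap D_i=\emptyset$, so by Remark \ref{disjoint} we have $\N(D_i,X)\subset D_j^{\perp}$. Since $D_j\cdot e>0$, we get $[e]\notin D_j^\perp$, and the claim follows.

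\emph{Step 3: codimension of $\N(D_i\cap E,X)$.} By Remark \ref{pisa0}, applied with $D=D_i$ (allowed since $D_i\cdot e>0$), we have $\N(E,X)=\R[e]+\N(D_i\cap E,X)$. Moreover $\N(D_i\cap E,X)\subset\N(D_i,X)$ does not contain $[e]$ by Step 2. So $\N(D_i\cap E,X)$ is a subspace of $\N(E,X)$ of codimension exactly one: its codimension is at most one by the sum decomposition, and at least one because $[e]$ is missing.

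The only delicate point is Step 1, namely excluding $D_2\cdot e=0$. This rests on the observation that a divisor on the $\pr^1$-bundle $E$ with degree zero on the fibers has support a union of fibers, which then must meet $D_1$. This holds because $D_{2|E}$ is effective, of degree $0$ on every fiber, and hence contains no horizontal component.
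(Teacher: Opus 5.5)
Your proposal is correct and follows essentially the same route as the paper: show $D_2\cdot e>0$ because a fiber of $E$ contained in $D_2$ would meet $D_1$, then use Remark~\ref{disjoint} to get $[e]\notin\N(D_i,X)$, and combine this with Remark~\ref{pisa0} to get codimension one. The only difference is in Step~1, where you argue by contradiction via ``an effective divisor of degree $0$ on the fibers is a union of fibers,'' whereas the paper observes directly that $D_2$ contains no fiber, so any fiber through a point of $D_2\cap E$ meets $D_2$ properly; this is a cosmetic difference, and your explicit exclusion of $D_2=E$ is a harmless extra detail.
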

\begin{proof}
Since $D_1\cap D_2=\emptyset$ and $D_1\cdot e>0$, $D_2$ cannot contain any fiber of the $\pr^1$-bundle on $E$; then $D_2\cap E\neq\emptyset$ implies that $D_2\cdot e>0$.
  
  We have $\N(D_2,X)\subset D_1^{\perp}$ (see Remark \ref{disjoint}) and $D_1\cdot e>0$, hence $[e]\not\in\N(D_2,X)$,
  in particular $[e]\not\in \N(D_2\cap E,X)$. By Remark \ref{pisa0} we get $\N(E,X)=\R[e]\oplus\N(D_2\cap E,X)$. 
The same proof applied to $D_1$ gives $[e]\not\in \N(D_1,X)$ and $\N(E,X)=\R[e]\oplus\N(D_1\cap E,X)$.
\end{proof}
\begin{lemma}[\cite{codim}, Lemma 3.1.5]\label{varigotti}
  Let $X$ be a smooth Fano variety, $E\subset X$  an exceptional $\pr^1$-bundle with fiber $e\subset E$, and $D\subset X$ a prime divisor such that $D\cdot e>0$. Suppose that
  $$\N(D\cap E,X)=\N(E,X)\cap D^{\perp}\subset E^{\perp}.$$
  Then $E\cong\pr^1\times B$, $e=\pr^1\times\{pt\}$, and $D\cap E=\{pts\}\times B$.
\end{lemma}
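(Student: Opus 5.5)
The plan is to work entirely on the smooth variety $E$, with its $\pr^1$-bundle structure $p\colon E\to B$. The key intermediate step is to show that $M:=D_{|E}$ is a nef divisor on $E$ of numerical dimension one; its semiample fibration then gives the product structure.

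\emph{Step 1: $M\equiv -a\,E_{|E}$ on $E$, where $a:=D\cdot e>0$.} Since $D\cdot e>0$, the set $D\cap E$ contains no fiber of $p$, so $p(D\cap E)=B$. Hence $p_*\bigl(\N(D\cap E,E)\bigr)=\N(B)$. Consider the divisor $G:=a\,E_{|E}+M$ on $E$. We have $G\cdot e=-a+a=0$, so, since $E$ is a $\pr^1$-bundle, $G\equiv p^*N$ for some divisor $N$ on $B$. By hypothesis $\N(D\cap E,X)\subset D^\perp\cap E^\perp$, so $G\cdot C=0$ for every curve $C\subset D\cap E$; here the intersections are computed in $X$ via the projection formula. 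Therefore $N\cdot p_*C=0$ for all such $C$, hence $N\equiv 0$ and $M\equiv -a E_{|E}$.

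\emph{Step 2: $M$ is nef with $M^2\equiv 0$, and $E$ is Fano.} Let $C\subset E$ be a curve. If $C\not\subset\Supp M$, then $M\cdot C\geq 0$. If $C\subset\Supp M=D\cap E$, then $M\cdot C=D\cdot C=0$ by hypothesis. Hence $M$ is nef. Moreover $M_{|\Supp M}$ is numerically trivial, so the cycle $M\cdot M$ is numerically trivial and $M$ has numerical dimension one. By adjunction,
\[
-K_E=(-K_X-E)_{|E}\equiv -K_{X|E}+\tfrac1a M,
\]
which is ample because $X$ is Fano and $M$ is nef. Thus $E$ is Fano, hence simply connected, and by the basepoint-free theorem $M$ is semiample. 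Let $f\colon E\to Z$ be the induced contraction. Since $M\not\equiv 0$ and $M$ has numerical dimension one, $Z$ is a smooth curve, and $M\equiv f^*(\text{ample})$.

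\emph{Step 3: the product structure.} Pick $m$ with $mM=f^*H$ and $H$ very ample, and take two general members $D_1,D_2\in|mM|$; they are disjoint unions of fibers of $f$. Since $M\cdot e>0$, no fiber of $f$ contains a fiber of $p$, so $p$ is finite on $\Supp D_1$ and $\Supp D_2$. Apply Lemma \ref{section2} to $p\colon E\to B$ and $D_1,D_2$; this is allowed because $E$ is simply connected.
\begin{itemize}
\item In case $(ii)$ we obtain $E\cong B\times\pr^1$ directly.
\item In case $(i)$ a general fiber of $f$ is a section of $p$. Then $f_{|e}$ has degree one, so $(p,f)\colon E\to B\times Z$ is finite and birational onto a normal variety, hence an isomorphism, and $Z\cong\pr^1$.
\end{itemize}
In both cases $e=\pr^1\times\{pt\}$, and $D\cap E=\Supp M$ is a union of fibers of $f$, that is, $\{pts\}\times B$.

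\emph{Expected main obstacle.} The delicate point is Step 2: showing that $M$ has numerical dimension one, so that the semiample fibration lands on a curve and yields disjoint members to which Lemma \ref{section2} applies. This hinges on the hypothesis $\N(D\cap E,X)\subset D^\perp$ giving $M_{|\Supp M}\equiv 0$. One must also check that "numerically trivial on each component of $\Supp M$" really gives $M^2\equiv0$ as a cycle, which follows from the projection formula applied to each component with its multiplicity.
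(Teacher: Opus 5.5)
Your proof is correct. The paper gives no argument of its own for this statement; it is quoted from \cite{codim}. So there is no in-paper proof to compare against. What you propose is a complete, self-contained proof using only tools available in the paper, and there is no circularity, since Lemma \ref{section2} does not rely on Lemma \ref{varigotti}.

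The chain of steps is sound:
\begin{enumerate}
\item $M:=D_{|E}\equiv -aE_{|E}$;
\item $M$ is nef and numerically trivial on $\Supp M$;
\item hence $-K_E\equiv -K_{X|E}+\frac1a M$ is ample;
\item hence $M$ is semiample of numerical dimension one;
\item hence there is a fibration $f\colon E\to Z$ onto a curve, whose fibres are disjoint divisors finite over $B$;
\item Lemma \ref{section2} applies to these fibres on the simply connected $E$.
\end{enumerate}

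Three spots deserve one more line each.

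\begin{enumerate}
\item In Step 1, $p(D\cap E)=B$ does not follow from ``$D\cap E$ contains no fibre''. It follows because every fibre of $p$ is numerically equivalent to $e$, so $D$ meets it. Also record that $D\neq E$, since $D\cdot e>0>E\cdot e$; this makes $M$ an effective Cartier divisor on $E$.

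\item In case $(i)$ of Lemma \ref{section2}, knowing that $\Supp D_j$ is a section only gives $f^*(z_j)=k\Sigma$ with $\Sigma$ a section of $p$. To get $\deg f_{|e}=e\cdot f^*(z_j)=1$ you need $k=1$. Choose $D_1,D_2$ as pullbacks of reduced divisors on $Z$ avoiding the finitely many points over which $f$ has non-reduced fibres; this is possible by generic smoothness. Irreducibility of $\Supp D_j$ then forces $D_j$ to be a single reduced fibre, which equals the section.

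\item In case $(ii)$, identify $f$ with the second projection $q\colon B\times\pr^1\to\pr^1$ before concluding.
\begin{enumerate}
\item The fibres of $f$ contained in $\Supp D_1=B\times\{pts\}$ are connected, hence they are slices $B\times\{t\}$.
\item For any curve $C_0\subset B$, the curves $C_0\times\{t\}$ are numerically equivalent for all $t$. So $M$ is trivial on every slice, $f$ factors through $q$, and $f=q$ up to an isomorphism $Z\cong\pr^1$.
\item Each component of $\Supp M$ is contracted by $f$, because $M$ is trivial on it. It is therefore a whole slice, which gives $D\cap E=\{pts\}\times B$.
\end{enumerate}
\end{enumerate}
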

  \subsection{Del Pezzo surfaces with Picard number $3$ over arbitrary fields}\label{delpezzo}
  \noindent Let $K$ be an arbitrary field (not necessarily algebraically closed). Smooth del Pezzo surfaces $S$ over $K$, with Picard number $\rho_S=3$, are classified in \cite{BFSZ} in more than 30 types.
They are described as $S=\Bl(Y_a;d_1,d_2)$ where $Y_a$ is a del Pezzo surface over $K$ with $\rho_{Y_a}=1$ and degree $(K_{Y_a})^2=a$, and $S$ is the blow-up of $Y_a$ at two closed points of degrees $d_1$ and $d_2$. This classification includes a complete description of the exceptional curves of $S$, of the cone of effective curves $\NE(S)$, and of the associated contractions, see
[\emph{ibidem}, Figures on pages 62--64].

We will need the following two results, that follow from this classification. We will apply them to the generic fiber of a del Pezzo fibration on a complex Fano variety.
\begin{lemma}[\cite{BFSZ}]\label{10surfaces}
  Let $S$ be a smooth del Pezzo surface over the field $K$, with $\rho_S=3$.
 If $S$ does not have a morphism onto a curve, then $S$ is one of the 10 surfaces in Table \ref{table1}.
    \begin{table}[h]
$\begin{array}{|c|c|c|c|}
\hline

\text{\em surface} & \text{\em figure in \cite{BFSZ}} & \text{\em degree} & \text{\em n.\ of extremal rays of $\NE(S)$}\\ 
  
  \hline
  \hline
  
  \Bl(\pr^2_K;2,3)   & \text{\em Fig.~16} & 4 & 5  \\

  \hline

 \Bl(\pr^2_K;2,5) &   \text{\em Fig.~18}     & 2 & 6 \\

        \hline

 \Bl(\pr^2_K;2,6) &  \text{\em Fig.~19}  &   1  &  8 \\

       \hline

 \Bl(Y_9;3,3) &   \text{\em Fig.~20}   &3  & 6 \\
       
  \hline

  \Bl(\pr^2_K;3,5) & \text{\em Fig.~22}       & 1 &  8\\

        \hline

\Bl(Y_8;3,3) & \text{\em Fig.~27} &     2   & 6 \\

\hline

  \Bl(Y_8;3,4)& \text{\em Fig.~28}  & 1 & 8 \\

  \hline

  \Bl(Y_6;2,2) & \text{\em Fig.~29}  & 2 & 6\\

  \hline

  \Bl(Y_6;2,3) & \text{\em Fig.~30}  & 1 & 8 \\
  
\hline

  \Bl(Y_3;1,1) & \text{\em Fig.~37} &  1 & 6\\
  
\hline
\end{array}$

\bigskip
  
  \caption{}\label{table1}
\end{table}
\end{lemma}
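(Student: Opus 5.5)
This lemma is a bookkeeping consequence of the classification in \cite{BFSZ}, so the plan is to go through that classification and discard every surface that has a morphism onto a curve. Recall that every smooth del Pezzo surface $S$ over $K$ with $\rho_S=3$ has the form $\Bl(Y_a;d_1,d_2)$. Here $Y_a$ is a del Pezzo surface with $\rho_{Y_a}=1$ and $a=(K_{Y_a})^2$, and $d_1,d_2$ are the degrees of the blown-up closed points, subject to $a-d_1-d_2\geq 1$. For each surface in the list, \cite{BFSZ} describes $\NE(S)$ and the contractions of all its faces in the figures on pages 62--64. Since $\rho_S=3$, a morphism onto a curve is the same as a contraction of a $2$-dimensional face of $\NE(S)$ onto a curve, and any such contraction factors through contractions of extremal rays. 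So we can read off from each figure whether some face contracts to a curve, either a conic bundle onto a conic or a del Pezzo fibration over a curve.

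First we discard the obvious cases. If $d_i=1$ for some $i$ and $Y_a=\pr^2_K$, then blowing up the rational point gives $\mathbb{F}_1$, which maps to $\pr^1_K$. Composing with the blow-down of the other point gives a morphism $S\to\pr^1_K$. This excludes every $\Bl(\pr^2_K;1,d)$, which is consistent with Theorem \ref{singular}$(i)$, where $X_\eta$ does have a conic bundle structure over $\pr^1$. We then go through the remaining combinations of $(Y_a,d_1,d_2)$ listed in \cite{BFSZ}: $Y_9=\pr^2$, the forms of $\pr^2$ and of quadrics $Y_8$, $Y_6$, $Y_5$, and $Y_a$ for $a\leq 4$. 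For each one, we check in the corresponding figure whether any extremal ray is of conic bundle type, or whether some $2$-face gives a fibration onto a curve. The ten surfaces that survive are exactly those in Table \ref{table1}.

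For each survivor we then record its degree, $a-d_1-d_2$ (for example $9-2-3=4$ for $\Bl(\pr^2_K;2,3)$ and $3-1-1=1$ for $\Bl(Y_3;1,1)$), together with the number of extremal rays shown in the figure. A quick consistency check on these counts is that each surface without a fibration has all its extremal rays spanned by orbits of $(-1)$-curves.

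The main obstacle is completeness. We must make sure that no case in the roughly 30 types of \cite{BFSZ} is missed, and that a surface whose extremal rays are all birational still has no $2$-face contracting onto a curve. I would handle the second point by observing the following. If $f\colon S\to C$ is a morphism onto a curve, then $f^*(\text{pt})$ is a nef class of self-intersection $0$. If that class is not $K$-trivial, the $K$-MMP relative to $C$ ends in an elementary conic bundle or a del Pezzo fibration over $C$. So it is enough to check, in the figures, that no contraction ends in a rank-one fibration over a curve, and this is visible directly in \cite{BFSZ}.
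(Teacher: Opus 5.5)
Your proposal is essentially the paper's proof, and it is correct: both invoke the classification of \cite{BFSZ} (Theorem 6.3(2)--(3), Tables 1--2) and read off from Figures 7--39 which surfaces have no $2$-dimensional face of $\NE(S)$ contracting onto a curve (the figures with no ``black ray''). One small correction: since $\rho_S=3$, every extremal ray of $\NE(S)$ is birational (an elementary contraction onto a curve would force $\rho_S=2$), and a surface cannot carry a del Pezzo fibration, so the only check that matters is the one on $2$-dimensional faces, which is exactly what those figures record.
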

Note that in $\Bl(Y_9;3,3)$, we could have both $Y_9\cong\pr^2_K$ and $Y_9$ a non-trivial Severi-Brauer surface.
\begin{proof}
The list of possibilities for $S$ are given in \cite[Theorem 6.3(2)-(3)]{BFSZ}, so that $S$ must appear in [\emph{ibidem}, Tables 1 or 2]. For each of these surfaces, the cone $\NE(S)$ is explicitly described in [\emph{ibidem}, Figures 7 -- 39], and those with no morphism onto a curve are those with no ``black ray'' in the corresponding figure.
\end{proof}
\begin{lemma}[\cite{BFSZ}]\label{4surfaces}
   Let $S$ be a smooth del Pezzo surface over the field $K$, with $\rho_S=3$.
Suppose that there is a contraction $f\colon S\to C$ onto a curve, with $\NE(f)=R_1+R_2$, such that both extremal rays $R_1$ and $R_2$ correspond to blow-ups of points of degree one.
 Then $S$ is one of the 4 surfaces in Table \ref{table2}.
 \begin{table}[h]

   $\begin{array}{|c|c|c|c|}
\hline

      \text{\em surface} & \text{\em figure in \cite{BFSZ}} & \text{\em degree} &\text{\em n.\ of extremal rays of $\NE(S)$}\\ 
  
  \hline
  \hline
  
  \Bl(\pr^2_K;1,1)   & \text{\em Fig.~7} & 7 & 3  \\

  \hline

 \Bl(\pr^2_K;1,4) &   \text{\em Fig.~10}     & 4 & 4 \\

        \hline

 \Bl(\pr^2_K;2,2) &  \text{\em Fig.~15}  &   5  &  4 \\

       \hline

\Bl(Y_4;1,1) &   \text{\em Fig.~38}   &2  & 4 \\
         
\hline
\end{array}$

\bigskip
  
  \caption{}\label{table2}
\end{table}
\end{lemma}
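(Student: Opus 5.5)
The plan is the same as for Lemma \ref{10surfaces}: a case-by-case inspection of the classification in \cite{BFSZ}. No new geometric argument is needed.

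First, by \cite[Theorem 6.3]{BFSZ}, $S$ appears in \cite[Tables 1 or 2]{BFSZ} as $S=\Bl(Y_a;d_1,d_2)$, and for each entry the corresponding figure (Figures 7--39) describes $\NE(S)$. It lists the extremal rays, the type of contraction each one gives (blow-up of a point of a given degree, or conic bundle), and the two-dimensional faces giving morphisms onto curves (the ``black rays'' in the figures). Since $\rho_S=3$, a contraction $f\colon S\to C$ onto a curve has $\dim\NE(f)=2$. So $\NE(f)$ is a $2$-dimensional face $R_1+R_2$, and $R_1,R_2$ are birational extremal rays contracting to the relative del Pezzo surface over $C$ with $\rho=2$. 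We therefore select the surfaces with such a face in which both bounding rays are contractions of a single exceptional curve defined over $K$, that is, blow-ups of a point of degree $1$.

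Second, we go through the list. Surfaces of the form $\Bl(Y_a;d_1,d_2)$ with both $d_i\geq 2$ typically have their conic-bundle faces bounded by rays of degree $\geq 2$; these are excluded unless the figure shows otherwise. The surviving ones are $\Bl(\pr^2_K;1,1)$, where the face is spanned by the strict transform of the line through the two points and one exceptional curve; $\Bl(\pr^2_K;1,4)$; $\Bl(\pr^2_K;2,2)$, where the relevant rays come from the conic through the four geometric points, or analogous degree-one curves; and $\Bl(Y_4;1,1)$. For each of these we read off the degree $K_S^2=9-d_1-d_2$ (respectively $a-d_1-d_2$) and the number of extremal rays from the figure, which gives Table \ref{table2}.

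The only real difficulty is bookkeeping: checking carefully, figure by figure, which faces of $\NE(S)$ have both rays of degree-one type. This matters especially in cases like $\Bl(\pr^2_K;2,2)$, where the degree-one exceptional curves are not the exceptional divisors of the blow-up but strict transforms of curves through the blown-up points.
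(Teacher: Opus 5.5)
Your proposal follows the paper's proof: both rest on the classification \cite[Theorem 6.3]{BFSZ} and a figure-by-figure inspection for a conic-bundle face (a ``black ray'') whose two adjacent extremal rays are both blow-ups of degree-one points (``blue''), and your list agrees with Table \ref{table2}. One small correction concerns $\Bl(\pr^2_K;2,2)$: writing $E_p,E_q$ for the exceptional curves over the two points of degree $2$, the face is spanned by the strict transforms $H-E_p$ and $H-E_q$ of the lines through each conjugate pair, while the conic through the four geometric points gives the fibre class $2H-E_p-E_q$ in the interior of the face; also, since $\rho_S=3$, no single extremal ray of $\NE(S)$ is of conic bundle type.
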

\begin{proof}
  We use again the classification
  in \cite[Theorem 6.3(2)-(3)]{BFSZ} as in the previous proof.
  Now in [\emph{ibidem}, Figures 7 -- 39] we have to look for the figures with a ``black ray'' whose two adjacent rays are blue.
\end{proof}
  \subsection{Fibrations and generic fibers} \label{fibrations}
\noindent In this subsection we consider a fibration in del Pezzo surfaces $f\colon X\to Y$ and we relate some properties of the generic fiber of $f$ to some properties of $\NE(f)$. We also study properties of extremal rays of type $(n-1,n-2)$ in $\NE(f)$.
\begin{lemma}\label{restr}
  Let $X$ be a projective, normal, and $\Q$-factorial Mori dream space, and $f\colon X\to Y$ a quasi-elementary contraction of fiber type. Let $X_y$ be
the fiber of $f$ over   a general closed point $y\in Y$ and
  $X_\eta$ the generic fiber of $f$.
  
  Then the restriction $r\colon \Nu(X)\to \Nu(X_{\eta})$ is surjective with kernel $\N(X_y,X)^{\perp}$. Dually we have an isomorphism $j_*\colon \N(X_{\eta})\stackrel{\sim}{\to}\N(X_y,X)$, where $j\colon X_\eta\to X$ is the natural morphism, and $\rho_{X_\eta}=\rho_X-\rho_Y$.
\end{lemma}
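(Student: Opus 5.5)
The plan is to compare numerical classes on $X$ with those on a general fiber $X_y$ and on the generic fiber $X_\eta$, going through the natural morphism $j\colon X_\eta\to X$. I would prove the divisor statement first and then obtain the curve statement by duality. Throughout, $X_\eta$ is a normal projective variety over $K=\C(Y)$; it is geometrically integral because $f$ has connected fibers and $\C$ has characteristic $0$.

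\emph{Step 1: $r$ is surjective.} Let $\Delta$ be a Weil divisor on $X_\eta$. Its closure $\overline{\Delta}$ in $X$ is a Weil divisor, which is $\Q$-Cartier since $X$ is $\Q$-factorial, and its restriction to $X_\eta$ is $\Delta$. Working with $\R$-coefficients, this shows that $r$ is surjective; numerical equivalence on both sides is compatible with this construction.

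\emph{Step 2: the kernel of $r$.} I claim that a class $[L]\in\Nu(X)$ restricts to zero on $X_\eta$ exactly when $L\cdot\gamma=0$ for every $\gamma\in\N(X_y,X)$. The link is a specialization argument. Take a curve $C_\eta\subset X_\eta$ and spread it out to a family of curves over a dense open subset $U\subset Y$. Then $\deg(L_{|X_\eta}\cdot C_\eta)$ equals $L\cdot C_y$, where $C_y\subset X_y$ is the specialization, since intersection numbers are constant in flat families. Conversely, for general $y$ every curve in $X_y$ is numerically, as a class in $X$, a combination of such specializations. The reason is that the fibers $X_y$ over a general (very general, then general by finite type of the relevant Chow/Hilbert components) point are all deformations of one another, and $\N(X_y,X)$ does not depend on the general $y$. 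Hence $[L_{|X_\eta}]=0$ if and only if $[L]\in\N(X_y,X)^\perp$.

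\emph{Step 3: numerical triviality on $X_\eta$ is detected by curves over $K$.} Curves over $\overline K$ are Galois-conjugate to curves defined over a finite extension. Pushing forward, or taking norms, reduces their classes to classes of curves defined over $K$ up to a positive multiple. Therefore numerical triviality on $X_\eta$ can be tested on $K$-curves, which is what Step 2 uses.

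\emph{Step 4: dimensions and the dual isomorphism.} From Steps 1 and 2, $\rho_{X_\eta}=\rho_X-\operatorname{codim}\bigl(\N(X_y,X)^\perp\bigr)=\dim\N(X_y,X)$. Since $f$ is quasi-elementary, this equals $\rho_X-\rho_Y$. The pushforward $j_*\colon\N(X_\eta)\to\N(X)$ sends the class of a $K$-curve to the class of its specialization, so its image lies in $\N(X_y,X)$. It is the transpose of $r$ under the intersection pairings, so it is injective because $r$ is surjective. Comparing dimensions, $j_*$ is an isomorphism onto $\N(X_y,X)$.

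The main obstacle is Step 2: making precise that $\N(X_y,X)$ is spanned by specializations of curves on $X_\eta$. I would handle it with a relative Hilbert scheme argument. Countably many components of the relative Hilbert scheme dominate $Y$. A very general $y$ avoids the images of the non-dominant ones, so every curve in $X_y$ lies in a family dominating $Y$. After a finite base change, such a family yields a curve over a finite extension of $K$, and Step 3 then applies. Finally, $\N(X_y,X)$ is constant for $y$ in a dense open set, because it equals $\N(X_{y'},X)$ for very general $y'$ by the quasi-elementary dimension count and semicontinuity.
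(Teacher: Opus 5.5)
Your proof is correct, but it takes a different route from the paper's.

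\textbf{The paper's route.} The paper works on the divisor side. It identifies $\ker r$ with the span $W$ of classes of $\R$-divisors whose support does not dominate $Y$, and quotes \cite[Lemma 3.3]{refinements} for $W=\N(X_y,X)^{\perp}$. Quasi-elementarity is then used only for the dimension count.

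\textbf{Your route.} You stay on the curve side and prove $\ker r=\N(X_y,X)^{\perp}$ directly. One inclusion comes from spreading out $K$-curves of $X_\eta$ and using constancy of degrees in flat families. For the other, you use the dominating components of the relative Hilbert scheme. They show that every curve in a very general fiber is, numerically in $X$ and up to a positive multiple, the specialization of a curve defined over a finite extension of $K$.

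\textbf{Comparison.} The paper's version is shorter and records that $\ker r$ is spanned by vertical divisor classes. Yours is self-contained. It makes explicit the inclusion $\ker r\subseteq\N(X_y,X)^{\perp}$, i.e.\ the passage from numerical triviality on $X_\eta$ to orthogonality to curves in the fibers. In the paper this step is absorbed into the unproved equality $\ker r=W$ and the citation.

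\textbf{Two small remarks.}
\begin{enumerate}
\item The independence of $\N(X_y,X)$ from the general point $y$ needs no semicontinuity. One always has $\N(X_y,X)\subseteq\ker f_*$, and quasi-elementarity says exactly that equality holds for general $y$. This also gives the implication ``$L\perp\N(X_y,X)\Rightarrow r(L)=0$'' at once, since every specialization of a $K$-curve has class in $\ker f_*$.
\item In Step 4 the formula should read $\rho_{X_\eta}=\rho_X-\dim\bigl(\N(X_y,X)^{\perp}\bigr)$, not $\rho_X-\codim\bigl(\N(X_y,X)^{\perp}\bigr)$. The value $\dim\N(X_y,X)$ you arrive at is nonetheless correct.
\end{enumerate}
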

\begin{proof}
  The restriction $r$ is surjective as $X$ is $\Q$-factorial, and $\ker r$
  is the linear subspace $W\subset \Nu(X)$ given by classes of $\R$-divisors $D$ whose support does not dominate $Y$.
  Then $W=\N(X_y,X)^\perp$ (see \cite[Lemma 3.3]{refinements}). Moreover $\dim
  \N(X_y,X)=\rho_X-\rho_Y$ because $f$ is quasi-elementary (see \S\ref{notation}).
  \end{proof}
  \begin{lemma}\label{NE}
    In the setting of Lemma \ref{restr}, suppose moreover that for every extremal ray $R$ of $\NE(f)$ we have $f(\Lo(R))=Y$.
    Then $\NE(X_\eta)\cong\NE(f)$ under $j_*$, and every contraction of $X_\eta$ is the restriction of a contraction of $X$ over $Y$.
  \end{lemma}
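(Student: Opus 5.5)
The plan has three steps: show that $j_*$ maps $\NE(X_\eta)$ into $\NE(f)$, prove the reverse inclusion ray by ray, and then deduce the statement about contractions. By Lemma \ref{restr}, $j_*\colon\N(X_\eta)\to\N(X_y,X)$ is an isomorphism, and $\N(X_y,X)$ is the linear span of $\NE(f)$, since $\dim\NE(f)=\rho_X-\rho_Y$ and $\NE(f)\subset\ker f_*$. Both cones are rational polyhedral: $\NE(f)$ because $X$ is a Mori dream space, and $\NE(X_\eta)$ because it is dual to the nef cone, which is the image of the nef-over-$Y$ cone under $r$ (to be checked along the way).

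\emph{Step 1: $j_*(\NE(X_\eta))\subseteq\NE(f)$.} Let $C_\eta\subset X_\eta$ be an integral curve. Its closure $\overline{C}\subset X$ is a surface dominating $Y$ with one-dimensional fibers, and $j_*[C_\eta]$ is the class of $\overline{C}\cap X_y$ for general $y$. This is an effective curve contracted by $f$, so its class lies in $\NE(f)$.

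\emph{Step 2: $\NE(f)\subseteq j_*(\NE(X_\eta))$.} It is enough to check this on each extremal ray $R$ of $\NE(f)$. By hypothesis $f(\Lo(R))=Y$, so for general $y$ the fiber $X_y$ meets $\Lo(R)$. Let $g\colon X\to Z$ be the contraction of $R$, which factors $f$. Over $\eta$, the locus $\Lo(R)_\eta$ is non-empty and the curves contracted by $g_\eta\colon X_\eta\to Z_\eta$ give classes in $\NE(X_\eta)$. By Step 1 these classes are multiples of $R$, because they are contracted by $g$. Hence $R\subset j_*\NE(X_\eta)$. More concretely, one can take a curve $\Gamma$ in the general fiber of $g_{|\Lo(R)}\to g(\Lo(R))$ inside $X_y$, spread it out over a dense open subset of $Y$, and take the generic member. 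Putting Steps 1 and 2 together gives $j_*\NE(X_\eta)=\NE(f)$.

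\emph{Step 3: contractions.} A contraction of $X_\eta$ corresponds to a face $\tau$ of $\NE(X_\eta)$ cut out by a nef divisor $L_\eta$. By Step 2, $j_*\tau$ is a face of $\NE(f)$, and hence a face of $\NE(X)$, because $\NE(f)$ is itself a face. Since $X$ is a Mori dream space, this face has a contraction $h\colon X\to W$ over $Y$. The restriction $h_\eta$ is then a contraction of $X_\eta$ (it is generic flat base change of $h$, with connected fibers and normal target) that contracts exactly the curves with classes in $\tau$. So $h_\eta$ coincides with the contraction defined by $\tau$.

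The step I expect to be the main obstacle is Step 2, namely producing, for each extremal ray of $\NE(f)$, a curve on $X_\eta$ whose class spans it. This is exactly where the hypothesis $f(\Lo(R))=Y$ is essential, and some care is needed with irreducibility: $\Lo(R)\cap X_y$ may be reducible, and the family of curves must be defined over $K$ (after possibly passing to Galois orbits and taking sums of conjugates to get a curve defined over $K$).
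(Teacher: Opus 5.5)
Your proposal is correct and follows the paper's argument. It first proves $j_*\NE(X_\eta)\subseteq\NE(f)$, then observes that for each extremal ray $R$ of $\NE(f)$ the contraction of $R$ restricts to a non-trivial contraction of $X_\eta$ because $f(\Lo(R))=Y$, which gives $R\subset j_*\NE(X_\eta)$. Your Step 3 only spells out the statement about contractions, which the paper treats as immediate from the cone isomorphism and the fact that $X$ is a Mori dream space.

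Two minor remarks. The closure $\overline{C}$ has dimension $\dim Y+1$, not $2$. Also, the Galois-orbit concern is unnecessary, since your first argument in Step 2 already works with curves defined over $K$ inside the positive-dimensional fibers of $g_\eta$.
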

  \begin{proof}
We have $j_*(\NE(X_\eta))\subset\NE(f)$. Conversely, for every extremal ray 
    $R$ of $\NE(f)$, 
the associated contraction restricts to a non-trivial contraction 
of $X_\eta$, because $f(\Lo(R))=Y$. Thus $R\subset j_*(\NE(X_\eta))$, and finally
$j_*(\NE(X_\eta))=\NE(f)$.
\end{proof}
\begin{lemma}\label{fulton}
Let $X$ be a smooth projective variety, $f\colon X\to Y$ a contraction with $\dim X-\dim Y=2$, and $D_1,D_2$ divisors in $X$. Let $X_y$ be
the fiber of $f$ over a general closed point $y\in Y$, and $X_\eta$ the generic fiber. Then
$$D_{1|X_y}\cdot D_{2|X_y}=D_{1|X_\eta}\cdot D_{2|X_\eta}.$$
\end{lemma}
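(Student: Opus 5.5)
The plan is to read the left-hand side as an intersection number on $X$ and the right-hand side as a degree of a zero-cycle on $X_\eta$, and to compare both with the degree over $Y$ of a single cycle on $X$. Since $X$ is smooth, $D_1$ and $D_2$ are Cartier, and the intersection class $D_1\cdot D_2\in A_{n-2}(X)$ is defined (Fulton, Chapter 2). By linearity in each $D_i$, it suffices to treat the case where $D_1,D_2$ are very ample, or more generally differences of such; writing $D_i=A_i-B_i$ with $A_i,B_i$ very ample reduces everything to products of very ample divisors. So I assume both are very ample and choose general members $D_1,D_2$ in their linear systems.

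For general members, $D_1\cap D_2$ is a (possibly reducible) subvariety of pure dimension $n-2=\dim Y$, so the intersection is proper. Its components fall into two groups: those dominating $Y$, and those that do not. By Bertini-type generality applied relative to $f$, the restrictions to $X_\eta$ and to a general $X_y$ are general members of base-point-free systems. Let $Z=D_1\cap D_2$ with its scheme structure.

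On the generic fiber, $D_{1|X_\eta}\cdot D_{2|X_\eta}$ equals the length of $Z_\eta=Z\times_Y\eta$, weighted by residue degrees over $K$; that is, $\deg_K(Z_\eta)$, the degree of the dominant part of $Z$ over $Y$. On a general fiber, $D_{1|X_y}\cdot D_{2|X_y}$ is the number of points of $Z\cap X_y$, counted with multiplicity. For $y$ general, the non-dominant components of $Z$ miss $X_y$. Moreover, the dominant part of $Z$ is generically finite over $Y$, and by generic flatness it is flat and finite over a dense open set $U\subset Y$. Hence the length of its fiber over $y\in U$ equals its generic degree over $K$. Comparing the two descriptions gives the equality.

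The main technical point is the multiplicities. I need the intersection multiplicities of $D_{1|X_y}$ and $D_{2|X_y}$ at points of $X_y$ to agree with the length of the scheme-theoretic fiber $Z_y$. This holds because $D_1,D_2$ restricted to the surface $X_y$ form a regular sequence locally, by properness, so their intersection number is the length of $\mathcal{O}_{Z_y}$. Equally, $Z$ is a local complete intersection, hence Cohen–Macaulay, and so is flat over $U$ by miracle flatness, since $Y$ is smooth on an open set. Alternatively, one can avoid all of this by invoking Fulton's compatibility of intersection products with flat pullback (restriction to $f^{-1}(U)$) and with specialization to the generic point. Either route gives the statement; the multiplicity/flatness step is the only place requiring care.
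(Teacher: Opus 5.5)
Your proof is correct, but it takes a different route from the paper's.

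\textbf{The paper's route.} The paper never moves $D_1,D_2$. It first shrinks to an open subset of $Y$ over which $f$ is smooth and the $D_i$ are flat. It then cuts $Y$ by a general very ample divisor through $y$, whose generic point has a discrete valuation ring as local ring (fraction field $\C(Y)$). By Fulton's Corollary 20.3, specialization from $\Pic(X_\eta)$ to the Picard group of the fiber over that generic point preserves intersection products. Induction on $\dim Y$ then brings it down to a closed point.

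\textbf{Your route.} You use bilinearity and invariance under linear equivalence to reduce to effective divisors without common components. You then identify both sides with $\dim_K H^0(\mathcal{O}_{Z_\eta})$ and $\dim_{\C} H^0(\mathcal{O}_{Z_y})$, where $Z=D_1\cap D_2$. This identification comes from the Koszul complex on the regular surfaces $X_\eta$ and $X_y$; note that $X_y$ is smooth for general $y$ by generic smoothness, which you should say explicitly. You conclude because $Z$ is finite and flat over a dense open $U\subset Y$, so $f_*\mathcal{O}_{Z_U}$ is locally free of constant rank.

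\textbf{Comparison.} Your argument is more elementary and self-contained. It avoids both the specialization machinery and the induction, and it makes transparent why the number is constant: it is the degree of a finite flat cover. The price is the preliminary moving step. You must carry it out for each of the finitely many pairs arising from $D_i=A_i-B_i$ and intersect the resulting dense open sets of good $y$. That is harmless.

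\textbf{Unneeded ingredients.} Two pieces of your sketch can be dropped:
\begin{itemize}
\item Miracle flatness: generic flatness of $Z\to Y$ already gives a dense open set over which $Z$ is flat, and finiteness there follows from properness plus quasi-finiteness.
\item Bertini relative to $f$: effective divisors with no common component already meet properly on $X$, on $X_\eta$, and on $X_y$ for general $y$. Components of $Z$ not dominating $Y$ miss a general $X_y$, and the dominant ones are generically finite over $Y$.
\end{itemize}
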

\begin{proof}
  We can assume that $y$ belongs to a smooth open subset of $Y$ over which $f$ is smooth, and over which $D_1$ and $D_2$ are flat.

  Let $Z\subset Y$ be a general very ample divisor through $y$, with $y\in Z_{reg}$; let $\eta_Z$ be the generic point of $Z$ and $X_{\eta_Z}$ the fiber of $f$ over $\eta_Z$. Let also $\ol_{Y,\eta_Z}$ be the local ring of $Y$ at $\eta_Z$; it is a discrete valuation ring with residue field $\C(Z)$ and quotient field $\C(Y)$. By considering the pullback of $f$ to $\Spec\ol_{Y,\eta}$ and using \cite[Corollary 20.3]{fultonint}, we see that the specialization map $\Pic(X_\eta)\to\Pic(X_{\eta_Z})$ preserves the intersection product. Then we get the statement by induction on the dimension.
\end{proof}  
\begin{lemma}\label{degreeR}
  Let $X$ be a projective and $\Q$-factorial Mori dream space, with terminal singularities and of dimension $n$, and $f\colon X\to Y$ a quasi-elementary $K$-negative contraction of fiber type with $\dim Y=n-2$. Let $X_y$ be
the fiber of $f$ over   a general closed point $y\in Y$, and $X_\eta$ the generic fiber; they are smooth del Pezzo surfaces over $\C$ and $\C(Y)$ respectively.

  Let $R$ be an extremal ray of $\NE(f)$ of type $(n-1,n-2)$, $E:=\Lo(R)$, 
  and $\sigma \colon X\to Z$ the contraction of $R$. $$\xymatrix{
    X\ar@/^{1pc}/[rr]^{f}\ar[r]_{\sigma} &{Z}\ar[r]_{\alpha}&Y}$$

  Then $f(E)=Y$ and there exists $d\in\mathbb{N}$ such that:
  \begin{enumerate}[(1)]
  \item $E_{|X_y}=e_1+\cdots+e_{d}$ where  $e_1,\dotsc,e_{d}$ are pairwise disjoint $(-1)$-curves in $X_y$
with $[e_1]=\cdots=[e_d]\in R$;
  \item $\sigma_\eta\colon X_\eta\to Z_\eta$ is the blow-up of a closed point of degree $d$, with exceptional curve $C=E_{|X_{\eta}}$, and $C^2=-d$. 
  \end{enumerate}
\end{lemma}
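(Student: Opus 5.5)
First I will show that $f(E)=Y$. Since $f$ is quasi-elementary, for a general fiber $X_y$ we have $\dim\N(X_y,X)=\rho_X-\rho_Y$, and this equals $\dim\NE(f)$. Suppose instead that $f(E)\subsetneq Y$. Then $E$ is a prime divisor not dominating $Y$, so by Lemma \ref{restr} its class lies in $\ker r=\N(X_y,X)^\perp$. In particular $E\cdot\gamma=0$ for every $\gamma\in\N(X_y,X)$. The rays of $\NE(f)$ are spanned by classes of curves in fibers, and since $\NE(f)$ has full dimension in $\N(X_y,X)$, we get $\ker f_*=\N(X_y,X)$. Hence $E\cdot R=0$. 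This contradicts $E\cdot R<0$, which holds because $E=\Lo(R)$ is the exceptional divisor of a divisorial contraction. So $E$ dominates $Y$, and $E_{|X_y}$ is a nonzero effective divisor on the smooth del Pezzo surface $X_y$.

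Next I will analyse $\sigma$ near a general fiber. Since $\dim\sigma(E)=n-2=\dim Y$ and $\alpha(\sigma(E))=Y$, the map $\sigma(E)\to Y$ is generically finite. Over a general $y$, the set $\sigma(E)\cap Z_y$ is therefore a finite set of $d$ points. Because $X$ is terminal and $\Q$-factorial and $\sigma$ is $K$-negative of type $(n-1,n-2)$, a general fiber of $E\to\sigma(E)$ is a smooth rational curve $\ell$ with $E\cdot\ell=-1$ and $K_X\cdot\ell=-1$. Indeed, over the general point of $\sigma(E)$ the map $\sigma$ is the blow-up of a smooth codimension 2 locus; this is standard, for instance from Ando's theorem applied where $X$ is smooth, and $X$ is smooth in codimension 2 because it is terminal. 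I plan to take $y$ general so that $X_y$ avoids $\Sing X$ and the non-general fibers of $\sigma_{|E}$, and so that $Z$ is smooth along $\sigma(E)\cap Z_y$. Then $\sigma_{|X_y}\colon X_y\to Z_y$ is the blow-up of the $d$ reduced points $\sigma(E)\cap Z_y$, by generic smoothness of the base change. This gives $E_{|X_y}=e_1+\dots+e_d$ with $e_i$ pairwise disjoint $(-1)$-curves, each a fiber of $\sigma_{|E}$, so $[e_i]\in R$. All fibers of $\sigma_{|E}$ over the dense set of general points are numerically equivalent in $X$, which gives $[e_1]=\dots=[e_d]$ and proves (1).

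For (2) I will pass to the generic point. Base change to $\eta$ is flat, and blow-ups commute with flat base change. Over the open set where $\sigma$ is the blow-up of the smooth locus $\sigma(E)$, restricting to $\eta$ shows that $\sigma_\eta$ is the blow-up of the closed subscheme $\sigma(E)_\eta\subset Z_\eta$. This subscheme is zero-dimensional and integral, since $\sigma(E)$ is irreducible; it is reduced and smooth over $\eta$, since in characteristic $0$ the generically finite map $\sigma(E)\to Y$ is generically étale. So $\sigma(E)_\eta$ is a closed point of degree $d=\deg(\sigma(E)\to Y)$, and $C=E_{|X_\eta}$ is the exceptional curve of $\sigma_\eta$. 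Finally, Lemma \ref{fulton} applied to $D_1=D_2=E$ gives $C^2=(E_{|X_y})^2=\sum e_i^2=-d$.

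The step I expect to be the main obstacle is justifying that $\sigma$ is a smooth blow-up over the general point of $\sigma(E)$ on a merely terminal $X$, and that the general fiber $X_y$ stays inside this good locus. I would handle this by cutting $X$ with general hyperplane sections down to a threefold transversal to $\sigma(E)$, and then applying Mori's classification of divisorial contractions from smooth threefolds to a curve, using that the resulting section avoids $\Sing X$ by generality.
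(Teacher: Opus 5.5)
Your proposal is correct and follows essentially the paper's proof. You show $f(E)=Y$ by noting that quasi-elementarity forces $R\subset\ker f_*=\N(X_y,X)$ while $E\cdot R<0$; the paper derives the same contradiction via $\dim\N(Z_y,Z)\leq\rho_Z-\rho_Y$. You then apply Ando's theorem over an open subset of $Z_{reg}$ whose preimage lies in $X_{reg}$, which is possible because terminal singularities have codimension at least $3$. From there you use transversality of $\sigma(E)$ with $Z_y$ for general $y$ and flat base change to $\eta$, exactly as the paper does, so your hyperplane-section fallback is not needed.

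The only small point to tidy is that Lemma \ref{fulton} is stated for smooth $X$. Either note that its proof is local over an open subset of $Y$ where $X$ is smooth, or compute $C^2=-d$ directly on $X_\eta$ from $C\cong\pr^1_{\kappa(p)}$ with $[\kappa(p):K]=d$.
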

We will refer to $d$ as {\bf the degree of $R$}. \label{defidegree}
\begin{proof}
Set $B:=\sigma(E)\subset Z$.
Note first of all that $\dim\Sing(X)\leq n-3$, therefore $\Sing(X)$ cannot dominate $Y$ nor $B$; in particular $X_y$ and $X_\eta$ are smooth.  
 We also note that the general fiber of $\sigma_{|E}\colon E\to B$ is one-dimensional. By \cite[Theorem 1.2]{wisn} there is an open subset $Z_0\subset Z_{reg}$ such that $X_0:=\sigma^{-1}(Z_0)\subset X_{reg}$, $B_0:=B\cap Z_0$ is a smooth, codimension $2$ subvariety of $Z_0$, and $\sigma_{|X_0}\colon X_0\to B_0$ is the blow-up of $B_0$. 

We show that $f(E)=Y$. Otherwise we would have $X_y\cap E=\emptyset$, thus $\N(X_y,X)\subset E^{\perp}$ (see Remark \ref{disjoint}) and hence $\ker\sigma_*\not\subset\N(X_y,X)$ (note that $\ker\sigma_*=\R R\not\subset E^{\perp}$ because $E\cdot R<0$). Set $Z_y:=\alpha^{-1}(y)$. We have $Z_y=\sigma(X_y)$, hence $\N(Z_y,Z)=\sigma_*(\N(X_y,X))$, and $\dim\N(X_y,X)=\dim\N(Z_y,Z)\leq\rho_Z-\rho_Y=\rho_X-\rho_Y-1$, contradicting the fact that
$f$ is quasi-elementary (see \S\ref{notation}). Therefore we have $f(E)=Y$ and
$\alpha(B)=Y$.

Hence $\alpha_{|B}\colon B\to Y$ is generically finite; 
let $d$ be its degree, so that $\sigma_\eta\colon X_\eta\to Z_\eta$ is the blow-up of the closed point $B_\eta$, of degree $d$. 

Moreover $B$ intersects $Z_y$ transversally in $d$ smooth points, hence $\sigma_{|X_y}\colon X_y\to Z_y$ is the blow-up of $d$ reduced, distinct points; this gives the statement.
\end{proof}
\begin{lemma}\label{t}
  In the setting of Lemma \ref{degreeR}, let $R_1$ and $R_2$ be two extremal rays of $\NE(f)$ of type $(n-1,n-2)^{sm}$; for $i=1,2$ let $d_i$ be the degree of $R_i$, 
  set $E_i:=\Lo(R_i)$, and let $e_i\subset E_i$ be a non-trivial fiber of the contraction of $R_i$.
  
  Assume that
 $E_{1|E_2}=mA$ where $A\subset E_2$ is a section of the $\pr^1$-bundle and $m\in\Z_{>0}$. Then  $E_1\cdot e_2=m$ and one of the following holds:
  \begin{enumerate}[$(i)$]
    \item
$m=1$;
  \item
    $m=2$
    and $(K_{X_y})^2\leq 2$;
  \item $m=3$ and $(K_{X_y})^2=1$.
\end{enumerate}
\end{lemma}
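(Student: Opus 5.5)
The plan is to read off $E_1\cdot e_2$ inside $E_2$, and then to turn this number into an intersection number of two $(-1)$-curves on the general fiber $X_y$. The Hodge index theorem on $X_y$ will then give the bounds. I assume $R_1\neq R_2$, so that $E_1\neq E_2$ (this is implicit, since the restriction $E_{1|E_2}$ is an effective divisor of $E_2$).

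First, $E_1\cdot e_2=m$. Since $e_2\subset E_2$, the projection formula gives $E_1\cdot e_2=E_{1|E_2}\cdot_{E_2}e_2=m\,(A\cdot_{E_2}e_2)$. Because $A$ is a section of the $\pr^1$-bundle $E_2\to\sigma_2(E_2)$, we have $A\cdot e_2=1$, so $E_1\cdot e_2=m$.

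Next, restrict to a general fiber $X_y$ of $f$, which is a smooth del Pezzo surface over $\C$. By Lemma \ref{degreeR}:
\begin{itemize}
\item $E_{2|X_y}=e_{2,1}+\cdots+e_{2,d_2}$ with pairwise disjoint $(-1)$-curves, each numerically equivalent in $X$ to $e_2$;
\item $E_{1|X_y}=f_1+\cdots+f_{d_1}$ with pairwise disjoint $(-1)$-curves $f_j$.
\end{itemize}
Fix one curve $e:=e_{2,k}$; it is a fiber of $E_2\to\sigma_2(E_2)$ (a general fiber, for $y$ general). Set-theoretically $E_1\cap E_2=A$, and $A$ meets $e$ in exactly one point $p$. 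Hence every $f_j$ meeting $e$ passes through $p$. Since the $f_j$ are pairwise disjoint, at most one of them, say $f$, passes through $p$.

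We then compute on $X_y$. By the projection formula,
$$m=E_1\cdot e=E_{1|X_y}\cdot_{X_y}e=\sum_j f_j\cdot e=f\cdot e.$$
Here $f\neq e$, because $[f]\in R_1$ and $[e]\in R_2$ with $R_1\neq R_2$. Thus $f$ and $e$ are distinct $(-1)$-curves on $X_y$ with $f\cdot e=m\geq 1$.

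The key numerical step is the Hodge index theorem on $X_y$. Since $K_{X_y}^2>0$, we have $(K_{X_y}\cdot\gamma)^2\geq K_{X_y}^2\,\gamma^2$ for every class $\gamma$. Take $\gamma=f+e$. Then $K_{X_y}\cdot\gamma=-2$ and $\gamma^2=-1-1+2m=2m-2$, so
$$4\geq K_{X_y}^2\,(2m-2).$$
Since $K_{X_y}^2\geq1$, this gives $m\leq 3$. If $m=2$, it gives $K_{X_y}^2\leq2$; if $m=3$, it gives $K_{X_y}^2\leq1$, hence $K_{X_y}^2=1$. These are exactly the cases $(i)$--$(iii)$.

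The main point needing care is the passage from the global statement $E_{1|E_2}=mA$ to the local fact that a single $f_j$ meets $e$, and meets it with multiplicity exactly $m$. This uses that $E_1\cap E_2$ is supported on the section $A$, together with the disjointness of the $f_j$. The multiplicity itself is handled cleanly by computing $E_1\cdot e$ through the projection formula, in $E_2$ on the one hand and in $X_y$ on the other, so no transversality argument is needed. The only genericity required is that $y$ is general, so that Lemma \ref{degreeR} applies and $e$ is a fiber of $E_2$ meeting $A$ in one point.
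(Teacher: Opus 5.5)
Your proof is correct and matches the paper's argument up to the final step. The shared steps are: computing $E_1\cdot e_2=m$ inside $E_2$, restricting to $X_y$ via Lemma \ref{degreeR}, and observing that $E_1\cap e$ is the single point $A\cap e$. Hence exactly one of the pairwise disjoint curves $f_j$ meets $e$, and $f\cdot e=m$; the existence of such an $f$ follows from $m>0$. The two proofs differ only in how $m$ is bounded. The paper appeals to the explicit description of $(-1)$-curves on del Pezzo surfaces (Batyrev--Popov). You instead apply the Hodge index inequality $(K_{X_y}\cdot\gamma)^2\geq K_{X_y}^2\,\gamma^2$ to $\gamma=f+e$, which gives $K_{X_y}^2\,(m-1)\leq 2$. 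This is self-contained and yields exactly the same conclusions. The bounds are sharp: equality holds for $e$ and $-K_{X_y}-e$ in degree $2$ (intersection $2$), and for $e$ and $-2K_{X_y}-e$ in degree $1$ (intersection $3$).
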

\begin{proof}
We have $E_1\cdot e_2=E_{1|E_2}\cdot_{E_2} e_2=mA\cdot_{E_2} e_2=m$.

 Let $i\in\{1,2\}$ and write $E_{i|X_y}=e_{i,1}+\cdots+e_{i,d_i}$, where $e_{i,1},\dotsc,e_{i,d_i}$ are pairwise disjoint $(-1)$-curves in $X_y$ (see Lemma \ref{degreeR}).

  Since $A$ is a section of the $\pr^1$-bundle $\sigma_{2|E_2}\colon E_2\to B_2$, and $e_{2,1}$ is a fiber of such $\pr^1$-bundle,
  we have $A\cap e_{2,1}=p$ for some point $p\in e_{2,1}$. Then
  $$\{p\}=(E_1\cap e_{2,1})_{red}=\bigl((e_{1,1}\cup\cdots\cup e_{1,d_1})\cap
  e_{2,1}\bigr)_{red},$$
and
$$m=E_1\cdot e_{2,1}=E_{1|X_y}\cdot_{X_y} e_{2,1}=(e_{1,1}+\cdots+e_{1,d_1})\cdot_{X_y} e_{2,1}.$$
Since $e_{1,1},\dotsc,e_{1,d_1}$ are pairwise disjoint, there is an index $k\in\{1,\dotsc,d_1\}$ such that
$$e_{1,i}\cap e_{2,1}=\emptyset \text{ for every }i\neq k,\ (e_{1,k}\cap e_{2,1})_{red}=\{p\},\text{ and }e_{1,k}\cdot_{X_y} e_{2,1}=m.$$
Because these are $(-1)$-curves in a del Pezzo surface, we deduce that
 $m\leq 3$; moreover $m=3$ implies that $(K_{X_y})^2=1$, and $m=2$ implies that $(K_{X_y})^2\leq 2$ (this follows from the explicit description of $(-1)$-curves in a del Pezzo surface, see for instance \cite[Theorem 2.1]{batyrevpopov}).
\end{proof}

\section{The geometry of Fano varieties with $\delta_X=2$}\label{proof}
\noindent In this section we recall the notion and properties of a special MMP, and then we prove Theorem \ref{main}.

Let $X$ be a smooth Fano variety with $\de_X=2$, and $D\subset X$ a prime divisor with $\de(D)=2$.

 A {\bf special MMP for $-D$} is a sequence:
 \begin{equation}\label{specialMMP}
 X=X^1\stackrel{\sigma_1}{\dasharrow} X^2\dasharrow\cdots\dasharrow
X^{k-1}\stackrel{\sigma_{k-1}}{\dasharrow} X^k\stackrel{\ph}{\la}Y\end{equation}
where:
\begin{enumerate}[(1)]
\item every $X^i$ is a normal projective variety with terminal and $\Q$-factorial singularities;
\item for every $i=1,\dotsc,k-1$ there is a birational extremal ray $R_i$ of $\NE(X^i)$, with $D_i\cdot R_i>0$ and $K_{X^i}\cdot R_i<0$, such that $\sigma_i$ is either the contraction of $R_i$ (if divisorial) or its flip (if small);  we set $D_{i+1}\subset X^{i+1}$ to be the transform of $D_i\subset X^i$;
  \item the morphism $\ph\colon X^k\to Y$ is a $K$-negative elementary contraction of fiber type such that $\ph(D_k)=Y$.
  \end{enumerate}
  Special MMP's as above are the starting point of the study of Fano varieties $X$ with $\de_X>0$, see \cite[\S 2]{codim} and \cite[\S 2]{codimtwo}. A special MMP for $-D$ can have two different outcomes, similarly to the situation where in a del Pezzo surface we start contracting $(-1)$-curves having positive intersection with a given irreducible curve: we could end up with $\pr^2$, or with a ruled del Pezzo surface.  
We distinguish the two types of MMP  as follows. 
  \begin{propdefi}[\cite{codim}, Lemma 2.7(2); \cite{codimtwo}, \S 2.1]
    We have $\de(D_k)\in\{0,1\}$. We say that the special MMP is:

  --  {\bf of type $(a)$} if $\de(D_k)=0$;

  --  {\bf of type $(b)$} if $\de(D_k)=1$.
  \end{propdefi}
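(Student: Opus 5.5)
The claim reduces to a dimension count in $\N(X^k)$, using only two facts: $\ph$ is elementary and $\ph(D_k)=Y$. Since $\de(D_k)=\codim\N(D_k,X^k)$ is by definition non-negative, it is enough to show $\dim\N(D_k,X^k)\geq\rho_{X^k}-1$.

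First, because $\ph\colon X^k\to Y$ is an elementary contraction of fiber type, we have $\dim\ker\ph_*=\rho_{X^k}-\rho_Y=1$, so $\rho_Y=\rho_{X^k}-1$.

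Second, I will show that $\ph_*(\N(D_k,X^k))=\N(Y)$, using the notation of \S\ref{notation}: $\N(\ph(D_k),Y)=\ph_*(\N(D_k,X^k))$ and $\ph(D_k)=Y$. Concretely, let $C\subset Y$ be an irreducible curve. Since $\ph_{|D_k}\colon D_k\to Y$ is surjective, some irreducible component of $(\ph_{|D_k})^{-1}(C)$ dominates $C$. Cutting this component by general hyperplane sections of $D_k$ gives an irreducible curve $C'\subset D_k$ with $\ph(C')=C$. Then $\ph_*[C']=m[C]$ with $m=\deg(\ph_{|C'}\colon C'\to C)>0$. Since classes of curves span $\N(Y)$, the map $\ph_*\colon\N(D_k,X^k)\to\N(Y)$ is surjective.

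Combining the two steps, $\dim\N(D_k,X^k)\geq\dim\N(Y)=\rho_{X^k}-1$. Hence $\de(D_k)=\codim\N(D_k,X^k)\leq 1$, that is, $\de(D_k)\in\{0,1\}$. The only point needing care is that $\N(D_k,X^k)$ makes sense when $X^k$ is singular. This is fine: $X^k$ is $\Q$-factorial with terminal singularities, $\N(X^k)$ is defined via intersection with Cartier divisors, and the pushforward of curve classes from $D_k$ is well defined. I do not expect any step to be a genuine obstacle; the surjectivity of $\ph_*$ restricted to $D_k$ is the only step with content.
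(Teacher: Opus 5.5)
Your proof is correct and is essentially the standard argument behind the cited lemma: $\ph(D_k)=Y$ gives $\ph_*(\N(D_k,X^k))=\N(Y)$, which has dimension $\rho_{X^k}-1$ because $\ph$ is elementary, so $\codim\N(D_k,X^k)\leq 1$. Your curve-lifting step just reproves the identity $\N(f(Z),Y)=f_*(\N(Z,X))$ already recorded in \S\ref{notation}, so you could cite that directly instead.
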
  
The properties of MMP's of type $(a)$ and $(b)$ when $\de_X=2$ are studied in detail in \cite{codimtwo}; in particular we will need the following results.
  \begin{lemma}[\cite{codimtwo}, \S 2.2]\label{typea}
In a special MMP of type $(a)$ we have $k\geq 3$, namely there are at least two birational steps.
  \end{lemma}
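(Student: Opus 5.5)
The plan is to show that each birational step of a special MMP lowers $\de(D_i)$ by at most one, and that the final fiber type contraction forces $\de(D_{k-1})\leq 1$ unless extra steps occur. Since $\de(D_1)=\de(D)=2$ and type $(a)$ means $\de(D_k)=0$, this leaves room for at least two birational steps, i.e.\ $k\geq 3$.

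\emph{Step 1 (one step drops $\de$ by at most one).} I will work dually, in $\Nu$. For a prime divisor $D$ we have $\de(D)=\dim \N(D,X)^{\perp}$, where $\N(D,X)^{\perp}\subset\Nu(X)$ is the space of divisor classes vanishing on every curve contained in $D$. Consider a birational step $\sigma_i\colon X^i\dasharrow X^{i+1}$, the contraction or flip of $R_i$. Let $g\colon X^i\to W$ be the contraction of $R_i$, and let $g'\colon X^{i+1}\to W$ be either the identity or the flipped contraction.

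Take $L\in \N(D_i,X^i)^{\perp}\cap R_i^{\perp}$. Since $g$ is $K$-negative and $L\cdot R_i=0$, we can write $L=g^*L_W$ for some $L_W$ on $W$ (by \cite[Theorem 3.7(4)]{kollarmori}, as in Lemma \ref{pullback}, working with $\Q$-Cartier classes). Set $L':=g'^*L_W$ on $X^{i+1}$. Every curve $C'\subset D_{i+1}$ falls into one of two cases.
\begin{enumerate}[$\bullet$]
\item $C'$ is contracted by $g'$. Then $L'\cdot C'=0$.
\item $C'$ is not contracted. Then $g'(C')\subset g(D_i)$ is the image of some curve $C\subset D_i$, since $g(D_i)=g'(D_{i+1})$. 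By the projection formula, $L'\cdot C'$ is a positive multiple of $L_W\cdot g'(C')$, which is a positive multiple of $L\cdot C=0$.
\end{enumerate}
Hence $L'\in \N(D_{i+1},X^{i+1})^{\perp}$, and $L\mapsto L'$ is injective on $R_i^{\perp}$. Intersecting with the hyperplane $R_i^{\perp}$ loses at most one dimension, so
$$\de(D_{i+1})\geq \de(D_i)-1.$$

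\emph{Step 2 (the fiber type contraction).} Let $\ph\colon X^k\to Y$ be the final contraction. It is elementary and $\ph(D_k)=Y$, so $\ph_*(\N(D_k,X^k))=\N(Y)$. Since $\ker\ph_*$ is one-dimensional, this gives $\dim\N(D_k,X^k)\geq\rho_{X^k}-1$, i.e.\ $\de(D_k)\leq 1$; this is consistent with the Proposition–Definition above. In particular $k=1$ is impossible, because $\de(D_1)=2$.

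\emph{Step 3 (conclusion).} If $k=2$, Step 1 gives $\de(D_2)\geq \de(D_1)-1=1$, so the MMP is of type $(b)$, not type $(a)$. Therefore type $(a)$ forces $k\geq 3$.

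The only delicate point is Step 1 in the small case. There one must check that the class $L'$ is well defined on the $\Q$-factorial terminal variety $X^{i+1}$, and that the injectivity and projection formula arguments still hold, even though $\N(X^i)$ and $\N(X^{i+1})$ are only identified through $\Nu$. Both follow from the fact that $g$ and $g'$ are isomorphisms in codimension one over $W$.
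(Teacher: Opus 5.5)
Your proof is correct and takes essentially the same approach as the source: the paper gives no proof of its own and cites \cite{codimtwo}, \S 2.2, where the lemma follows from the same counting argument, namely that a divisorial contraction or flip of a ray $R_i$ with $D_i\cdot R_i>0$ lowers $\delta(D_i)$ by at most one, so going from $\delta(D)=2$ to $\delta(D_k)=0$ needs at least two birational steps. Your Step 1 proves that fact directly, and the points you flag as delicate (that $L\mapsto L'$ is well defined and injective on numerical classes) do hold, by lifting curves from $W$ to $X^i$ and to $X^{i+1}$.
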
  
\begin{proposition}[\cite{codimtwo}]\label{prop5.1}
  Let $X$ be a smooth Fano variety with $\de_X=2$, $D\subset X$ a prime divisor with $\de(D)=2$, and  assume that there exists 
  a special MMP of type $(b)$ for $-D$ as in \eqref{specialMMP}.
  Then the following hold:
  \begin{enumerate}[$(a)$]
  \item  
    $X^k$ and $Y$ are smooth, and $\ph\colon X^k\to Y$ is a conic bundle;
  \item
  there exists $l\in\{1,\dotsc,k-1\}$ such that $\sigma_i$ is a flip for every $i\neq l$, while $\sigma_{l}\colon X^{l}\to X^{l+1}$
  is an elementary divisorial contraction of type $(n-1,n-2)^{sm}$;
  \item
    $\sigma_{l}(\Exc(\sigma_{l}))$ is contained in the open subset where the birational map  $X^{l+1}\dasharrow X^k$ is an isomorphism; let $A\subset X^k$ be its transform, and set $A_Y:=\ph(A)\subset Y$. Then $\ph$ is smooth over $A_Y$,
$A$ is a section of $\ph_{|\ph^{-1}(A_Y)}$,
  and $\ph^{-1}(A_Y)$ is contained in the open subset where the map $X^k\dasharrow X^{l+1}$ is an isomorphism;
\item
  let $\wi{E}_{l}\subset X^{l}$ be the transform of $\ph^{-1}(A_Y)$. Then
  $\Exc(\sigma_{l})\cup\wi{E}_{l}$ is contained in the open subset where the birational map $X^{l}\dasharrow X$  is an isomorphism, the map $\zeta_l\colon X^{l}\dasharrow Y$ is regular and a conic bundle in a neighborhood of
  $\Exc(\sigma_{l})\cup\wi{E}_{l}$, and $\Exc(\sigma_{l})+\wi{E}_{l}=\zeta_l^*(A_Y)$.
\end{enumerate}  
\end{proposition}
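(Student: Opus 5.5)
The plan is to track how the invariant $\de(D_i):=\codim\N(D_i,X^i)$ changes along the special MMP \eqref{specialMMP}, and to combine this with the fact that $X$ is Fano with $\de_X=2$.

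\textbf{Step 1: exactly one divisorial step, and $\de$ drops only there.} I would first check the behaviour of $\de(D_i)$ at each step.
\begin{enumerate}[$\bullet$]
\item If $\sigma_i$ is a flip, then $\N(D_{i+1},X^{i+1})$ is identified with $\N(D_i,X^i)$ under the natural isomorphism $\N(X^i)\cong\N(X^{i+1})$. The reason is that $D_i\cdot R_i>0$, so $D_i$ meets the flipping locus, and the flipped curves lie in $D_{i+1}$. Hence $\de(D_{i+1})=\de(D_i)$.
\item If $\sigma_i$ is divisorial with exceptional divisor $E_i$, then $\N(X^{i+1})=\N(X^i)/\R R_i$. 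Since $D_i\cdot R_i>0$, the divisor $D_i$ contains curves of $R_i$ only when $E_i\subset D_i$, which is impossible since both are prime and distinct. So $\de$ drops by at most one, and it drops exactly when $[R_i]\notin\N(D_i,X^i)$.
\end{enumerate}
Since $\de(D)=2$ and $\de(D_k)=1$, exactly one step drops $\de$. The hard part is to show that a divisorial step which does not drop $\de$ cannot occur. Here I would use the argument of \cite[\S 2]{codim}. The transform in $X$ of such an exceptional divisor, together with $D$, would produce a prime divisor $E\subset X$ with $\de(E)\geq 3$, contradicting $\de_X=2$. Concretely, such a divisor would satisfy $\N(E,X)\subset D^{\perp}\cap E'^{\perp}\cap\cdots$, and one counts codimension using Remarks \ref{disjoint} and \ref{linindep}. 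This gives the index $l$ in (b). The fact that $\sigma_l$ is of type $(n-1,n-2)^{sm}$ should follow as in \cite[Lemma 2.7]{codim}. Its exceptional divisor has $\de\le 2$ while containing no curve of $\N(D_l,X^l)$-direction in $R_l$. This forces every fiber to be one-dimensional, and then Ando's theorem applies on the smooth locus.

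\textbf{Step 2: geometry near $\Exc(\sigma_l)$ (parts (c) and (d)).} Let $E_l=\Exc(\sigma_l)$. Since $[R_l]\notin\N(D_l,X^l)$, we get $\N(D_{l+1})=\sigma_{l*}\N(D_l)$, and the flips after step $l$ all have positive intersection with $D$. I would show that the later flipping loci are disjoint from $\sigma_l(E_l)$: a flipping curve meeting $\sigma_l(E_l)$ would produce curves in $E_l$ forcing $\de(E_l)$-type contradictions, again with $\de_X=2$. Symmetrically, the flips before step $l$ avoid $E_l$ and its companion divisor $\wi E_l$. Next, $A\subset X^k$ is the isomorphic image of $\sigma_l(E_l)$, which has codimension $2$, and $\ph(A)=A_Y$ is a divisor. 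The preimage $\ph^{-1}(A_Y)$ pulls back to $E_l+\wi E_l$ in $X^l$, via Lemma \ref{pullback} applied to the $K$-negative contraction. Since $E_l\cdot e=-1$ on a fiber $e$, a fiber of $\ph$ over $A_Y$ meets $A$ in one point with multiplicity one, so $A$ is a section and $\ph$ is smooth there.

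\textbf{Step 3: $X^k$ smooth and $\ph$ a conic bundle (part (a)); this is the main obstacle.} I expect this to be the hardest step, because flips can a priori create singularities and $\ph$ could have $2$-dimensional fibers. My approach would be as follows.
\begin{enumerate}[$\bullet$]
\item \emph{Fiber dimension.} Since $\de(D_k)=1$, $\ph(D_k)=Y$, and $\ph$ is elementary, we have $\ker\ph_*\not\subset\N(D_k,X^k)$ generically. Using the divisor $\ph^{-1}(A_Y)$ and $D_k$ (which are disjoint or meet controlledly), I would deduce that every fiber is one-dimensional. A $2$-dimensional fiber would have to meet $D_k$ in a curve whose class lies in $\ker\ph_*$, contradicting this.
\item \emph{Smoothness.} I would then use that the flipped loci of all the flips lie in the transforms of $D$ (by $D_i\cdot R_i>0$). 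Hence $X^k$ is smooth outside $D_k$. Near $D_k$, I would try to show the flips are of type $(n-1,n-2)$-free, standard small flips of smooth varieties, following \cite{codimtwo}; this is the point I am least sure of.
\item \emph{Conclusion.} Once $X^k$ is smooth and $\ph$ is equidimensional of relative dimension one and $K$-negative elementary, Ando's theorem gives that $Y$ is smooth and $\ph$ is a conic bundle.
\end{enumerate}
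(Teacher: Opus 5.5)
\textbf{There are genuine gaps; the paper itself proves this proposition purely by citation to \cite{codimtwo}.}

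\textbf{First gap: Step 1 does not give (b).} The monotonicity of $\de(D_i)$ is fine: $\de$ never increases and drops by at most one per step, so it drops exactly once. There are two problems after that.
\begin{enumerate}
\item To place that drop at a \emph{divisorial} step you need flips to preserve $\N(D_i,X^i)$. ``The flipped curves lie in $D_{i+1}$'' only shows that $\N(D_{i+1},X^{i+1})$ is the full preimage of the image of $\N(D_i,X^i)$ in the base of the flipping contraction. You also need $D_i$ to contain a curve with class in $R_i$, i.e.\ flipping fibres of dimension $\geq 2$. That follows from Ionescu--Wi\'sniewski on a smooth variety, but $X^i$ is only known to be terminal. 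Separately, your sentence that $D_i$ contains curves of $R_i$ only if $E_i\subset D_i$ is false, and it contradicts your next sentence.
\item More importantly, nothing in the counting forbids a further divisorial step $\sigma_j$ with $[R_j]\in\N(D_j,X^j)$, and your proposed contradiction cannot work. Since $D_j\cdot R_j>0$, every exceptional divisor of the MMP meets (the transform of) $D$. So Remark \ref{disjoint} never gives $\N(E,X)\subset D^{\perp}$, and no divisor with $\de\geq 3$ appears. The paper gets the uniqueness of the divisorial step from a separate result, \cite[Proposition 5.1]{codimtwo}, not from the $\de$-count. Your Step 1 leaves it unproved.
\end{enumerate}

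\textbf{Second gap: the mechanism behind (c), (d), the ``sm'' in (b), and (a) is missing.} It is a comparison of anticanonical degrees. Every step is $K$-negative. So if a curve $\Gamma\subset X^i$ meets, but is not contained in, the locus where $X^i\dasharrow X$ is not an isomorphism, its transform satisfies $-K_X\cdot\hat\Gamma<-K_{X^i}\cdot\Gamma$. On the other hand $-K_X\cdot\hat\Gamma\geq 1$ because $X$ is Fano.
\begin{enumerate}
\item Applied to fibres of $\sigma_l$ (anticanonical degree at most $1$), this puts $\Exc(\sigma_l)$ in the isomorphism locus. Only then is $X^l$ smooth there, so that Ando/Wi\'sniewski applies. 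Your ``Ando on the smooth locus'' presupposes exactly this, which is the first claim of (d).
\item Applied to fibres of $\ph$ (degree $2$) through $A$, it forces them to be irreducible, to meet $A$ once transversally, and to avoid all flipping loci. That is (c) and the rest of (d).
\end{enumerate}
Your substitutes do not do this. ``$E_l\cdot e=-1$, hence a fibre of $\ph$ over $A_Y$ meets $A$ once'' is a non sequitur, since $e$ is a fibre of $\sigma_l$, not of $\ph$. ``$\de(E_l)$-type contradictions'' has no content.

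\textbf{On (a).} Your fibre-dimension argument is correct: $\N(D_k,X^k)\cap\ker\ph_*=0$ exactly, so $D_k$ contains no $\ph$-contracted curve. The inclusion $\Sing(X^k)\subset D_k$ is also correct. But you give no way to exclude terminal singularities on $D_k$. Without smoothness of $X^k$ you cannot conclude that $Y$ is smooth and $\ph$ is a conic bundle. The paper takes (a) from \cite[Lemma 2.2]{codimtwo}.
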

\begin{proof}
Part $(a)$ is \cite[Lemma 2.2]{codimtwo}. By [\emph{ibidem}, \S 2.3] 
there is a special index  $l\in\{1,\dotsc,k-1\}$ such that
 $\sigma_{l}\colon X^{l}\to X^{l+1}$
   is an elementary divisorial contraction of type $(n-1,n-2)^{sm}$.
   Then
 [\emph{ibidem}, Proposition 5.1] shows that $\sigma_i$ is a flip for every $i\neq l$, giving $(b)$. Finally $(c)$ and $(d)$ are again in 
[\emph{ibidem}, \S 2.3].
\end{proof}
\begin{prg}\label{outlinemain}
We are now ready to prove Theorem \ref{main} from the Introduction, on the structure of Fano varieties $X$ with $\de_X=2$. The proof will take all the rest of this section; let us give an outline.

If there exist a  prime divisor $D\subset X$ with $\de(D)=2$ and a special MMP of type $(b)$ for $-D$, then the statement follows rather directly from Proposition \ref{prop5.1} (see \ref{2014}). Thus the proof consists in showing that  this case is always verified, namely that
there always exists such $D$ with a special MMP of type $(b)$.\footnote{Differently from this case, let us note that
in general it can happen
that there are no special MMP's 
of type $(a)$, see Remark \ref{typeb}.}

We work by contradiction, and assume that
for every prime divisor $D\subset X$ with $\de(D)=2$, every special MMP for $-D$ is of type $(a)$
(see Assumption \ref{assumption}). This situation has already been studied in \cite{codimtwo}, where it is shown that under this assumption there exists a flat, quasi-elementary fibration in del Pezzo surfaces $\psi\colon X\to T$, with $\rho_X-\rho_T=3$, such that  every extremal ray of $\NE(\psi)$ is of type $(n-1,n-2)^{sm}$ (see \ref{previous}).

We consider the extremal rays $R_1,\dotsc,R_m$ of the $3$-dimensional cone $\NE(\psi)$ and their loci $E_1,\dotsc,E_m$, and study in detail the relative positions of the divisors 
$E_1,\dotsc,E_m$ and the intersections  $E_i\cdot e_j$ for $i,j=1,\dotsc,m$, where $e_j$ is a non-trivial fiber of the contraction of $R_j$  (see \ref{ilme} -- \ref{old}); here we use the assumption on the non-existence of special MMP's of type $(b)$. In particular, we show that whenever two rays $R_i$ span a face  of $\NE(\psi)$, their loci are disjoint (see \ref{cone_psi}), and the  contraction of this $2$-dimensional face  is birational (see \ref{birational}).

Then we consider the generic fiber $X_\eta$ of $\psi$,  a smooth del Pezzo surface over the field $K:=\C(T)$. Using results from \S\ref{fibrations}, we show that 
$\rho_{X_\eta}=3$, that $\NE(X_\eta)\cong\NE(\psi)$, and that every contraction of $X_\eta$ extends to a contraction of $X$ over $T$ (see \ref{turkish}). This implies that the surface $X_\eta$ does not have a morphism onto a curve, and hence that $X_\eta$ is one of the 10 surfaces in Table \ref{table1}; in particular the number $m$ of extremal rays of $\NE(\psi)$ is $5$, $6$, or $8$ (see \ref{nocurve}, \ref{list10}).

We use the previous results to show that  \emph{the classes $[E_1],\dotsc,[E_m]\in\Nu(X)$  span a $3$-dimensional subspace}, using various strategies. When $m=8$ the proof is simple (see \ref{m=8}), while in the cases where $m=5,6$ we need to use two different arguments, depending on $X_\eta$ (see \ref{milano}, \ref{dim3}, and Remark \ref{long}).

Finally, using similar techniques as in \cite[\S 3]{codim}, we use this information on the classes $[E_1],\dotsc,[E_m]$ to show that $E_i\cong\pr^1\times B$ for every $i$, and to construct a nef divisor $H$ that defines a contraction $X\to S$ onto a surface, ``transverse'' to $\psi$.  Finally we show that $X\cong S\times T$, but this is impossible by the previous classification of the possible generic fibers of $\psi$, and we get a contradiction.
\end{prg}
\begin{proof}[Proof of Theorem \ref{main}]
  Let $X$ be a Fano variety with $\de_X=2$.
\begin{parg}\label{2014}
 Suppose that there exist a  prime divisor $D\subset X$ with $\de(D)=2$ and a special MMP of type $(b)$ for $-D$:
$$X=X^1\stackrel{\sigma_1}{\dasharrow} X^2\dasharrow\cdots\dasharrow
X^{k-1}\stackrel{\sigma_{k-1}}{\dasharrow} X^k\stackrel{\ph}{\la}Y$$
  Then the statement follows from Proposition \ref{prop5.1}.
\end{parg}
\begin{proof}
By Proposition \ref{prop5.1}$(a)$
 $X^k$ and $Y$ are smooth, and $\ph$ is a conic bundle. Let $A\subset X^k$ be the transform of $\sigma_{l}(\Exc(\sigma_{l}))\subset X^{l+1}$ as in Proposition \ref{prop5.1}$(c)$. Then $\ph$ is smooth over $A_Y=\ph(A)\subset Y$, and
$A$ is a section of $\ph_{|\ph^{-1}(A_Y)}$.

Let $\sigma\colon X'\to X^k$ be the blow-up of $A$. Note that $A$ is smooth by Proposition \ref{prop5.1}, so that $X'$ is smooth too.
  We have a diagram:
  $$\xymatrix{ X\ar@{-->}@/^{1pc}/[rr]^{\xi}\ar@{-->}[r]&{X^{l}}\ar@{-->}[r]
    \ar[d]^{\sigma_{l}}&{X'}\ar[dr]^{\zeta}\ar[d]_{\sigma}&\\
    &X^{l+1}\ar@{-->}[r]&{X^k}\ar[r]^{\ph}&Y
  }$$
By Proposition \ref{prop5.1}$(c)$ and $(d)$, $\zeta:=\ph\circ\sigma\colon X'\to Y$ is a conic bundle. Moreover
 the composition $\xi\colon X\dasharrow X'$ is an isomorphism in codimension one, and $\zeta^{-1}(A_Y)\subset\dom(\xi^{-1})$.

Finally we notice that $\xi$ factors
 as a finite sequence of $K$-negative flips, see \cite[Proposition 3.7(1)]{codimtwo}. Thus we set $X_2:=X^k$ and $A_2:=A$, and we have the statement of Theorem \ref{main}.
\end{proof}
By \ref{2014}, to prove the statement it is enough to show that there  exist some $D\subset X$ with $\de(D)=2$ and a special MMP of type $(b)$ for $-D$.
 We proceed by contradiction and make the following:
\begin{assumption}\label{assumption}
For every prime divisor $D\subset X$ with $\de(D)=2$, every special MMP for $-D$ is of type $(a)$.
\end{assumption}
\begin{parg}[\cite{codimtwo}, proof of Theorem 4.1]\label{previous}
  Under Assumption \ref{assumption}  we have the following:
  \begin{enumerate}[$(a)$]
    \item\label{a}
There exists a  contraction
$$\psi\colon X\la T$$
which is a quasi-elementary, equidimensional fibration in del Pezzo surfaces, with $\dim T=n-2$ and $\rho_X-\rho_T=3$.
\item\label{b}
  Every extremal ray of $\NE(\psi)$ is of type $(n-1,n-2)^{sm}$.
\item\label{c} There are $4$ distinct extremal rays $R_0,R_1,R_2,R_3$ of $\NE(\psi)$ such that $R_0+R_1$, $R_1+R_2$,  and $R_2+R_3$ are faces of $\NE(\psi)$ (see Figure \ref{figura_raggi}), and if
  $E_{i}:=\Lo(R_i)$ for $i=0,1,2,3$, we have
  $E_{0}\cap E_{1}=\emptyset$,
  $E_{1}\cap E_{2}=\emptyset$, $E_{2}\cap E_{3}=\emptyset$, $E_0\cdot R_3>0$, and $E_3\cdot R_0>0$.
  \end{enumerate}
\end{parg}
\begin{proof}
  Statement $(\ref{a})$ is proved in \cite[proof of Theorem 4.1, 4.5 and 4.13]{codimtwo}, and $(\ref{b})$
  in [\emph{ibidem}, 4.3 -- 4.6].
  Four extremal rays of $\NE(\psi)$ are introduced  in [\emph{ibidem}, 4.3, 4.4,  4.7]:
 $R_1$, $\tilde{R}_2$, $\tilde{R}_3$, and $\tilde{R}_4$. For convenience of notation we set $R_2:=\tilde{R}_2$,  
 $R_3:=\tilde{R}_4$, and $R_0:=\tilde{R}_3$. Statement $(\ref{c})$ is shown in [\emph{ibidem}, 4.4, 4.7, 4.8, 4.9].
\end{proof}
 \begin{figure}[h]\caption{A section of $\NE(\psi)$.}\label{figura_raggi}

\bigskip
  
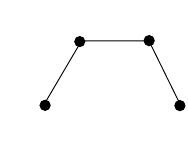
    \end{figure}
\begin{notation}\label{not}
   Let $m$ the number of extremal rays of $\NE(\psi)$; note that  $m\geq 4$ by \ref{previous}$(c)$.
  Let $R_0,R_1,R_2,R_3$ be as in \ref{previous}$(\ref{c})$, and set $R_m:=R_0$; recall that $R_0+R_1$, $R_1+R_2$, and $R_2+R_3$ are faces of $\NE(\psi)$.
Let 
 $R_4,\dotsc,R_{m-1}$ be the remaining extremal rays of $\NE(\psi)$, numbered in such a way that  $R_i$ and $R_{i+1}$ are adjacent for $i=3,\dotsc,m-1$. 
 Every $R_i$ is of type $(n-1,n-2)^{sm}$ by \ref{previous}$(\ref{b})$;  for every $i=1,\dotsc,m$ set $E_i:=\Lo(R_i)$, and let $e_i\subset E_i$ be a non-trivial fiber of the contraction of $R_i$, so that $e_i\cong\pr^1$ with $E_i\cdot e_i=-1$. 
 To simplify the notation, the index $i$ in $R_i$ is to be considered modulo  $m$, so that $R_m=R_0$,  $R_{m+1}=R_1$, etc.\, and similarly for the divisors $E_i$ and the curves $e_i$ (see Figure \ref{figura_raggi}).
\end{notation}
\begin{parg}\label{ilme}
 Notation as in \ref{not}. 
Suppose that for some $i\in\{1,\dotsc,m\}$ we have $E_{i-1}\cap E_i=E_i\cap E_{i+1}=\emptyset$. Then $E_i\cdot e_j>0$ for every $j\in\{1,\dotsc,m\}\smallsetminus \{i-1,i,i+1\}$.
\end{parg}
\begin{proof}
  Let $j\in\{1,\dotsc,m\}\smallsetminus \{i-1,i,i+1\}$, and consider the classes
  $[e_{i-1}],[e_i]$, $[e_{i+1}],[e_j]\in\N(X)$. They all belong to the $3$-dimensional subspace $\ker\psi_*$, so there must be a  relation of linear dependence among them. On the other hand each three of them are linearly independent, because they span distint extremal rays, so we can write
  $$e_j+a e_i\equiv be_{i-1}+ce_{i+1}$$
  with $a,b,c\neq 0$.
 We also note that $a,b,c$ must be positive, because $R_{i-1}$, $R_i$, $R_{i+1}$, $R_j$, $R_{i-1}+R_i$, and $R_i+R_{i+1}$ are faces of $\NE(\psi)$.

 We have $E_i\cdot e_{i-1}=E_i\cdot e_{i+1}=0$, while $E_i\cdot e_i<0$.
 By intersecting with $E_i$ in the relation above we deduce that $E_i\cdot e_j>0$.
\end{proof}
\begin{parg}\label{cone_psi}
  Notation as in \ref{not}. We have  $E_{i}\cap E_{i+1}=\emptyset$ for every $i=1,\dotsc,m$.
 \end{parg}  
\begin{proof}
  We proceed by induction on $i=1,\dotsc,m$. For $i=1,2,m$ the statement follows from   \ref{previous}$(\ref{c})$.

  Let  $i\in\{3,\dotsc,m-1\}$ and suppose that
  $E_j\cap E_{j+1}=\emptyset$ for every $j=1,\dotsc,i-1$. Since we also have
  $E_{j-1}\cap E_j=\emptyset$, it follows from \ref{ilme} that:
\begin{equation}\label{fabri}
 E_j\cdot e_h>0\text{ for every }j\in\{1,\dotsc,i-1\}\text{ and } h\in\{1,\dotsc,m\}\smallsetminus\{j-1,j,j+1\}.\end{equation}

By contradiction, suppose that $E_i\cap E_{i+1}\neq\emptyset$. By  \eqref{fabri} and the induction assumption and have $E_{i-1}\cdot R_{i+1}>0$ and $E_{i-1}\cap E_i=\emptyset$.
We apply Lemma \ref{pisa}  to the divisor $E_{i+1}$ with the pair of disjoint divisors $E_{i-1}$ and $E_i$, and we get
\begin{equation}\label{prima}
E_i\cdot R_{i+1}>0.\end{equation}

\medskip

We show that $E_{i+1}\cdot R_i>0$. Note that if $m=4$, then $i=3$, and $E_0\cdot R_3>0$ by \ref{previous}$(c)$, thus we can assume that $m\geq 5$.
By induction and by \eqref{fabri} we also have $E_{i-2}\cap E_{i-1}=\emptyset$ and $E_{i-2}\cdot R_i>0$, moreover 
$E_{i-2}$ and $E_{i-1}$ both have positive intersection with $R_{i+1}$ (because 
$i+1\neq i-3$, as $m\geq 5$).
Suppose that the contraction of $R_i$ is not finite on $E_{i+1}$. Then we apply 
\cite[Lemma 3.4]{codimtwo} to $E_{i+1}$ with the pair of disjoint divisors 
$E_{i-2}$, $E_{i-1}$ and the extremal ray $R_i$, and we get that $E_{i-2}\cdot R_i\leq 0$, a contradiction.

Therefore  the contraction of $R_i$ must be finite on $E_{i+1}$, namely $E_{i+1}$ cannot contain curves with class in $R_i$. On the other hand we have assumed that $E_i\cap E_{i+1}\neq\emptyset$, and we conclude that
 \begin{equation}\label{seconda}E_{i+1}\cdot R_i>0.\end{equation}

\medskip
 
Let us consider the contraction $\sigma_i\colon X\to X_i$ of $R_{i}$ and
the contraction $X\to Z$
of the face $R_i+R_{i+1}$, which factors through $\sigma_i$:
 \begin{equation}\label{MMP}
  X\stackrel{\sigma_{i}}{\la} X_{i}\stackrel{\xi}{\la}Z.
\end{equation}
Let us show that this is a special MMP of type $(b)$ for $-E_1$ (see p.\ \pageref{specialMMP}), contradicting Assumption \ref{assumption}.
Note that  $E_0\cap E_1=\emptyset$ and $E_1\cap E_2=\emptyset$ by \ref{previous}$(\ref{c})$, and $E_0\neq E_2$ because $E_2\cap E_3=\emptyset$ while $E_0\cdot R_3>0$ still by \ref{previous}$(\ref{c})$, therefore $\de(E_1)=2$ (see Remark \ref{easy}).

First of all
$\sigma_{i}$ is birational of type $(n-1,n-2)^{sm}$ (see \ref{previous}$(\ref{b})$), in particular it is $K$-negative, and $X_i$ is smooth. Moreover
 \begin{equation}\label{terza}
  E_1\cdot R_i>0\quad\text{and}\quad E_1\cdot R_{i+1}\geq 0,
\end{equation}
 by \eqref{fabri} and \ref{previous}$(\ref{c})$, because $i\in\{3,\dotsc,m-1\}$.

Concerning the second step of the MMP, we have $[\sigma_i(e_{i+1})]\in\NE(\xi)$, hence $\sigma_{i}(E_{i+1})$ is a divisor contained in $\Exc(\xi)$. Using \eqref{prima} and \eqref{seconda}:
\begin{align*} \sigma_{i}(E_{i+1})\cdot\sigma_{i}(e_{i+1})&=\sigma_{i}^*\bigl(\sigma_{i}(E_{i+1})
                                                            \bigr)\cdot e_{i+1}\\
&  =\bigl(E_{i+1}+(E_{i+1}\cdot e_{i})E_i\bigr)\cdot e_{i+1}=(E_{i+1}\cdot e_i)(E_i\cdot e_{i+1})-1\geq 0
  \end{align*}
  so that $\xi$ is of fiber type. By \cite[Lemma 2.8]{prokshok}, there exists a $\Q$-divisor $\Delta_i$ such that $(X_i,\Delta_i)$ is a klt log Fano, in particular $-K_{X_i}$ is big and $[-K_{X_i}]\in\Eff(X_i)$; therefore
$\xi$ is $K$-negative.
  We also have, by \eqref{prima} and \eqref{terza}:
  \begin{align*}
    \sigma_{i}(E_1)\cdot\sigma_{i}(e_{i+1})&=
                                         \sigma_{i}^*\bigl(\sigma_{i}(E_{1})\bigr)\cdot e_{i+1} =   \bigl(E_1+(E_1\cdot e_{i})E_{i}\bigr)\cdot e_{i+1}\\ & =E_1\cdot e_{i+1}+(E_1\cdot e_{i})(E_{i}\cdot e_{i+1})
> E_1\cdot e_{i+1}\geq 0,\end{align*}
so that \eqref{MMP} 
 is a special MMP for $-E_1$. Moreover by Lemma \ref{typea} it must be of type $(b)$, because there is only one birational step; this contradicts Assumption \ref{assumption}.
We conclude that $E_i\cap E_{i+1}=\emptyset$.
\end{proof}
\begin{parg}\label{positive}
  Let $i,j\in\{1,\dotsc,m\}$; we have $E_i\cdot R_j>0$ whenever $R_i\neq R_j$ and $R_i$ and $R_j$ are not adjacent. Moreover
 the divisors
  $E_1,\dotsc,E_m$ are all distinct.
\end{parg}
\begin{proof}
The first statement follows from \ref{cone_psi} and \ref{ilme}. Now consider $a,b$ with $1\leq a<b\leq m$.
If $R_a$ and $R_b$ are adjacent, then
$E_{a}\cap E_b=\emptyset$ by \ref{cone_psi}, in particular $E_{a}\neq E_b$. Otherwise, $E_a\cdot R_b>0$ by the first statement; this implies that $E_{a}\neq E_b$, because $E_b\cdot R_b<0$.
\end{proof}  
\begin{parg}\label{cone_psi2}
  For every $i=1,\dotsc,m$ we have
      $\de(E_i)=2$ and $\N(E_i,X)=E_{i-1}^{\perp}\cap E_{i+1}^{\perp}$. 
\end{parg}
\begin{proof}
  Let $i\in\{1,\dotsc,m\}$. Since $E_{i-1}\cap E_i=E_i\cap E_{i+1}=\emptyset$
  by \ref{cone_psi}, and $E_{i-1}\neq E_{i+1}$ by \ref{positive},
  the statement follows from Remark \ref{easy}.
\end{proof}
\begin{parg}\label{bergeggi}
  Suppose that $m\geq 6$, and let $i,j\in\{1,\dotsc,m\}$ be such that $\{i-1,i,i+1\}\cap\{j-1,j,j+1\}=\emptyset$.
 Then  the classes $[E_{i-1}],[E_{i+1}],[E_{j-1}],[E_{j+1}]$ are linearly dependent in $\Nu(X)$. 
\end{parg}
\begin{proof}
  We have $E_i\cdot R_j>0$ by \ref{positive}; in particular $E_i\cap E_j\neq\emptyset$ and $\N(E_j,X)=\R R_j+\N(E_i\cap E_j,X)$ (see Remark \ref{pisa0}). Since $\de(E_j)=2$ (see \ref{cone_psi2}), we get $\codim\N(E_i\cap E_j,X)\leq 3$.
  On the other hand, by \ref{cone_psi}, $E_i\cap E_j$ is disjoint from $E_{i-1}$, $E_{i+1}$, $E_{j-1}$, and $E_{j+1}$, therefore $\N(E_i\cap E_j,X)\subset (E_{i-1})^{\perp}\cap (E_{i+1})^{\perp} \cap (E_{j-1})^{\perp}\cap (E_{j+1})^{\perp}$ (see Remark \ref{disjoint}). We conclude that  the classes $[E_{i-1}],[E_{i+1}],[E_{j-1}],[E_{j+1}]$ are linearly dependent in $\Nu(X)$. 
\end{proof}  
\begin{parg}\label{old}
 Let $R_i,R_j,R_{j+1}$ be three distinct extremal rays of $\NE(\psi)$ such that
  $R_i$ is not adjacent to $R_j$ nor $R_{j+1}$, with $i,j\in\{1,\dotsc,m\}$. Then one of the following holds:
\begin{enumerate}[$(i)$]  
\item $E_{h}\cdot e_i=1$ for some $h\in\{j,j+1\}$;
\item $E_{h}\cdot e_{i}=2$ for some $h\in\{j,j+1\}$,
and the general fiber of $\psi$ is a del Pezzo surface of degree one or two;
  \item $E_{h}\cdot e_{i}=3$ for some $h\in\{j,j+1\}$, and the general fiber of $\psi$ is a del Pezzo surface of degree one;
  \item the classes $[E_{i-1}],[E_{i+1}],[E_j],[E_{j+1}]$ are linearly dependent in $\Nu(X)$.
\end{enumerate}  
\end{parg}
\begin{proof}
  Let $\pi\colon E_i\to B$ be the $\pr^1$-bundle induced by the contraction of $R_i$. By \ref{cone_psi} and \ref{positive} we have $E_j\cap E_{j+1}=\emptyset$, $E_j\cdot R_i>0$, and $E_{j+1}\cdot R_i>0$.  Then
 Lemma \ref{pisa} implies that  $R_i\not\subset \N(E_j,X)$ and $R_i\not\subset \N(E_{j+1},X)$, thus  $\pi$ is finite on $E_i\cap E_j$ and $E_i\cap E_{j+1}$.

 We apply Lemma \ref{section} to $E_i$, with the divisors $E_{j|E_i}$ and $E_{j+1|E_i}$. It implies that either the classes of these divisors are proportional in $\Nu(E_i)$, or one of these divisors
  is supported on a section of $\pi$. 

 Suppose first the classes of  $E_{j|E_i}$ and $E_{j+1|E_i}$ are proportional in $\Nu(E_i)$. Then there exists $\lambda\in\Q_{>0}$ such that $(E_j-\lambda E_{j+1})_{|E_i}\equiv 0$, so that   $(E_j-\lambda E_{j+1})\cdot C=0$ for every curve $C\subset E_i$. This implies that $\N(E_i,X)\subset (E_j-\lambda E_{j+1})^{\perp}$.
 On the other hand we have $\N(E_i,X)=E_{i-1}^{\perp}\cap E_{i+1}^{\perp}$ by \ref{cone_psi2}, therefore $[E_j]-\lambda [E_{j+1}]$ is a linear combination of $[E_{i-1}]$ and $[E_{i+1}]$ in $\Nu(X)$, and we get $(iv)$.

  Otherwise, there is $h\in\{j,j+1\}$ such that 
  $E_{h|E_i}$ is supported on a section of $\pi$. We apply Lemma \ref{t} to the two extremal rays $R_i$ and $R_h$, and get $(i)$, $(ii)$, or $(iii)$.
\end{proof}
\begin{parg}\label{birational}
The contraction of any proper face of $\NE(\psi)$ is birational. 
\end{parg}
\begin{proof}
  For extremal rays, this follows from \ref{previous}$(b)$. The 2-dimensional faces of $\NE(\psi)$ are  $R_i+R_{i+1}$  ($i\in\{1,\dotsc,m\}$),
and $E_i\cdot R_{i+1}=E_{i+1}\cdot R_i=0$
  by \ref{cone_psi}, therefore $(E_i+E_{i+1})\cdot R_i<0$ and $(E_i+E_{i+1})\cdot R_{i+1}<0$. This implies that every irreducible curve with class in the face $R_i+R_{i+1}$ is contained in $E_i\cup E_{i+1}$, hence the contraction associated to this face is birational with exceptional locus $E_i\cup E_{i+1}$.
\end{proof}
 Consider now the generic  fiber $X_\eta$ of $\psi$; it is a smooth del Pezzo surface over $K:=\C(T)$. Note that the degree $(K_{X_\eta})^2$ of $X_\eta$ is the same as the degree $(K_{X_t})^2$ of a general closed fiber $X_t$ of $\psi$ (see Lemma \ref{fulton}).
\begin{parg}\label{turkish}
We have $\rho_{X_\eta}=3$ and $\psi(E_i)=T$ for every $i=1,\dotsc,m$. Moreover the natural morphism $X_\eta\to X$ gives an isomorphism $\NE(X_\eta)\cong\NE(\psi)$, and every contraction of $X_\eta$ is the restriction of a contraction of $X$ over $T$.
\end{parg}
\begin{proof}
This follows from Lemmas \ref{degreeR}, \ref{restr}, and \ref{NE}.
\end{proof}  
      \begin{parg}\label{nocurve}
The surface  $X_{\eta}$ does not have a morphism onto a curve over $K$.
\end{parg}
\begin{proof}
 This is because the contraction of every proper face of $\NE(X_{\eta})$ is birational, by \ref{birational} and \ref{turkish}.
\end{proof}
\begin{parg}\label{list10}
  The surface $X_\eta$ is one of the 10 surfaces in Table \ref{table1}; in particular $m\in\{5,6,8\}$.
\end{parg}
\begin{proof}
  Straightforward from \ref{nocurve} and Lemma \ref{10surfaces}.
\end{proof}
We are now going to prove that the linear span of the classes $[E_1],\dotsc,[E_m]$ in $\Nu(X)$ has dimension $3$; we give different proofs depending on the value of $m$ and on the generic fiber $X_\eta$.
\begin{parg}\label{m=8}
Suppose that $m=8$. Then  the linear span of the classes $[E_1],\dotsc,[E_m]$ in $\Nu(X)$ has dimension $3$.
\end{parg}
\begin{proof}
  Note that for distinct $i,j,k\in\{1,\dotsc,m\}$, the classes $[E_i],[E_j],[E_k]$ are always linearly independent, see Remark \ref{linindep}.
  Let $W\subset\Nu(X)$ be the linear span of $[E_1],[E_3],[E_6]$.
  
  We apply \ref{bergeggi} several times:
  \begin{enumerate}[--]
    \item
  for $i=2$, $j=5$ we get that $[E_1],[E_3],[E_4],[E_6]$ are linearly dependent, thus $[E_4]\in W$;
\item
for  $i=2$, $j=7$ we get that $[E_1],[E_3],[E_6],[E_8]$ are linearly dependent, thus $[E_8]\in W$;
\item
for  $i=3$, $j=7$ we get that $[E_2],[E_4],[E_6],[E_8]$ are linearly dependent, thus $[E_2]\in W$;
\item
for  $i=3$, $j=8$ we get that $[E_2],[E_4],[E_7],[E_1]$ are linearly dependent, thus $[E_7]\in W$;
  \item
for   $i=2$, $j=6$
 we get that $[E_1],[E_3],[E_5],[E_7]$ are linearly dependent, thus $[E_5]\in W$ and we have the statement.\qedhere
\end{enumerate}
\end{proof}
\begin{prg}\label{tenerife}
  Suppose that $m=6$. Then each of the following sets are linearly dependent:
  $$\{[E_{1}],[E_{3}],[E_{4}],[E_{6}]\},\quad \{[E_{1}],[E_{2}],[E_{4}],[E_{5}]\},
  \quad
   \{[E_{2}],[E_{3}],[E_{5}],[E_{6}]\}.$$
This is straightforward from \ref{bergeggi}.
\end{prg}
\begin{parg}\label{milano}
  Suppose that $X_\eta\cong\Bl(Y_6;2,2)$ or $X_\eta\cong\Bl(Y_3;1,1)$; in particular $m=6$.
  Then the linear span of the classes $[E_1],\dotsc,[E_6]$ in $\Nu(X)$ has dimension $3$.
  \end{parg}  
\begin{proof}
 By \cite[Figures 29 and 37]{BFSZ} we see that all the contractions associated to the $2$-dimensional faces of $\NE(X_\eta)$ are of the same type, and express $X_\eta$ as the blow-up of some $Y_6$ or $Y_3$.
 
  Let us consider the contraction $\alpha\colon X\to Z$ of the face $R_1+R_2$ of $\NE(\psi)$; there is a contraction $\pi\colon Z\to T$ such that $\psi=\pi\circ\alpha$:
   $$\xymatrix{X\ar[r]_{\alpha}\ar@/^{1pc}/[rr]^{\psi}
    &{Z}\ar[r]_{\pi}&T
  }$$
  The map $\alpha$ is birational with exceptional locus $E_1\cup E_2$ (see \ref{birational}), and since $R_1$ and $R_2$ are of type $(n-1,n-2)^{sm}$ (see \ref{previous}$(b)$) and $E_1\cap E_2=\emptyset$ (see \ref{cone_psi}), $Z$ is smooth and $\alpha$ is the blow-up of two disjoint smooth, irreducible, codimension $2$ subvarieties $A_1,A_2\subset Z$, with $A_i=\alpha(E_i)$ for $i=1,2$. Since $\psi(E_i)=T$ (see \ref{turkish}), we have $\pi(A_i)=T$.

  We show that $\pi$ is finite on $A_i$ for $i=1,2$. For simplicity let $i=1$. 
  We have $\de(E_1)=2$ and $\N(E_1,X)=E_6^{\perp}\cap E_2^{\perp}$ (see \ref{cone_psi2}), so that $[e_2]\not\in\N(E_1,X)$ and $\N(E_1,X)\cap\ker\alpha_*=\R[e_1]$. Since $\N(A_1,Z)=\alpha_*(\N(E_1,X))$, we get $\dim\N(A_1,Z)=\dim\N(E_1,X)-1=\rho_X-3=\rho_T$. On the other hand $\pi_*(\N(A_1,Z))=\N(T)$, therefore $\N(A_1,Z)\cap\ker\pi_*=\{0\}$, and $\pi$ is finite on $A_1$.

  Over the generic point $\eta$ of $T$, $\alpha$ restricts to a blow-up $X_\eta\to Z_\eta$, with $Z_\eta\cong Y_6$ or $Z_\eta\cong Y_3$. Therefore the general closed fiber $Z_t$ of $\pi$ is a (complex) del Pezzo surface of degree $6$ or $3$ (see Lemma \ref{fulton}), and contains a $(-1)$-curve, namely a smooth rational curve $\ell\subset Z_t$ with $-K_Z\cdot\ell=-K_{Z_t}\cdot\ell=1$.

  Let us now apply to $\ell$ some results on deformations of rational curves. Let $V$ be an irreducible component of $\Rat(Z)$ parametrizing $\ell$, see \cite[\S II.2]{kollar}. Note that every curve parametrized by $V$ is an irreducible and reduced rational curve algebraically equivalent to $\ell$, hence numerically equivalent to $\ell$, and contracted by $\pi$.
  Let $\Lo(V)\subset Z$ be the union 
  of the curves parametrized by $V$. Since $-K_Z$ is $\pi$-ample and  $-K_Z\cdot\ell=1$, $V$ is proper, and $\Lo(V)$ is an irreducible subvariety of $Z$. Note also that $\Lo(V)\subsetneq Z$, otherwise  the general member of $V$ should be a free curve and would have anticanonical degree $\geq 2$.

  We have $\dim V\geq n-2$ from the standard lower bound on $\dim\Hom(\pr^1,Z)$, see \cite[Theorems II.1.2 and II.2.15]{kollar}.
  By \cite[Proposition IV.2.5]{kollar} we also have, for $z\in\Lo(V)$ general:
  $$\dim\Lo(V)+\dim\Lo(V_z)=\dim V+2\geq n,$$
  where $\Lo(V_z)\subset Z$ is the union of the curves of the family $V$ that contain $z$, and is again a closed subset of $Z$. Since $\Lo(V_z)$ is contained in the fiber of $\pi$ containing $z$, and $\psi$ is equidimensional (see \ref{previous}$(a)$), we have $\dim\Lo(V_z)\leq 2$, and we get $\dim\Lo(V)\geq n-2$.
Moreover 
if $\dim\Lo(V)=n-2$, then $\dim\Lo(V_z)=2$, and by semicontinuity  $\dim\Lo(V_{z'})=2$ for every $z'\in\Lo(V)$. However this is impossible, because for $z'\in\ell$ we get 
 $\Lo(V_{z'})=Z_t$, and
 the del Pezzo surface $Z_t$ contains only finitely many rational curves of anticanonical degree one. 
We conclude that $\Lo(V)\subset Z$ is a prime divisor.

We show that $\Lo(V)\cap A_i=\emptyset$ for $i=1,2$. Indeed otherwise there exists a curve $\ell_0$ of the family $V$ such that $A_i\cap\ell_0\neq\emptyset$. On the other hand $\ell_0\not\subset A_i$, because $\pi$ is finite on $A_i$. Then if $\tilde\ell_0\subset X$ is the transform of $\ell_0$, we have $-K_X\cdot\tilde\ell_0< -K_Z\cdot\ell_0=1$, a contradiction.

Let $D\subset X$ be the transform of $\Lo(V)$; then $D\cap(E_1\cup E_2)=\emptyset$. Thus
for $i=1,2$ we have
$\N(E_i,X)\subset D^{\perp}$ (see Remark \ref{disjoint}) and $\N(E_i,X)=E_{i-1}^{\perp}\cap E_{i+1}^{\perp}$ (see \ref{cone_psi2}), therefore $[D]$ is a linear combination of $[E_{i-1}]$ and $[E_{i+1}]$. We conclude that 
 $[E_1],[E_2],[E_3],[E_6]$ are linearly dependent, and together with \ref{tenerife} this gives the statement. 
\end{proof}     
 \begin{parg}\label{inttables}
Suppose that $m\in\{5,6\}$ and that $X_\eta\not\cong\Bl(Y_6;2,2)$, $X_\eta\not\cong\Bl(Y_3;1,1)$. Up to  a cyclic permutation of the $E_i$'s, the possible intersection tables $E_i\cdot e_j$ in $X$, for $i,j=1,\dotsc,m$, are given in the tables below, depending on the generic fiber $X_\eta$ (see Table \ref{table1}).
\end{parg}  

{\tiny
$$\begin{array}{|c||r|r|r|r|r|}
   \hline
   
 X_\eta=\Bl(\pr^2_K;2,3)  & E_1 & E_2 & E_3 & E_4 & E_5 \\
      \hline\hline

e_1 &       -1 & 0 & 1 & 1 & 0 \\

        \hline

e_2 &   0 &    -1 & 0 & 1 & 2 \\

       \hline

e_3 &     2 & 0 & -1 & 0 & 3 \\
       
  \hline

e_4 &      2 & 3 & 0 & -1  & 0 \\

        \hline

e_5 &      0  & 2 & 1 & 0  & -1\\
      
\hline
  \end{array}\qquad  
\begin{array}{|c||r|r|r|r|r|r|}
   \hline
   
X_\eta= \Bl(\pr^2_K;2,5) & E_1 & E_2 & E_3 & E_4 & E_5 & E_6\\
      \hline\hline

e_1 &       -1 & 0 & 1 & 3 & 5 & 0 \\

        \hline

e_2 &   0 &    -1 & 0 & 2 & 6 & 1 \\

       \hline

e_3 &     2 & 0 & -1 & 0 & 5 & 2 \\
       
  \hline

e_4 &      3 & 5 & 0 & -1  & 0 & 1\\

       \hline

e_5 &      2 & 6 & 1 & 0  & -1 & 0\\
      
        \hline

e_6 &      0  & 5 & 2 & 2 & 0  & -1\\
      
\hline
  \end{array}$$     

 \medskip
     
   $$\begin{array}{|c||r|r|r|r|r|r|}
   \hline
   
 X_\eta=\Bl(Y_9;3,3) & E_1 & E_2 & E_3 & E_4 & E_5 & E_6\\
      \hline\hline

e_1 &       -1 & 0 & 2 & 3 & 2 & 0 \\

        \hline

e_2 &   0 &    -1 & 0 & 2 & 3 & 2 \\

       \hline

e_3 &     2 & 0 & -1 & 0 & 2 & 3 \\
       
  \hline

e_4 &      3 & 2 & 0 & -1  & 0 & 2\\

       \hline

e_5 &      2 & 3 & 2 & 0  & -1 & 0\\
      
        \hline

e_6 &      0  & 2 & 3 & 2 & 0  & -1\\
      
\hline
     \end{array}\qquad\begin{array}{|c||r|r|r|r|r|r|}
   \hline
   
X_\eta=\Bl(Y_8;3,3)  & E_1 & E_2 & E_3 & E_4 & E_5 & E_6\\
      \hline\hline

e_1 &       -1 & 0 & 1 & 4 & 3 & 0 \\

        \hline

e_2 &   0 &    -1 & 0 & 3 & 4 & 1 \\

       \hline

e_3 &     3 & 0 & -1 & 0 & 3 & 2 \\
       
  \hline

e_4 &      4 & 3 & 0 & -1  & 0 & 1\\

       \hline

e_5 &      3 & 4 & 1 & 0  & -1 & 0\\
      
        \hline

e_6 &      0  & 3 & 2 & 3 & 0  & -1\\
      
\hline
  \end{array}$$ }

\bigskip

\begin{proof}
We prove the statement 
in the case $X_\eta=\Bl(\pr^2_K;2,3)$,
the other cases being analogous.

Let $C_1$ and $C_2$ be the exceptional curves of the blow-up $X_\eta\to\pr^2_K$, respectively over the point of degree $2$ and of degree $3$, and let $H$ be the pullback of $\ol_{\pr^2_K}(1)$. We have $(C_1)^2=-2$, $(C_2)^2=-3$, $H^2=1$, $C_1\cdot C_2=C_1\cdot H=C_2\cdot H=0$.

We see from  \cite[Fig.~16]{BFSZ} that the cone $\NE(X_\eta)$ has $m=5$ extremal rays, generated by the classes $C_1,C_2,H-C_1,2H-C_1-C_2,3H-2C_2$ (the classes of the $5$ exceptional curves in $X_\eta$, ordered in such a way that two consecutives ones span a $2$-dimensional face of $\NE(X_\eta)$). We get the following intersection table in $X_\eta$:

{\tiny
$$\begin{array}{|c||r|r|r|r|r|}
   \hline
   
 \Bl(\pr^2_K;2,3)  & C_1 & C_2 & H-C_1 & 2H-C_1-C_2 & 3H-2C_2 \\
      \hline\hline

C_1 &       -2 & 0 & 2 & 2 & 0 \\

        \hline

C_2 &   0 &    -3 & 0 & 3 & 6 \\

       \hline

H-C_1 &     2 & 0 & -1 & 0 & 3 \\
       
  \hline

2H-C_1-C_2 &      2 & 3 & 0 & -1  & 0 \\

        \hline

3H-2C_2 &      0  & 6 & 3 & 0  & -3\\
      
\hline
  \end{array}$$}

\

Let us name the extremal rays of $\NE(\psi)$ as $R_1,\dotsc,R_5$ corresponding to the classes $C_1,C_2,H-C_1,2H-C_1-C_2,3H-2C_2$ in the same order, under the natural isomorphism $\NE(X_\eta)\cong\NE(\psi)$ (see \ref{turkish}). If $R_i$ corresponds to the exceptional curve $C_i$ in $X_\eta$, and $d_i$ is the degree of $R_i$ (see on p.\ \pageref{defidegree}),
we have $d_i=-C_i^2$ by Lemma \ref{degreeR}, therefore
$d_1=2$, $d_2=3$, $d_3=d_4=1$, and $d_5=3$.

Let $X_t$ be the fiber of $\psi$ over a general closed point $t\in T$. For every $i,j=1,\dotsc,5$ we have $E_{i|X_t}\cdot_{X_t} E_{j|X_t}=C_i\cdot_{X_\eta} C_j$ by Lemma \ref{fulton} and $E_{j|X_t}=e_{j,1}+\cdots +e_{j,d_j}$ with $e_{j,h}\equiv e_j$ for all $h=1,\dotsc,d_j$, by  Lemma \ref{degreeR}. Therefore:
$$d_j E_i\cdot e_j=E_i\cdot(e_{j,1}+\cdots +e_{j,d_j})=E_i\cdot E_{j|X_t}=
E_{i|X_t}\cdot_{X_t} E_{j|X_t}=C_i\cdot_{X_\eta} C_j,$$
which allows to deduce the intersection table $E_i\cdot e_j$ in $X$. 
\end{proof}
\begin{parg}\label{dim3}
 The linear span of the classes $[E_1],\dotsc,[E_m]$ in $\Nu(X)$ has dimension $3$.
\end{parg}  
\begin{proof}
If $m=8$ this is shown in \ref{m=8}, while if  $X_\eta\cong\Bl(Y_6;2,2)$ or $X_\eta\cong\Bl(Y_3;1,1)$ it is shown in \ref{milano}. In the remaining cases, by \ref{list10} we have 
 $m\in\{5,6\}$ and four possible generic fibers: $\Bl(\pr^2_K;2,3)$, $\Bl(\pr^2_K;2,5)$, $\Bl(Y_8;3,3)$, $\Bl(Y_9;3,3)$. 
We prove the statement by applying repeatedly  \ref{old}
in the four cases, using
the intersection tables given in \ref{inttables}.

  Suppose that $X_\eta=\Bl(\pr^2_K;2,3)$, and consider the intersection table given in \ref{inttables}; we have $m=5$ and the degree of the del Pezzo surface is $4$, therefore in \ref{old} cases $(ii)$ and $(iii)$ do not occur.
  Let us first apply  \ref{old} with $i=3$ and $\{j,j+1\}=\{5,1\}$. We have $E_1\cdot e_3=2$ and $E_5\cdot e_3=3$,
  thus case $(i)$ of \ref{old} does not occur, and we conclude that the classes $[E_1],[E_2],[E_4],[E_5]$ are linearly dependent in $\Nu(X)$. Similarly, by applying \ref{old} with $i=4$ and $\{j,j+1\}=\{1,2\}$,
as $E_1\cdot e_4=2$ and $E_2\cdot e_4=3$,
we deduce that also the classes  $[E_1],[E_2],[E_3],[E_5]$ are linearly dependent, and we have the statement.

\medskip

In the remaining cases for $X_\eta$ we have $m=6$, therefore we already have the relations of linear dependence in \ref{tenerife}.

Suppose that $X_\eta=\Bl(\pr^2_K;2,5)$, and consider the intersection table given in \ref{inttables}; now the degree of the del Pezzo surface is $2$, therefore in \ref{old} case $(iii)$ does not occur.
We apply  \ref{old} with $i=1$ and $\{j,j+1\}=\{4,5\}$. We have $E_4\cdot e_1=3$ and $E_5\cdot e_1=5$,
thus cases $(i)$ and $(ii)$ of \ref{old} do not occur, and we conclude that the classes $[E_2],[E_4],[E_5],[E_6]$ are linearly dependent in $\Nu(X)$. Together with \ref{tenerife} this gives the statement.

\medskip

In the case where $X_\eta=\Bl(Y_8;3,3)$, again the degree is $2$; we proceed  as the previous case, by applying \ref{old} with $i=1$ and $\{j,j+1\}=\{4,5\}$, and together with \ref{tenerife} this gives the statement.

\medskip

Finally suppose that $X_\eta=\Bl(Y_9;3,3)$. Here the degree is $3$, therefore in \ref{old} cases $(ii)$ and $(iii)$ do not occur. We apply  \ref{old} with $i=1$ and $\{j,j+1\}=\{3,4\}$. We have $E_3\cdot e_1=2$ and $E_4\cdot e_1=3$,
thus case $(i)$ of \ref{old} does not occur, and we conclude that the classes $[E_2],[E_3],[E_4],[E_6]$ are linearly dependent in $\Nu(X)$. Together with \ref{tenerife} this gives the statement.
\end{proof}
\begin{remark}\label{long}
The argument used in the proof of \ref{dim3} can be adapted to all of the 10 surfaces in Table \ref{table1}, except $\Bl(Y_3;1,1)$: for this reason we need a different argument in \ref{milano}.
\end{remark}  
Set $L:=E_1^{\perp}\cap\cdots\cap E_m^{\perp}\subset\N(X)$.
\begin{parg}\label{Eproduct}
  For every $i=1,\dotsc,m$ we have $E_i\cong\pr^1\times B$ and $e_i=\pr^1\times\{pt\}$. Moreover, for every $i\neq j$ such that
  $R_i$ and $R_j$ are not adjacent, we have  
  $E_i\cap E_j=\{pts\}\times B$ in both $E_i$ and $E_j$, and $\N(E_i\cap E_j,X)=L$.
\end{parg}
\begin{proof}
  We have $\codim L=3$ by \ref{dim3}, so that $L=E_a^\perp\cap E_b^\perp\cap E_c^\perp$ for every $a<b<c$ (see Remark \ref{linindep}).

  Let $i,j\in\{1,\dotsc,m\}$ be such that $R_i\neq R_j$ and $R_i$ and $R_j$ are not adjacent, namely $j\neq i-1,i,i+1$. Note that $R_{i-1},R_{i+1},R_{j-1},R_{j+1}$ contain at least three distinct extremal rays, because $m\geq 5$ (see \ref{list10}).

  By \ref{cone_psi} $E_i\cap E_j$ is disjoint from all four divisors 
  $E_{i-1},E_{i+1},E_{j-1},E_{j+1}$, hence  $\N(E_i\cap E_j,X)\subset E_{i-1}^{\perp}\cap E_{i+1}^{\perp}\cap E_{i-1}^{\perp}\cap E_{i+1}^{\perp}=L$ (see Remark \ref{disjoint}); 
  this implies that   $\N(E_i\cap E_j,X)\subset E_{i}^{\perp}\cap E_{j}^{\perp}$.

  Let us also note that  $E_j\cdot R_i>0$ (see  \ref{positive}) and $\dim\N(E_i,X)=\rho_X-2$ (see \ref{cone_psi2}),
therefore by Remark \ref{pisa0}
we get $\dim\N(E_i\cap E_j,X)\geq\dim\N(E_i,X)-1=\rho_X-3$. On the other hand
$\N(E_i\cap E_j,X)\subset L$ and $\dim L=\rho_X-3$, and we conclude that $\N(E_i\cap E_j,X)=L$.
We also have
  $$\N(E_i\cap E_j,X)\subset \N(E_i,X)\cap E_j^{\perp}\subsetneq \N(E_i,X)$$
  (because $R_i\subset\N(E_i,X)$ and $R_i\not\subset E_j^{\perp}$), therefore $\N(E_i\cap E_j,X)= \N(E_i,X)\cap E_j^{\perp}$.
  Now by applying Lemma \ref{varigotti} with  $D=E_j$ and $E=E_i$ we get the statement.
\end{proof}
\begin{parg}\label{product}
We have $X\cong S\times T$ where $S=\Bl_{2 pts}\pr^2$, and $\psi\colon X\to T$ is the projection.
\end{parg}
\begin{proof}
  For every $i=1,\dotsc,m$ set $B_i:=\{pt\}\times B\subset E_i$. Then $\N(B_i,X)=L$, indeed if $j\in\{1,\dotsc,m\}$ is such that $R_j$ is not adjacent to $R_i$, then $\N(B_i,X)=\N(E_i\cap E_j,X)=L$ by  \ref{Eproduct}. In particular, since $E_i\cong \pr^1\times B$, we have:
  \begin{equation}\label{NE_E_i}
\NE(E_i,X)=R_i+\NE(B_i,X)\subset R_i+L.
\end{equation}

Recall that $m\geq 5$ (see \ref{list10}), hence $R_4$ is not adjacent to $R_1$ nor $R_2$, and we have $E_4\cdot R_i>0$ and $E_i\cdot R_4>0$ for $i=1,2$, by \ref{positive}.

Set $H:=2E_1+2E_2+3E_4$. We proceed similarly to \cite[\S 3.2.21]{codim}.
For $i=1,2$ we have
$H\cdot e_i=3E_4\cdot e_i-2>0$, and $H\cdot e_4=2E_1\cdot e_4+2E_2\cdot e_4-3>0$.

We also have $L=E_1^{\perp}\cap E_2^{\perp}\cap E_4^{\perp}\subset H^{\perp}$.

Let $C\subset X$ be an irreducible curve with $C\subset\Supp H=E_1\cup E_2\cup E_4$. Then $C\subset E_j$ for some $j\in\{1,2,4\}$, hence $[C]\in R_j+L$ (see \eqref{NE_E_i}) and $H\cdot C\geq 0$. This implies that $H$ is nef and defines a contraction $\xi\colon X\to S$ such that $\NE(\xi)=\NE(X)\cap H^{\perp}$.
$$\xymatrix{
& X\ar[dl]_{\xi}\ar[dr]^{\psi}&  \\
S && T
}$$

Let $j\in\{1,2,4\}$. Since $\N(B_j,X)=L\subset H^{\perp}$, for every curve $C'\subset B_j$ we have $H\cdot C'=0$ and hence $\xi(C')=\{pt\}$. Then
the image $\xi(B_j)$ is a point, and
$\xi(E_j)=\xi(e_j)$
is an irreducible rational curve (because $H\cdot e_j>0$). Therefore
 $\xi_{|E_j}\colon E_j\to \xi(e_j)$ factors through the
projection $E_j\to\pr^1$.
In particular $\dim\xi(\Supp H)=1$, hence $S$ is a surface, because $H$ is the pullback of a divisor on $S$ (see Lemma \ref{pullback}).

Let us show that
 \begin{equation}\label{disp}
\NE(\xi)=L\cap\NE(X).\end{equation}
We already have $\NE(\xi)=H^{\perp}\cap\NE(X)\supseteq
L\cap\NE(X)$. Conversely, let 
 $C''\subset X$ be an irreducible curve such that $\xi(C'')=\{pt\}$,
 \emph{i.e.}\ $H\cdot C''=0$.

If $C''$ is disjoint from
$\Supp H=E_1\cup E_2\cup E_{4}$, then $C''\cdot E_j=0$ for
$j=1,2,4$, hence $[C'']\in L$.  

If instead  $C''$ intersects $E_1\cup E_2\cup E_4$
  then it must be contained in it,
and we have $C''\subset E_j$ for some $j\in\{1,2,4\}$. Since $\xi_{|E_j}$ factors as
the projection onto $\pr^1$ followed by a finite map, we get $C''\subset
B_j$, and again $[C'']\in\N(B_j,X)=L$.
Therefore we have \eqref{disp}.

In particular for every $j=1,2,4$ we have $\NE(\xi)\subseteq
E_j^{\perp}$, therefore  $E_j=\xi^*(\xi(E_j))$.

\medskip

Let $\pi\colon X\to S\times T$ be the morphism induced by $\xi$ and
$\psi$. We have $\ker\psi_*=\R(R_1+R_2+R_4)$, and it is easy to see that 
$L\cap \ker\psi_*=\{0\}$.  Moreover 
$\ker\xi_*\subseteq L$ by
\eqref{disp}, therefore $\pi$ is finite. 

In particular, $\xi$ must be equidimensional, hence $S$ is smooth by \cite[Lemma 3.10(ii)]{fanos}. Let $j\in\{1,2,4\}$. Since both $S$ and 
$E_j$ are smooth, \cite[Remark 3.1.1(4)]{codim} yields that
$\xi(E_j)$ is a smooth curve,
therefore $\xi(E_j)\cong\pr^1$ and $\xi_{|E_j}$ is the
projection.

We want to apply \cite[Remark 3.2.23]{codim}, that states that 
 $\pi\colon X\to S\times T$ is an isomorphism if $\ker\psi_*\subset (K_{X/S})^{\perp}$.
For this
we  need to show that $K_{X/S}\cdot R_j=0$ for
$j=1,2,4$.
But since $E_j\cong\pr^1\times B$ and  $\xi_{|E_j}$ is the
projection onto $\pr^1$, we have 
$$K_{X/S}\cdot e_j=(K_{X/S})_{|E_j}\cdot e_j=K_{E_j/\xi(E_j)}\cdot e_j=0.$$
We conclude that $\pi$ is an isomorphism and $X\cong S\times T$. Moreover $S$ is a del Pezzo surface with $\rho_S=3$, namely $S=\Bl_{2 pts}\pr^2$.
\end{proof}
\begin{parg}
  We get a contradiction.
\end{parg}
Indeed, by \ref{product}, 
$X_\eta\cong\Bl(\pr^2_K;1,1)$,  against \ref{list10}.
This concludes the proof of Theorem \ref{main}.
\end{proof}

\section{The case with non-empty discriminant}\label{discriminant}
\noindent Let
$X$ be a smooth Fano variety with $\de_X=2$, and consider the rational conic bundle on $X$ given by Theorem \ref{main}:
 $$\xymatrix{ X\ar@{-->}[r]^{\xi}&{X'}\ar[r]^{\sigma}\ar[dr]_{\zeta}&{X_2}\ar[d]^{\ph}\\
    &&Y
  }$$


  In this section we study in more detail the setting where the elementary conic bundle $\ph$ is not smooth, and prove Theorem \ref{singular} from the Introduction.
  Our starting point is the following.
  \begin{proposition}[\cite{codimtwo}, Proposition 5.1 and its proof]\label{giugno}
In the setting of Theorem \ref{main}, suppose that the conic bundle $\ph$ is not smooth. 
Then $\xi$ is an isomorphism (so that $X\cong X'$) and there is a smooth $\pr^1$-fibration $\pi_Y\colon Y\to T$, with $T$ a smooth projective variety of dimension $n-2$, such that $\pi_Y$ is finite on $A_Y$:
 $$\xymatrix{X\ar[r]_{\sigma}\ar@/^{1pc}/[rr]^{\zeta}
    \ar@/_/[rrd]_{\psi}&{X_2}\ar[r]_-{\ph}&{Y}\ar[d]^{\pi_Y}\\
    &&T
  }$$
  The composition $\psi:=\pi_Y\circ\zeta\colon X\to T$ is an equidimensional, quasi-elementary fibration in del Pezzo surfaces. Moreover if
$E:=\Exc(\sigma)\subset X$ we have
  $\de(E)=2$.
\end{proposition}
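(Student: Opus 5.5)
This statement is quoted from \cite{codimtwo}, so the plan is to reconstruct its proof along the lines of that paper, using the structure already established in Theorem \ref{main} and Proposition \ref{prop5.1}. Let $\Delta_\ph\subset Y$ be the discriminant divisor, which is non-empty by assumption. Set $E:=\Exc(\sigma)$; it is the transform of $\ph^{-1}(A_Y)$ together with the exceptional divisor of the blow-up of $A_2$, namely $\zeta^*(A_Y)=E+\widehat{E}$.

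\emph{Step 1: $\de(E)=2$ and the divisors disjoint from $E$.} Since $A_2$ is a section of $\ph$ over $A_Y$ and $\ph$ is smooth there, $E$ and $\widehat{E}$ are exceptional $\pr^1$-bundles over $A_Y$, and they are disjoint. The components of $\zeta^{-1}(\Delta_\ph)$ give further divisors. I would show that $A_Y\cap\Delta_\ph=\emptyset$ by a Lefschetz-defect count, using Remarks \ref{disjoint} and \ref{easy}. Over $\Delta_\ph$ the conic bundle $\ph$ has two-component fibers, which enlarges $\N(X_2)$ compared with $\N(\ph^{-1}(\Delta_\ph),X_2)$. If $A_Y$ met $\Delta_\ph$, these reducible fibers would lie inside $E\cup\widehat E$ and force $\de(E)\le 1$. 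This would contradict $\de(E)=\de_X=2$, which itself follows because $\ph^{-1}(A_Y)$ and the fibers of $\zeta$ carry classes independent from $\N(E,X)$ once the flips are trivial.

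\emph{Step 2: $\xi$ is an isomorphism.} The flipped loci of $\xi^{-1}$ avoid $\zeta^{-1}(A_Y)$. I would argue that a flipping curve in $X'$ would be $K$-positive. Since $\Delta_\ph\ne\emptyset$, the reducible fibers of $\zeta$ give extremal rays of type $(n-1,n-2)$ whose loci dominate $\Delta_\ph$. Intersection numbers with $E$ and $\widehat E$ would then show that no small $K$-positive ray exists, so the sequence of flips is empty.

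\emph{Step 3: construct $\pi_Y$.} Consider the cone $\NE(Y)$ together with the divisors $A_Y$ and $\Delta_\ph$. I would run the argument of \ref{2014} in reverse: take a contraction of $X$ whose relative cone contains $\NE(\zeta)$ together with one extra ray, coming from a ray $R_Y$ of $\NE(Y)$ with $A_Y\cdot R_Y>0$. Using that $X$ is Fano and Lemma \ref{section} or \ref{section2} applied to $A_Y$ and components of $\Delta_\ph$ (which are disjoint from $A_Y$), one shows that the contraction of $R_Y$ is a smooth $\pr^1$-fibration $\pi_Y\colon Y\to T$ that is finite on $A_Y$.

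\emph{Step 4: properties of $\psi$.} Equidimensionality of $\psi=\pi_Y\circ\zeta$ is immediate from the equidimensionality of $\zeta$ and $\pi_Y$. Quasi-elementarity follows from a dimension count of $\N(X_t,X)$, which gives $\dim\N(X_t,X)=3=\rho_X-\rho_T$.

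The main obstacle is Step 3: producing $\pi_Y$ and proving it is a smooth $\pr^1$-fibration. This requires controlling $\NE(Y)$ near $A_Y$, and I would try first to exploit the disjointness of $A_Y$ from $\Delta_\ph$ via Lemma \ref{section}.
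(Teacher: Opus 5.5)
There is a genuine gap. The two substantive claims of the proposition are that $\xi$ is an isomorphism and that a smooth $\pr^1$-fibration $\pi_Y\colon Y\to T$ exists (your Steps~2 and~3). Neither is proved, and the tools you invoke cannot prove them. The paper does not reprove them either: it quotes \cite[Proposition 5.1]{codimtwo}. It only adds that $\pi_Y$ is finite on $A_Y$, deduced from $\pi_Y(A_Y)=T$ and $\dim\N(A_Y,Y)=\rho_T$ \cite[5.13]{codimtwo}. Then $(\pi_Y)_*$ maps $\N(A_Y,Y)$ onto $\N(T)$, hence isomorphically, so $\ker(\pi_Y)_*\not\subset\N(A_Y,Y)$ and no fiber of $\pi_Y$ lies in $A_Y$.

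In Step~2, the claim that reducible fibers of $\zeta$ over $\Delta_\ph$ span extremal rays of type $(n-1,n-2)$ is false. Since $\ph$ is elementary, a component $\ell$ of such a fiber is numerically half a fiber in $X_2$. Since also $E\cdot\ell=0$ and $\ker\sigma_*=\R[e]$, its class in $X'$ is $\frac{1}{2}([e]+[\hat e])$. This lies in the relative interior of the face $\NE(\zeta)=\R_{\geq 0}[e]+\R_{\geq 0}[\hat e]$, so it is not extremal. Moreover $E\cup\widehat{E}=\zeta^{-1}(A_Y)\subset\dom(\xi^{-1})$ by Theorem \ref{main}. So whatever prevents $\xi$ from being an isomorphism lies in a closed set disjoint from $E\cup\widehat{E}$, and curves there have zero intersection with $E$ and $\widehat{E}$. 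Intersection numbers with these divisors therefore cannot exclude flips.

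In Step~3, Lemmas \ref{section} and \ref{section2} take a smooth $\pr^1$-fibration as input, so they cannot be used to produce $\pi_Y$. The paper applies them only afterwards, in \ref{A_Ysection}, to upgrade $\pi_Y$ to a $\pr^1$-bundle. Nothing in your sketch shows that the contraction of a ray $R_Y$ with $A_Y\cdot R_Y>0$ is $K_Y$-negative and of fiber type. Note that $Y$ is not known to be Fano; compare the remarks after Lemma \ref{madeira}. Still less do you show it is smooth with one-dimensional fibers over an $(n-2)$-dimensional base. Finiteness on $A_Y$ is just asserted.

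Step~1 also needs repair.
First, $E$ and $\widehat{E}$ are not disjoint. From $(E+\widehat{E})\cdot e=\zeta^*(A_Y)\cdot e=0$ you get $\widehat{E}\cdot e=1$, and $E\cap\widehat{E}$ is the locus of nodes of the fibers $e\cup\hat e$ over $A_Y$.
Second, $A_Y\cap\Delta_\ph=\emptyset$ needs no argument; it is part of Theorem \ref{main}.
Third, your justification of $\de(E)=2$ presupposes Step~2. In any case, independence of the fiber class of $\zeta$ from $\N(E,X)$ only gives $\de(E)\geq 1$, and $\de(E)=1$ does happen when $\ph$ is smooth (Remark \ref{rho3}).

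The second condition must come from the discriminant. Work on $X'$; this does not change $\de(E)$, since $\xi$ is an isomorphism in codimension one and near $E$.
\begin{enumerate}[(1)]
\item Since $[e]+[\hat e]\notin\N(E,X')$ \cite[\S 2.3]{codimtwo}, we get $\N(E,X')\cap\ker\zeta_*=\R[e]$.
\item By Remark \ref{disjoint}, $\zeta_*\N(E,X')=\N(A_Y,Y)\subset\Delta_\ph^{\perp}$, which is a hyperplane because $\Delta_\ph\neq\emptyset$.
\item Hence $\dim\N(E,X')\leq\rho_Y=\rho_X-2$, so $\de(E)=2$.
\item Also $\dim\N(A_Y,Y)=\rho_Y-1$, which is exactly what the finiteness argument above needs once $\pi_Y$ exists with $\pi_Y(A_Y)=T$.
\end{enumerate}

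Step~4 is essentially fine. But without real arguments for Steps~2 and~3 the proposal does not prove the proposition. You should either cite \cite[Proposition 5.1]{codimtwo}, as the paper does, or reproduce its proof.
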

\begin{proof}
The statement is in \cite[Proposition 5.1 and its proof]{codimtwo}, with the additional statement that  $\pi_Y$ is finite on $A_Y$. It is shown in [\emph{ibidem}, 5.13] that $\pi_Y(A_Y)=T$ and $\dim\N(A_Y,Y)=\rho_Y-1=\rho_T$. Because $\pi_Y(A_Y)=T$, we have $(\pi_Y)_{*}(\N(A_Y,Y))=\N(T)$, thus $\ker(\pi_Y)_*\not\subset\N(A_Y,Y)$, and $\pi_Y$ is finite on $A_Y$.
\end{proof}  
\begin{prg}\label{outline_singular}
  We are now ready to prove Theorem \ref{singular}; let us give an outline of the proof. First of all, the two divisors $A_Y$ and $\Delta_\ph$ of $Y$ are disjoint, and both horizontal for the $\pr^1$-fibration $\pi_Y\colon Y\to T$; we use this to show that $\pi_Y$ is in fact a $\pr^1$-bundle, and $A_Y$ is a section (see \ref{A_Ysection}).

  Then we consider the $3$-dimensional cone $\NE(\psi)$, and show that every extremal ray of the cone is of type $(n-1,n-2)^{sm}$ (see \ref{every_ray}). This allows to apply to the del Pezzo fibration $\psi\colon X\to T$ the results on generic fibers in \S\ref{fibrations}, so that the generic fiber $X_\eta$ of $\psi$  is a smooth del Pezzo surface over $K=\C(T)$ with $\rho_{X_\eta}=3$, $\NE(X_\eta)\cong\NE(\psi)$, and every contraction of $X_\eta$ extends to a contraction of $X$ over $T$ (see \ref{turkish_2}). By construction we know that there is a contraction $X_\eta\to\pr^1_K$ whose relative cone is generated by the classes of two exceptional curves $C$, $\wi{C}$ with $C^2=\wi{C}^2=-1$ (see \ref{treno}), and this implies that $X_\eta$ is one of the 4 surfaces  $\Bl(\pr^2_K;1,1)$, $\Bl(\pr^2_K;1,4)$,
$\Bl(\pr^2_K;2,2)$, or $\Bl(Y_4;1,1)$ (see \ref{list4}). We easily exclude that $X_\eta\cong\Bl(\pr^2_K;1,1)$, by showing that in this case $\ph$ should be smooth (see \ref{not_smooth}). More work is needed to exclude the case $X_\eta\cong\Bl(Y_4;1,1)$;
for this we reproduce some arguments from  \ref{milano}, \ref{Eproduct}, and \ref{product}, to show that if $X_\eta\cong\Bl(Y_4;1,1)$, then $X\cong (\Bl_{2  pts}\pr^2)\times T$, and $\psi\colon X\to T$ is the projection, which gives a contradiction (see \ref{errore}, \ref{italo}, and \ref{yoga}).
\end{prg}  
\begin{proof}[Proof of Theorem \ref{singular}]
  By assumption the conic bundle $\ph$ has non-empty discriminant divisor $\Delta_{\ph}\subset Y$. We have $A_Y\cap\Delta_{\ph}=\emptyset$ by Theorem \ref{main}, and $A_Y\sqcup \Delta_{\ph}$ is the discriminant divisor of the conic bundle $\zeta$.
  \begin{parg}\label{A_Ysection}
 In the setting of Proposition \ref{giugno}, $A_Y$ is a section of $\pi_Y$, therefore $Y=\pr_T(\ma{F})$ for some rank $2$ vector bundle $\ma{F}$ on $T$.
\end{parg}
\begin{proof}
  We note that $\Delta_{\ph}$ cannot be a section of $\pi_Y$, otherwise $\Delta_{\ph}\cong T$ is smooth, connected, and simply connected (as $T$ is rationally connected, because $X$ is). 
  However this is
impossible, because by a standard construction the conic bundle $\ph$ defines a double cover 
$\w{\Delta}_\ph\to\Delta_{\ph}$, obtained by considering the Hilbert scheme of lines in the fibers of $\ph$, see \cite[\S 1.4]{beauville} and
\cite[\S 1.17]{sarkisov}. Since $\ph$ is an elementary
contraction, this double cover is non-trivial. Moreover $\ph$ does not have non-reduced fibers, because these are precisely the fibers over $\Sing(\Delta_{\ph})$, and $\Delta_{\ph}$ is smooth. This implies that the double cover
$\w{\Delta}_\ph\to\Delta_{\ph}$
is  étale,  and we have a contradiction. Therefore $\Delta_{\ph}$ is not a section of $\pi_Y$.

Now we notice that $Y$ is rationally connected because $X$ is, hence $Y$ is simply connected.
Moreover $A_Y$ and $\Delta_{\ph}$ are both reduced effective divisors, they are disjoint, and $\pi_Y$ is finite on $A_Y$, hence $\pi_Y$ must be finite on $\Delta_{\ph}$ too. By Lemma \ref{section2}, 
we conclude that $A_Y$ is a section of $\pi_Y$.
\end{proof}
\begin{parg}\label{integral}
Every fiber of $\psi$ is irreducible and reduced.
\end{parg}
\begin{proof}
Let $t\in T$. We have $\Gamma_t:=(\pi_Y)^{-1}(t)\cong\pr^1$, and $\Gamma_t\not\subset\Delta_\ph$, hence the restriction $\zeta_{|\psi^{-1}(t)}\colon \psi^{-1}(t)\to \Gamma_t$ is a conic bundle with general fiber $\pr^1$, in particular it is flat. Therefore $\psi^{-1}(t)$ is reduced and irreducible by \cite[Ch.~III, Proposition 9.7]{hartshorne}.
\end{proof}  
\begin{prg}\label{cricca}
  Recall that $E=\Exc(\sigma)$; let $\wi{E}\subset X$ be the transform of $\ph^{-1}(A_Y)\subset Y$.
Then $\wi{E}\stackrel{\sigma}{\cong}\ph^{-1}(A_Y)$ and $E$ and $\wi{E}$ are exceptional $\pr^1$-bundles; let  $e\subset E$ and $\hat{e}\subset\wi{E}$ be the respective fibers.
We have
 $\NE(\zeta)=R_E+R_{\wi{E}}$ where
  $R_E:=\NE(\sigma)=\R_{\geq 0}[e]$ and
 $R_{\wi{E}}:=\R_{\geq 0}[\hat{e}]$ are extremal rays; finally $\NE(\psi)$ is a $3$-dimensional polyhedral cone.
\end{prg}
\begin{parg}\label{every_ray}
  Every extremal ray $R$ of $\NE(\psi)$ is of type $(n-1,n-2)^{sm}$.
\end{parg}
\begin{proof}
  This is clear if $R=R_E$ or $R=R_{\wi{E}}$; suppose otherwise. Then $R\not\subset\NE(\zeta)$, while $\zeta_*(R)=\zeta_*(\NE(\psi))=\NE(\pi_Y)$. If $F_R\subset X$ is a non-trivial fiber of the contraction of $R$, then $\zeta$ is finite on $F_R$, and $\zeta(F_R)$ is contained in a fiber of $\pi_Y$, therefore $\dim F_R=1$, and either $R$ is of type $(n-1,n-2)^{sm}$, or the contraction of $R$ is a conic bundle (see \cite[Theorem 1.2]{wisn}).
In this last case, we get a factorization of $\psi$ as 
$$\xymatrix{X\ar[r]_{\beta}\ar@/^{1pc}/[rr]^{\psi}&{X'}\ar[r]_{\gamma}&T}
$$
where $\beta$ is the contraction of $R$, and $\dim X'=n-1$.
Note that $X'$ is smooth, and $\gamma$ is of fiber type because $\dim T=n-2$. 
Since $\psi$ is quasi-elementary, $\gamma$ is quasi-elementary too
(see \cite[Lemma 3.15]{refinements}). On the other hand this is impossible, because the general fiber $F_{\gamma}\subset X'$ of $\gamma$ is a curve, thus
$\dim\N(F_{\gamma},X')=1$, while
 $\rho_{X'}-\rho_T=2$.
Therefore $R$ is of type $(n-1,n-2)^{sm}$.
\end{proof}
Consider now 
the generic fiber $X_\eta$ of $\psi$, a smooth del Pezzo surface over the field $K:=\C(T)$ of rational functions on $T$. 
\begin{parg}\label{turkish_2}
We have $\rho_{X_\eta}=3$ and $\psi(\Lo(R))=T$ for every extremal ray $R$ of $\NE(\psi)$. Moreover we have a natural isomorphism $\NE(X_\eta)\cong\NE(\psi)$, and every contraction of $X_\eta$ is the restriction of a contraction of $X$ over $T$.
\end{parg}
\begin{proof}
This follows from \ref{every_ray} and Lemmas \ref{degreeR}, \ref{restr}, and \ref{NE}.
\end{proof} 
   \begin{parg}\label{treno}
Set $C:=E_{|X_\eta}$ and $\wi{C}:=\wi{E}_{|X_\eta}$. Then $C$ and $\wi{C}$ are exceptional curves in $X_\eta$ with $C^2=\wi{C}^2=-1$. Moreover the cone $\langle [C],[\wi{C}]\rangle$ is the relative cone of a contraction $X_\eta\to\pr^1_K$.
\end{parg}
\begin{proof}
  Let $X_t\subset X$ be the fiber of $\psi$ over a general closed point $t\in T$.
We have $E_{|X_t}=e$ and $\wi{E}_{|X_t}=\hat{e}$. Indeed $\pi_Y^{-1}(t)\cong\pr^1$ meets $A_Y$ transversally in one point ($A_Y$ being a section of $\pi_Y$, see \ref{A_Ysection}), thus $(\ph\circ\pi_Y)^{-1}(t)\subset X_2$ meets $A$ transversally in one point (because $A$ is a section of $\ph_{|\ph^{-1}(A_Y)}$, see Theorem \ref{main}). Hence $E_{|X_t}$ is a reduced fiber of $\sigma$. Similarly for $\wi{E}$.
Then  Lemma \ref{degreeR} shows that $C$ and $\wi{C}$ are exceptional curves in $X_\eta$ with $C^2=\wi{C}^2=-1$.

Moreover we have $R_E+R_{\wi{E}}=\NE(\zeta)$, where $\zeta=\ph\circ\sigma\colon X\to Y$, and the generic fiber of $\pi_Y\colon Y\to T$ is $\pr^1_K$, because $\pi_Y$ is a $\pr^1$-bundle (see \ref{A_Ysection}). Therefore $\zeta$ restricts to a contraction $\zeta_\eta \colon X_\eta\to\pr^1_K$ with $\NE(\zeta_\eta)=\langle [C],[\wi{C}]\rangle$.
\end{proof}
\begin{parg}\label{list4}
  The surface $X_\eta$ is one of the 4 surfaces in Table \ref{table2}:
  $$\text{$\Bl(\pr^2_K;1,1)$, $\quad\Bl(\pr^2_K;1,4)$,
  $\quad\Bl(\pr^2_K;2,2)$, $\quad\Bl(Y_4;1,1)$.}$$
\end{parg}
\begin{proof}
  Straightforward from \ref{treno} and Lemma \ref{4surfaces}.
  \end{proof}
 \begin{parg}\label{not_smooth}
  We have $X_{\eta}\not\cong\Bl(\pr^2_K;1,1)$.
\end{parg}
\begin{proof}
  Suppose by contradiction that
  $X_{\eta}\cong\Bl(\pr^2_K;1,1)$. By \cite[Figure 7]{BFSZ} the cone $\NE(X_\eta)\cong\NE(\psi)$ is simplicial; let $R_F$ be the third extremal ray of $\NE(\psi)$, so that $\NE(\psi)=R_E+R_{\wi{E}}+R_F$. Recall that $R_F$ is of type $(n-1,n-2)^{sm}$ (see \ref{every_ray}); set $F:=\Lo(R_F)$ and let $f\subset F$ be a non-trivial fiber of the contraction of $R_F$.

We keep the same notation as in \ref{treno},  
and we set $C_2:=F_{|X_{\eta}}$. Since the contraction of the face $\langle [C],[\wi{C}]\rangle$ is of fiber type, we can assume that the blow-up $X_\eta\to\pr^2_K$ contracts the curves $\wi{C}$ and $C_2$. If $H$  is the pullback of $\ol_{\pr^2_K}(1)$, we have
  $C\sim H-\wi{C}-C_2$.
  We get the following intersection table in $X_\eta$:
  
  {\tiny
$$\begin{array}{|c||r|r|r|}
   \hline
   
\Bl(\pr^2_K;1,1)  & C & \wi{C} & C_2 \\
      \hline\hline

C &       -1 & 1 & 1 \\

        \hline

\wi{C} &   1 &    -1 & 0 \\

       \hline

C_2 &     1 & 0 & -1 \\
       
  \hline
  \end{array}$$}
  As in the proof of \ref{inttables}, we deduce
the following intersection table in $X$:
{\tiny
$$\begin{array}{|c||r|r|r|}
   \hline
   
  & E & \wi{E} & F \\
      \hline\hline

e &       -1 & 1 & 1 \\

        \hline

\hat{e} &   1 &    -1 & 0 \\

       \hline

f &     1 & 0 & -1 \\
       
  \hline
\end{array}$$}

Let us consider now the prime divisor $\sigma(F)\subset X_2$. We have $\sigma^*(\sigma(F))=F+(F\cdot e)E=F+E$,
and:
$$\sigma(F)\cdot\sigma(\hat{e})=\sigma(F)\cdot\sigma_*(\hat{e})=\sigma^*(\sigma(F))\cdot\hat{e}=(F+E)\cdot\hat{e}=1.$$
On the other hand $\sigma(\hat{e})$ is a smooth fiber of $\ph$,
and this implies that $\ph$ cannot have singular fibers, contradicting $\Delta_{\ph}\neq\emptyset$.
\end{proof}
\begin{prg}\label{setnot}
We conclude that $X_\eta$ is one of the three surfaces
 $\Bl(\pr^2_K;1,4)$,
$\Bl(\pr^2_K;2,2)$, or $\Bl(Y_4;1,1)$, and  $\NE(X_\eta)\cong\NE(\psi)$ has $4$ extremal rays (see Table \ref{table2}).
Let  $R_F$ and $R_{\wi{F}}$ be the other two extremal rays of $\NE(\psi)$, of type $(n-1,n-2)^{sm}$ (see \ref{every_ray}) and with loci $F$ and $\wi{F}$ respectively, such that
$R_E+R_{F}$, $R_{\wi{E}}+R_{\wi{F}}$, and $R_F+R_{\wi{F}}$ are faces of $\NE(\psi)$ (see Figure \ref{figura_psi}).
Note that $F$ and $\wi{F}$ are exceptional $\pr^1$-bundles, like $E$ and $\wi{E}$; let $f\subset F$ and $\hat{f}\subset\wi{F}$ be fibers of the $\pr^1$-bundles. Let us also denote by $d_F$ and $d_{\wi{F}}$ the degrees of the extremal rays $R_F$ and $R_{\wi{F}}$ respectively (see Lemma \ref{degreeR}).

\begin{figure}[h]\caption{A section of $\NE(\psi)$.}\label{figura_psi}

\bigskip
  
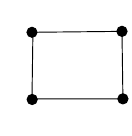
    \end{figure}
  \end{prg}
  \begin{parg}\label{tables}
We have
following the intersection tables:

  {\tiny
  $$\begin{array}{|c||r|r|r|r|}
   \hline
   
 X_\eta=\Bl(\pr^2_K;1,4) & E & \wi{E} & F & \wi{F} \\
      \hline\hline

e &       -1 & 1 & 0 &  4 \\

        \hline

\hat{e} &   1 &    -1 & 4 & 0 \\

       \hline

f &   0 & 1 & -1  & 1 \\
       
  \hline

\hat{f} &       1 & 0 & 1 & -1 \\ 

        \hline
  \end{array}
  \qquad
  \begin{array}{|c||r|r|r|r|}
   \hline
   
 X_\eta=\Bl(\pr^2_K;2,2)  & E & \wi{E} & F & \wi{F} \\
      \hline\hline

e &       -1 & 1 & 0&  2 \\

        \hline

\hat{e} &   1 &    -1 & 2 & 0 \\

       \hline

f &      0 & 1 & -1  & 0 \\
       
  \hline

\hat{f} &      1 & 0 & 0 & -1 \\

        \hline
  \end{array}$$

\smallskip
  
  $$\begin{array}{|c||r|r|r|r|}
   \hline
   
 X_\eta=\Bl(Y_4;1,1)  & E & \wi{E} & F & \wi{F} \\
      \hline\hline

e &       -1 & 1 & 0 &  2 \\

        \hline

\hat{e} &   1 &    -1 & 2 & 0 \\

       \hline

f &     0 & 2 & -1  & 1 \\ 
       
  \hline

\hat{f} &     2 & 0 & 1 & -1 \\

        \hline
    \end{array}$$}
  Moreover 
$$d_F=d_{\wi{F}}=\begin{cases}
4\quad\text{if }X_\eta=\Bl(\pr^2_K;1,4);\\
  2\quad\text{if }X_\eta=\Bl(\pr^2_K;2,2);\\
  1\quad\text{if } X_\eta=\Bl(Y_4;1,1).
                 \end{cases}$$
\end{parg}
\begin{proof}
From \cite[Figures 10, 15, 38]{BFSZ} we get the degrees $d_F$ and $d_{\wi{F}}$, and the intersection tables for exceptional curves in the three surfaces $X_\eta$; then we deduce the intersection tables above using Lemmas \ref{fulton} and \ref{degreeR}, as in the proof of \ref{inttables}.
\end{proof}
\begin{prg}\label{description}
  We have $E\cap F=\emptyset$, $\wi{E}\cap\wi{F}=\emptyset$, and $\de(\wi{E})=\de(F)=\de(\wi{F})=2$. We also have  $\N(E,X)\cap\ker\psi_*=\R[e]$, and similarly for the other divisors $\wi{E},F,\wi{F}$.

  Finally $F\cap\wi{F}=\emptyset$ if $X_\eta=\Bl(\pr^2_K;2,2)$.
\end{prg}
\begin{proof}
  We show that $E\cap F=\emptyset$ and $\N(E,X)\cap\ker\psi_*=\R[e]$. We have $\psi(E)=T$, thus $\psi_*(\N(E,X))=\N(T)$. On the other hand $\de(E)=2$ (see Proposition \ref{giugno}), thus $\dim\N(E,X)=\rho_X-2=\rho_T+1$, therefore $\dim(\N(E,X)\cap\ker\psi_*)=1$, and
  $\N(E,X)\cap\ker\psi_*=\R[e]$. In particular we deduce that $[f]\not\in\N(E,X)$. Now if $E\cap F\neq\emptyset$, since $E\cdot f=0$ (see \ref{tables}), $E$ should contain some curve numerically equivalent to $f$, a contradiction. 

\medskip 
  
  Recall that $\wi{E}\cdot e>0$ (see \ref{tables}), so that $\N(E,X)=\R[e]+\N(E\cap\wi{E},X)$ (see Remark \ref{pisa0}), and we see that
  $\dim\N(E\cap\wi{E},X)\in\{\rho_X-3,\rho_X-2\}$.
 We also have 
$E\cdot\hat{e}>0$ and 
 $[\hat{e}]\not\in\N(E,X)$ (by the first part of the proof), in particular $[\hat{e}]\not\in\N(E\cap\wi{E},X)$, so that $\N(\wi{E},X)=\R[\wi{e}]\oplus\N(E\cap\wi{E},X)$, and 
 $\dim\N(\wi{E},X)\in\{\rho_X-2,\rho_X-1\}$.

 We show that $\de(\wi{E})=2$.
 Otherwise, by what precedes, we have $\dim\N(\wi{E},X)=\rho_X-1=\rho_T+2$, $\dim\N(E\cap\wi{E},X)=\rho_X-2$, and $[e]\in\N(E\cap\wi{E},X)\subset\N(\wi{E},X)$. Since $\psi(\wi{E})=T$, we have $\psi_*(\N(\wi{E},X))=\N(T)$ and hence $\dim (\N(\wi{E},X)\cap\ker\psi_*)=2$. Therefore $\N(\wi{E},X)\cap\ker\psi_*=\R[e]\oplus\R[\hat{e}]$, and $[\hat{f}]\not\in \N(\wi{E},X)$ (see Figure \ref{figura_psi}). On the other hand $\wi{E}\cdot \hat{f}=0$ by \ref{tables}, and this implies that $\wi{E}\cap \wi{F}=\emptyset$, otherwise
 $\wi{E}$ should contain some curve numerically equivalent to $\hat{f}$.
 In particular we get $\N(\wi{E},X)\subset \wi{F}^{\perp}$ (see Remark \ref{disjoint}), and therefore
  $\N(\wi{E},X)= \wi{F}^{\perp}$. Now recall that in $Y$
we have $A_Y\cap\Delta_{\ph}=\emptyset$ by Theorem \ref{main}, hence $\zeta^{-1}(A_Y)\cap\zeta^{-1}(\Delta_{\ph})=\emptyset$ in $X$, and $\zeta^{-1}(A_Y)=E\cup\wi{E}$. In particular $\wi{E}$ is disjoint also from the divisor $G:=\zeta^{-1}(\Delta_{\ph})$, and as above we get $\N(\wi{E},X)= G^{\perp}$, and finally $G^{\perp}=\wi{F}^{\perp}$. Then there exists $\lambda\in\Q_{>0}$ such that $[G]=\lambda[\wi{F}]$. However this is impossible, because $G\cap E=\emptyset$ and hence $G\cdot e=0$, while $\wi{F}\cdot e\neq 0$.
We conclude that $\de(\wi{E})=2$.

\medskip

  Now consider the exceptional $\pr^1$-bundle $\wi{E}$ with the pair of disjoint divisors $E,F$; note that $E\cdot\hat{e}>0$ and $F\cdot\hat{e}>0$ by \ref{tables}, and we can apply Lemma \ref{pisa}. We deduce that $\N(\wi{E}\cap F,X)$ has codimension one in $\N(\wi{E},X)$, therefore $\dim\N(\wi{E}\cap F,X)=\rho_X-3$. Since $\wi{E}\cdot f>0$ again by \ref{tables}, we have
$\N(F,X)=\N(\wi{E}\cap F,X)+\R[f]$ (see Remark \ref{pisa0}). Therefore $\dim\N(F,X)\leq \rho_X-2$ and finally equality holds because $\de_X=2$.

  The rest of the statement is shown in a similar way. If  $X_\eta=\Bl(\pr^2_K;2,2)$ then $F\cdot\hat{f}=\wi{F}\cdot f=0$ by  \ref{tables}, and we show as above that $F\cap\wi{F}=\emptyset$.
\end{proof}
We are left to show that    $X_{\eta}\not\cong\Bl(Y_4;1,1)$; we will proceed by contradiction.
 \begin{parg}\label{errore}
 Suppose  that
  $X_{\eta}\cong\Bl(Y_4 ;1,1)$. Then there exist  prime divisors $D,\wi{D}\subset X$ such that $D\cap(E\cup F)=\emptyset$ and  $\wi{D}\cap
(\wi{E}\cup \wi{F})=\emptyset$.
\end{parg}
\begin{proof}
Using the description in \cite[Figure 38]{BFSZ} and the isomorphism $\NE(\psi)\cong\NE(X_\eta)$, we see that the contractions of both $2$-dimensional faces $R_E+R_F$ and $R_{\wi{E}}+R_{\wi{F}}$ of $\NE(\psi)$ (see Figure \ref{figura_psi})
give, over the generic point, blow-ups $X_\eta\to Y_4$.

We proceed
as in the proof of \ref{milano}, and consider the contraction $\alpha\colon X\to Z$ of one of these faces, $R_E+R_F$ or $R_{\wi{E}}+R_{\wi{F}}$, so that we get a diagram:
   $$\xymatrix{X\ar[r]_{\alpha}\ar@/^{1pc}/[rr]^{\psi}
    &{Z}\ar[r]_{\pi}&T
  }$$
  where $Z$ is smooth, $\alpha$ is the blow-up of two disjoint smooth, irreducible subvarieties of codimension $2$ in $Z$, on which $\pi$ is finite, and the general closed fiber $Z_t$ of $\pi$ is a del Pezzo surface of degree $4$.
The exceptional divisors of $\alpha$ are $E\cup F$ or $\wi{E}\cup\wi{F}$, depending on the face  that we have chosen.

Proceeding as in the proof of \ref{milano}, we use a $(-1)$-curve $\ell$ in the general closed fiber $Z_t$ to construct prime divisors $D,\wi{D}\subset X$ as in the statement.
\end{proof}
 \begin{parg}\label{italo}
 Suppose  that
  $X_{\eta}\cong\Bl(Y_4 ;1,1)$. Then $E\cong F\cong\wi{E}\cong\wi{F}\cong\pr^1\times B$, with the curves $e,f,\hat{e},\hat{f}$ being $\pr^1\times\{pt\}$, $E\cap\wi{E}=\{pts\}\times B$ in both $E$ and $\wi{E}$, and
  $F\cap\wi{F}=\{pts\}\times B$ in both $F$ and $\wi{F}$. Moreover   $$L:=\N(E\cap\wi{E},X)=\N(F\cap\wi{F},X)= E^{\perp}\cap\wi{E}^{\perp}\cap F^{\perp}\cap\wi{F}^{\perp}.$$
\end{parg}  
\begin{proof}
  We proceed similarly to \ref{Eproduct}.
  Consider the divisor $\wi{E}$, and recall that $\de(\wi{E})=2$ (see \ref{description}). We have $E\cdot \hat{e}>0$ and $F\cdot\hat{e}>0$ (see \ref{tables}), therefore both $E$ and $F$ intersect $\wi{E}$.
Let moreover $D\subset X$ be as in \ref{errore}.
Then $D$ intersects $\wi{E}$ too, by Remark \ref{easy}. Since $D,E,F$ are pairwise disjoint, we have $D\cdot\hat{e}>0$, and moreover $\dim\N(D\cap\wi{E},X)\geq\rho_X-3$ by Remark \ref{pisa0}, and similarly for the other intersections $E\cap\wi{E}$ and $F\cap\wi{E}$.
By \cite[Lemma 3.1.7]{codim} there exists a linear subspace $L\subset \N(X)$, of codimension $3$, such that
$$L=\N(E\cap\wi{E},X)=\N(F\cap \wi{E},X)=\N(\wi{E},X)\cap E^{\perp}\subset F^{\perp}.$$
We  argue in the same way for the other divisors $\wi{F},E,F$ (using also $\wi{D}$ as in \ref{errore}), in particular by exchanging the roles of $E$ and $\wi{E}$ we get
$$L=\N(E\cap\wi{E},X)\subset\wi{E}^{\perp}.$$
By Lemma \ref{varigotti} this implies that $\wi{E}\cong\pr^1\times B$, $\wi{e}=\pr^1\times\{pt\}$, and $E\cap\wi{E}=\{pts\}\times B$.
Moreover $L\subset E^{\perp}\cap F^{\perp}\cap \wi{E}^{\perp}$, and for dimensional reasons we have equality (see Remark \ref{linindep}).
By repeating the argument for the other divisors we get the statement.
\end{proof}
\begin{parg}\label{yoga}
 Suppose  that
  $X_{\eta}\cong\Bl(Y_4 ;1,1)$.
Then $X\cong S\times T$ where $S=\Bl_{2 pts}\pr^2$,  and $\psi\colon X\to T$ is the projection.
\end{parg}
\begin{proof}
We proceed similarly to \ref{product}. Set $H:=E+F+\wi{E}+\wi{F}$. By \ref{tables} we see that $H\cdot e=H\cdot \wi{e}=H\cdot f=H\cdot\wi{f}=2$. Moreover $L=E^{\perp}\cap\wi{E}^{\perp}\cap F^{\perp}\cap\wi{F}^{\perp}\subset H^{\perp}$.

Let $C\subset X$ be an irreducible curve with $C\subset\Supp H=E\cup F\cup\wi{E}\cup\wi{F}$. Suppose that $C\subset E$, the other cases being analogous, and set $B_E:=\{pt\}\times B\subset E$. Then $[C]\in\NE(E,X)=R_E+\NE(B_E,X)\subset R_E+L$, hence $H\cdot C\geq 0$. We conclude that $H$ is nef and defines a contraction $\xi\colon X\to S$ 
 such that $\NE(\xi)=\NE(X)\cap H^{\perp}$.
$$\xymatrix{
& X\ar[dl]_{\xi}\ar[dr]^{\psi}&  \\
S && T
}$$

As in the proof of \ref{product}, we see that $\xi_{|E}\colon E\to\xi(e)$ factors through the projection $E\to\pr^1$, and similarly for the other divisors $\wi{E},F,\wi{F}$; in particular $\dim\xi(\Supp(H))=1$ and $\dim S=2$. We also see that $\NE(\xi)=L\cap\NE(X)$, and  that $\xi$ and $\psi$ induce a finite morphism $\pi\colon X\to S\times T$.
Finally as in the proof of \ref{product} we see that $\pi$ is an isomorphism.
\end{proof}
By \ref{yoga},
we conclude that $X_\eta\not\cong\Bl(Y_4;1,1)$. Indeed if $X\cong S\times T$, then $X_\eta\cong \Bl(\pr^2_K;1,1)$, and we get a contradiction. Therefore by \ref{list4} and \ref{not_smooth} $X_\eta$ 
is 
 $\Bl(\pr^2_K;1,4)$ or
$\Bl(\pr^2_K;2,2)$, and this concludes the proof of Theorem \ref{singular}.
\end{proof}
\begin{remark}
We will study in detail the case $X_\eta=\Bl(\pr^2_K;1,4)$ in the next sections. For the other case  $X_\eta=\Bl(\pr^2_K;2,2)$, we describe in the figure below the generic fibers of the targets of the contractions of the $2$-dimensional faces of $\NE(\psi)$, using the isomorphism $\NE(\psi)\cong\NE(X_\eta)$ and the description in \cite[Figure 15]{BFSZ}. Here $Y_8$ is a del Pezzo surface over $K=\C(T)$ with $\rho_{Y_8}=1$ and degree $8$.
\begin{center}
  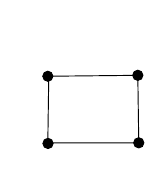
\end{center}
 \end{remark}

\section{The case $X_\eta\cong\Bl(\pr^2_K;1,4)$}\label{section_rho=6}
\noindent
\begin{prg}\label{outline_rho=6}
In this section we prove Theorem \ref{rho=6} from the Introduction; let us give an outline of the proof.

First of all we consider the map $\pi_Y\circ\ph\colon X_2\to T$ and its second factorization in elementary contractions: $X_2\stackrel{\tau}{\to} Z\stackrel{\pi_Z}{\to} T$ (see diagram \eqref{firenze}). We show that
$Z$ is smooth and 
$\alpha:=\tau\circ\sigma\colon X\to Z$ is the blow-up of two disjoint smooth, irreducible, codimension $2$ subvarieties $A_Z,B_Z$ of $Z$, where $A_Z=\tau(A_2)$ (see \ref{alpha}; recall that $A_2$ is the center of the blow-up $\sigma$).
We also show that $\pi_Z\colon Z\to T$ is a $\pr^2$-bundle, and $A_Z$ is a section, while $B_Z$ is finite of degree $4$ onto $T$ (see \ref{centers} -- \ref{gianfranco}).
The rest of the statement follows by studying carefully the different factorizations and the involved morphisms.  To show that $Z\cong\pr_T(\ol\oplus\ol(D)\oplus\ol(2D))$, we use some results by Casnati and Ekedahl on degree $4$ covers, applied to $\pi_{Z|B_Z}\colon B_Z\to T$ (see \ref{trace}, \ref{D1}, and \ref{coker}).
\end{prg}
We keep the same notation as in Section \ref{discriminant}.
\begin{parg}\label{alpha}
  We have a commutative diagram:$\,$\footnote{In this Section's diagrams, for the reader's convenience, for some blow-up map we specify the exceptional divisors, \emph{e.g.}\ $\sigma=\sigma_E$.}
  \begin{equation}\label{firenze}
\xymatrix{ X\ar@/^{1pc}/[rr]^{\alpha=\alpha_{E,F}}\ar[r]_{\sigma=\sigma_E}&{X_2}\ar[d]_{\ph}
  \ar[r]_{\tau}&{Z}\ar[d]^{\pi_Z} \\
 & {Y}\ar[r]^{\pi_Y}&{T}
}
\end{equation}
where $Z$ is smooth and
$\alpha$ is the blow-up of two disjoint smooth, irreducible subvarieties 
$A_Z,B_Z\subset Z$ of codimension two, where $A_Z$ is the image of $A_2\subset X_2$; the exceptional divisors of $\alpha$ are $E$ (over $A_Z$) and $F$ (over $B_Z$).
\end{parg}
\begin{proof}
We have $\NE(\ph\circ\sigma)=R_E+R_{\wi{E}}$; let $\alpha\colon X\to Z$ be the contraction over $T$ with $\NE(\alpha)=R_E+R_{F}$ (see Figure \ref{figura_psi}), then $\alpha$ factors through $\sigma$ because $\NE(\sigma)=R_E$. Since both rays $R_E$ and $R_{F}$ are of type $(n-1,n-2)^{sm}$ (see \ref{every_ray}), with disjoint loci $E$ and $F$ (see  \ref{description}), we get the statement.
 \end{proof}
  \begin{parg}\label{centers}
    $A_Z$ is a section of $\pi_Z$.
 \end{parg}  
\begin{proof}
  Since $A_2$ is a section of $\ph_{|\ph^{-1}(A_Y)}$ (see Theorem \ref{singular}), and $A_Y$ is a section of $\pi_Y$ (see \ref{A_Ysection}), we see that $A_2$ is a section of $\pi_Y\circ\ph\colon X_2\to T$. Moreover $A_2=\sigma(E)$ is disjoint from the exceptional divisor $\sigma(F)$ of the blow-up $\tau\colon X_2\to Z$ of $B_Z$, therefore its image $A_Z\subset Z$ is a section of $\pi_Z$.
\end{proof}
 \begin{parg}\label{vbE}
We have $Z=\pr_T(\ma{E})$ where $\ma{E}$ is a rank $3$ vector bundle on $T$, sitting in an extension:
\begin{equation}\label{sabri}
  0\la \ma{N}^{\vee}_{A_Z/Z}\la\ma{E}\la\ol_T\la 0
\end{equation}
given by the section $A_Z$ of $\pi_Z$. Let $H_Z$ be a tautological divisor in $Z$; we have $\ol_Z(H_Z)_{|A_Z}\cong\ol_T$.
\end{parg}
\begin{proof}
 The map $\pi_Z\colon Z\to T$ is an elementary contraction with generic fiber $\pr^2_K$, thus the fiber over a general closed point is $\pr^2$. 

 We note that $\pi_Z$ is equidimensional with irreducible fibers. 
 Indeed this holds for $\psi=\pi_Z\circ\alpha$ (see \ref{integral}), and every fiber of $\pi_Z$ is the image, under $\alpha$, of a fiber of $\psi$.
 
We claim that $\pi_Z$ is smooth. Indeed it is a flat morphism, between smooth varieties, with general fiber $\pr^2$; if it were not smooth, by \cite[Theorem 4]{carolinaflat} it should have reducible fibers, which is not the case. 
Since moreover $\pi_Z$ has a section, we conclude that  
 $Z=\pr_T(\ma{E})$ for some rank $3$ vector bundle $\ma{E}$ on $T$, and we can choose $\ma{E}$ such that the section $A_Z$ gives the exact sequence \eqref{sabri}.
\end{proof}
\begin{parg}\label{gianfranco}
Set  $\wi{E}_Z:=\alpha(\wi{E})\subset Z$.
We have $\wi{E}\stackrel{\alpha}{\cong}\wi{E}_Z$, the divisor $\wi{E}_Z\subset Z$ is  smooth, contains both $A_Z$ and $B_Z$, and $\pi_{Z|\wi{E}_Z}\colon \wi{E}_Z\to T$ is a smooth 
conic bundle. Moreover $\pi_B:=\pi_{Z|B_Z}\colon B_Z\to T$ is finite of degree $4$.
\end{parg}
For $t\in T$ general, in $Z_t:=\pi_Z^{-1}(t)\cong\pr^2$, $\wi{E}_Z\cap Z_t$ is the conic through the $5$ points $(A_Z\cup B_Z)\cap Z_t$, see Figure \ref{figura_Z_t}.

\begin{figure}[h]\caption{The projective plane $Z_t\cong\pr^2$ for $t\in T$ general.}\label{figura_Z_t}

\bigskip
  
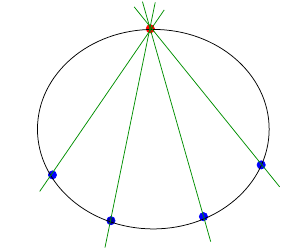
    \end{figure}

 \begin{proof}
   We have $\wi{E}\cdot e=\wi{E}\cdot f=1$ (see  \ref{tables}), hence  $\wi{E}_Z$ contains both $A_Z$ and $B_Z$. Moreover
$\N(\wi{E},X)\cap\ker\psi_*=\R[\hat{e}]$ by  \ref{description}, and $\NE(\alpha)=\langle [e],[f]\rangle\subset\ker\psi_*$ (see \ref{alpha}), therefore $\N(\wi{E},X)\cap\NE(\alpha)=\{0\}$, and $\alpha$ is finite on $\wi{E}$. We conclude that
$\wi{E}\stackrel{\alpha}{\cong}\wi{E}_Z$ and $\wi{E}_Z$ is smooth.

We also have $\wi{E}\stackrel{\sigma}{\cong}\ph^{-1}(A_Y)\subset X_2$ (see \ref{cricca}), and $A_Y$ is a section of $\pi_Z$, therefore
 $\pi_{Z|\wi{E}_Z}\colon \wi{E}_Z\to T$ is isomorphic to $\ph_{|\ph^{-1}(A_Y)}\colon\ph^{-1}(A_Y)\to A_Y$ (see diagram \eqref{firenze}), and is a smooth $\pr^1$-fibration. Let $\hat{e}_Z:=\alpha(\hat{e})$ be its fiber.
 Since $-K_X\cdot\hat{e}=1$, $E\cdot\hat{e}=1$, and $F\cdot\hat{e}=4$ (see \ref{tables}), we have
  $$-K_Z\cdot\hat{e}_Z=-K_Z\cdot\alpha_*(\hat{e})=
  \alpha^*(-K_Z)\cdot\hat{e}=(-K_X+E+F)\cdot\hat{e}=6,$$ therefore $\hat{e}_Z$ is a smooth conic in a fiber of $\pi_Z$, and
we conclude that $\wi{E}_Z\subset Z$ is a smooth conic bundle.
   
   Now  $\pi_{B}=\pi_{Z|B_Z}\colon B_Z\to T$ is generically finite of degree $4$, because the generic point of $B_Z$ has degree $4$ in $Z_\eta\cong\pr^2_K$.
   Since $\pi_{Z|\wi{E}_Z}$ is finite on $A_Z$ and $A_Z\cap B_Z=\emptyset$, $B_Z$ cannot contain any fiber of $\pi_{Z|\wi{E}_Z}$, and $\pi_Z$ must be finite on $B_Z$ too.
\end{proof} 
\begin{prg}
  Let $t\in T$ (a closed point), and consider the fibers over $t$ of
$\pi_Y\colon Y\to T$, $\pi_Y\circ\ph\colon X_2\to T$, and $\pi_Z\colon Z\to T$:
  $$\xymatrix{ {(X_2)_t}\ar[d]_{\ph_t}\ar[r]^{\tau_t}&{Z_t\cong\pr^2} \\
 {Y_t\cong\pr^1}&
}$$
Here $\tau_t\colon (X_2)_t\to\pr^2$ is the blow-up of the zero-dimensional subscheme $(B_Z)_t\subset\pr^2$, of length $4$, $\ph_t$ is the conic bundle given by the pencil of conics through $(B_Z)_t$, and $(\Delta_{\ph})_t\subset\pr^1$ is the discriminant of such conic bundle. If $t$ is general, then $(B_Z)_t$ is reduced and contains $4$ points in general linear position,\footnote{Because the blow-up of $\pr^2$ along $(B_Z)_t$ is a del Pezzo surface.} therefore $(\Delta_{\ph})_t$ is reduced of length $3$. Thus $\pi_{Y|\Delta_\ph}\colon\Delta_\ph\to T$ has degree $3$.

We remark that the finite covers $\pi_{B}\colon B_Z\to T$ (of degree 4)  and
$\pi_{Y|\Delta_\ph}\colon\Delta_\ph\to T$ (of degree 3) determine each other, we refer the reader to \cite{casnati} for a study of the relation among them.
\end{prg}
\begin{prg}\label{trace}
  The direct image $(\pi_B)_*\ol_{B_Z}$ is a rank $4$ locally free sheaf on $T$; moreover the natural inclusion $\pi_B^{\#}\colon\ol_T\to (\pi_B)_*\ol_{B_Z}$ splits via the trace map
  $$\frac{1}{4}\text{Tr}_{B_Z/T}\colon (\pi_B)_*\ol_{B_Z}\to \ol_T,$$
  see \cite[\S 6.3.D]{lazII}. Then $\ker\text{Tr}_{B_Z/T}\cong\coker\pi_B^{\#}$  is a rank $3$ locally free sheaf on $T$ such that:
  $$(\pi_B)_*\ol_{B_Z}\cong \ol_T\oplus\ker\text{Tr}_{B_Z/T}.$$ We set $\ma{G}:=(\ker\text{Tr}_{B_Z/T})^{\vee}$,  
  this is called the bundle associated to the covering $\pi_B$, or also the Tschirnhausen module of $\pi_B$.
\end{prg} 
\begin{parg}\label{D1}
  There exists a divisor $D_1$ on $T$ such that $\ma{G}\cong\ma{E}\otimes \ol_T(D_1)$ and $H_{Z|B_Z}\sim K_{B_Z}-\pi_B^*(D_1+K_T)$ (see \ref{vbE} and \ref{gianfranco}).
\end{parg}  
\begin{proof}
  It is shown in \cite[Theorem 1.3]{casnek} (see also \cite[Proposition 6.3.49]{lazII}) that there is an embedding $B_Z\hookrightarrow\pr_T(\ma{G})$ such that $\pi_B$ is the restriction of the $\pr^2$-bundle map, and  $\ol_{\pr_T(\ma{G})}(1)_{|B_Z}\cong\omega_{B_Z/T}$ \cite[Theorem 2.1(ii)]{casnek}.
Moreover
such an embedding of $\pi_B\colon B_Z\to T$ in a $\pr^2$-bundle over $T$ is unique up to isomorphism, in the following sense: if $\pi_B$ factors as $p\circ\iota$ where $\iota\colon B_Z\hookrightarrow \pr$ is an embedding, $p\colon \pr\to T$ is a $\pr^2$-bundle, and $\iota_t\colon (B_Z)_t\hookrightarrow \pr_t=\pr^2$ is a non-degenerate, arithmetically Gorenstein embedding for every $t\in T$, then $\pr\cong\pr_T(\ma{G})$, and this isomorphism respects the embeddings of $B_Z$ and the $\pr^2$-bundle structures [\emph{ibidem}, Theorem 2.1, Step B, Step C].

Now we have
$B_Z\subset Z=\pr_T(\ma{E})$, and for every $t\in T$, $(B_Z)_t\subset Z_t=\pr^2$ is the base locus of a pencil of conics whose general member is smooth; therefore $(B_Z)_t$ is not contained in a line, and it is an arithmetically Gorenstein subscheme of $\pr^2$, being the complete intersection of two conics.
We deduce that $\pr_T(\ma{G})\cong\pr_T(\ma{E})$ and hence $\ma{G}\cong\ma{E}\otimes \ol_T(D_1)$ for some divisor $D_1$ on $T$. 

  We also have $\ol_Z(H_Z)=\ol_{\pr_T(\ma{E})}(1)\cong\ol_{\pr_T(\ma{G})}(1)\otimes\pi_Z^*\ol_T(-D_1)$, and restricting to $B_Z$ we get $H_{Z|B_Z}\sim \omega_{B_Z/T}-\pi_B^*(D_1)=K_{B_Z}-\pi_B^*(D_1+K_T)$.
\end{proof}  
\begin{prg}\label{pantelleria}
In the blow-up $\alpha\colon X\to Z$ of $A_Z$ and $B_Z$, let us first blow-up $A_Z$ by
$\sigma'\colon X_2'\to Z$, and then the transform  $B_2'\subset X_2'$ of $B_Z$:
 $$\xymatrix{   X\ar[rd]^{\alpha}\ar[r]^{\tau'=\tau'_{F}}\ar[d]_{\sigma=\sigma_E}&{X_2'}\ar[d]^{\sigma'=\sigma'_{E_2'}}
    \\
    {X_2}\ar[r]_{\tau}&  {Z}
  }$$
The exceptional divisor of $\tau'$ is $F$, and 
that of $\sigma'$ is $E_2':=\tau'(E)\subset X_2'$ (see Figure \ref{figura_X_2} on page \pageref{figura_X_2}).
\end{prg}
\begin{parg}\label{alternative}
 We have a commutative diagram:
\begin{equation}\label{calcio}
 \xymatrix{ X\ar[rd]^{\alpha}\ar[r]^{\tau'=\tau'_{F}}\ar[d]_{\sigma=\sigma_E}&{X_2'}\ar[rd]^{\ph'}\ar[d]^{\sigma'}&
    \\
    {X_2}\ar[r]^{\tau}\ar[dr]_{\ph}&  {Z}\ar[dr]^{\pi_Z} &W\ar[d]^{\pi_W}\\
 & {Y}\ar[r]_{\pi_Y}&{T}
}\end{equation}
where $W$ is smooth of dimension $n-1$, and  $\pi_W\colon W\to T$ and  $\ph'\colon X_2'\to W$ are smooth $\pr^1$-fibrations (see Figure \ref{figura_psi_2}).
\end{parg}

\begin{figure}[h]\caption{A section of $\NE(\psi)$.}\label{figura_psi_2}

\bigskip
  
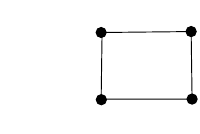
    \end{figure}

\begin{proof}
Since $\pi_Z$ is a $\pr^2$-bundle and $A_Z$ is a section of $\pi_Z$, the composition $\pi_Z\circ\sigma'\colon X_2'\to T$ is a $K$-negative, smooth morphism with fiber the Hirzebruch surface $\mathbb{F}_1$, and has a different factorization as $X_2'\stackrel{\ph'}{\to} W\stackrel{\pi_W}{\to} T$ where $W$ is smooth of dimension $n-1$ and both $\ph'$ and $\pi_W$ are smooth $\pr^1$-fibrations.
We have $\NE(\ph'\circ\tau')=R_F+R_{\wi{F}}$. 
\end{proof}
\begin{parg}\label{caldaia}
  Set  $\wi{E}_2':=\tau'(\wi{E})\subset X_2'$. Then  $\wi{E}_2'$ is a section of $\ph'$ (see Figure \ref{figura_X_2}).
\end{parg}
\begin{proof}
  We have $\wi{E}\cdot f=1$, $\wi{E}\cdot \hat{f}=0$, and $F\cdot\hat{f}=1$ (see \ref{tables}).
The curve $\tau'(\hat{f})$ is a fiber of $\ph'$ and
$$\wi{E}'_2\cdot \tau'(\hat{f})=(\tau')^*(\wi{E}_2')\cdot\hat{f}
=\bigl(\wi{E}+(\wi{E}\cdot f)F\bigr)\cdot\hat{f}=(\wi{E}+F)\cdot \hat{f}=1.$$
 
Moreover $\ph'$ must be finite on $\wi{E}_2'$, because $\NE(\ph'\circ\tau')=R_F+R_{\wi{F}}$ and $\N(\wi{E},X)\cap\ker\psi_*=\R[\hat{e}]$ (see \ref{description}), therefore $\N(\wi{E},X)\cap\NE(\ph'\circ\tau')=\{0\}$, $\ph'\circ\tau'$ is finite on $\wi{E}$, and  $\ph'$ is finite on $\wi{E}_2'=\tau'(\wi{E})$.
 We conclude that $\wi{E}_2'$ is a section of $\ph'$.
\end{proof}

\begin{figure}[h]\caption{The surface $(X_2')_t$ for $t\in T$ general.}\label{figura_X_2}

\bigskip
  
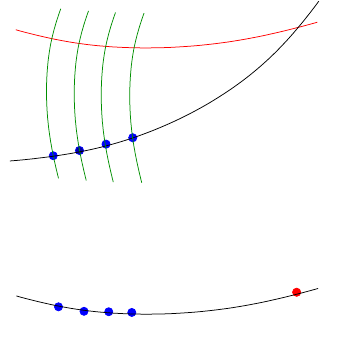
    \end{figure}

\begin{parg}\label{xi}
 We have an isomorphism $\xi:=\ph'\circ(\sigma'_{|\wi{E}_2})^{-1}\colon \wi{E}_Z\cong W$, such that $\pi_W\circ\xi=\pi_{Z|\wi{E}_Z}$.
$$\xymatrix{
  {\wi{E}_2'\subset X_2'}\ar[r]^-{\ph'} \ar[d]_-{\sigma'} & W\ar[d]^{\pi_W}\\
  {\wi{E}_Z\subset Z}\ar[ru]|{\,\xi\,}\ar[r]^-{\pi_Z}&T
  }$$
\end{parg}
\begin{proof}
  We know that $\alpha_{|\wi{E}}\colon \wi{E}\to\wi{E}_Z$ is an isomorphism (see \ref{gianfranco}), and $\alpha=\sigma'\circ\tau'$,
  therefore $\sigma'_{|\wi{E}_2'}\colon \wi{E}_2'\to\wi{E}_Z$ is an isomorphism too.
Moreover $\wi{E}_2'\subset X_2'$ is a section of $\ph'$ (see \ref{caldaia}), therefore $\ph'_{|\wi{E}_2'}\colon \wi{E}_2'\to W$ is an isomorphism, and we get the statement.
\end{proof}  
\begin{parg}\label{W1}
  Set $A_W:=\xi(A_Z)\subset W$ and $B_W:=\xi(B_Z)\subset W$. Then $A_W$ and $B_W$ are smooth and disjoint, 
  $A_W$ is a section of $\pi_W\colon W\to T$, and  $\pi_{W|B_W}\colon B_W\to T$ is a finite morphism of degree $4$, isomorphic to $\pi_B\colon B_Z\to T$
   (see Figure \ref{figura_X_2}).
\end{parg}
\begin{proof}
The statement follows from \ref{xi}, \ref{alpha}, \ref{centers}, and \ref{gianfranco}.
\end{proof}  
   \begin{parg}\label{W_T_Fano}
$W$ and $T$ are Fano.
\end{parg}
\begin{proof}
  Let us consider the composition $\ph'\circ\tau'\colon X\to W$; recall that $\ph'\colon X_2'\to W$ is a smooth $\pr^1$-fibration, and $\tau'\colon X\to X_2'$ is the blow-up of $B_2'\subset X_2'$, transform of $B_Z\subset Z$ under $\sigma'$ (see \ref{pantelleria} and \ref{alternative}).
Since $B_Z\subset\wi{E}_Z$, we have $B_2'\subset\wi{E}_2'$, and $\ph'(B_2')=B_W$ (see \ref{xi} and \ref{W1}). Moreover $\wi{E}_2'$ is a section of $\ph'$ (see \ref{caldaia}), therefore $B_2'$ is a section of $\ph'_{|(\ph')^{-1}(B_W)}$. 
We conclude that
$\ph'\circ\tau'$
  is a conic bundle, and has smooth fibers over $W\smallsetminus B_W$, reducible fibers over $B_W$, and no non-reduced fibers. Hence $W$ is Fano by \cite[Proposition 4.3]{wisn}, and since $\pi_W$ is smooth (see \ref{alternative}), $T$ is Fano too.
\end{proof}
\begin{parg}\label{W}
  We have $W\cong\pr_{T}(\ol\oplus\ol(D))$ where $D$ is a divisor  on $T$ such that  $-K_T+D$ is ample and $[D]\in\Eff(T)\smallsetminus\{0\}$. 
  Moreover, if $H_W$ is a tautological divisor, we have $H_{W|A_W}\sim 0$ and $B_W\sim 4 H_W$.  
\end{parg}
\begin{proof}
  Recall that $\pi_W\colon W\to T$ is a smooth $\pr^1$-fibration, $A_W$ is a section of $\pi_W$, and $A_W\cap B_W=\emptyset$ (see \ref{alternative} and \ref{W1}).
  Let $D$ be a divisor on $T$ such that $\ol_{W}(A_W)_{|A_W}\cong\ol_{T}(-D)$, and set $D':=-D$.
Since $W$ is Fano,
we deduce from  Lemma \ref{sectionsFano} that $-K_T+D=-K_{T}- D'$ is ample, and that $W\cong\pr_{T}(\ol\oplus\ol(D'))$, with the section $A_W$ corresponding to the projection onto $\ol_{T}(D')$. By tensoring the vector bundle with $\ol_T(D)$, we get $W\cong\pr_{T}(\ol(D)\oplus\ol)$ 
  with $A_W$ corresponding to the projection onto $\ol_{T}$.
  If $H_W$ is a tautological divisor, we have $H_{W|A_W}\sim 0$ and  $B_{W|A_W}= 0$, thus $B_W\sim\lambda H_W$ for some $\lambda$, and since $B_W$ has degree $4$ onto $T$, we get 
  $B_W\sim 4 H_W$.

  Finally the existence of a smooth connected divisor $B_W\in |4 H_W|$ implies that
 $[D]\in\Eff(T)\smallsetminus\{0\}$, see Remark \ref{Deffective}.
\end{proof}
\begin{parg}\label{coker}
We have $\ma{G} \cong\ol_T(D)\oplus\ol_T(2D)\oplus\ol_T(3D)$ (see \ref{trace}).
\end{parg}
\begin{proof}
Since $\pi_B\colon B_Z\to T$ is isomorphic to the cover $\pi_{W|B_W}\colon B_W\to T$ (see \ref{W1}), we can compute $\ma{G}\cong(\coker\pi_B^{\#})^{\vee}$ using the embedding $\iota\colon B_W\hookrightarrow W=\pr_T(\ol\oplus\ol(D))$. We still denote by $\pi_B$ the restriction $\pi_{W|B_W}$.
  
Consider the exact sequence on $W$:
$$0\la \ol_W(-B_W)\la \ol_W\stackrel{\iota^{\#}}{\la}\iota_*\ol_{B_W}\la 0$$
and take direct images via $\pi_W$. 
We have
$$(\pi_W)_*\ol_W\cong\ol_T\quad\text{and}\quad
(\pi_W)_*\iota_*\ol_{B_W}=(\pi_B)_*\ol_{B_W}$$
and under these isomorphisms the natural restriction morphism $\iota^{\#}\colon  \ol_W\to \iota_*\ol_{B_W}$ is sent, via $(\pi_W)_*$, to
the morphism $\pi_B^{\#}\colon \ol_T\to(\pi_B)_*\ol_{B_W}$.
We also have
$$(\pi_W)_*\ol_W(-B_W)=0\quad\text{and}\quad R^1(\pi_W)_*\ol_W=0,$$
so we get an exact sequence on $T$:
$$0\la \ol_T\stackrel{\pi_B^{\#}}{\la} (\pi_B)_*\ol_{B_W}\la R^1(\pi_W)_*\ol_W(-B_W)\la 0.$$
 We conclude that $R^1(\pi_W)_*\ol_W(-B_W)\cong \coker \pi_B^{\#}\cong \ma{G}^{\vee}$.

 Recall that $W\cong\pr_T(\ol\oplus\ol(D))$, $\omega_{W/T}=\ol_W(-2H_W+\pi_W^*(D))$,
 $\ol_W(B_W)\cong\ol_W( 4H_W)$ (see \ref{W}), and by relative Serre duality we have:
\begin{align*}
  \ma{G}&\cong \bigl(R^1(\pi_W)_*\ol_W(-B_W)\bigr)^{\vee}\cong
(\pi_W)_*\bigl(\ol_W(B_W)\otimes\omega_{W/T}\bigr)\\
&\cong (\pi_W)_*\ol_W\bigl(2H_W+\pi_W^*(D)\bigr)\cong  (\pi_W)_*\ol_W(2H_W)\otimes\ol_T(D)\\
 & \cong\Sym^2\bigl(\ol_T\oplus\ol_T(D)\bigr)\otimes\ol_T(D) \cong\ol_T(D)\oplus\ol_T(2D)\oplus\ol_T(3D).\qedhere
\end{align*}
\end{proof}
\begin{parg}\label{tautologicals}
Recall the isomorphism $\xi\colon \wi{E}_Z\to W$ from \ref{xi}. We have $H_{Z|\wi{E}_Z}\sim\xi^*(2H_W)$ (recall that $H_Z$ is a tautological divisor in $Z$, see \ref{vbE}).
\end{parg}
\begin{proof}
  Set $M:=(\xi^{-1})^*H_{Z|\wi{E}_Z}$; we show that $M\sim 2 H_W$. We have $M\sim\lambda H_W+\pi_W^*(M_T)$, where $\lambda\in\Z$ and $M_T$ is a divisor on $T$. Moreover, if $\Gamma\subset W$ is a fiber of  $\pi_W$, then $\xi^{-1}(\Gamma)=\hat{e}_Z$ is a conic in a fiber of the $\pr^2$-bundle $\pi_Z\colon Z\to T$ (see \ref{gianfranco}), and we have
  $$\lambda=M\cdot\Gamma=H_{Z|\wi{E}_Z}\cdot \hat{e}_Z=2.$$
  We also have $H_{Z|A_Z}\sim 0$ (see \ref{vbE}), thus  $(H_{Z|\wi{E}_Z})_{|A_Z}\sim 0$, and $M_{|A_W}\sim 0$. Since  $H_{W|A_W}\sim 0$ (see \ref{W}), we get
  $$0\sim M_{|A_W}\sim 2H_{W|A_W}+\pi_W^*(M_T)_{|A_W}\cong M_T,$$
and we get the statement.
\end{proof}   
\begin{parg}
  We have $\ol_T(D_1)\cong\ol_T(D)$ and
$\ma{E}\cong \ol_T\oplus \ol_T(D)\oplus\ol_T(2D)$ (see \ref{vbE} and \ref{D1}).
 \end{parg}
\begin{proof}
  We have $K_W-\pi_W^*(K_T)\sim -2H_W+\pi_T^*(D)$, $B_W\sim 4 H_W$ (see \ref{W}), and
  $$K_{B_W}-\pi_B^*(K_T)\sim \bigl(K_W+B_W-\pi_W^*(K_T)\bigr)_{|B_W}
  \sim 2H_{W|B_W}+\pi_B^*(D).$$
  On the other hand by \ref{tautologicals} and \ref{D1} we also have $$2H_{W|B_W}\sim (\xi^{-1})^*(H_{Z|B_Z})
\sim (\xi^{-1})^*\bigl(K_{B_Z}-\pi_B^*(D_1+K_T)\bigr)=
K_{B_W}-\pi_B^*(D_1+K_T)$$
namely $K_{B_W}-\pi_B^*(K_T)\sim 2H_{W|B_W}+\pi_B^*(D_1)$,
so that $$\bigl(\pi_W^*\ol_T(D_1-D)\bigr)_{|B_W}=\pi_B^*\ol_T(D_1-D)\cong\ol_{B_W}.$$

  Now we notice that
  since $B_W$ dominates $T$, $\N(B_W,W)$ dominates $\N(T)$ under $(\pi_W)_*$, hence $\dim\N(B_W,W)\geq \rho_T=\rho_W-1$, and $\de(B_W)\leq 1$. By duality $\de(B_W)=\dim\ker r$ where $r\colon\Nu(W)\to \Nu(B_W)$ is the restriction. Since $A_W\cap B_W=\emptyset$ (see \ref{W1}), we have $[A_W]\in\ker r$, and we deduce that $\ker r=\R[A_W]$ and $\pi_W^*(D_1-D)\equiv \lambda A_W$ for some $\lambda\in \R$. By taking the intersection with a fiber of $\pi_W$ we see that $\lambda=0$, $\pi_W^*\ol_T(D_1-D)\cong\ol_W$, and finally $\ol_T(D_1-D)\cong\ol_T$, therefore $\ol_T(D_1)\cong\ol_T(D)$.

  Then \ref{D1} and \ref{coker} give $\ma{E}\cong\ma{G}\otimes\ol_T(-D)\cong\ol_T\oplus \ol_T(D)\oplus\ol_T(2D)$.
\end{proof}
\begin{parg}\label{portosanto}
The sequence \eqref{sabri} splits and $\ma{N}^{\vee}_{A_Z/Z}\cong\ol_T(D)\oplus\ol_T(2D)$.
\end{parg}
\begin{proof}
  Consider the surjective morphism $\phi\colon \ma{E}\cong\ol_T\oplus\ol_T(D)\oplus\ol_T(2D)\to\ol_T$ in \eqref{sabri}. We have $$
  \Hom\bigl(\ol_T\oplus\ol_T(D)\oplus\ol_T(2D),\ol_T\bigr)\cong H^0\bigl(T,\ol_T\oplus\ol_T(-D)\oplus\ol_T(-2D)\bigr)$$
and $H^0(T,\ol_T(-D))=H^0(T,\ol_T(-2D))=0$ because $[D]\in\Eff(T)\smallsetminus\{0\}$ (see \ref{W}). Therefore $\phi=\lambda\pi$ where $\lambda\in\C\smallsetminus\{0\}$ and $\pi\colon\ma{E}\to\ol_T$ is the projection onto the first summand, the sequence splits, and $\ma{N}^{\vee}_{A_Z/Z}=\ker\phi\cong \ol_T(D)\oplus\ol_T(2D)$.
\end{proof}  
\begin{parg}
  $-K_T-2D$ is ample.
\end{parg}
\begin{proof}
Since $E\cap F=\emptyset$ and $F=\Exc(\tau')$ (see \ref{pantelleria}), we have $-K_{X|E}\stackrel{\tau'}{\cong} -K_{X_2'|E_2'}$ ample. Moreover $E_2'$ is the exceptional divisor of the blow-up $\sigma'\colon X_2'\to Z$ of $A_Z$, therefore by \ref{portosanto} we have $E_2'=\pr_T(\ol(D)\oplus\ol(2D))$ with tautological class $\eta=\ol_{X_2'}(-E_2')_{|E_2'}$. We also have $K_{E_2'}=\pi^*(K_T+3D)-2\eta$, thus
$-K_{X_2'|E_2'}=-K_{E_2'}+E'_{2|E_2'}=\pi^*(-K_T-3D)+\eta$ is the tautological class for $\pr_T(\ol(-K_T-2D)\oplus\ol(-K_T-D))$. We deduce that $-K_T-2D$ is ample.
\end{proof}
\begin{prg}
We have  $\de_T\leq \de_X=2$ by \cite[Remark 3.3.18]{codim}.
\end{prg}  
This concludes the proof of Theorem \ref{rho=6}.
\section{Reconstructing $X$ from $(T,D)$}\label{reconstructing}
\noindent 
In this section we prove Proposition \ref{construction}, by showing how a Fano variety $X$ as in Section \ref{section_rho=6} can be reconstructed from the Fano variety $T$ and the divisor $D$ on $T$, provided that the conditions 
given in Proposition \ref{construction} are satisfied.

Let 
$T$ be a smooth Fano variety of dimension $n-2$, and $D$ a divisor on $T$.
\begin{prg}\label{defW}
  Set
$$W:=\pr_T(\ol\oplus\ol(D))\stackrel{\pi_W}{\la} T$$
and let $H_W\subset W$ be the section  corresponding to the projection $\ol\oplus\ol(D)\twoheadrightarrow\ol(D)$, so that $H_W$ is a tautological divisor.
\end{prg}
\begin{assumption}\label{4H}
  We assume that there exists a smooth connected divisor $B_W\in |4H_W|$.
\end{assumption}
  \begin{remark}\label{bsptfree}
    If $D$ is base-point-free and $D\not\sim 0$, then Assumption \ref{4H} is verified.
    
    Indeed   $\ol_T\oplus\ol_T(D)$ is  globally generated, therefore $H_W$ and $4H_W$ are base-point-free, and there exists a smooth divisor $B_W\in|4H_W|$. If $B_W$ is not connected, then the linear system $|4H_W|$ defines a morphism from $W$ to a curve, that factors through a contraction $\theta\colon W\to\pr^1$ sending $B_W$ to points, and finite on the fibers of $\pi_W$. Then $H_W$ must be a fiber of $\theta$, and is a section of $\pi_W$: this implies that the induced finite morphism $(\pi_W,\theta)\colon W\to T\times\pr^1$ has degree one, therefore  $W\cong T\times\pr^1$ and $\ol_T(D)\cong\ol_T$, a contradiction. We conclude that  $B_W$ is  connected.
  \end{remark}
   \begin{remark}\label{Deffective}
     Assumption \ref{4H} implies that $[D]\in\Eff(T)$ and $D\not\sim 0$.

     Indeed, since $\pi_{W*}\ol_W(4H_W)\cong\Sym^4(\ol_T\oplus\ol_T(D))$, $h^0(W,4H_W)>0$ implies that $h^0(T,mD)>0$ for some $m\in\{0,1,2,3,4\}$. Moreover, if $D\sim 0$, then $W=\pr^1\times T$ and $H_W=\{pt\}\times T$, and there is no smooth connected divisor in $|4H_W|$.
\end{remark}

\begin{prg}\label{ref}
  Let $A_W\subset W$ be the section of $\pi_W$ corresponding to the projection $\ol\oplus\ol(D)\twoheadrightarrow\ol$, so that $A_W\cap H_W=\emptyset$ and $A_W\sim H_W-\pi_W^*(D)$. It is easy to see that $A_W\cap B_W=\emptyset$.

  We note that $A_W$ is a tautological divisor for $W=\pr_T(\ol(-D)\oplus\ol)$, thus $H^0(W,A_W)\cong H^0(T,\ol(-D)\oplus\ol)=\C$ (as $H^0(T,-D)=0$ by Remark \ref{Deffective}), and $\dim |A_W|=0$.
\end{prg}
The following lemma is left to the reader.
\begin{lemma}\label{BW}
Set $\pi_{B}:=\pi_{W|B_W}\colon B_W\to T$. Then $\pi_{B}$ is finite of degree $4$,  $H_{W|B_W}\sim \pi_{B}^*(D)$, $-K_{W|B_W}=\pi_{B}^*(-K_T+D)$, and   $-K_{B_W}=\pi_{B}^*(-K_T-3D)$.
\end{lemma}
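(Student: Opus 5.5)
We argue fibrewise and with intersection numbers on $W=\pr_T(\ol\oplus\ol(D))$, using the two disjoint sections $A_W$ and $H_W$ from \ref{defW} and \ref{ref}.

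\emph{Finiteness and degree.} Since $A_W\cap B_W=\emptyset$ (see \ref{ref}), $B_W$ contains no fiber $\Gamma\cong\pr^1$ of $\pi_W$, because every such fiber meets the section $A_W$. Hence $\pi_B$ is quasi-finite; being proper, it is finite. Its degree is $B_W\cdot\Gamma=4H_W\cdot\Gamma=4$.

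\emph{The restriction $H_{W|B_W}$.} Because $A_W\sim H_W-\pi_W^*(D)$ and $A_W\cap B_W=\emptyset$, we have $\ol_W(A_W)_{|B_W}\cong\ol_{B_W}$. Therefore
\[
H_{W|B_W}\sim\bigl(A_W+\pi_W^*(D)\bigr)_{|B_W}\sim\pi_B^*(D).
\]

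\emph{The canonical classes.} For the $\pr^1$-bundle $W=\pr_T(\ma{E})$ with $\ma{E}=\ol\oplus\ol(D)$, the relative Euler sequence gives
\[
K_W=-2H_W+\pi_W^*(K_T+\det\ma{E})=-2H_W+\pi_W^*(K_T+D).
\]
This agrees with the formula $\omega_{W/T}=\ol_W(-2H_W+\pi_W^*(D))$ used in \ref{coker}. Restricting to $B_W$ and using $H_{W|B_W}\sim\pi_B^*(D)$, we get
\[
-K_{W|B_W}=\pi_B^*(2D-K_T-D)=\pi_B^*(-K_T+D).
\]
By adjunction,
\[
-K_{B_W}=(-K_W-B_W)_{|B_W}=\pi_B^*(-K_T+D)-4\pi_B^*(D)=\pi_B^*(-K_T-3D).
\]

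The only point needing care is the sign convention for the tautological divisor, so that $\det\ma{E}$ contributes $+\pi_W^*(D)$ to $K_W$. To check it, restrict to the section $A_W$, which corresponds to the quotient $\ol$. There $H_{W|A_W}\sim 0$, and the normal bundle satisfies $\ol(A_W)_{|A_W}\cong\ol(-D)$, from $A_W\sim H_W-\pi_W^*(D)$. The formula then gives $K_{W|A_W}=K_T+D$, and adjunction gives $K_{A_W}=K_T+D-D=K_T$, as it should. Everything else is routine.
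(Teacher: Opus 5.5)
Your proof is correct, and since the paper leaves this lemma to the reader, it is exactly the intended routine verification: finiteness because $B_W$ is disjoint from the section $A_W$, $H_{W|B_W}\sim\pi_B^*(D)$ from $A_W\sim H_W-\pi_W^*(D)$, and the canonical classes from $K_W=-2H_W+\pi_W^*(K_T+D)$ plus adjunction. These are the same ingredients the paper uses in \ref{W}, \ref{coker} and Lemma \ref{-K}; the disjointness $A_W\cap B_W=\emptyset$, only asserted in \ref{ref}, holds because $B_{W|A_W}\sim 4H_{W|A_W}\sim 0$ is effective on $A_W\cong T$ and $A_W\not\subset B_W$.
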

\begin{prg}\label{defZ}
We set $$Z:=\pr_T(\ol\oplus\ol(D)\oplus\ol(2D))\stackrel{\pi_Z}{\la} T.$$
Let $A_Z\subset Z$ be the section of $\pi_Z$ corresponding to the projection $\ol\oplus\ol(D)\oplus\ol(2D)\twoheadrightarrow\ol$, so that $$\ma{N}_{A_Z/Z}\cong \ol_T(-D)\oplus\ol_T(-2D).$$ Set also $H_Z:=\pr_T(\ol(D)\oplus\ol(2D))\hookrightarrow Z$; we have $H_Z\cap A_Z=\emptyset$ and $H_Z$ is a tautological divisor for $Z$.
\end{prg}
\begin{lemma}\label{Ehat}
  We have $A_Z\subset\text{Bs}|2(H_Z-\pi_Z^*(D))|$. The general divisor $\wi{E}_Z\in |2(H_Z-\pi_Z^*(D))|$ is smooth, and $\pi_{Z|\wi{E}_Z}\colon \wi{E}_Z\to T$ is a smooth conic bundle.
\end{lemma}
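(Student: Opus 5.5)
Write $Z=\pr_T(\ma{E})$ with $\ma{E}=\ol\oplus\ol(D)\oplus\ol(2D)$, and set $L:=H_Z-\pi_Z^*(D)$, so that $L$ is the tautological divisor of $\pr_T(\ol(-D)\oplus\ol\oplus\ol(D))$. Then
$$(\pi_Z)_*\ol_Z(2L)\cong\Sym^2\bigl(\ol_T(-D)\oplus\ol_T\oplus\ol_T(D)\bigr).$$
In fiber coordinates $(x_0:x_1:x_2)$, with weights $-D,0,D$ and $A_Z=\{x_1=x_2=0\}$, the summands are:
\begin{enumerate}[$\bullet$]
\item $x_0^2\leftrightarrow\ol_T(-2D)$ and $x_0x_1\leftrightarrow\ol_T(-D)$, which have no sections since $H^0(T,-D)=0$ (Remark \ref{Deffective});
\item $x_0x_2$ and $x_1^2$, both $\leftrightarrow\ol_T$;
\item $x_1x_2\leftrightarrow\ol_T(D)$;
\item $x_2^2\leftrightarrow\ol_T(2D)$.
\end{enumerate}
So every section of $2L$ has the form
$$a\,x_0x_2+b\,x_1^2+s\,x_1x_2+u\,x_2^2,\qquad a,b\in\C,\ s\in H^0(T,D),\ u\in H^0(T,2D).$$
Each such expression vanishes on $A_Z$, which gives $A_Z\subset\text{Bs}|2L|$.

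For smoothness, take $a,b\neq0$ general, say $a=b=1$. I would first try the special member $Q:=x_0x_2+x_1^2$, with $s=u=0$, and then argue by openness. Fiberwise $Q$ is the smooth conic $x_0x_2+x_1^2=0$ in every fiber $\pr^2$, because its rank-$3$ matrix has constant coefficients in a trivialization adapted to the splitting. So the hypersurface $\{Q=0\}$ is flat over $T$ with smooth fibers, hence smooth, and $\pi_Z$ restricted to it is a smooth conic bundle. For a general $\wi{E}_Z=\{x_0x_2+x_1^2+sx_1x_2+ux_2^2=0\}$, the fiberwise quadratic form has the same rank: its discriminant is a nonzero constant independent of $s,u$, since $x_0$ appears only in $x_0x_2$. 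A direct check of the determinant of the symmetric matrix, with zero entries in the $x_0x_0$ and $x_0x_1$ positions, gives $\det=-\tfrac14 a^2 b$ up to a constant factor, independent of $s$ and $u$. Hence every member with $a,b\neq0$ is a smooth conic in each fiber, so $\wi{E}_Z\to T$ is smooth and $\wi{E}_Z$ is smooth. This fiberwise determinant computation is the key step.

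The main obstacle is making the coordinate description global. I would justify it by the decomposition of $(\pi_Z)_*\ol_Z(2L)$ into line bundles: the determinant of the fiberwise quadric is a section of a line bundle on $T$ of degree zero in $D$, namely the trivial one, and it is constant in $t$. The constancy follows because the $(x_0,\cdot)$ row only involves the $\ol_T$-summand coefficient $a$, and the determinant equals $-a^2b/4$ times a unit. Finally, since $a,b$ are nonzero constants on a nonempty open subset of $|2L|$, the general member is smooth, and the lemma follows.
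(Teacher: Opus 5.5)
Your proposal is correct and follows essentially the same route as the paper. Both push $2(H_Z-\pi_Z^*(D))$ forward to $\Sym^2(\ol(-D)\oplus\ol\oplus\ol(D))$, discard the $\ol(-2D)$ and $\ol(-D)$ summands, write the local equation $a\,x_0x_2+b\,x_1^2+s\,x_1x_2+u\,x_2^2$ (which contains $A_Z$), and observe that for $a,b\neq0$ every fiber is a smooth conic. Your determinant $-a^2b/4$ just makes this last step explicit, so the openness detour is unnecessary. One small slip: $H^0(T,-2D)=0$ does not follow from $H^0(T,-D)=0$ alone. Both vanishings do follow from $[D]\in\Eff(T)\smallsetminus\{0\}$, which is what the paper uses.
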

\begin{proof}
Note that $H_Z-\pi_Z^*(D)$ is a tautological divisor for $Z=\pr_T(\ol(-D)\oplus\ol\oplus\ol(D))$, so that
  \begin{align*}
    H^0\bigl(Z, 2(H_Z-\pi_Z^*(D))\bigr) & \cong  H^0\bigl(T,\text{Sym}^2(\ol(-D)\oplus\ol\oplus\ol(D))\bigr)\\
& \cong    H^0(T,\ol(-2D)\oplus\ol(-D)\oplus \ol\oplus \ol \oplus\ol(D)\oplus\ol(2D)).
  \end{align*}
  We have $H^0(T,\ol(-2D))=H^0(T,\ol(-D))=0$, because $[D]\in\Eff(T)\smallsetminus\{0\}$ by Remark \ref{Deffective}.

  Let $U\subset T$ be an open subset such that $\ol_T(D)_{|U}\cong\ol_U$; then $\pi_T^{-1}(U)\cong U\times\pr^2_{(x_0:x_1:x_2)}$ and $A_Z\cap\pi_T^{-1}(U)\cong U\times\{(1:0:0)\}$.    A non-zero section in $H^0(Z, 2(H_Z-\pi_Z^*(D)))$ gives a divisor $\wi{E}_U \subset U\times\pr^2_{(x_0:x_1:x_2)}$ with equation:
  $$\lambda x_0x_2+\mu x_1^2+fx_1x_2+gx_2^2=0,$$
  where $\lambda,\mu\in \C$, $f\in H^0(T,\ol(D))$, $g\in  H^0(T,\ol(2D))$. We see that $\wi{E}_U$ always contains $A_Z\cap \pi_T^{-1}(U)$, therefore $A_Z\subset\text{Bs}|2(H_Z-\pi_Z^*(D))|$.
Moreover if $\lambda\neq 0$ and $\mu\neq 0$ then every fiber of $\pi_{Z|\wi{E}_U}\colon \wi{E}_U\to U$ is a smooth conic, so that $\pi_{Z|\wi{E}_U}$ is a smooth morphism, and $\wi{E}_U$ is smooth. Therefore, for $\lambda\neq 0$ and $\mu\neq 0$, $\wi{E}_Z$ is smooth and $\pi_{Z|\wi{E}_Z}\colon\wi{E}_Z\to T$ is a smooth conic bundle. 
\end{proof}

Let $\sigma'\colon X_2'\to Z$ be the blow-up of $A_Z$ and $E_{2}'\subset X_2'$ the exceptional divisor (the notation is chosen to match with that of Section \ref{section_rho=6}). 
\begin{lemma}\label{X_2}
  We have a commutative diagram:
  $$\xymatrix{ {X_2'}\ar[r]^{\sigma'}\ar[d]_{\ph'}& Z\ar[d]^{\pi_Z}\\ 
W\ar[r]^{\pi_W}&T
    }$$
where $\ph'$ is a $\pr^1$-bundle and  $X_2'=\pr_W(\ol\oplus\ol(H_W+\pi_W^*(D)))$, with tautological divisor  the transform $H_{2}'\subset X_2'$ of $H_Z\subset Z$.
Moreover $E_{2}'\cap H_{2}'=\emptyset$ and $E_{2}'\sim H_{2}'-(\ph')^*(H_W+\pi_W^*(D))$.
\end{lemma}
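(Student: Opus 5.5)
The blow-up of a section $A_Z$ of a $\pr^2$-bundle $\pr_T(\ma{E})$ with $\ma{E}=\ol\oplus\ma{Q}$, where $A_Z$ corresponds to $\ma{E}\twoheadrightarrow\ol$, is the $\pr^1$-bundle over $\pr_T(\ma{Q})$ given by projecting each fiber $\pr^2$ from the point $A_Z\cap Z_t$. Here $\ma{Q}=\ol(D)\oplus\ol(2D)\cong(\ol\oplus\ol(D))\otimes\ol(D)$. Hence $\pr_T(\ma{Q})\cong W$, with tautological line bundle $\ol_{\pr_T(\ma{Q})}(1)\cong\ol_W(H_W+\pi_W^*(D))$.

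I will build the diagram explicitly. Set $p\colon W\to T$ and $\ma{L}:=\ol_W(H_W+\pi_W^*D)$. On $W$ there is the tautological surjection $\pi_W^*\ma{Q}\twoheadrightarrow\ma{L}$. Adding the trivial summand gives $\pi_W^*\ma{E}=\ol_W\oplus\pi_W^*\ma{Q}\twoheadrightarrow\ol_W\oplus\ma{L}$. This surjection induces a closed embedding $P:=\pr_W(\ol_W\oplus\ma{L})\hookrightarrow W\times_T Z$, and so a morphism $P\to Z$ over $T$.

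Fiberwise over $t\in T$, this is the standard picture: the incidence variety of $\{(\text{line through }a,\text{point on it})\}$ in $\pr^1\times\pr^2$, which is $\mathbb{F}_1\to\pr^2$, the blow-up at the point $a=A_Z\cap Z_t$. The section $\pr_W(\ol_W)\subset P$, corresponding to $\ol_W\oplus\ma{L}\twoheadrightarrow\ol_W$, maps onto $A_Z$. Away from it, the map $P\to Z$ is an isomorphism onto $Z\smallsetminus A_Z$.

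Next I will check that $P\to Z$ is the blow-up of $A_Z$. Two facts do this. First, the section $\pr_W(\ol_W)$ is a Cartier divisor mapping to $A_Z$ with fibers $\pr^1\cong\pr(\ma{N}_{A_Z/Z}^{\vee}|_{pt})$; this gives the identification with $\pr_T(\ma{N}^\vee_{A_Z/Z})=\pr_T(\ma{Q})=W$. Second, $P$ is smooth and the morphism is birational and projective. By the universal property of blowing up, $P\to Z$ factors through $\sigma'$. The induced map $P\to X_2'$ is then birational, finite (it is bijective on fibers over $A_Z$), and between smooth varieties, hence an isomorphism by Zariski's Main Theorem. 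This step is the main technical point. Alternatively, one can verify directly in local coordinates $U\times\pr^2$ with $A_Z=\{(1:0:0)\}$, where both sides are the usual $\{x_1y_2=x_2y_1\}\subset U\times\pr^2\times\pr^1$ and the gluing is governed by the transition functions of $\ol(D)$.

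Finally, the divisor identities. $E_2'$ is the section $\pr_W(\ol_W)$, which corresponds to the quotient $\ol_W\oplus\ma{L}\twoheadrightarrow\ol_W$. Likewise $H_2'$, the preimage of $H_Z=\pr_T(\ma{Q})$ (disjoint from $A_Z$), is the section corresponding to $\ol_W\oplus\ma{L}\twoheadrightarrow\ma{L}$. These two sections are disjoint, so $E_2'\cap H_2'=\emptyset$. The tautological bundle $\ol_P(1)$ is the pullback of $\ol_Z(1)=\ol_Z(H_Z)$. Since $A_Z\cap H_Z=\emptyset$, this pullback is $\ol(H_2')$, so $H_2'$ is tautological.

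For two disjoint sections of $\pr_W(\ol\oplus\ma{L})$ one has $\ol(H_2')|_{H_2'}\cong\ma{L}$ and $\ol(H_2')|_{E_2'}\cong\ol$. Write $E_2'\sim H_2'-(\ph')^*N$. Restricting to $E_2'$ gives $0\sim 0-N$ on the left-hand side, i.e. $E_2'|_{E_2'}\cong\ol(-N)$; restricting to $H_2'$, where $E_2'$ is trivial, gives $0\sim\ma{L}-N$. Hence $N=H_W+\pi_W^*(D)$, which is the stated relation $E_2'\sim H_2'-(\ph')^*(H_W+\pi_W^*(D))$.
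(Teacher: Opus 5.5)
Your proof is correct, but it takes a different route from the paper's.

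\textbf{The paper's route.} The paper argues via contraction theory.
\begin{enumerate}
\item $\pi_Z\circ\sigma'\colon X_2'\to T$ is a smooth $K$-negative morphism with fiber $\mathbb{F}_1$. Hence it has a second factorization $X_2'\to\overline{W}\to T$ through smooth $\pr^1$-fibrations.
\item The exceptional divisor $E_2'\cong\pr_{A_Z}(\ma{N}^\vee_{A_Z/Z})\cong\pr_T(\ol(D)\oplus\ol(2D))\cong W$ is a section of $\ph'$. This identifies $\overline{W}$ with $W$.
\item The transform $H_2'$ is a second section, disjoint from $E_2'$. So $X_2'=\pr_W(\ol\oplus L)$ with $H_2'$ tautological.
\item $L$ is read off as the conormal bundle $\ma{N}^\vee_{E_2'/X_2'}=\ol_{X_2'}(-E_2')_{|E_2'}$, which is the tautological bundle $\ol_W(H_W+\pi_W^*(D))$ of $\pr_{A_Z}(\ma{N}^\vee_{A_Z/Z})$.
\end{enumerate}

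\textbf{Your route.} You build the $\pr^1$-bundle $\pr_W(\ol\oplus\ma{L})$ explicitly inside $W\times_T Z$ and prove it is the blow-up of $A_Z$. This makes $\ph'$ and the section $E_2'$ completely explicit. It also makes the tautological property of $H_2'$ explicit, since $\ol(1)$ is the pullback of $\ol_Z(H_Z)$. You avoid both the relative-cone machinery and the abstract identification $\overline{W}\cong W$.

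\textbf{What each buys.} The paper's argument is shorter given the tools already in use there. Yours is self-contained, and its cost is the blow-up identification.

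\textbf{Two small remarks.} First, the universal property of blowing up needs the scheme-theoretic preimage of $A_Z$ to be an effective Cartier divisor, not merely the reduced section $\pr_W(\ol_W)$. Your local description $\{x_1y_2=x_2y_1\}$ does confirm this: in the chart $y_1=1$ the ideal $(x_1,x_2)$ pulls back to $(x_1)$. So you should state the hypothesis in that form. Second, in the last step only the restriction to $H_2'$ is needed to determine $N$. The restriction to $E_2'$ merely recovers $\ol(-E_2')_{|E_2'}\cong\ma{L}$, which is exactly how the paper computes $L$.
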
  
\begin{proof}
  The composition $\pi_Z\circ \sigma'\colon X_2'\to T$ is a $K$-negative smooth morphism with fiber $\mathbb{F}_1$, and it admits a second factorization as
  $X_2'\stackrel{\ph'}{\to}\overline{W}\to T$ where both morphisms are smooth $\pr^1$-fibrations. The exceptional divisor $E_{2}'$ of $\sigma'$ is a section of $\ph'$ and $E_{2}'\cong \pr_{A_Z}(\ma{N}^{\vee}_{A_Z/Z})\cong\pr_T(\ol(D)\oplus\ol(2D))\cong W$ (see \ref{defZ}), so we can take $\overline{W}=W$. Moreover  $\ol_{X_2'}(-E_{2}')_{|E_{2}'}$ is the tautological line bundle of $\pr_{A_Z}(\ma{N}^{\vee}_{A_Z/Z})$, and we get
  $\ol_{X_2'}(-E_{2}')_{|E_{2}'}\cong \ol_W(H_W+\pi_W^*(D))$ (see \ref{defW}).

Let $H_{2}'\subset X_2'$ be the transform of $H_Z\subset Z$, and recall that $H_Z\cap A_Z=\emptyset$ (see \ref{defZ}). Then $H_{2}'$ is another section of $\ph'$, disjoint from $E_{2}'$, so that $\ph'\colon X_2'\to W$ is the projectivization of a decomposable rank $2$ vector bundle on $W$. Write $X_2'=\pr_W(\ol\oplus L)$ with $L\in\Pic(W)$, where the section $E_{2}'$ corresponds to the projection onto $\ol_W$, and $H_{2}'$ to that onto $L$; note that $H_2'$ is a tautological divisor. Then $L\cong \ma{N}^{\vee}_{E_{2}'/X_2'}\cong \ol_W(H_W+\pi_W^*(D))$, and we get the statement.
\end{proof}
\begin{lemma}\label{defEhat2}
  Let $\wi{E}_Z\subset Z$ be as Lemma \ref{Ehat}, and $\wi{E}_{2}'\subset X_2'$ its transform.
  Then $\wi{E}_2'$ is a section of $\ph'$, and
  \begin{equation}\label{coffee}
    \wi{E}_{2}' \sim H_{2}'+(\ph')^*\bigl(H_W-\pi_W^*(D)\bigr)\sim E_2'+(\ph')^*(2H_W)
  \end{equation}
  (see Figure \ref{figura_X_2} on page \pageref{figura_X_2}).
  \end{lemma}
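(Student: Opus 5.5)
The plan is to work fiberwise over $W$ and compute the class of $\wi{E}_2'$ in $\Pic(X_2')=\Z[H_2']\oplus(\ph')^*\Pic(W)$ by restricting to the two disjoint sections $E_2'$ and $H_2'$.

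\emph{Step 1: the pullback of $\wi{E}_Z$.} By Lemma \ref{Ehat}, $A_Z\subset\wi{E}_Z$, and since $\wi{E}_Z$ is smooth, $A_Z$ has multiplicity one on it. Hence
$$(\sigma')^*\wi{E}_Z=\wi{E}_2'+E_2'.$$
By Lemma \ref{X_2}, $H_Z$ pulls back to $H_2'$, because $H_Z\cap A_Z=\emptyset$. Also $\pi_Z\circ\sigma'=\pi_W\circ\ph'$. Together these give
$$\wi{E}_2'\sim 2H_2'-2(\ph')^*\pi_W^*(D)-E_2'.$$
Now substitute $E_2'\sim H_2'-(\ph')^*(H_W+\pi_W^*(D))$ from Lemma \ref{X_2}. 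This yields
$$\wi{E}_2'\sim H_2'+(\ph')^*\bigl(H_W-\pi_W^*(D)\bigr),$$
which is the first expression in \eqref{coffee}. Substituting the same relation in the other direction, i.e.\ writing $H_2'\sim E_2'+(\ph')^*(H_W+\pi_W^*(D))$, gives $\wi{E}_2'\sim E_2'+(\ph')^*(2H_W)$, the second expression.

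\emph{Step 2: $\wi{E}_2'$ is a section of $\ph'$.} A fiber $\Gamma$ of $\ph'$ satisfies $H_2'\cdot\Gamma=1$, so $\wi{E}_2'\cdot\Gamma=1$. It remains to show that $\ph'$ is finite on $\wi{E}_2'$, equivalently that $\wi{E}_2'$ contains no fiber of $\ph'$. Once this holds, $\wi{E}_2'\to W$ is finite of degree one onto a normal variety, hence an isomorphism, so $\wi{E}_2'$ is a section.

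To check finiteness I work locally as in the proof of Lemma \ref{Ehat}, with equation $\lambda x_0x_2+\mu x_1^2+fx_1x_2+gx_2^2=0$ and $\lambda\mu\neq0$. Fibers of $\ph'$ are the strict transforms of lines in $Z_t\cong\pr^2$ through the point $A_Z\cap Z_t=(1:0:0)$. Each such line meets the smooth conic $\wi{E}_Z\cap Z_t$, which passes through $(1:0:0)$, in at most one further point. A line is contained in the conic only if the conic is reducible, and $\lambda\mu\neq0$ rules this out. So no fiber of $\ph'$ lies in $\wi{E}_2'$.

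The main obstacle is justifying that the pullback multiplicity is exactly $1$; smoothness of $\wi{E}_Z$ along $A_Z$ settles it. The rest is bookkeeping.
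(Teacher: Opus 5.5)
Your proposal is correct and essentially the paper's proof: the same pullback $(\sigma')^*\wi{E}_Z=\wi{E}_2'+E_2'$ combined with the relations of Lemma \ref{X_2}, and the same fiberwise observation that the smooth conic $\wi{E}_Z\cap Z_t$ contains no line through $A_Z\cap Z_t$, so $\ph'$ is finite on $\wi{E}_2'$. Your use of $\wi{E}_2'\cdot\Gamma=1$ together with the normality of $W$ just spells out the paper's brief conclusion that $\wi{E}_2'$ is a section.
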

\begin{proof}
  Since $A_Z\subset \wi{E}_Z$ and $\wi{E}_Z$ is smooth, using Lemma \ref{X_2} we have 
\begin{equation*}
  \begin{split}
  \wi{E}_{2}' & \sim(\sigma')^*(\wi{E}_Z)-E_2'
  \sim(\sigma')^*\bigl(2H_Z-2\pi^*_Z(D)\bigr)-E_2'\\
 & \sim 2H_2'-2(\ph')^*\pi_W^*(D)-
  \bigl(H_{2}'-
  (\ph')^*(H_W+\pi_W^*(D))\bigr) \\
             &  = H_{2}'+(\ph')^*\bigl(H_W-\pi_W^*(D)\bigr) \sim E_2'+(\ph')^*(2H_W).
  \end{split}
   \end{equation*}
Note that fiberwise over $T$, $(\wi{E}_Z)_t\subset Z_t\cong\pr^2$ is a smooth conic, and its transform $(\wi{E}_2')_t\subset X_2'$ is a smooth rational curve, horizontal for $\ph'$. Hence $\ph'$ is finite on $\wi{E}_{2}'$,
and
$\wi{E}_{2}'$ is a section of $\ph'$.
\end{proof}
Recall from Assumption \ref{4H} that $B_W$ is a smooth connected divisor in $|4H_W|$.
We set $\wi{F}_2':=(\ph')^{-1}(B_W)\subset X_2'$ and $B_2':=\wi{E}_2'\cap \wi{F}_{2}'$ (see Figure \ref{figura_X_2} on page \pageref{figura_X_2});
then $\wi{F}_2'$ is a $\pr^1$-bundle over $B_W$, and $B_2'$ is a section (because $\wi{E}_2'$ is a section of $\ph'$, see  Lemma \ref{defEhat2}), therefore $B_2' \cong B_W$, in particular $B_2'$ is smooth and irreducible. We also note that, by \eqref{coffee}:
\begin{equation}\label{relation}
2\wi{E}_2'\sim 2E_2'+(\ph')^*(4H_W)\sim 2E_2'+(\ph')^*(B_W)=2E_2'+\wi{F}_2'.
\end{equation}
\begin{lemma}\label{A'}
We have $\ph'(E_2'\cap\wi{E}_2')=A_W$ and $E_{2}'\cap B_2'=\emptyset$.
\end{lemma}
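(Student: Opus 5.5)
We work fiberwise over $T$ and use the fact that $E_2'$ and $\wi{E}_2'$ are both sections of $\ph'$ (Lemmas \ref{X_2} and \ref{defEhat2}). Since two sections of a $\pr^1$-bundle meet exactly over the locus where they coincide, $\ph'(E_2'\cap\wi{E}_2')$ is the image in $W$ of a divisor, namely the zero locus of the difference of the two sections. By \eqref{coffee} we have $\wi{E}_2'\sim E_2'+(\ph')^*(2H_W)$. Restricting to $E_2'\cong W$ and using $\ol_{X_2'}(E_2')_{|E_2'}\cong\ol_W(-H_W-\pi_W^*(D))$ from Lemma \ref{X_2}, we get
$$\wi{E}_{2|E_2'}'\sim H_W-\pi_W^*(D)\sim A_W$$
as divisors on $W$, using \ref{ref}. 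Now $\ph'(E_2'\cap\wi{E}_2')$ is an effective divisor in $|A_W|$, and $\dim|A_W|=0$ by \ref{ref}. It therefore equals $A_W$ scheme-theoretically (in particular as sets), which gives the first assertion.

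To make this rigorous without the linear-equivalence shortcut, we can instead verify it directly in local coordinates, as in the proof of Lemma \ref{Ehat}. Over $U\subset T$ trivializing $D$, the exceptional divisor $E_2'$ over $A_Z=\{(1:0:0)\}$ is identified with the lines through $(1:0:0)$, that is, with points $(x_1:x_2)$ of $W_t$. The conic $\lambda x_0x_2+\mu x_1^2+\dots=0$, with $\lambda\neq0$, has tangent line $x_2=0$ at $(1:0:0)$. So $E_2'\cap\wi{E}_2'$ is the single point $(x_1:x_2)=(1:0)$ in each fiber. This is the point of the section corresponding to the projection onto $\ol$, i.e.\ $A_W$ (not $H_W$, which is $x_1=0$ under the identification $E_2'\cong\pr_T(\ol(D)\oplus\ol(2D))\cong W$). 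The only delicate point is tracking this identification of $E_2'$ with $W$, and in particular which summand maps to which. The divisor-class computation above settles this bookkeeping unambiguously, so we rely on it and use the coordinates only as a check.

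For the second assertion: $B_2'\subset\wi{F}_2'=(\ph')^{-1}(B_W)$, so $\ph'(E_2'\cap B_2')\subset \ph'(E_2'\cap\wi{E}_2')\cap B_W=A_W\cap B_W$. This intersection is empty by \ref{ref}, since $A_W\cap H_W=\emptyset$, hence $A_{W|A_W}$ and $4H_{W|A_W}\sim0$ give $B_{W|A_W}\sim 0$ for an effective divisor not containing $A_W$, which therefore misses $A_W$. It follows that $E_2'\cap B_2'=\emptyset$. We expect the main obstacle to be only the identification step in the first part; the second part is immediate once the first is in place.
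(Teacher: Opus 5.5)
Your proof is correct and follows the paper's argument. You restrict $\wi{E}_2'$ to $E_2'\cong W$ to get an effective divisor in $|A_W|$ and conclude from $h^0(W,A_W)=1$, then obtain $E_2'\cap B_2'=\emptyset$ from $B_2'\subset\wi{E}_2'\cap\wi{F}_2'$ and $A_W\cap B_W=\emptyset$; the only cosmetic difference is that you use the second form of \eqref{coffee} together with $\ol_{X_2'}(E_2')_{|E_2'}\cong\ol_W(-H_W-\pi_W^*(D))$, whereas the paper uses the first form together with $E_2'\cap H_2'=\emptyset$, and both yield $H_W-\pi_W^*(D)\sim A_W$.
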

\begin{proof}
  Using \eqref{coffee}, $E_2'\cap H_2'=\emptyset$ (see Lemma \ref{X_2}), and \ref{ref}, 
 under the isomorphism $E_{2}'\cong W$ induced by $\ph'$
 we get
 \begin{align*}
(\wi{E}_{2}')_{|E_{2}'} & \sim \bigl(H_{2}'+(\ph')^*(H_W-\pi_W^*(D))\bigr)_{|E_{2}'}=(\ph')^*(H_W-\pi_W^*(D))_{|E_{2}'}\\ & \stackrel{\ph'}{\cong} H_W-\pi_W^*(D)\sim A_W.\end{align*}
Since $h^0(W,A_W)=1$ (see \ref{ref}), this shows that  $\ph'(E_2'\cap\wi{E}_2')=A_W$. Moreover
$(\wi{F}_{2}')_{|E_{2}'}=(\ph')^{-1}(B_W)_{|E_{2}'}$, thus  $\ph'(\wi{F}_2'\cap E_2')=B_W$. Since $E_2'$ is a section of $\ph'$, and $A_W\cap B_W=\emptyset$ (see \ref{ref}), we get
$\emptyset=(E_2'\cap\wi{E}_2')\cap (\wi{F}_2'\cap E_2')=E_2'\cap\wi{E}_2'\cap \wi{F}_2'=E_2'\cap B_2'$.
\end{proof}
\begin{prg}\label{defX}
Let $\tau'\colon X\to X_2'$ be the blow-up of $B_2'$; then $X$ is smooth with dimension $n$ and $\rho_X=\rho_T+3$.
 $$\xymatrix{ X\ar@/^{1pc}/[rr]^{\alpha}\ar[r]_{\tau'}&{X_2'}\ar[r]_{\sigma'}\ar[d]_{\ph'}& Z\ar[d]^{\pi_Z}\\ 
&W\ar[r]^{\pi_W}&T
    }$$
Set $F:=\Exc(\tau')\subset X$ and let $E,\wi{E},\wi{F}\subset X$ be the transforms of $E_{2}',\wi{E}_{2}',\wi{F}_{2}'\subset X_2'$ respectively.
Set $\alpha:=\sigma'\circ\tau'\colon X\to Z$, and $\psi:=\pi_Z\circ\alpha\colon X\to T$. Note that $\psi$ is quasi-elementary (see \S\ref{notation}), because $\pi_W\circ\ph'$ is, and $B_2'$ dominates $T$.
\end{prg}
\begin{lemma}\label{-K}
We have $-K_X=\psi^*(-K_T)+E+\wi{E}+\frac{1}{4}(F+\wi{F})$ and $2E+\wi{F}\sim 2\wi{E}+F$.
\end{lemma}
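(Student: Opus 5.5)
The plan is to compute $-K_X$ in two stages, first on $X_2'$ and then on $X$, and to write everything in terms of the divisors $E_2',\wi{E}_2',\wi{F}_2'$ and their transforms. The first formula involves fractional coefficients, so it is to be read as an equality in $\Pic(X)\otimes\Q$. To keep the argument integral, I will actually prove $-4K_X\sim\psi^*(-4K_T)+4E+4\wi{E}+F+\wi{F}$.

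\textbf{The relation $2E+\wi{F}\sim 2\wi{E}+F$.} I start with this because it is needed anyway. Pull back \eqref{relation}, namely $2\wi{E}_2'\sim 2E_2'+\wi{F}_2'$, via $\tau'$. Since $B_2'=\wi{E}_2'\cap\wi{F}_2'$ is a smooth codimension-$1$ subvariety of both $\wi{E}_2'$ and $\wi{F}_2'$, we get $(\tau')^*\wi{E}_2'=\wi{E}+F$ and $(\tau')^*\wi{F}_2'=\wi{F}+F$. By Lemma \ref{A'}, $E_2'\cap B_2'=\emptyset$, so $(\tau')^*E_2'=E$. Substituting gives $2\wi{E}+2F\sim 2E+\wi{F}+F$, that is, $2\wi{E}+F\sim 2E+\wi{F}$.

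\textbf{The canonical class on $X_2'$.} Since $Z=\pr_T(\ol\oplus\ol(D)\oplus\ol(2D))$ has tautological divisor $H_Z$ and $\det=\ol_T(3D)$, we have $-K_Z\sim 3H_Z+\pi_Z^*(-K_T-3D)$. As $\sigma'$ is the blow-up of the codimension-$2$ subvariety $A_Z$, this gives
$$-K_{X_2'}\sim 3(\sigma')^*H_Z+(\pi_Z\circ\sigma')^*(-K_T-3D)-E_2'.$$
Next I eliminate $H_Z$. By Lemma \ref{defEhat2} (its proof), $\wi{E}_2'\sim (\sigma')^*(2H_Z-2\pi_Z^*D)-E_2'$, so $2(\sigma')^*H_Z\sim \wi{E}_2'+E_2'+2(\pi_Z\sigma')^*D$. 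Doubling the previous formula and substituting yields
$$-2K_{X_2'}\sim (\pi_Z\sigma')^*(-2K_T)+3\wi{E}_2'+E_2'.$$
Doubling once more and using $2\wi{E}_2'\sim 2E_2'+\wi{F}_2'$ to replace $2\wi{E}_2'$ inside $6\wi{E}_2'=4\wi{E}_2'+2\wi{E}_2'$, I obtain
$$-4K_{X_2'}\sim(\pi_Z\sigma')^*(-4K_T)+4\wi{E}_2'+4E_2'+\wi{F}_2'.$$

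\textbf{The canonical class on $X$.} Since $\tau'$ is the blow-up of the smooth codimension-$2$ subvariety $B_2'$, we have $-K_X=(\tau')^*(-K_{X_2'})-F$. Pulling back the formula above with the same pullbacks as in the first step ($(\tau')^*\wi{E}_2'=\wi{E}+F$, $(\tau')^*\wi{F}_2'=\wi{F}+F$, $(\tau')^*E_2'=E$), and using $\psi=\pi_Z\circ\sigma'\circ\tau'$, gives
$$-4K_X\sim\psi^*(-4K_T)+4\wi{E}+4F+4E+\wi{F}+F-4F=\psi^*(-4K_T)+4E+4\wi{E}+F+\wi{F},$$
which is the stated formula multiplied by $4$.

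\textbf{Expected obstacle.} The only delicate point is the multiplicity bookkeeping in the pullbacks under $\tau'$. It relies on three facts:
\begin{itemize}
\item $B_2'$ lies in $\wi{E}_2'$ and in $\wi{F}_2'$ with multiplicity one; this holds because $B_2'$ is a section of the $\pr^1$-bundle $\wi{F}_2'\to B_W$ and is cut out on it by the section $\wi{E}_2'$;
\item $B_2'$ is disjoint from $E_2'$, by Lemma \ref{A'};
\item the transforms $E,\wi{E},\wi{F}$ are the strict transforms of $E_2',\wi{E}_2',\wi{F}_2'$.
\end{itemize}
Everything else is a direct substitution.
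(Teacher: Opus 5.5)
Your proof is correct and follows the paper's argument. You compute $-K_{X_2'}$ in terms of $E_2',\wi{E}_2',\wi{F}_2'$, pull back via $\tau'$ using $(\tau')^*E_2'=E$, $(\tau')^*\wi{E}_2'=\wi{E}+F$, $(\tau')^*\wi{F}_2'=\wi{F}+F$, and pull back \eqref{relation} for the second claim. The differences are cosmetic: you get $-K_{X_2'}$ from the blow-up $\sigma'$ and $-K_Z$ (eliminating $(\sigma')^*H_Z$ via the class of $\wi{E}_Z$) rather than from the $\pr^1$-bundle $X_2'=\pr_W(\ol\oplus\ol(H_W+\pi_W^*(D)))$ of Lemma \ref{X_2} and $-K_W$, and you clear the denominator by working with $-4K_X$.
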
  
\begin{proof}
  We note that, by Lemma \ref{X_2} and \eqref{coffee}, in $X_2'$ we have $E_{2}'+\wi{E}_{2}'\sim 2(H_{2}'-(\ph')^*(\pi_W^*(D)))$; moreover
  $-K_W=  \pi_W^*(-K_T-D)+2H_W$ (see \ref{defW}). By Lemma \ref{X_2} we get
  \begin{align*}
    -K_{X_2'}& =(\ph')^*(-K_W -H_W-\pi_W^*(D))+2H_{2}'\\
            & =  (\ph')^*\bigl(\pi_W^*(-K_T-D)+2H_W -H_W-\pi_W^*(D)\bigr)+2H_{2}'\\
            & =  (\ph')^*(\pi_W^*(-K_T))+ 2\bigl(H_{2}'-(\ph')^*(\pi_W^*(D))\bigr)+(\ph')^*(H_W)\\
    & \sim 
    (\ph')^*(\pi_W^*(-K_T))+E_{2}'+\wi{E}_{2}'+(\ph')^*(H_W)
    \\ & \sim (\ph')^*(\pi_W^*(-K_T))+E_{2}'+\wi{E}_{2}'+\frac{1}{4} \wi{F}_{2}',
  \end{align*}
  because $\wi{F}_{2}'=(\ph')^*(B_W)\sim 4(\ph')^*(H_W)$ (see Assumption \ref{4H}).
Finally $-K_X=(\tau')^*(-K_{X_2'})-F$, $(\tau')^*(E_{2}')=E$, $(\tau')^*(\wi{E}_{2}')=\wi{E}+F$, and $(\tau')^*(\wi{F}_{2}')=F+\wi{F}$, give the first statement. The second statement follows similarly from \eqref{relation}.
\end{proof}
\begin{lemma}\label{E}
  The following are equivalent:
  \begin{enumerate}[$(i)$]
    \item
      $-K_{X|E}$ is ample;
      \item
$-K_{X|\wi{E}}$ is ample;
  \item $-K_T-2D$ and $-K_T-D$ are ample.
\end{enumerate}
  \end{lemma}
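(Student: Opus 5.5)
The plan is to compute the restrictions of $-K_X$ to $E$ and to $\wi{E}$ explicitly, identify both divisors with $W$ via $\ph'$, and then apply Lemma \ref{ample2} on $W$. In both cases the restriction should turn out to be the same class $A_W+\pi_W^*(-K_T-D)$, so $(i)\Leftrightarrow(iii)$ and $(ii)\Leftrightarrow(iii)$ follow from one ampleness criterion.

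\emph{Case of $E$.} By Lemma \ref{A'} we have $E_2'\cap B_2'=\emptyset$, so $\tau'$ is an isomorphism near $E$. Hence $E\cong E_2'$ and $-K_{X|E}\cong -K_{X_2'|E_2'}$. By Lemma \ref{X_2}, $E_2'$ is a section of $\ph'$, so $\ph'$ identifies $E_2'$ with $W$, and $\ol_{X_2'}(E_2')_{|E_2'}\cong\ol_W(-H_W-\pi_W^*(D))$. Adjunction together with $-K_W=2H_W+\pi_W^*(-K_T-D)$ gives
$$-K_{X_2'|E_2'}=-K_W-H_W-\pi_W^*(D)=H_W+\pi_W^*(-K_T-2D)\sim A_W+\pi_W^*(-K_T-D),$$
where the last step uses $A_W\sim H_W-\pi_W^*(D)$ from \ref{ref}.

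\emph{Case of $\wi{E}$.} Here $B_2'=\wi{E}_2'\cap\wi{F}_2'$ is a smooth divisor inside $\wi{E}_2'$. Therefore $\tau'_{|\wi{E}}\colon\wi{E}\to\wi{E}_2'$ is an isomorphism, and $F_{|\wi{E}}$ corresponds to $B_2'$. Since $\wi{E}_2'$ is a section of $\ph'$ (Lemma \ref{defEhat2}), we again identify $\wi{E}_2'\cong W$, with $B_2'\mapsto B_W\sim 4H_W$. For the normal bundle we use \eqref{coffee}: $\wi{E}_2'\sim E_2'+(\ph')^*(2H_W)$. By Lemma \ref{A'}, $E_{2|\wi{E}_2'}'$ corresponds to $A_W$, so $\ol(\wi{E}_2')_{|\wi{E}_2'}\cong\ol_W(A_W+2H_W)$. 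This gives
$$-K_{X|\wi{E}}=\bigl(-K_{X_2'}\bigr)_{|\wi{E}_2'}-B_2'\cong -K_W+A_W+2H_W-4H_W=A_W+\pi_W^*(-K_T-D).$$

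\emph{Applying Lemma \ref{ample2}.} On $W$, with section $A=A_W$ and disjoint prime divisor $B=H_W$ (or $B_W$), we have $\ol_W(A_W)_{|A_W}\cong\ol_T(-D)$ because $H_{W|A_W}\sim0$. The class is $dA_W+\pi_W^*(M)$ with $d=1$ and $M=-K_T-D$. Lemma \ref{ample2} then says it is ample if and only if $-K_T-D$ and $-K_T-D+1\cdot(-D)=-K_T-2D$ are both ample. This proves the equivalences.

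The main obstacle is bookkeeping: getting the normal bundles and the identifications $E_2'\cong W\cong\wi{E}_2'$ right. In particular one must check that $E_2'\cap\wi{E}_2'$ maps to $A_W$ in both copies of $W$, which is Lemma \ref{A'} together with its analogue obtained by restricting $E_2'$ to $\wi{E}_2'$ via \eqref{coffee}.
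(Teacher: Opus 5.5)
Your proposal is correct and follows the paper's proof: the same identifications $E\cong E_2'\cong W$ and $\wi{E}\cong\wi{E}_2'\cong W$ give the same class $H_W+\pi_W^*(-K_T-2D)\sim A_W+\pi_W^*(-K_T-D)$ in both cases. The only difference is the last step, where the paper recognizes this class as the tautological divisor of $\pr_T(\ol(-K_T-2D)\oplus\ol(-K_T-D))$, while you invoke Lemma \ref{ample2}; this amounts to the same ampleness criterion.
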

\begin{proof}
 Recall that $\tau'\colon X\to X_2'$ is the blow-up of $B_2'$. Since $E_{2}'\cap B_2'=\emptyset$ (see Lemma \ref{A'}), $-K_{X|E}$ is ample if and only if  $-K_{X_2'|E_{2}'}$ is ample.
Recall also that $E_2'=\Exc(\sigma')$ where $\sigma'\colon X_2'\to Z$ is the blow-up of $A_Z$, hence $E_{2}'\cong \pr_{A_Z}(\ma{N}^{\vee}_{A_Z/Z})\cong\pr_T(\ol(D)\oplus\ol(2D))$ (see \ref{defZ}), and  $\ol_{X_2'}(-E_{2}')_{|E_{2}'}$ is the tautological line bundle.
Then by adjunction
$$-K_{X_2'|E_{2}'}=-K_{E_{2}'}+E_{2|E_{2}'}'=\pi^*(-K_T-3D)-E_{2|E_{2}'}'$$
(where $\pi\colon E_2'\to T$ is the $\pr^1$-bundle)
and this is a tautological divisor for $\pr_T(\ol(-K_T-2D)\oplus\ol(-K_T-D))$. This shows the equivalence of $(i)$ and $(iii)$.

  We also have, using \eqref{coffee} and Lemma \ref{A'}:
  \begin{align*} -K_{X|\wi{E}} &=-K_{\wi{E}}+\wi{E}_{|\wi{E}}=-K_{\wi{E}}+\bigl((\tau')^*(\wi{E}_2')-F\bigr)_{|\wi{E}}
    \cong -K_{\wi{E}_2'}+\wi{E}_{2|\wi{E}_2'}'-B_2'  \\ &\sim
  -K_{\wi{E}_2'}+\bigl(E_2'+(\ph')^*(2H_W)\bigr)_{|\wi{E}_2'}-B_2' \stackrel{\ph'}{\cong}
  -K_W+ A_W+2H_W-B_W  \\ & \sim
                           2H_W+\pi_W^*(-K_T-D)+\bigl(H_W-\pi_W^*(D)\bigr)+2H_W-4H_W
                           \\ & = H_W+\pi_W^*(-K_T-2D)
  \end{align*}
where the relations in $W$ follow from \ref{defW}, Assumption \ref{4H}, and \ref{ref}.
  Then we see that $-K_{X|\wi{E}}$
 is a tautological divisor for 
 $\pr_T(\ol(-K_T-2D)\oplus \ol(-K_T-D))$, and this shows the equivalence of $(ii)$ and $(iii)$.
\end{proof}
\begin{lemma}\label{F}
 The following are equivalent:
  \begin{enumerate}[$(i)$]
    \item
      $-K_{X|F}$ is ample;
      \item
$-K_{X|\wi{F}}$ is ample;
  \item  $-K_T+ D$ and $-K_T- D$ are ample.
\end{enumerate}
\end{lemma}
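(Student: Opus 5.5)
The plan is to compute $-K_{X|F}$ and $-K_{X|\wi{F}}$ explicitly, as in the proof of Lemma \ref{E}. In both cases the restriction should turn out to be a tautological-type divisor on a $\pr^1$-bundle over $B_W$ whose ampleness reduces to that of $\pi_B^*(-K_T+D)$ and $\pi_B^*(-K_T-D)$. Since $\pi_B\colon B_W\to T$ is finite and surjective (Lemma \ref{BW}), a line bundle on $T$ is ample if and only if its pullback to $B_W$ is. So it is enough to reduce $(i)$ and $(ii)$ to the ampleness of these two pullbacks.

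First I restrict the formula of Lemma \ref{-K} to $B_2'$, working upstairs on $X_2'$. Write $-K_{X_2'}\sim(\ph')^*\pi_W^*(-K_T)+E_2'+\wi{E}_2'+\frac14\wi{F}_2'$.
\begin{enumerate}[$\bullet$]
\item $E_2'\cap B_2'=\emptyset$ by Lemma \ref{A'}.
\item By \eqref{coffee}, $\wi{E}'_{2|B_2'}\sim 2H_{W|B_W}\sim\pi_B^*(2D)$.
\item $\wi{F}'_{2|B_2'}=4H_{W|B_W}\sim\pi_B^*(4D)$, using Lemma \ref{BW}.
\end{enumerate}
Hence $-K_{X_2'|B_2'}\sim\pi_B^*(-K_T+3D)$.

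\textbf{Case $F$.} Since $B_2'=\wi{E}_2'\cap\wi{F}_2'$ is a transversal intersection of two smooth divisors, its normal bundle is
$$\ma{N}_{B_2'/X_2'}\cong \wi{E}'_{2|B_2'}\oplus\wi{F}'_{2|B_2'}\cong\pi_B^*\bigl(\ol_T(2D)\oplus\ol_T(4D)\bigr).$$
Then $F=\pr_{B_2'}(\ma{N}^\vee)$ and $\ol_X(-F)_{|F}$ is its tautological bundle. This gives
$$-K_{X|F}=(\tau')^*(-K_{X_2'})_{|F}-F_{|F}.$$
This is the tautological divisor of $\pr_{B_W}\bigl(\pi_B^*(\ol(-2D)\oplus\ol(-4D))\otimes\pi_B^*\ol(-K_T+3D)\bigr)=\pr_{B_W}\bigl(\pi_B^*(\ol(-K_T+D)\oplus\ol(-K_T-D))\bigr)$. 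A tautological bundle of a split bundle is ample if and only if both summands are ample, which gives $(i)\Leftrightarrow(iii)$.

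\textbf{Case $\wi{F}$.} Here $\tau'$ blows up the divisor $B_2'\subset\wi{F}_2'$, so $\wi{F}\cong\wi{F}_2'$. This is a $\pr^1$-bundle over $B_W$ with two disjoint sections, $S:=E_2'\cap\wi{F}_2'$ and $B_2'$. On this bundle:
\begin{enumerate}[$\bullet$]
\item $F_{|\wi{F}}=B_2'$;
\item $\wi{E}_{2|\wi{F}_2'}'=B_2'$;
\item $\frac14\wi{F}'_{2|\wi{F}_2'}=\pi^*(D)$.
\end{enumerate}
Restricting Lemma \ref{-K} therefore gives $-K_{X|\wi{F}}\sim S+\pi^*(-K_T+D)$, where $\pi\colon\wi{F}\to B_W$ and $\pi_B$-pullbacks are suppressed. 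By \eqref{coffee}, $S\sim B_2'-\pi^*(2D)$, so $\ol(S)_{|S}\cong\ol(-2D)$. Lemma \ref{ample2} then applies with $A=S$, the disjoint prime divisor $B=B_2'$, $d=1$ and $M=-K_T+D$. It says $-K_{X|\wi{F}}$ is ample if and only if both $-K_T+D$ and $-K_T+D-2D=-K_T-D$ are ample after pullback to $B_W$, giving $(ii)\Leftrightarrow(iii)$.

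The main obstacle is bookkeeping of signs and conventions: the sign of $\ol(S)_{|S}$, the tautological convention for $\pr(\ma{N}^\vee)$, and the transversality of $\wi{E}_2'$ and $\wi{F}_2'$ along $B_2'$. Transversality holds because $\wi{E}_2'$ is a section of $\ph'$ and $\wi{F}_2'=(\ph')^{-1}(B_W)$. I would double-check the signs against the generic-fiber intersection numbers in \ref{tables}, namely $F\cdot\hat f=1$ and $\wi{F}\cdot\hat f=-1$.
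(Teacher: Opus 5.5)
Your proposal is correct and follows essentially the paper's route. For $F$ you use the split normal bundle of $B_2'=\wi{E}_2'\cap\wi{F}_2'$ and the tautological class $-F_{|F}$ exactly as the paper does, only computing $-K_{X_2'|B_2'}\sim\pi_B^*(-K_T+3D)$ first instead of going through $-K_{B_2'}$, and for $\wi{F}$ you write $-K_{X|\wi{F}}\sim S+\pi^*(-K_T+D)$ and invoke Lemma \ref{ample2} with the disjoint sections $S$ and $B_2'$, whereas the paper reads it off directly from $X_2'=\pr_W(\ol\oplus\ol(H_W+\pi_W^*(D)))$ as the tautological divisor of $\pr_{B_W}(\pi_B^*\ol(-K_T-D)\oplus\pi_B^*\ol(-K_T+D))$, which is an equivalent repackaging of the same computation.
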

\begin{proof}
 Since $B_2'=\wi{E}_{2}'\cap \wi{F}_{2}'$, we have $\ma{N}_{B_2'/X_2'}=\ol_{B_2'}(\wi{E}_{2|B_2'}')\oplus\ol_{B_2'}(\wi{F}_{2|B_2'}')$ and $F=\pr_{B_2'}(\ol(-\wi{E}_{2|B_2'}')\oplus\ol(-\wi{F}_{2|B_2'}'))$ with tautological class $-F_{|F}$. Then $-K_{X|F}=-K_{F}+F_{|F}=\pi^*(-K_{B_2'}+\wi{E}_{2|B_2'}'+\wi{F}_{2|B_2'}')-F_{|F}$ (where $\pi\colon F\to B_2'$ is the $\pr^1$-bundle), and this is a tautological divisor for
  $\pr_{B_2'}(\ol(-K_{B_2'}+\wi{E}_{2|B_2'}')\oplus\ol(-K_{B_2'}+\wi{F}_{2|B_2'}'))$.

  Recall that $\wi{E}_{2}'$ is a section of $\ph'$, and
  by \eqref{coffee} and Lemma \ref{A'} we have
  $$\wi{E}_{2|\wi{E}_{2}'}'\sim E_{2|\wi{E}_2'}'+(\ph')^*_{|\wi{E}_2'}(2H_W)\stackrel{\ph'}{\cong} A_W+2H_W.$$
  Thus $\wi{E}_{2|B_2'}'=(\wi{E}_{2|\wi{E}_{2}'}')_{|B_2'}\stackrel{\ph'}{\cong} (A_W+2H_W)_{|B_W}
  \sim 2\pi_{B}^*(D)$ (see Lemma \ref{BW}),
  and using again Lemma \ref{BW}:
  $$-K_{B_2'}+\wi{E}_{2|B_2'}'\stackrel{\ph'}{\cong} -K_{B_W}+2\pi_{B}^*(D) \sim\pi_{B}^*(-K_T-D).$$
  Similarly, $\wi{F}_{2|\wi{E}_{2}'}'\cong B_W$, $\wi{F}_{2|B_2'}'\cong B_{W|B_W}\sim 4H_{W|B_W}\sim 4\pi_{B}^*(D)$, and 
$-K_{B_2'}+\wi{F}_{2|B_2'}'\cong \pi_{B}^*(-K_T+D)$. Since
 $\pi_{B}$ is a finite map, this shows the equivalence of $(i)$ and $(iii)$. 
We also note that $F\cong\pr_{B_W}(\ol\oplus\ol(2\pi_B^*(D)))$.
 
Moreover, similarly as before, using  that  $X_2'=\pr_W(\ol\oplus\ol(H_W+\pi_W^*(D)))$ (Lemma \ref{X_2})  and
    \eqref{coffee} we get:
  \begin{align*}
    -K_{X|\wi{F}} & \cong -K_{X_2'|\wi{F}_{2}'}-B_2'=(-K_{X_2'}-\wi{E}_2')_{|\wi{F}_{2}'}\\
                  &  = \Bigl(2H_2'+(\ph')^*\bigl(-K_W-H_W-\pi_W^*(D)\bigr)-H_2'-(\ph')^*\bigl(H_W-\pi_W^*(D)\bigr)\Bigr)_{|\wi{F}_{2}'}
    \\ & = \bigl(H_{2}'+(\ph')^*(-K_W-2H_W)\bigr)_{|\wi{F}_{2}'}.\end{align*}
Since $\wi{F}_2'=\pr_{B_W}(\ol\oplus\ol(H_W+\pi_W^*(D))_{|B_W})$, with tautological divisor $H_{2|\wi{F}_2'}'$,
  this is a tautological divisor for:  $$\wi{F}_{2}'=\pr_{B_W}\bigl(\ol(-K_W-2H_W)_{|B_W}\oplus\ol\bigl(-K_W-H_W+\pi_W^*(D)\bigr)_{|B_W}\bigr).$$

  Now using Lemma \ref{BW} we have
  $(-K_W-2H_W)_{|B_W}\sim\pi_B^*(-K_T-D)$ and  $(-K_W-H_W+\pi_W^*(D))_{|B_W}\sim\pi_{B}^*(-K_T+D)$, and again by the finiteness of $\pi_B$, we get the equivalence of $(ii)$ and $(iii)$.
\end{proof}
\begin{proposition}
  The following are equivalent:
  \begin{enumerate}[$(i)$]
    \item
      $X$ is Fano;
      \item 
        $-K_T- 2D$ and $-K_T+D$ are ample.
        \end{enumerate}
      \end{proposition}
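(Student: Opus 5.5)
The plan is to derive both directions from Lemmas \ref{E} and \ref{F}, together with the formula for $-K_X$ in Lemma \ref{-K}.

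For $(i)\Rightarrow(ii)$: if $X$ is Fano, then $-K_X$ restricts to an ample divisor on $E$ and on $F$. Lemma \ref{E} then gives that $-K_T-2D$ and $-K_T-D$ are ample, and Lemma \ref{F} gives that $-K_T+D$ and $-K_T-D$ are ample. This yields $(ii)$.

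For $(ii)\Rightarrow(i)$, first observe that $-K_T-D=\tfrac12(-K_T-2D)+\tfrac12(-K_T)$. Since $T$ is Fano, this is ample. Hence all of $-K_T-2D$, $-K_T-D$, $-K_T+D$ are ample, and by Lemmas \ref{E} and \ref{F} the restriction of $-K_X$ to each of $E,\wi{E},F,\wi{F}$ is ample.

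I then use Kleiman's criterion on $X$, which is a Mori dream space in the relevant sense; concretely it suffices to show $-K_X\cdot C>0$ for every irreducible curve $C\subset X$. By Lemma \ref{-K},
$$-K_X=\psi^*(-K_T)+E+\wi{E}+\tfrac14(F+\wi{F}),$$
a sum of a nef divisor and an effective divisor supported on $E\cup\wi{E}\cup F\cup\wi{F}$. There are two cases.
\begin{enumerate}[$\bullet$]
\item If $C$ is contained in one of $E,\wi{E},F,\wi{F}$, then $-K_X\cdot C>0$ by the ampleness of the restrictions.
\item Otherwise $C$ meets each of these four divisors nonnegatively, so $-K_X\cdot C\geq -K_T\cdot\psi_*C$. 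This is positive unless $C$ is contracted by $\psi$.
\end{enumerate}

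The remaining case, which I expect to be the main obstacle, is a curve $C$ in a fiber $X_t$ of $\psi$ that is not contained in the boundary divisors. Here I would show that $C$ meets $E\cup\wi{E}\cup F\cup\wi{F}$, so that the effective part contributes strictly positively.

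Each fiber $X_t$ is the blow-up of the Hirzebruch surface $(X_2')_t\cong\mathbb{F}_1$ at the four (possibly non-reduced) points $(B_2')_t$ lying on the section $(\wi{E}_2')_t$. The divisors $E_t$ and $\wi{E}_t$ come from sections of the ruling $\ph'_t$, and $F_t\cup\wi{F}_t$ contains the preimages of four fibers of the ruling. A curve in $X_t$ avoiding all of these would map to a curve in $\mathbb{F}_1$ disjoint from a section. Such a curve is a union of fibers of $\ph'_t$, since the fibers together with $E_2'$ generate $\NE(\mathbb{F}_1)$ and the curve has zero intersection with a section of positive self-intersection. But every fiber of $\ph'_t$ meets $E_2'$, a contradiction.

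An alternative, cleaner route for this last step is to write $-K_X$ with a different effective decomposition. Using $2E+\wi{F}\sim 2\wi{E}+F$, I would express $-K_{X}|_{X_t}$ as an ample combination on each fiber and use $\psi$-ampleness via Lemma \ref{ample} type arguments. Combining $\psi$-ampleness of $-K_X$ with the positivity on horizontal curves established above then gives ampleness.
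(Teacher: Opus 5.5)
Your direction $(i)\Rightarrow(ii)$ and your proof that $-K_X\cdot C>0$ for every irreducible curve $C$ follow the paper's first step, with one local flaw in the fiber argument. A curve of $\mathbb{F}_1$ disjoint from the $(-1)$-section need not be a union of fibers: a line missing the blown-up point is a counterexample. Moreover, fibers meet every section, so the contradiction you draw does not come from the reason you give. The correct one-line reason is this: $(\wi{E}_2')_t$ is the transform of a smooth conic through the blown-up point, of class $2h-\epsilon$ with $h$ the pullback of a line and $\epsilon$ the $(-1)$-curve. This class is ample on $(X_2')_t\cong\mathbb{F}_1$. Hence every curve of $X_t$ not contained in $F$ maps to a curve meeting $\wi{E}_2'$, and so meets $\wi{E}\cup F$. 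The paper argues more directly: if $-K_X\cdot C=0$, then $C$ is contracted by $\psi$ and disjoint from $E$ and $F$. So $-K_Z\cdot\alpha(C)=-K_X\cdot C=0$ with $\alpha(C)$ in a fiber of the $\pr^2$-bundle $\pi_Z$, which is absurd.

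The genuine gap is the passage from strict nefness to ampleness. Kleiman's criterion needs $-K_X$ positive on $\overline{\NE}(X)\smallsetminus\{0\}$, and positivity on irreducible curves does not give this in general. Your claim that $X$ is a Mori dream space is unsupported at this point: it follows once $X$ is known to be Fano, which is exactly what you are proving. Your alternative route does not close the gap either. A divisor that is relatively ample and positive on every curve need not be ample; Mumford's tautological bundle on a ruled surface over a curve of genus $\geq 2$ is an example.

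The missing ingredient is bigness of $-K_X$, which the paper proves as follows:
\begin{enumerate}[$\bullet$]
\item $P_Z:=H_Z+\pi_Z^*(-K_T-D)$ is tautological for $\pr_T(\ol(-K_T-D)\oplus\ol(-K_T)\oplus\ol(-K_T+D))$, hence ample, so $P_X:=m\alpha^*(P_Z)-c_1E-c_2F$ is ample for suitable positive constants.
\item One computes $\alpha^*(P_Z)\sim E+\frac14(F+\wi{F})+\psi^*(-K_T)$, and together with Lemma \ref{-K} this gives $-mK_X\equiv P_X+c_1E+m\wi{E}+c_2F$, so $-K_X$ is big.
\item Then $-K_X$ is nef and big, hence semiample by the base-point-free theorem. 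Strict nefness makes the associated morphism finite, so $-K_X$ is ample.
\end{enumerate}

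You could instead close the gap with a closed-cone decomposition as in Lemma \ref{ample}. This means writing $\overline{\NE}(X)$ as the sum of the cones $\overline{\NE}(G,X)$ for $G\in\{E,\wi{E},F,\wi{F}\}$ and of $\overline{\NE}(X)\cap\{E,\wi{E},F,\wi{F}\geq 0\}$. You would then still have to check that no nonzero class in $\ker\psi_*$ is orthogonal to $E$, $\wi{E}$ and $F$, which your proposal does not do.
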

      Note that if  $-K_T- 2D$ and $-K_T+D$ are ample, then
      $-K_T+\lambda D$ is ample for every $\lambda\in\R$ with $-2\leq\lambda\leq 1$.
      \begin{proof}
If $X$ is Fano, then  $-K_T- 2D$ and $-K_T+D$ are ample by Lemmas \ref{E} and \ref{F}.

   Conversely, assume $(ii)$. We show that     
  $-K_X$ is strictly nef, namely that $-K_X\cdot C>0$ for every 
irreducible curve
$C\subset X$. If $C$ is contained in one of the divisors $E,\wi{E},F,\wi{F}$, then $-K_X\cdot C>0$ by Lemmas \ref{E} and \ref{F}.

  Suppose instead that $C$ is not contained in any of the divisors $E,\wi{E},F,\wi{F}$. Then 
  $$\bigl(E+\wi{E}+\frac{1}{4}(F+\wi{F})\bigr)\cdot C\geq 0,$$
  and $\psi^*(-K_T)\cdot C=-K_T\cdot\psi_*(C)\geq 0$ because $T$ is Fano, thus 
  by Lemma \ref{-K} we get $-K_X\cdot C \geq 0$.

  Moreover
  $-K_X\cdot C =0$ would imply that $C$ is contracted by $\psi$ and disjoint from the exceptional divisors $E,F$ of $\alpha\colon X\to Z$. Then $\alpha(C)\subset Z$ must be contained in a fiber of the $\pr^2$-bundle $\pi_Z\colon Z\to T$, and $-K_Z\cdot\alpha(C)=-K_X\cdot C=0$, a contradiction.
  Therefore $-K_X\cdot C>0$.

\bigskip
  
We show that $-K_X$ is big. In $Z$ the divisor $P_Z:=H_Z+\pi_Z^*(-K_T-D)$ is tautological for $\pr_T(\ol(-K_T-D)\oplus\ol(-K_T)\oplus\ol(-K_T+D))$ (see \ref{defZ}) and the divisors $-K_T-D$, $-K_T$, and $-K_T+D$ are ample, so that $P_Z$ is ample on $Z$.  Therefore there  exist $m,c_1,c_2\in\R_{>0}$ such that
$P_X:=m\alpha^*(P_Z)-c_1E-c_2F$ is ample on $X$.

Using \eqref{coffee}, we have
\begin{align*}
\alpha^*(P_Z) & =\alpha^*(H_Z)+\alpha^*\pi_Z^*(-K_T-D)=(\tau')^*(H_2')+\psi^*(-K_T)
-\psi^*(D)\\
& \sim(\tau')^*\bigl(E_2'+(\ph')^*(H_W+\pi_W^*(D))\bigr)+\psi^*(-K_T)
-\psi^*(D)\\
& \sim E+\frac{1}{4}(\tau')^*(\ph')^*(B_W)+\psi^*(D)+\psi^*(-K_T)
-\psi^*(D)\\
&=E+\frac{1}{4}(\tau')^*(\wi{F}_2')+\psi^*(-K_T)
                =E+\frac{1}{4}\wi{F}+\frac{1}{4}F+\psi^*(-K_T).
\end{align*}
Then:
$$
P_X\equiv m\psi^*(-K_T)+(m-c_1)E+\frac{1}{4}m\wi{F}+\bigl(\frac{1}{4}m-c_2\bigr)F$$
and by Lemma \ref{-K} we get
 $$ -mK_X  = m\psi^*(-K_T)+mE+m\wi{E}+\frac{1}{4}mF+\frac{1}{4}m\wi{F}\\
      = P_X+c_1E+m\wi{E}+c_2F.$$
This shows that $-K_X$ is big, so that it is ample by the base-point-free theorem.
\end{proof}
\begin{lemma}
  Suppose that $X$ is Fano. We have the following:
  \begin{enumerate}[$(a)$]
   \item $\de(E)=\de(\wi{E})=\de(F)=\de(\wi{F})=2$; 
\item
     $\delta_X\geq 2$;
   \item $\delta_X=2$ if and only if $\delta_T\leq 2$;
   \item if $\delta_X\geq 3$, then  $\delta_X=\delta_T$.
     \end{enumerate}
\end{lemma}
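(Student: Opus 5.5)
We need to prove (a)–(d) for the constructed $X$.

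For (a), the plan is to exhibit, for each of the four divisors, two disjoint prime divisors it avoids, and then apply Remark \ref{easy} (which needs $\de_X\le 2$) or a direct dimension count. Since $\de_X$ is not yet known, I would argue directly.

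First, $E\cap F=\emptyset$ by Lemma \ref{A'}, since $E_2'\cap B_2'=\emptyset$. Also $E\cap\wi{F}=\emptyset$ away from $E_2'\cap\wi{F}_2'$; but these meet over $B_W$, so I need more care. I would instead use the pairs suggested by the intersection table in \ref{tables}: $E\cap F=\emptyset$, $\wi{E}\cap\wi{F}=\emptyset$ (the transforms separate after blowing up $B_2'=\wi{E}_2'\cap\wi{F}_2'$), and $E\cap\wi{E}$, $F\cap\wi{F}$ nonempty.

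For $\N(E,X)$, the plan is as follows:
\begin{enumerate}[$\bullet$]
\item $E\cong\pr_T(\ol(D)\oplus\ol(2D))$ maps to $T$ with $\pr^1$ fibers, so $\dim\N(E,X)\le\rho_T+1=\rho_X-2$, because $\N(E)$ is spanned by a fiber and a section pulled from $T$.
\item Conversely, $\psi_*\N(E,X)=\N(T)$ and $[e]\in\N(E,X)$ with $\psi_*[e]=0$, so the dimension is $\ge\rho_T+1$.
\end{enumerate}
Hence $\de(E)=2$. The same argument works for $\wi{E}\cong W$, which is a $\pr^1$-bundle over $T$, and for $F$, $\wi{F}$, which are $\pr^1$-bundles over $B_W$ (resp.\ $B_2'$). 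For the latter two, $\N(B_W)\to\N(T)$ has image all of $\N(T)$, but its dimension could a priori exceed $\rho_T$; this is the main subtle point. To handle it, I would note that $\N(B_W,W)$ lies in $A_W^\perp$ since $A_W\cap B_W=\emptyset$, so $\dim\N(B_W,W)\le\rho_W-1=\rho_T$. Pushing forward through the $\pr^1$-bundle $F\to B_2'\cong B_W$ then gives $\dim\N(F,X)\le\rho_T+1$. This requires checking that the image of $\N(B_2',X)$ in $\N(X)$ has dimension at most $\rho_T$. I would get this from $B_2'\subset\wi{E}_2'\cong W$ together with $B_W\cap A_W=\emptyset$, lifting curves of $B_2'$ via $\tau'$ to a section inside $F$. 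The lower bound again comes from the fiber class together with surjectivity onto $\N(T)$. For $\wi{F}$ one uses the same reasoning with $\wi{F}\to B_W$.

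Part (b) follows immediately from (a), since $\de_X\ge\de(E)=2$.

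For (c) and (d), I would use Theorem \ref{geq 4} and Theorem \ref{delta=3} together with \cite[Remark 3.3.18]{codim}, which gives $\de_T\le\de_X$. If $\de_X=2$, then $\de_T\le2$. Conversely, suppose $\de_T\le 2$ but $\de_X\ge3$. If $\de_X\ge4$, then $X\cong S\times T'$ with $\rho_S=\de_X+1\ge5$, and the generic fiber of $\psi$ would have to be compatible with this product structure. If $\de_X=3$, Theorem \ref{delta=3} gives a del Pezzo fibration $X\to T'$ whose fibers have $\rho=5$. In both cases I would try to identify this fibration with $\psi$, which would force $\de_T=\de_{T'}$ (or rule the case out, since $\rho_{X_t}=6$). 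The hard step is this identification: showing that the fibration provided by Theorem \ref{geq 4} or Theorem \ref{delta=3} coincides with $\psi$. The approach I would try first is to use the divisors $E,\wi{E},F,\wi{F}$, each with $\de=2$ and vertical structure over $T$, and show via $\N(D,X)$ computations that any divisor realizing $\de_X\ge3$ must be $\psi$-vertical, i.e.\ of the form $\psi^{-1}(D_T)$. Then $\de(\psi^{-1}(D_T))=\de(D_T)\le\de_T$ gives $\de_X\le\max(2,\de_T)$, which yields (c) and (d) at once.
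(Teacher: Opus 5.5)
\textbf{Parts (a) and (b).} These are fine. For $E$ and $\wi{E}$, your two bounds amount to the paper's injectivity of $\N(E)\to\N(X)$. The upper bound is $\rho_E=\rho_T+1$; the lower bound comes from $\psi_*(\N(E,X))=\N(T)$ together with $0\neq[e]\in\ker\psi_*$.

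For $F$ your route differs slightly from the paper's. You bound the section $\wi{E}\cap F$ of $F\to B_2'$ inside $\wi{E}\cong\wi{E}_2'\cong W$, where it corresponds to $B_W$, and $\N(B_W,W)\subset A_W^{\perp}$. The paper instead works in $X_2'$: it bounds $\dim\N(B_2',X_2')\leq\rho_{X_2'}-2$ using the two divisors $E_2'$ and $(\ph')^{-1}(A_W)$, both disjoint from $B_2'$. It then uses $\dim\N(F,X)=\dim\N(B_2',X_2')+1$. This is the same idea.

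You should make explicit that you use that particular section $\wi{E}\cap F$. The bound comes from pushing forward along $\N(\wi{E})\to\N(X)$, not from an arbitrary lift of $B_2'$.

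Drop the detour through the tables in \ref{tables}. Those were derived under the hypotheses of Section \ref{discriminant}, so using them for the $X$ built from $(T,D)$ would be circular at this point.

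\textbf{The gap is in (c)--(d).} By your own account, the route through Theorems \ref{geq 4} and \ref{delta=3} is left open. The fallback does not supply an argument either. You claim that a divisor $P$ with $\de(P)\geq 3$ must be $\psi$-vertical ``via $\N(D,X)$ computations'' with $E,\wi{E},F,\wi{F}$, but those divisors are irrelevant and no reason for verticality is given. You also assert $\de(\psi^{-1}(D_T))=\de(D_T)$ without justification.

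The missing step is a direct dimension count:
\begin{enumerate}[$\bullet$]
\item If $\de(P)\geq 3$, then $\dim\N(P,X)\leq\rho_X-3=\rho_T$.
\item Since $\dim P=n-1>\dim T$, $\psi$ contracts curves in $P$. Hence $\N(P,X)\cap\ker\psi_*\neq 0$, and so $\dim\psi_*(\N(P,X))\leq\rho_T-1$.
\item This forces $\psi(P)\subsetneq T$, since otherwise $\psi_*(\N(P,X))=\N(T)$.
\item As $\psi$ is flat with $2$-dimensional fibers, $P_T:=\psi(P)$ is a prime divisor and $P$ contains a fiber $F_0$.
\item As $\psi$ is quasi-elementary (\ref{defX}), $\ker\psi_*=\N(F_0,X)\subset\N(P,X)$.
\item Therefore $\psi_*$ maps $\N(P,X)$ onto $\N(P_T,T)$ with kernel exactly $\ker\psi_*$. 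This gives $\codim\N(P_T,T)=\codim\N(P,X)$, hence $\de_T\geq\de_X$.
\end{enumerate}
Combined with $\de_T\leq\de_X$, this yields (c) and (d). Note that quasi-elementarity of $\psi$ is precisely what your asserted equality $\de(\psi^{-1}(D_T))=\de(D_T)$ requires.
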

\begin{proof}
  The restriction $\psi_{|E}\colon E\to T$ is a $\pr^1$-bundle, and $E\cap\wi{E}$ is a section, so it gives a morphism $s\colon T\to E$ such that $\psi\circ\iota\circ s=\text{Id}_T$, where $\iota\colon E\hookrightarrow X$ is the inclusion. Then $\psi_*\circ\iota_*\circ s_*=\text{Id}_{\N(T)}$.

  We show that $\iota_*\colon\N(E)\to\N(X)$ is injective. Let $\gamma\in\ker\iota_*$; we can write $\gamma=\lambda[e]_E+s_*(\gamma_0)$, where $\lambda\in\R$, $\gamma_0\in\N(T)$, and $e\subset E$ is a fiber of the $\pr^1$-bundle. By applying $\iota_*$ we see that $\gamma_0=0$, and then $0=\iota_*(\gamma)=\lambda[e]_X$, so that $\lambda=0$ and $\gamma=0$.
  
Therefore  $\de(E)=\codim\N(E,X)=\rho_X-\rho_E=(\rho_T+3)-(\rho_T+1)=2$. The argument for $\wi{E}$ is analogous.

  Consider now $B_2'=\wi{E}_2'\cap\wi{F}_2'\subset X_2'$. We have $B_2'\cap E_2'=\emptyset$ (Lemma \ref{A'}). We also have $\wi{F}_2'=(\ph')^{-1}(B_W)$ and $A_W\cap B_W=\emptyset$ (see \ref{ref}), therefore $G:=(\ph')^{-1}(A_W)\subset X_2'$ is a prime divisor disjoint from $B_2'$. We have $E_2'\neq G$ ($E_2'$ is a section of $\ph'$) and $E_2'=\Exc(\sigma')$, hence the classes $[E_2'],[G]\in\Nu(X_2')$ are linearly independent, and the hyperplanes $(E_2')^{\perp},G^{\perp}\subset\N(X_2')$ are distinct. Since $\N(B_2',X_2')\subset (E_2')^{\perp}\cap G^{\perp}$ (see Remark \ref{disjoint}), we get $\dim\N(B_2',X_2')\leq \rho_{X_2'}-2=\rho_T$. On the other hand $\pi_W(\ph'(B_2'))=\pi_W(B_W)=T$, therefore $\pi_{W*}((\ph')_*(\N(B_2',X_2')))=\N(T)$, and $\dim\N(B_2',X_2')=\rho_T$. This implies that $\dim\N(F,X)=\dim\N(B_2',X_2')+1=\rho_T+1=\rho_X-2$, and $\de(F)=2$. The argument for $\wi{F}$ is analogous.
  This gives $(a)$; moreover $(b)$ is straightforward from $(a)$.

The existence of the surjective morphism $\psi\colon X\to T$ implies that $\delta_T\leq\delta_X$, see \cite[Remark 3.3.18]{codim}. Therefore $\delta_X=2$ implies that $\delta_T\leq 2$. Conversely, we show that
$\delta_X\geq 3$ implies $\delta_T\geq \delta_X\geq 3$, which
gives both $(c)$ and $(d)$.

Thus let us assume that $\delta_X\geq 3$, and
let $P\subset X$ be a prime divisor with $\delta(P)=\delta_X$, so that $\dim\N(P,X)\leq\rho_X-3$. Note that $\psi$ cannot be finite on $P$, thus $\N(P,X)\cap\ker\psi_*\neq 0$, and $\dim\psi_*(\N(P,X))\leq\rho_X-4$. Since $\rho_T=\rho_X-3$, we have $\psi_*(\N(P,X))\subsetneq\N(T)$, and we conclude that
 $\psi(P)\subsetneq T$. Since $\psi$ is flat, we deduce that $P_T:=\psi(P)$ is a prime divisor in $T$, and
  $P$ contains some fiber $F_0$ of $\psi$.

  We have $\N(F_0,X)=\ker\psi_*$ because $\psi$ is quasi-elementary (see \ref{defX}), thus $\ker\psi_*\subset\N(P,X)$, and we get $\codim\N(P_T,T)=\codim\N(P,X)$ and hence $\delta_T\geq \delta(P_T)=\delta(P)=\delta_X$.
\end{proof}


\begin{lemma}\label{madeira}
  Suppose that $X$ is Fano. The morphism $\psi\colon X\to T$ has another factorization:
$$\xymatrix{ X\ar@/^{1pc}/[rr]^{\alpha}\ar[r]_{\sigma}&{X_2}\ar[r]_{\tau}\ar[d]_{\ph}& Z\ar[d]^{\pi_Z}\\ 
&Y\ar[r]^{\pi_Y}&T
}$$
where:
\begin{enumerate}[$(a)$]
\item $\tau\colon X_2\to Z$ blows-up $B_Z:=\sigma'(B_2')\subset Z$ and $\sigma\colon X\to X_2$ blows-up the transform $A_2\subset X_2$ of $A_Z$;
\item $Y=\pr_T(\ma{F})\stackrel{\pi_Y}{\to} T$ is a $\pr^1$-bundle, and $A_Y:=\ph(A_2)$ is a section of $\pi_Y$, corresponding to
  an extension:
  $$0\la \ol_T(2D)\la \ma{F}\la\ol_T\la 0;$$
\item $\ph$ is a conic bundle with non-empty discriminant locus $\Delta_{\ph}\subset Y$, $\Delta_{\ph}\cap A_Y=\emptyset$, and $\Delta_{\ph}\sim 3 A_Y+6\pi_Y^*(D)$.
\end{enumerate}  
\end{lemma}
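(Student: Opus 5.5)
The plan is to run the argument of Section \ref{section_rho=6} backwards: find the second factorization by contracting the other faces of the relative cone $\NE(\psi)$, which is a $3$-dimensional cone with four rays. In $X$ we have the four exceptional $\pr^1$-bundles $E,\wi{E},F,\wi{F}$, with fibers $e,\hat e,f,\hat f$. I would first compute the intersection table $E\cdot e$, $\wi{E}\cdot e$, etc., directly from the construction: the blow-ups $\tau'$, $\sigma'$, the relations \eqref{coffee} and \eqref{relation}, and Lemma \ref{A'}. The target is the table of \ref{tables} for $\Bl(\pr^2_K;1,4)$. In particular $E\cap F=\emptyset$, since $E_2'\cap B_2'=\emptyset$.

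Since $E$ and $F$ are disjoint exceptional divisors over $Z$, the contraction $\alpha$ factors in the other order as well. Let $\tau\colon X_2\to Z$ be the blow-up of $B_Z:=\sigma'(B_2')$, which is disjoint from $A_Z$, and let $\sigma\colon X\to X_2$ be the blow-up of the transform $A_2$ of $A_Z$. This gives $(a)$.

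Next I would produce $\ph$ as the contraction over $T$ of the face $R_E+R_{\wi{E}}$. Since $X$ is Fano, this face is $K$-negative. I would check that it is a face of $\NE(\psi)$: from the table, $\NE(\psi)$ has exactly the four rays $R_E,R_F,R_{\wi F},R_{\wi E}$ in this cyclic order. This can be read from the generic fiber $X_\eta=\Bl(\pr^2_K;1,4)$, using Lemmas \ref{restr} and \ref{NE}. The face is also visible directly: $\wi{E}$ is the transform of the conic bundle $\wi{E}_Z$, and $\hat e$ is the transform of a conic through the five points.

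Contracting $R_{\wi E}$ in $X_2$ then gives an elementary contraction $\ph\colon X_2\to Y$ whose fibers over $T$ are the pencils of conics through the four points $(B_Z)_t$. Hence $\ph$ is a conic bundle, and $\pi_Y\colon Y\to T$ is a smooth $\pr^1$-fibration. I would argue smoothness of $\pi_Y$ fiberwise, using that $(B_Z)_t$ is always a complete intersection of two conics in $\pr^2$. The image $A_Y=\ph(A_2)$ is a section, because $A_2$ is a section over $T$. By Lemma \ref{section2}-type arguments, or directly, $Y=\pr_T(\ma F)$. The discriminant is disjoint from $A_Y$, because the conics through $(A_Z)_t$ and $(B_Z)_t$ are the smooth conics of $\wi{E}_Z$. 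It is non-empty, because each pencil of conics through four general points has three singular members.

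For the extension class I would compute $\ma N_{A_Y/Y}$. Identify $A_Y$ with $A_Z\cong T$; the fiber of $\pi_Y$ at $t$ is the pencil of conics through $(B_Z)_t$, and the tangent direction at $A_Y$ corresponds to conics through $(A_Z)_t$. I would compute this normal bundle via the divisor $\wi{E}_Z\in|2(H_Z-\pi_Z^*D)|$ together with the pencil. A cleaner route is to compute $A_{Y|A_Y}$ through $\zeta^*(A_Y)=E+\wi{E}$, which is Proposition \ref{prop5.1}(d) adapted here. Restricting to $E\cap\wi{E}\cong T$ and using \eqref{coffee} together with Lemma \ref{X_2}, this should give $\ol_Y(A_Y)_{|A_Y}\cong\ol_T(-2D)$. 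That yields the extension $0\to\ol_T(2D)\to\ma F\to\ol_T\to0$ once the normalization is chosen so that $A_Y$ corresponds to the quotient $\ol_T$.

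Finally, for $\Delta_\ph$, I would write $\Delta_\ph\sim aA_Y+\pi_Y^*(N)$. The coefficient $a=3$ comes from degree $3$ over $T$. The divisor $N$ is determined by $\Delta_{\ph|A_Y}\sim0$, since the two are disjoint, so $N\sim -a\,A_{Y|A_Y}=6D$.

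I expect the main obstacle to be verifying rigorously that $\ph$ is everywhere a conic bundle and that $\pi_Y$ is a $\pr^1$-bundle over all of $T$. The first thing I would try is flatness via equidimensionality of fibers, together with \cite[Theorem 1.2]{wisn}, as in \ref{every_ray}. The other delicate point is the sign and normalization of $A_{Y|A_Y}$.
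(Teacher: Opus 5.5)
Your outline matches the paper's proof in its main steps. These are: factoring $\alpha$ in the other order for $(a)$; taking $\ph$ as the second contraction over $T$; using that $A_Y$ is a section to get the $\pr^1$-bundle; computing $\ol_Y(A_Y)_{|A_Y}$ by restricting a pullback of $A_Y$ to a section; and getting $\Delta_\ph$ from degree $3$ plus disjointness from $A_Y$. Your variant for the normal bundle works. First, $\zeta^*(A_Y)=E+\wi{E}$, which you can check directly from $\ph^*(A_Y)=\wi{E}_2$ and $A_2\subset\wi{E}_2$. Then, since $E\cap F=\emptyset$ and $H_2'\cap E_2'=\emptyset$, Lemma \ref{X_2} and \eqref{coffee} give $(E_2'+\wi{E}_2')_{|E_2'}\sim -2(\ph')^*\pi_W^*(D)_{|E_2'}$, hence $\ol_Y(A_Y)_{|A_Y}\cong\ol_T(-2D)$. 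The paper does the same computation by restricting $\wi{E}_2=\ph^*(A_Y)$ to $A_2$ and using $\wi{E}_Z\in|2(H_Z-\pi_Z^*(D))|$.

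The step that is not justified as written is the existence of the face $R_E+R_{\wi{E}}$. Lemma \ref{NE} gives $\NE(X_\eta)\cong\NE(\psi)$ only if every extremal ray of $\NE(\psi)$ has locus dominating $T$. In Section \ref{discriminant} this came from \ref{every_ray}, whose proof uses the conic bundle $\zeta$ you are trying to construct, so invoking it here is circular. An intersection table on the four curves $e,\hat e,f,\hat f$ also cannot by itself exclude further rays spanned by curves in special fibers.

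The paper avoids this by working in $X_2$:
\begin{itemize}
\item $\pi_Z\circ\tau$ has relative Picard number $2$, so $\NE(\pi_Z\circ\tau)=\NE(\tau)+R$ for a unique second ray $R$.
\item $R$ is $K$-negative, since $-K_{X_2}\cdot\Gamma=(-K_X+E)\cdot\Gamma_X>0$ for contracted curves.
\item The nontrivial fibers of the contraction of $R$ are curves, because every fiber $F_0$ of $\pi_Z\circ\tau$ is an irreducible surface with $\dim\N(F_0,X_2)=2$.
\item Wi\'sniewski's theorem plus the elementary link on $(X_2)_\eta$ then show $\ph$ is a conic bundle; your pencil-of-conics description would also serve here.
\end{itemize}

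Alternatively, you can stay in $X$ and use the divisor $E+\wi{E}$. It is $\psi$-nef: $\psi_{|E}$ and $\psi_{|\wi{E}}$ are $\pr^1$-bundles with $(E+\wi{E})\cdot e=(E+\wi{E})\cdot\hat e=0$. Moreover $(E+\wi{E})^{\perp}\cap\NE(\psi)=R_E+R_{\wi{E}}$. Indeed, a contracted curve outside $E\cup\wi{E}$ with zero intersection has class in $E^{\perp}\cap\wi{E}^{\perp}\cap\ker\psi_*=\R([e]+[\hat e])$, using $E\cdot f=0$ and $\wi{E}\cdot f=1$. With either fix, the rest of your plan goes through.
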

\begin{proof}
  Let us factor $\alpha\colon X\to Z$ by first blowing-up $B_Z=\sigma'(B_2')$ and then the transform $A_2$ of $A_Z$; we obtain:
  $$\xymatrix{ X\ar@/_/[rrd]_{\psi}\ar@/^{1pc}/[rr]^{\alpha}\ar[r]_{\sigma}&{X_2}\ar[r]_{\tau}& Z\ar[d]^{\pi_Z}\\ 
&&T
}$$
where $\sigma$ and $\tau$ are as in statement $(a)$.

Consider the composition $\pi_Z\circ\tau\colon X_2\to T$; it is a quasi-elementary contraction (because $B_Z$ dominates $T$) with relative Picard number two, equidimensional, with irreducible fibers. It is also $K$-negative, indeed it is finite on $A_2$, and if an irreducible  curve $\Gamma\subset X_2$ is not contained in $A_2$, and $\Gamma_X\subset X$ is its transform, we have $E\cdot\Gamma_X\geq 0$, $-K_X\cdot\Gamma_X>0$, hence $$-K_{X_2}\cdot\Gamma=-K_{X_2}\cdot\sigma_*(\Gamma_X)=(-K_X+E)\cdot\Gamma_X>0.$$

Let $R$ be the extremal ray of $\NE(X_2)$ such that $\NE(\pi_Z\circ\tau)=\NE(\tau)+R$. Let $F_R$ be an irreducible component of a non-trivial fiber of the contraction of $R$, and  $F_0$  the fiber of 
$\pi_Z\circ\tau$ containing it. We have $\dim\N(F_0,X_2)=2$ while $\dim\N(F_R,X_2)=1$, therefore $F_R\subsetneq F_0$. Since $\dim F_0=2$ and $F_0$ is irreducible, we get $\dim F_R=1$.

We conclude that  $R$ is a $K$-negative extremal ray whose contraction has fibers of dimension $\leq 1$. 
By \cite[Theorem 1.2]{wisn} we see that  either $R$ is of type $(n-1,n-2)^{sm}$, or its contraction is a conic bundle. Moreover $\Lo(R)$ must dominate $T$ because $\pi_Z\circ\tau$ is quasi-elementary.

Let us consider the generic point $\eta=\Spec\C(T)$ of $T$, and the generic fiber $(X_2)_\eta$ of  $\pi_Z\circ\tau$. It is the blow-up of $Z_\eta=\pr^2_K$, where $K=\C(T)$, along the point $(B_Z)_\eta$, of degree $4$. By the classification of elementary links (see \cite[Theorem 2.6]{isk96} or \cite[Propositions 5.1 and 5.5]{BFSZ}), the second elementary contraction of  $(X_2)_\eta$ is a conic bundle. Then by Lemma \ref{NE} we see that the 
 contraction $\ph\colon X_2\to Y$ of $R$ is a conic bundle too, and $Y$ is smooth of dimension $n-1$. We get also a contraction $\pi_Y\colon Y\to T$, with general fiber $\pr^1$, such that $\pi_Z\circ\tau=\pi_Y\circ\ph$.

We note that $A_2$ is a section for $\pi_Z\circ\tau$, hence $A_Y:=\ph(A_2)\subset Y$ is a section of $\pi_Y$. This implies first of all that $\pi_Y$ is equidimensional and hence a conic bundle, and in fact that it is a $\pr^1$-bundle.
We have $Y=\pr_T(\ma{F})$ where $\ma{F}$ is a rank $2$ vector bundle on $T$ sitting in an extension:
  $$0\la\ma{L}\la\ma{F}\la\ol_T\la 0$$
  where $\ma{L}\in\Pic(T)$ is such that $\ma{L}\cong \ol_Y(-A_Y)_{|A_Y}$.

  Let $\wi{E}_2\subset X_2$ be the transform of  $\wi{E}_Z\subset Z$, and notice that $A_2\subset\wi{E}_2$. Fiberwise over $T$, $\wi{E}_Z\cap Z_t$ is the smooth conic through the $5$ points $(A_Z\cup B_Z)\cap Z_t$ (see Figure \ref{figura_Z_t} on page \pageref{figura_Z_t}).
  The conic bundle $\ph_{|(X_2)_t}\colon (X_2)_t\to\pr^1$ is given by 
pencil of conics through the $4$ points $B_Z\cap Z_t$, and 
  $\wi{E}_2\cap (X_2)_t$ is a smooth fiber, 
  hence $\ph(\wi{E}_2)=A_Y$ and $\wi{E}_2=\ph^*(A_Y)$. 
  Now we have $$\ol_{X_2}(\wi{E}_2)_{|\wi{E}_2}\cong\ph^*\ol_Y(A_Y)_{|\wi{E}_2} =(\ph_{|\wi{E}_2})^*\ol_Y(A_Y)_{|A_Y}\cong(\ph_{|\wi{E}_2})^*(\ma{L}^{\vee}),$$
  and since $A_2\cap\Exc(\tau)=\emptyset$, using \ref{defZ} and Lemma \ref{Ehat} we get
  $$\ma{L}\cong \ol_{X_2}(-\wi{E}_2)_{|A_2}\cong \ol_Z(-\wi{E}_Z)_{|A_Z}
  \cong\ol_Z\bigl(-2H_Z+\pi_Z^*(2D)\bigr)_{|A_Z}\cong\ol_T(2D),$$
so we have $(b)$.

  For $t\in T$ a general closed point, the $4$ points $B_Z\cap Z_t$ are distinct, and must be in general linear position, otherwise one can find a line in $Z_t$ whose transform in $X$ has non-positive anticanonical degree. Therefore the pencil of conics through $B_Z\cap Z_t$ contains exactly $3$ singular conics, and $\ph$ has $3$ singular fibers over $\pi_Y^{-1}(t)\cong\pr^1$. This implies that $\Delta_{\ph}\neq\emptyset$ and that $\Delta_{\ph}$ has generically degree $3$ onto $T$.

  We also have $A_Y\cap\Delta_{\ph}=\emptyset$, because $\ph_{|\wi{E}_2}\colon
  \wi{E}_2\to A_Y$ is smooth, having $A_2$ as a section. Write $\Delta_{\ph}\sim 3 A_Y+\pi_Y^*(M)$, $M$ a divisor on $T$, and recall that $A_{Y|A_Y}\sim -2D$. Then $0=\Delta_{\ph|A_Y}\sim 3A_{Y|A_Y}+\pi_Y^*(M)_{|A_Y}\cong -6D+M$, hence $M\sim 6D$, and we have $(c)$.
\end{proof}
\begin{remark}
 Suppose that $X$ is Fano. By Lemma \ref{madeira} $(b)$ and $(c)$, and Lemma \ref{sectionsFano},  we see that $Y$ is Fano if and only if $-K_T+2D$ is ample; in this case we have $Y\cong\pr_T(\ol\oplus\ol(2D))$.
\end{remark}  
\begin{remark}
 Suppose that $X$ is Fano. If $D$ is nef, then  $-K_T+2D$ is ample, therefore $Y$ is Fano and $Y\cong\pr_T(\ol\oplus\ol(2D))$.
\end{remark}  
\begin{remark}
  Suppose that $X$ is Fano.
We have $E\cong\wi{E}\cong W=\pr_T(\ol\oplus\ol(D))$, and $F\cong\wi{F}\cong\pr_{B_W}(\ol\oplus\ol(2\pi_B^*(D)))$ (see the proof of Lemma \ref{F}).
  The sequence
  $$X\stackrel{\sigma}{\la}X_2\stackrel{\ph}{\la} Y$$
  is a special MMP of type $(b)$ for $-\wi{F}$. Moreover the sequence
  $$X\stackrel{\tau'}{\la}X_2'\stackrel{\ph'}{\la} W$$
is a special MMP of type $(b)$ for  $-\wi{E}$.
\end{remark}  
  \begin{remark}\label{lori}
    Let $T$ and $D$ be as in Proposition \ref{construction}. Then in particular $-K_T\pm D$ are both ample, therefore for every irreducible curve $C\subset T$ with $-K_T\cdot C=1$ we must have $D\cdot C=0$, namely $[C]\in D^{\perp}\subset\N(T)$.
Since $D\not\equiv 0$ (see Remark \ref{Deffective}), this implies that the classes of curves of anticanonical degree one are contained in a hyperplane of $\N(T)$.
    This is quite a restriction on $T$, for instance it implies that $\NE(T)$ must have at least one extremal ray of length $\geq 2$.  
\end{remark}
\section{The $4$-dimensional case}\label{dim4}
\noindent In this section we apply the previous results to the $4$-dimensional case, and get the following.
\begin{corollary}\label{4fold}
There exists a family of Fano $4$-folds $X$ with $\de_X=2$ as in Theorem \ref{singular}$(i)$. Such family is unique, and is obtained as in Proposition \ref{construction} from $(T,D)=(\pr^2,\ol_{\pr^2}(1))$; its invariants are given in Table \ref{t1}.
{\footnotesize
\begin{table}[h]
 $\begin{array}{||c|c|c|c|c|c|c|c||}
   \hline\hline
   
   \rho_{X} & K_{X}^4 & b_3(X)& h^{2,2}(X) & h^{1,3}(X) &  h^0(-K_{X}) & c_2(X)\cdot K_X^2 & \chi(T_X)\\
      \hline

       4 & 160 & 0  & 24 & 1 & 40 & 148 & -22\\
      
\hline\hline
  \end{array}$
  
\bigskip
  
  \caption{}\label{t1}
\end{table}}
\end{corollary}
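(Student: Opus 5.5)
The plan has two parts: existence and uniqueness of the pair $(T,D)$, followed by a computation of the invariants.

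Let $X$ be a Fano $4$-fold with $\de_X=2$ as in Theorem \ref{singular}$(i)$. By Theorem \ref{rho=6}, $T$ is a smooth Fano variety of dimension $n-2=2$, so $T$ is a del Pezzo surface with $\delta_T\leq 2$. Moreover $D$ is a divisor on $T$ such that $-K_T-2D$ and $-K_T+D$ are ample, and $D\not\equiv 0$ with $[D]\in\Eff(T)$ (Remark \ref{Deffective}). By Remark \ref{lori}, the classes of all $(-1)$-curves of $T$ lie in the hyperplane $D^\perp$, so $\NE(T)$ has an extremal ray of length $\geq 2$.

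For del Pezzo surfaces this forces $T\cong\pr^2$ or $T\cong\pr^1\times\pr^1$. Indeed, if $\rho_T\geq 2$ and $T\not\cong\pr^1\times\pr^1$, then $T$ is a blow-up of $\pr^2$, and its $(-1)$-curves span $\N(T)$. For example, $\mathbb{F}_1$ has $\NE$ spanned by the $(-1)$-curve $e$ and a fiber $f$ of length $2$; here $D\cdot e=0$ gives $D=a(e+f)$, and then $-K_T-2D$ ample forces $a<1$ together with $a\neq0$. This needs care, so I would carry out a short case check:
\begin{itemize}
\item On $\mathbb{F}_1$, take $D\equiv a(e+f)$ with $a\in\Z$. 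Since $-K=2e+3f$, the condition $-K-2D=(2-2a)e+(3-2a)f$ ample forces $a=0$, a contradiction.
\item For $\rho_T\geq3$ (blow-ups of $\pr^2$ at $\geq2$ points), the $(-1)$-curves span $\N(T)$, so $D\equiv0$, again a contradiction.
\item For $T=\pr^1\times\pr^1$ with $D=(a,b)$, ampleness of $-K\pm$ gives $(2-2a,2-2b)$ and $(2+a,2+b)$ ample, hence $a,b\in\{0\}$ (from $a\leq0$, $a>-2$, $a\geq0$ by effectivity with $D\neq0$). Concretely $a,b\in\{-1,0\}$ and effectivity excludes negatives, so $D=0$, a contradiction.
\item For $T=\pr^2$ with $D=\ol(d)$, we need $3-2d>0$, $3+d>0$, and $d\geq1$, so $d=1$.
\end{itemize}
Thus $(T,D)=(\pr^2,\ol(1))$. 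Existence then follows from Proposition \ref{construction}: $D$ is base-point-free and nonzero, so Remark \ref{bsptfree} gives $B_W$, and $\delta_{\pr^2}=0\leq2$. Uniqueness of the family follows because $X$ is determined by $(T,D)$ together with the choice of $B_W\in|4H_W|$, which varies in a connected family.

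Next, the invariants. Here $W=\mathbb{F}_1$, $H_W$ is the class of a line pulled back ($e+f$), and $B_W\in|4H_W|$ is the transform of a general plane quartic, so $B_Z\cong B_W$ is a genus $3$ curve. Dimensionally, $B_Z$ is a surface: $B_W\subset W$ is a divisor in a $3$-fold, namely a degree $4$ cover of $\pr^2$ with $-K_{B_W}=\pi_B^*(-K_T-3D)=0$ by Lemma \ref{BW}. So $B_Z$ is a K3 surface, and $A_Z\cong\pr^2$. Therefore $X$ is the blow-up of $Z=\pr_{\pr^2}(\ol\oplus\ol(1)\oplus\ol(2))$ along $\pr^2$ and a K3 surface $S$, and $\rho_X=1+2+1=4$. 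The standard blow-up formulas give $b_3=0$, $h^{1,3}=h^{2,0}(S)=1$, and $h^{2,2}(X)=h^{2,2}(Z)+h^{1,1}(\pr^2)+h^{1,1}(S)=3+1+20=24$. For $K_X^4$, I would use Lemma \ref{-K} or the blow-up formula $(-K_X)^4=(-K_Z)^4-\ldots$ with the normal bundle data (the normal bundle of $A_Z$ is $\ol(-1)\oplus\ol(-2)$, and that of $S$ is computed from $\wi{E}_Z$ and $H_Z$). From there $h^0(-K_X)$ follows via Riemann--Roch, and $c_2\cdot K^2$ and $\chi(T_X)$ follow by Hirzebruch--Riemann--Roch and the standard Chern class blow-up formulas.

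I expect the main obstacle to be the Chern number computation for the blow-up along the K3 surface, which requires its normal bundle and embedding data in $Z$.
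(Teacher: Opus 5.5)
Your proposal is correct and follows essentially the same route as the paper. Existence comes from Proposition~\ref{construction} via Remark~\ref{bsptfree}; uniqueness of $(T,D)$ comes from combining Remark~\ref{lori} with the ampleness of $-K_T-2D$ and $-K_T+D$ to exclude $\rho_T\geq 3$, $\mathbb{F}_1$ and $\pr^1\times\pr^1$; and the invariants are read off from $X$ as the blow-up of $Z$ along $A_Z\cong\pr^2$ and a quartic K3 surface $B_Z$.

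Points to fix or note in a final write-up:
\begin{itemize}
\item Delete two false remarks that your later steps override. First, the $(-1)$-curve condition alone does not force $T\cong\pr^2$ or $\pr^1\times\pr^1$, since $\mathbb{F}_1$ satisfies it. Second, $W=\pr_{\pr^2}(\ol\oplus\ol(1))\cong\Bl_p\pr^3$ is a threefold, not $\mathbb{F}_1$, so $B_Z$ is never a genus $3$ curve.
\item Like the paper, you only sketch $K_X^4$, $h^0(-K_X)$, $c_2(X)\cdot K_X^2$ and $\chi(T_X)$. The sketch does close. With $-K_Z=3H_Z$, $(-K_Z)^4=567$, $-K_{Z|A_Z}=0$, $-K_{Z|B_Z}=6h$ (where $h$ is the hyperplane class of the quartic, $h^2=4$) and $c_2(\ma{N}_{B_Z/Z})=32$, the blow-up formula gives $567-7-400=160$.
\end{itemize}
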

Explicitly, $\alpha\colon X\to Z=\pr_{\pr^2}(\ol\oplus \ol(1)\oplus\ol(2))$ is the blow-up of two disjoint smooth, irreducible surfaces $A_Z,B_Z\subset Z$, 
where $A_Z$ is the section of the $\pr^2$-bundle with normal bundle $\ma{N}_{A_Z/Z}\cong\ol_{\pr^2}(-1)\oplus\ol_{\pr^2}(-2)$.  Here $Z$ is toric, and 
$\Bl_{A_Z}Z$ is the toric Fano $4$-fold $D_3$ in \cite{bat2}.
Moreover, $W=\pr_{\pr^2}(\ol\oplus\ol(1))$ is the blow-up of $\pr^3$ at a point, and $B_W\subset W$ is the transform of a smooth quartic surface in $\pr^3$ not containing the blown-up point; therefore $B_Z\cong B_W$ is a quartic K3 surface. 
\begin{proof}[Proof of Corollary \ref{4fold}]
  Consider the pair $(T,D)=(\pr^2,\ol_{\pr^2}(1))$. Then the assumptions of Proposition \ref{construction} are satisfied (note that $D$ is base-point-free), therefore there exists a family of Fano $4$-folds as in the statement.  The invariants of $X$ are computed from the explicit description of $X$ as a blow-up of $Z=\pr_{\pr^2}(\ol\oplus \ol(1)\oplus\ol(2))$ given in \S \ref{reconstructing}; see for instance \cite[Lemma 3.2]{delta3_4folds} and \cite[Lemma 6.25]{vb} for the relevant Riemann-Roch formulas.
  
  Conversely, let $X$ be a Fano $4$-fold as  in Theorem \ref{singular}$(i)$. Then by Theorem \ref{rho=6} we get a smooth del Pezzo surface $T$ with a divisor $D$ such that $[D]\in\Eff(T)$, $D\not\equiv 0$, and $-K_T-2D$ and $-K_T+D$ are ample. Moreover every $(-1)$-curve of $T$ must have intersection zero with $D$, and  classes of $(-1)$-curves cannot generate $\N(T)$ (see Remark \ref{lori}). This implies immediately that
$\rho_T\leq 2$; moreover, if $T\cong\mathbb{F}_1$, then  $D=\sigma^*\ol_{\pr^2}(a)$ with $a>0$, and one can check that $-K_T-2D$ is never ample. Similarly, if $T\cong\pr^1\times\pr^1$, for every effective, non-zero $D$ the divisor $-K_T-2D$ is not ample. 
  We are left with $T\cong\pr^2$, and $-K_T-2D$ ample implies that $D$ is a line.
\end{proof}

\section{Open questions and special cases}\label{questions}
\noindent While the structure of Fano varieties with Lefschetz defect $\de_X\geq 3$ is well understood (see Theorems \ref{geq 4} and \ref{delta=3}), we still have a few open questions on the case $\de_X=2$.

Let us consider first  the case of non-empty discriminant; then by Theorem \ref{singular} there is a del Pezzo fibration $\psi\colon X\to T$ with $T$ smooth and $\rho_X-\rho_T=3$.
\begin{question}
In the setting of Theorem \ref{singular}, does case $(ii)$ with generic fiber $\Bl(\pr^2_{\C(T)};2,2)$ actually occur? If so, is the base $T$ of del Pezzo fibration  always Fano? Is it possible to give for  case $(ii)$ a structure theorem and an explicit construction like Theorem \ref{rho=6} and Proposition \ref{construction} for case $(i)$?
\end{question} 

Let us consider now the case where $\Delta_{\ph}=\emptyset$, so that by Theorem \ref{main} we get a diagram:
  $$\xymatrix{ X\ar@{-->}[r]^{\xi}&{X'}\ar[r]^{\sigma}\ar[dr]_{\zeta}&{X_2}\ar[d]^{\ph}\\
    &&Y
  }$$
  where $\ph$ is a smooth $\pr^1$-fibration.
  \begin{question}
    Is $Y$ always weak Fano, i.e.\ is $-K_Y$ nef and big?

    Note that in presence of flips, $Y$ does not need to be Fano, as toric examples show (see for instance toric Fano $4$-folds of type $J$ or $R$ in \cite{bat2}). However, in the toric case, $Y$ is always weak Fano. Let us also note that in any case $Y$ is of Fano type, and $-K_Y$ is big.
  \end{question}  
\begin{remark}[the case where $\ph$ is smooth and without  flips]\label{linate}
  Suppose now that, in the setting above, there are no flips, so that $X=X'$ and $X$ itself has a conic bundle structure:
  $$\xymatrix{X\ar@/^{1pc}/[rr]^{\zeta}\ar[r]_{\sigma}&{X_2}\ar[r]_{\ph}&Y
    }$$
    where $\ph$ is a smooth $\pr^1$-fibration and $\sigma$ is the blow-up of a smooth, codimension $2$ subvariety $A_2\subset X_2$, a section of $\ph_{|\ph^{-1}(A_Y)}$, where $A_Y=\ph(A_2)\subset Y$. Then the conic bundle $\zeta$ has no non-reduced fibers, hence $Y$ is Fano by \cite[Proposition 4.3]{wisn}.
    
    In this case, we would like to understand whether $X$ can be reconstructed from some data on the Fano variety $Y$.

\smallskip
    
    In \cite{pier} this setting is studied under the assumption that  $A_2$ is contained in a global section $S\subset X_2$ of $\ph$; note that in this case
    $X_2=\pr_Y(\ma{E})$ for some rank $2$ vector bundle on $Y$.
 In [\emph{ibidem}, Theorem 1.3 and Proposition 2.3]   explicit conditions on $A_Y$ and $\ma{E}$ are given in order to obtain  a Fano variety $X$ as above. In particular, for $n=4$, by choosing  suitable decomposable vector bundles on  Fano $3$-folds $Y$, this yields 144 distinct families  of Fano $4$-folds $X$ with $\de_X=2$ and $\rho_X\in\{4,5\}$ [\emph{ibidem}, Theorem 1.4]. 
 However, an example is given to show that in general $A_2$ does not need to be contained in a global section [\emph{ibidem}, p.\ 24].
 \end{remark}
   \begin{remark}[the case $\rho_X=3$]\label{rho3}
   Let $X$ be a Fano variety with $\de_X=2$; then the Picard number of $X$ is at least  $3$. When moreover $\rho_X=3$, it is shown in \cite[Theorem 3.8]{minimal} that the setting described above always holds, namely $X=\Bl_{A_2}X_2$ where $X_2=\pr_Y(\ma{E})$, $\ma{E}$ is a decomposable rank 2 vector bundle on a Fano variety $Y$ with dimension $n-1$ and $\rho_Y=1$, and $A_2$ is contained in a section of $\ph\colon X_2\to Y$. Using this structure, Fano $4$-folds with $\de_X=2$ and $\rho_X=3$ have been classified in \cite{saverio}; they form 28 families.

     Suppose that $n\geq 3$.   Let us also note that we have $\de(E)=1$
for $E:=\Exc(\sigma)$, differently from the case $\Delta_{\ph}\neq\emptyset$, where $\de(E)=2$ (see Proposition \ref{giugno}).
Indeed $\sigma(E)=A_2$ and $\dim A_2=n-2>0$, therefore $\dim\N(E,X)>1$. On the other hand $\N(E,X)$ does not contain the class of a general fiber of $\zeta$ (see \cite[\S 2.3]{codimtwo}), therefore $\N(E,X)\subsetneq\N(X)$, and finally $\dim\N(E,X)=2$. 
   \end{remark}
   \begin{remark}\label{typeb}
     Let $X$ be a smooth Fano variety with $\de_X=2$ and $\rho_X=3$, as in Remark \ref{rho3}, of dimension $n\geq 3$. Let $D\subset X$ be a prime divisor with $\de(D)=2$, namely $\dim\N(D,X)=1$. Then every special MMP for $-D$ is of type $(b)$.

     Indeed, if there were a special MMP of type $(a)$, by \cite[\S 2.2]{codimtwo} we would get an exceptional $\pr^1$-bundle $E_1\subset X$ such that $D\cdot e_1>0$ ($e_1\subset E_1$ a fiber of the $\pr^1$-bundle) and $\dim\N(D\cap E_1,X)=\rho_X-3=0$, but this is impossible because $D\cap E_1\neq \emptyset$ and hence $\dim (D\cap E_1)\geq n-2\geq 1$.
   \end{remark}  
    
\small
\providecommand{\noop}[1]{}
\providecommand{\bysame}{\leavevmode\hbox to3em{\hrulefill}\thinspace}
\providecommand{\MR}{\relax\ifhmode\unskip\space\fi MR }
\providecommand{\MRhref}[2]{%
  \href{http://www.ams.org/mathscinet-getitem?mr=#1}{#2}
}
\providecommand{\href}[2]{#2}

\end{document}